\numberwithin{equation}{section}
\theoremstyle{plain}
\newtheorem{theorem}[equation]{Theorem}
\newtheorem{proposition}[equation]{Proposition}
\newtheorem{lemma}[equation]{Lemma}
\newtheorem{corollary}[equation]{Corollary}
\newtheorem*{cap_stability}{Neck Stability}
\theoremstyle{remark}
\newtheorem{remark}[equation]{Remark}
\theoremstyle{definition}
\newtheorem{definition}[equation]{Definition}
\newtheorem{question}[equation]{Question}
\newtheorem*{question*}{Question}
\newtheorem{example}[equation]{Example}
\newcommand{\C}{{\mathcal C}}
\renewcommand{\L}{{\mathcal L}}
\newcommand{\M}{{\mathcal M}}
\newcommand{\n}{\mathcal{N}}
\renewcommand{\P}{{\mathcal P}}
\renewcommand{\r}{\mathfrak{r}}
\newcommand{\R}{\mathbb R}
\newcommand{\calr}{{\mathcal R}}
\renewcommand{\S}{{\mathcal S}}
\renewcommand{\t}{\mathfrak{t}}
\newcommand{\V}{{\mathcal V}}
\newcommand{\W}{{\mathcal W}}
\newcommand{\Z}{\mathbb Z}
\newcommand{\bad}{\operatorname{Bad}}
\newcommand{\const}{\operatorname{const.}}
\newcommand{\cyl}{Cyl}
\newcommand{\diam}{\operatorname{Diam}}
\newcommand{\dist}{\operatorname{dist}}
\newcommand{\dvol}{\operatorname{dvol}}
\newcommand{\hess}{\operatorname{Hess}}
\newcommand{\im}{\operatorname{Im}}
\newcommand{\inj}{\operatorname{inj}}
\newcommand{\Int}{\operatorname{Int}}
\newcommand{\length}{\operatorname{length}}
\definecolor{gray}{gray}{0.7}
\newcommand{\ric}{\operatorname{Ric}}
\newcommand{\vol}{\operatorname{vol}}
\newcommand{\D}{\partial}
\newcommand{\de}{\delta}
\newcommand{\De}{\Delta}
\newcommand{\eps}{\epsilon}
\newcommand{\ga}{\gamma}
\newcommand{\Ga}{\Gamma}
\newcommand{\kapsold}{\operatorname{Sol_\kappa^D}}
\newcommand{\la}{\lambda}
\newcommand{\lra}{\longrightarrow}
\newcommand{\ol}{\overline}
\newcommand{\Om}{\Omega}
\newcommand{\ra}{\rightarrow}
\newcommand{\restr}{\mbox{\Large \(|\)\normalsize}}
\newcommand{\Rm}{\operatorname{Rm}}
\newcommand{\SO}{\operatorname{SO}}
\newcommand{\sphere}{Sphere}
\newcommand{\tr}{\operatorname{tr}}
\renewcommand{\th}{\theta}
\def\XXint#1#2#3{{\setbox0=\hbox{$#1{#2#3}{\int}$}
     \vcenter{\hbox{$#2#3$}}\kern-.5\wd0}}
\begin{document}

\begin{abstract}

We introduce singular Ricci flows, which are Ricci flow spacetimes 
subject to certain asymptotic conditions.  
These provide a solution to the long-standing problem of finding a 
good notion of Ricci flow through singularities, in the three
dimensional case.

We prove that Ricci flow with surgery, starting from a fixed
initial condition, subconverges to a singular Ricci flow as 
the surgery parameter tends to zero.  We establish a number of
geometric and analytical properties of singular Ricci flows.

\end{abstract}

\title{Singular Ricci flows I}
\author{Bruce Kleiner}
\address{Courant Institute of Mathematical Sciences\\
251 Mercer St. \\
New York, NY  10012}
\email{bkleiner@cims.nyu.edu}

\author{John Lott}
\address{Department of Mathematics\\
University of California at Berkeley \\
Berkeley, CA  94720}
\email{lott@berkeley.edu}
\thanks{Research supported by NSF grants DMS-1105656,
DMS-1207654 and DMS-1405899, and a Simons Fellowship}
\date{November 16, 2015}
\maketitle

\tableofcontents

\section{Introduction}

\subsection*{Overview}
It has been a long-standing problem in geometric analysis to find a good
notion of a Ricci flow 
through singularities
\cite{Feldman-Ilmanen-Knopf_shrinking,Perelman_entropy}.
The motivation comes from the fact that 
a Ricci flow with a smooth initial condition can develop singularities without
blowing up everywhere; hence one would like to continue the flow beyond
the singular time.
From the broader perspective 
of the analysis of PDE's, this is just one instance of
the widespread phenomenon of the breakdown of classical solutions, 
a phenomenon that is central to geometric PDE's, and which
is often handled by 
using generalized solutions.
For example, for mean curvature flow of hypersurfaces in 
$\R^n$, there are notions of generalized
solutions 
\cite{Brakke_motion,Chen-Giga-Goto_uniqueness,Evans-Spruck_level_set}
which became the foundation for studying existence, uniqueness,
partial regularity, compactness, and other structural properties of solutions
\cite{Brakke_motion,huisken_monotonicity,evans_spruck_IV,Ilmanen_elliptic_regularization,white_nature,white_size}.
Other geometric PDE's, such as
minimal surfaces \cite{de_giorgi_misura,reifenberg_varying,Federer-Fleming_normal_integral,reifenberg_analyticity,almgren_some,simons_minimal_varieties,federer_dimension_reducing,allard_first_variation,simon_cylindrical}, 
harmonic maps \cite{morrey_multiple_integrals,schoen_uhlenbeck_regularity,schoen_uhlenbeck_minimizing,giaquinta_giusti_quadratic,helein_symmetries,bethuel_stationary_harmonic,simon_rectifiability,lin_blow_up},  and harmonic map heat flows \cite{struwe_higher_dimensions,chen_struwe_heat_flow_harmonic,lin_wang_defect_measure},
have undergone
a similar development. 

Thus far, there has been little progress in implementing a similar program for
Ricci flow.
In the approaches used for the equations above, one first defines 
``rough'' objects ---
e.g. integral currents, sets of finite perimeter,
varifolds, Sobolev mappings --- and then, using an appropriate approximation
scheme, one produces a generalized solution within the class of rough objects,
by appealing to a suitable weak compactness result.  
Certain features, such as the existence of an ambient space, or the
fact that one has a scalar equation,   help enormously.
The absence of these features
in the case of Ricci flow creates
serious technical obstacles to using such an approach with the Ricci 
flow.

In this paper we solve the problem of flowing through singularities in
three-dimensional Ricci flow. We do this 
using a novel approach that is quite different in spirit from 
earlier work, and which may be adaptable to other geometric PDE's.
We introduce generalized solutions that 
we call singular Ricci flows.
They are smooth Ricci flow spacetimes that are possibly incomplete,
but are subject to certain asymptotic
conditions.  We show that singular Ricci flows 
have a number of good properties.
In particular, we prove a compactness result for families of spacetimes, 
and using this we obtain
the following existence theorem:

\begin{theorem}
\label{thm_existence_intro}
For every compact Riemannian $3$-manifold $M$, there is a singular Ricci flow 
with initial condition $M$.
\end{theorem}
\noindent
In addition, we establish a number of structural results.  
These results, and further results that will appear elsewhere,
strongly indicate that singular Ricci flows
provide a natural analytical
framework for 
three-dimensional Ricci
flows with singularities.

The notion of singular Ricci flows  derives partly
from the 
spectacular work of
Hamilton \cite{Hamilton_four_manifolds} 
and Perelman \cite{Perelman_surgery} on Ricci flow with surgery.    
After constructing Ricci flow with surgery, Perelman was naturally 
lead back to the problem of flowing through singularities.
In \cite[Sec. 13.2]{Perelman_entropy},
Perelman wrote, ``It is likely that by 
passing to the limit in this construction one would get a canonically defined
Ricci flow through singularities, but at the moment I don't have a proof of 
that.''  
In proving Theorem \ref{thm_existence_intro} we
partially confirm  Perelman's expectation, by showing that Ricci flow with 
surgery (for a fixed initial condition) subconverges to a singular
Ricci flow as the surgery parameter goes to zero.\footnote{\em{Note added in proof.}  Since this paper was submitted there have been several developments building on the results proven here.  Perelman's convergence conjecture  (\cite[Sec. 13.2]{Perelman_entropy}, \cite[p.1]{Perelman_surgery}) and the uniqueness question (Question~\ref{ques_uniqueness}) were both addressed in \cite{bamler_kleiner_uniqueness_stability}.   These results, together with a number of results from this paper (the existence of singular Ricci flows (Corollary~\ref{cor_existence_smooth_initial_condition}), behavior of volume (Theorem~\ref{thm_main_convergence}(4),Theorem~\ref{vol1}), and finiteness of bad worldlines  (Theorem~\ref{finitenessthm}) were used to prove the Generalized Smale Conjecture concerning the homotopy type of diffeomorphism groups of $3$-manifolds \cite{bamler_kleiner_gsc}.}

Several aspects of the work in this paper may be 
applicable in other settings.
First,   the strategy that we use here
--- defining generalized solutions using smooth 
(possibly incomplete) spacetimes
satisfying  asymptotic geometric bounds --- may be adaptable to other 
geometric PDE's. 
In addition, there are technical ingredients in our work
which may have
parallels in other situations,
such as a compactness theorem valid for possibly incomplete Riemannian manifolds,
and a dynamical analysis of ancient solutions.
A novel feature of the setup is that the
spacetimes are not assumed to be 
simply
concatenations of product spacetime regions. 
To our knowledge this is the first 
appearance of such spacetimes in Ricci flow, although of course
they are widely used 
in general relativity and mean curvature flow.

We mention that there is related
work in the literature on Ricci flow through singularities in certain
cases, such as in the K\"ahler case or under the assumption of
rotational symmetry; see the
end of the introduction for a discussion.

\subsection*{Convergence of Ricci flows with surgery}
Before formulating our first result, we briefly recall 
Perelman's version of
Ricci flow with surgery (which followed earlier work of Hamilton).
 (To make this paper accessible to a wider audience, in 
Appendix \ref{RFsurgery} we collect needed background results
about Ricci flow and Ricci flow with surgery.)

Ricci flow with surgery evolves a Riemannian $3$-manifold by 
alternating between two processes:
flowing by ordinary Ricci flow until the metric  goes
singular, and modifying the resulting limit by surgery, so as to produce a  
compact smooth Riemannian manifold that serves as a new initial condition for 
Ricci flow.  The construction is regulated by  
a global parameter $\epsilon > 0$ as well as 
decreasing
parameter functions $r, \delta,\kappa : [0, \infty) \ra
(0, \infty)$, which play the following roles:
\begin{itemize}
\item   
The scale at which surgery occurs is bounded above in terms of
$\delta$. In particular, 
surgery at time $t$ is performed by cutting along necks whose scale 
tends to zero as $\delta(t)$ goes to zero.
\item The function $r$ defines 
the canonical neighborhood scale: at time $t$, near
any point with  scalar curvature at least $r(t)^{-2}$, 
the flow is (modulo parabolic 
rescaling)  approximated
to within error $\eps$ by either 
a $\kappa$-solution (see Appendix \ref{appkappa})
or  a standard postsurgery model.
\end{itemize}

In Ricci flow with surgery,
the initial conditions are assumed to be {\em normalized}, meaning that
at each point $m$ in the initial time slice,
the eigenvalues of the curvature operator $\Rm(m)$ are bounded by one in
absolute value, and the volume of the unit ball $B(m,1)$ is at least
half the volume of the Euclidean unit ball.  
By rescaling, any
compact Riemannian manifold can be normalized.

Perelman showed that under certain constraints on the parameters, one
can implement Ricci flow with surgery for any
normalized initial condition.  His constraints allow one to make
$\de$ as small as one wants. Hence one can consider the behavior of 
Ricci flow with surgery, for a fixed initial condition, as $\de$ goes 
to zero. 

In order to formulate our convergence theorem, we will use a spacetime framework.
Unlike the case of general relativity, where one has a Lorentzian manifold,
in our setting there is a natural foliation of spacetime by time slices,
which carry Riemannian metrics.  This is formalized in the following
definition.

\begin{definition} \label{def_ricci_flow_spacetime}
A {\em Ricci flow spacetime} is a tuple $(\M,\t,\D_{\t},g)$ where:
\begin{itemize}
\item $\M$ is a smooth manifold-with-boundary.
\item 
$\t$ is the {\em time function} -- a submersion
$\t:\M\ra I$ where $I\subset\R$ is  a time interval; 
we will usually take $I=[0,\infty)$.
\item 
The boundary of
$\M$, if it is nonempty, 
corresponds to the endpoint(s) of the 
time interval: $\D\M=\t^{-1}(\D I)$.
\item $\D_{\t}$ is the {\em time vector field}, which satisfies $\D_{\t}\t\equiv 1$.
\item  $g$ is a smooth inner product on the spatial subbundle
$\ker(d\t)\subset T\M$, and $g$ 
defines a Ricci flow: ${\mathcal L}_{\D_{\t}} g = - 2 \ric(g)$.
\end{itemize}
For $0\leq a< b$, we write $\M_a=\t^{-1}(a)$, $\M_{[a,b]}=\t^{-1}([a,b])$ and 
$\M_{\leq a}=\t^{-1}([0,a])$.   
Henceforth, unless otherwise specified, when we refer to geometric quantities such as 
curvature, we will implicitly be referring to the metric on the time slices.
\end{definition}

\medskip

Note that near any point $m\in\M$, a Ricci flow spacetime 
$(\M,\t,\D_{\t},g)$ reduces to a Ricci flow
in the usual sense, because the time function $\t$ will form 
part of a chart $(x,\t)$ near $m$ 
for which the coordinate vector field $\frac{\D}{\D t}$
coincides with $\D_{\t}$; then one has $\frac{\D g}{\D t}=-2\ric(g)$.  Also, there is a canonical Ricci flow spacetime associated with any Ricci flow with surgery (see Subsection~\ref{RFsurgery}); we will often conflate this Ricci flow spacetime with the Ricci flow with surgery. 

Our first result  partially answers the question of
Perelman alluded to above, by formalizing the notion of convergence 
and obtaining subsequential limits:

\begin{theorem}
\label{thm_main_convergence}
Let $\{\M^j\}_{j=1}^\infty$ be a sequence of
three-dimensional Ricci flows with surgery
(in the sense of Perelman)
where:
\begin{itemize}
\item  The initial conditions 
 $\{\M^j_0\}$ are compact 
normalized Riemannian manifolds that
lie in a compact family
in the smooth topology, and
\item If $\de_j:[0,\infty)\ra(0,\infty)$ denotes the Perelman  
surgery parameter for
$\M^j$ then
$\lim_{j \rightarrow \infty} \delta_j(0) = 0$.
\end{itemize}
Then after passing to a subsequence,
there is a 
Ricci flow spacetime $(\M^\infty,\t_\infty,\D_{t_\infty},g_\infty)$,
and a sequence 
of diffeomorphisms
\begin{equation}
\{\M^j\supset U_j \stackrel{\Phi^j}{\ra} V_j\subset\M^\infty\}
\end{equation}
with the following properties:
\begin{enumerate}
\item $U_j\subset\M^j$ and $V_j\subset \M^\infty$ are open subsets. 
\item 
Let $R_j$ and $R_\infty$
denote the scalar curvature on $\M^j$ and $\M^\infty$, respectively.
Given $\ol{t} < \infty$ and $\ol{R} < \infty$, if $j$ is sufficiently
large then
\begin{align}
U_j &\supset \{ m_j \in \M^j \: : \: \t_j(m_j) \le \bar t,
\: R_j(m_j) \le \ol{R} \}, \\
V_j & \supset 
 \{ m_\infty \in \M^\infty \: : \: \t_\infty(m_\infty) \le \bar t,
\: R_\infty(m_\infty) \le \ol{R} \}. \notag
\end{align}
\item $\Phi^j$ is time preserving, and the sequences
$\{\Phi^j_* \partial_{t_j}\}_{j=1}^\infty$ and
$\{\Phi^j_*g_j\}_{j=1}^\infty$ converge smoothly on compact subsets of 
$\M^\infty$ to $\partial_{t_\infty}$ and $g_\infty$, respectively.
\item $\Phi^j$ is asymptotically volume preserving: Let
$\V_j, \V_\infty:[0,\infty)\ra [0,\infty)$ denote the respective
volume functions
$\V_j(t)= Vol(\M^j_t)$ and $\V_\infty(t)= Vol(\M^\infty_t)$.
Then 
 $\V_\infty:[0,\infty)\ra [0,\infty)$ is continuous
 and $\lim_{j \ra \infty} \V_j = \V_\infty$, with uniform
convergence on compact subsets of $[0,\infty)$.  
\end{enumerate}
Furthermore:
\begin{enumerate}
\renewcommand{\theenumi}{\alph{enumi}}
\item The scalar curvature function 
$R_\infty : \M^\infty_{\leq T}\ra \R$ is 
bounded below and proper
for all $T\geq 0$.
\item $\M^\infty$ satisfies the Hamilton-Ivey pinching condition
of (\ref{hi}).
\item 
$\M^\infty$ is 
$\kappa$-noncollapsed below scale $\epsilon$
and satisfies the $r$-canonical neighborhood assumption,
where $\kappa$ and $r$ are the aforementioned
parameters from Ricci flow with surgery.
\end{enumerate}
\end{theorem}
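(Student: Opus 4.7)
The plan is to build $\M^\infty$ as an exhaustion by the sublevel sets $\{R_j\le \ol R\}\cap\M^j_{\le \ol t}$, on which Perelman's a priori estimates give uniformly bounded geometry, and then extract a smooth limit via Hamilton's compactness theorem for Ricci flows.

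First I would collect the uniform a priori estimates that Perelman established for Ricci flow with surgery: (i) $\kappa$-noncollapsing below scale $\eps$, (ii) the $r$-canonical neighborhood assumption, and (iii) Hamilton-Ivey pinching, all with constants that depend only on the compact family of normalized initial conditions (and not on $\de_j$). The crucial observation, which lets surgeries be ignored on low-curvature regions, is that the surgery scale $h_j(t)$ is controlled by $\de_j(t)r(t)$, so $h_j(0)\to 0$ and in fact $\sup_{t\le \ol t}h_j(t)\to 0$; since surgery is performed inside $\eps$-necks at scalar curvature $\sim h_j^{-2}$, for any fixed $\ol R$ the subset $U_j(\ol t,\ol R):=\{m\in\M^j_{\le \ol t}:R_j(m)\le \ol R\}$ is eventually disjoint from the surgery locus. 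On $U_j(\ol t,\ol R)$, Hamilton-Ivey pinching upgrades the bound on $R_j$ to a two-sided sectional curvature bound depending only on $\ol R$ and $\ol t$, while $\kappa$-noncollapsing provides a uniform injectivity radius lower bound. Thus the flows $\{\M^j\}$ have uniformly bounded geometry on $U_j(\ol t,\ol R)$ together with a definite parabolic neighborhood inside $\M^j$.

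Next I would choose basepoints $\star_j\in\M^j_0$ and pass to a subsequence so the initial metrics converge smoothly in the pointed Cheeger-Gromov sense (using that the normalized initial conditions lie in a compact family). Hamilton's compactness theorem, applied to exhausting families $U_j(\ol t_k,\ol R_k)$ with $\ol t_k,\ol R_k\to\infty$, yields, after a diagonal argument, open sets $V_{j,k}\subset\M^\infty$, diffeomorphisms $\Phi_j:U_{j,k}\to V_{j,k}$, and a smooth time-preserving limit Ricci flow spacetime $(\M^\infty,\t_\infty,\D_{t_\infty},g_\infty)$ with $\Phi_j^*g_\infty\to g_j$ in $C^\infty_{\loc}$. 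By construction $U_j$ contains every preassigned sublevel set $\{R_j\le\ol R\}\cap\M^j_{\le\ol t}$ for $j$ large, and similarly for $V_j$, giving properties (1)-(3). The main obstacle here is checking the "inclusion" statement in (2) for $\M^\infty$, i.e.\ that every point of $\M^\infty$ with $R_\infty\le\ol R$ and $\t_\infty\le\ol t$ already lies in $V_j$ for large $j$; this, together with property (a) (properness of $R_\infty$ on $\M^\infty_{\le T}$), follows from the fact that the approximating subsets $U_j(\ol t,\ol R)$ are precompact with uniformly bounded geometry, so their smooth limits are precisely the analogous sublevel sets in $\M^\infty$.

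Finally, properties (b) and (c) descend to $\M^\infty$ because the Hamilton-Ivey pinching inequality, $\kappa$-noncollapsing, and the $r$-canonical neighborhood assumption are closed conditions under smooth convergence of Ricci flows, and they hold uniformly on $\M^j$ by Perelman. For property (4), one uses that $\V_j$ is monotone nonincreasing on smooth intervals (since $R\ge -C$ by pinching normalization) and drops by an amount bounded by $C(\ol t)h_j(t)^3$ times the number of surgeries at each surgery time; Perelman's estimate bounds the total number of surgeries on $[0,\ol t]$ by a quantity depending on the initial volume and $r,\de_j$, but the volume removed per surgery tends to zero with $h_j$, so the cumulative volume loss on $[0,\ol t]$ tends to zero uniformly. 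Smooth convergence on each $U_j(\ol t,\ol R)$ combined with properness of $R_\infty$ and a scalar curvature lower bound shows that $\vol(\M^j_t\cap\{R_j\le\ol R\})\to\vol(\M^\infty_t\cap\{R_\infty\le\ol R\})$, and letting $\ol R\to\infty$ together with the surgery volume estimate gives the asserted uniform convergence $\V_j\to\V_\infty$ and continuity of $\V_\infty$.
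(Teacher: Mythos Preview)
Your overall architecture matches the paper's, but there is a genuine gap at exactly the point you flag as ``the main obstacle'' and then dismiss. Having uniformly bounded geometry on each $U_j(\ol t,\ol R)$ is \emph{not} enough to conclude that the pointed limit captures all of $U_j(\ol t,\ol R)$: Hamilton's compactness produces comparison diffeomorphisms defined on metric balls around the basepoint, so you must show that $U_j(\ol t,\ol R)$ lies in a $g_{\M^j}$-ball of radius depending only on $\ol t,\ol R$ (not on $j$). This is nontrivial because after a surgery the spacetime may split into pieces, and a low-curvature point in a piece not containing $\star_j$ need not be within any a priori spacetime distance of $\star_j$. The paper addresses this with a separate estimate (Proposition~\ref{curve}): for any $m$ with $\t(m)\le T$ and $R(m)\le R_0$ there is a time-preserving curve from $m$ back to $\M_0$ whose $g_\M$-length and scalar curvature along it are bounded by functions of $(R_0,T)$ alone. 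The proof alternates between following worldlines when $R<r(t)^{-2}$ and using the canonical neighborhood structure (approximation by $\kappa$-solutions) to step backward when $R\ge r(t)^{-2}$, checking that one never meets the surgery region because $R\le \rho(0)^{-2}$. Your sentence ``precompact with uniformly bounded geometry, so their smooth limits are precisely the analogous sublevel sets'' is exactly where this ingredient is needed and missing; without it, part~(2) for $V_j$ and the properness statement~(a) do not follow.

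Your volume argument is also garbled. It is not true that ``the cumulative volume loss on $[0,\ol t]$ tends to zero'': each surgery removes a long horn of volume much larger than $h_j^3$, and the number of surgeries can go to infinity as $\de_j\to 0$. What the paper actually shows (Proposition~\ref{volprop}) is that $\vol\{R\ge\ol R\}\le(\eps_1^T(\ol R)+\eps_2^T(\de_j(0)))\V(0)$, where the first term comes from tracing worldlines back to time zero and the second comes from the observation that the ratio of cap volume inserted to horn volume removed at each surgery is at most $\de'(\de_j(0))\to 0$. This, combined with the convergence of $\V_j^{<\ol R}$ to $\V_\infty^{<\ol R}$ from part~(3), gives the uniform convergence in~(4). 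Continuity of $\V_\infty$ is deferred in the paper to a later section (it uses that bad worldlines have measure zero).
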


\medskip

Theorem \ref{thm_main_convergence} may be compared with other convergence results 
such as Hamilton's compactness theorem 
\cite{Hamilton_compactness_property} and its variants  
\cite[Appendix E]{Kleiner-Lott_perelman_notes}, as well as 
analogous results for sequences of Riemannian 
manifolds. 
All of these results require uniform bounds on curvature in  regions
of a given size around a basepoint, which we do not have.
Instead, our approach is based on
the fact that in a three dimensional Ricci flow with surgery, the scalar 
curvature controls the local geometry.
We first prove a general pointed compactness result for sequences of 
(possibly incomplete) Riemannian manifolds whose local geometry
is governed by a control function.  
We then apply this
general compactness result in the case when the Riemannian
manifolds are the spacetimes of Ricci flows with surgery, and the
control functions are constructed from the scalar curvature
functions.  To obtain (1)-(3) of Theorem \ref{thm_main_convergence} 
we have to rule out the possibility that part of the spacetime with
controlled time and scalar curvature escapes to infinity, i.e. is not
seen in the pointed limit.
This is done by means of a new estimate on the 
spacetime geometry of a Ricci flow with surgery; see 
Proposition \ref{curve} below.

Motivated by the conclusion of Theorem \ref{thm_main_convergence}, 
we make the following definition:
\begin{definition} 
\label{def_singular_ricci_flow}
A Ricci flow spacetime
$(\M,\t,\D_{\t},g)$ 
is a {\em singular Ricci flow} if it is $4$-dimensional, 
the initial time
slice $\M_0$ is a compact normalized Riemannian manifold 
and
\begin{enumerate}
\renewcommand{\theenumi}{\alph{enumi}}
\item The scalar curvature function $R:\M_{\leq T}\ra \R$ is 
bounded below and proper
for all $T\geq 0$.
\item $\M$ satisfies the Hamilton-Ivey pinching condition
of (\ref{hi}).
\item 
For a global parameter $\epsilon > 0$ and  
decreasing
functions $\kappa, r:[0,\infty)\ra (0,\infty)$, the spacetime
$\M$ is 
$\kappa$-noncollapsed below scale $\epsilon$ in the sense of
Appendix \ref{kappa} and satisfies the $r$-canonical neighborhood assumption
in the sense of Appendix \ref{canon}.
\end{enumerate}
\end{definition} 

\medskip
Although conditions (b) and (c) 
in Definition \ref{def_singular_ricci_flow} are pointwise conditions
imposed everywhere, 
we will show elsewhere  that
$\M$ is a singular Ricci flow 
if (b) and (c) are only assumed to hold outside of some
compact subset of $\M_{\le T}$, for all $T \ge 0$. Thus (b) and (c)
can be viewed as asymptotic conditions at infinity for a Ricci flow
defined on a noncompact spacetime.
Condition (a) implies that if a spatial slice is noncompact then
the scalar curvature tends to infinity as one approaches an end;
this latter property 
compensates for the possible lack of completeness.

With this definition, Perelman's existence theorem for Ricci flow with
surgery and Theorem \ref{thm_main_convergence} immediately imply :
\begin{corollary}
\label{cor_existence_smooth_initial_condition}
If $(M,g_0)$ is a compact 
normalized
Riemannian $3$-manifold then there exists
a singular Ricci flow having initial condition $(M,g_0)$,
with parameter functions $\kappa$ and $r$ as in
Theorem \ref{thm_main_convergence}.
\end{corollary}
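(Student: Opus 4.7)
The plan is to combine Perelman's existence theorem for Ricci flow with surgery with the compactness result of Theorem \ref{thm_main_convergence}. Since $(M,g_0)$ is already assumed to be normalized, we do not need any preliminary rescaling.

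\textbf{Step 1: Produce the approximating sequence.} By Perelman's theorem, there is a threshold function $\underline{\delta}$ (depending on $\kappa$ and $r$) such that for any decreasing surgery parameter $\delta : [0,\infty) \to (0,\infty)$ with $\delta \le \underline{\delta}$, there exists a Ricci flow with surgery starting from any compact normalized Riemannian $3$-manifold, with these parameter functions $(\kappa,r,\delta)$. Fix $\kappa,r$ (and $\epsilon$) as in Theorem \ref{thm_main_convergence}, and choose a sequence $\{\delta_j\}$ of admissible surgery parameters with $\delta_j(0)\to 0$. Applying Perelman's construction to the fixed initial data $(M,g_0)$ yields a sequence of Ricci flows with surgery $\{\M^j\}$, all with the same compact normalized initial condition $(M,g_0)$.

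\textbf{Step 2: Extract the limit.} The constant sequence of initial conditions $\{(M,g_0)\}$ trivially lies in a compact family in the smooth topology, and $\delta_j(0) \to 0$ by construction, so the hypotheses of Theorem \ref{thm_main_convergence} are satisfied. Passing to a subsequence, we obtain a Ricci flow spacetime $(\M^\infty,\t_\infty,\D_{t_\infty},g_\infty)$ together with time-preserving diffeomorphisms $\Phi_j : U_j \to V_j$ as in the theorem. By construction $\M^\infty$ is a $4$-dimensional Ricci flow spacetime.

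\textbf{Step 3: Verify the definition of singular Ricci flow.} Conditions (a), (b), (c) in Definition \ref{def_singular_ricci_flow} are exactly conclusions (a), (b), (c) of Theorem \ref{thm_main_convergence}, so they hold for $\M^\infty$ with the same $\kappa$, $r$, $\epsilon$. It remains to check that the initial time slice $\M^\infty_0$ is compact and isometric to $(M,g_0)$. Because $(M,g_0)$ is normalized, the scalar curvature satisfies $R_j \le 6$ on $\M^j_0$ uniformly in $j$; hence by property (2) of Theorem \ref{thm_main_convergence}, for $j$ large we have $\M^j_0 \subset U_j$, and correspondingly $\Phi_j(\M^j_0) \subset \M^\infty_0$. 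Property (3) gives smooth convergence of $\Phi^j_* g_j$ on compact subsets; since each $(\M^j_0,g_j|_{\M^j_0})$ equals $(M,g_0)$, the limit $(\M^\infty_0,g_\infty|_{\M^\infty_0})$ is a compact Riemannian $3$-manifold isometric to $(M,g_0)$.

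There is no real obstacle here; the only point requiring care is that the normalization of the initial data is what ensures bounded curvature on $\M^j_0$, which in turn is what lets us apply the curvature-controlled inclusion in property (2) of Theorem \ref{thm_main_convergence} to conclude that the initial slice of the limit is indeed $(M,g_0)$ rather than some proper open subset of it.
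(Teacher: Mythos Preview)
Your proof is correct and follows exactly the approach the paper indicates: the paper simply states that the corollary is an immediate consequence of Perelman's existence theorem for Ricci flow with surgery together with Theorem~\ref{thm_main_convergence}. You have spelled out the details, in particular the identification of the initial slice $\M^\infty_0$ with $(M,g_0)$, which the paper leaves implicit.
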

From the PDE 
viewpoint, flow with surgery is a regularization of Ricci flow, while
singular Ricci flows may be considered to be generalized solutions to 
Ricci flow.  In this language, 
Corollary \ref{cor_existence_smooth_initial_condition}
gives the
existence of generalized solutions by means of a regularization procedure. 
One can compare this
with the existence proof for Brakke flows in 
\cite{Brakke_motion,Ilmanen_elliptic_regularization} or 
level set flows in \cite{Chen-Giga-Goto_uniqueness,Evans-Spruck_level_set}.

The existence assertion in Corollary \ref{cor_existence_smooth_initial_condition}
leads to the corresponding uniqueness question:
\begin{question}
\label{ques_uniqueness}
If  two singular Ricci flows have isometric initial conditions, are the
underlying Ricci flow spacetimes the same up to diffeomorphism?
\end{question}
An affirmative answer would confirm Perelman's expectation 
that Ricci flow with surgery should converge to a canonical flow 
through singularities, as it  would imply that if one takes a fixed initial condition
in Theorem \ref{thm_main_convergence}
 then one would have convergence without having
to pass to a subsequence.
Having such a uniqueness result, in conjunction with 
Theorem \ref{thm_main_convergence}, would
closely parallel the 
results of 
\cite{Lauer_convergence_mcf,Head_two_convex,Brendle-Huisken_surgery,Haslhofer-Kleiner_surgery}
that $2$-convex
mean curvature flow with surgery converges to level set flow
when the surgery parameters tend to zero.

\subsection*{The structure of singular Ricci flows}

The asymptotic conditions
in the definition of a singular Ricci flow have a number of implications
which we analyze in this paper.  In addition to clarifying the
structure of limits of Ricci flows with surgery as in Theorem 
\ref{thm_main_convergence}, the results indicate that singular Ricci flows 
are well behaved objects from geometric and analytical points of view.

To analyze the geometry of Ricci flow spacetimes, we use two different Riemannian
metrics.
\begin{definition} \label{spacetimemetric}
Let $(\M,\t,\D_{\t},g)$ be a Ricci flow spacetime.  The {\em spacetime
metric} on $\M$ is the  Riemannian metric
$g_{\M}=\hat g+d\t^2$, where
$\hat g$ is the extension of $g$ to a quadratic form on $T\M$
such that $\D_{\t}\in\ker(\hat g)$.
The {\em quasiparabolic metric} on $\M$ is the Riemannian metric
$g^{qp}_{\M}=(1+R^2)^\frac12\,\hat g+(1+R^2)d\t^2$. 
\end{definition}

For the remainder of the introduction, unless otherwise specified, $(\M,\t,\D_{\t},g)$
will denote a fixed singular Ricci flow.

Conditions (b) and (c) of Definition \ref{def_singular_ricci_flow} imply that 
the scalar curvature controls the local geometry of the singular Ricci flow.  This
has several implications.
\begin{itemize}
\item (High-curvature regions in
singular Ricci flows are topologically standard)
For every $t$, the superlevel set $\{R\geq r^{-2}(t)\}\cap \M_t$ is contained in a 
disjoint union of connected components whose diffeomorphism types
come from a small list of possibilities, with well-controlled local geometry.
In particular, each connected component $C$  of $\M_t$ has finitely many ends, and passing to
the metric completion $\bar C$ adds at most one point for each end
(Proposition \ref{prop_large_r_structure}).
\item
(Bounded geometry)
The spacetime metric
$g_{\M}$ has bounded geometry at the scale defined by the scalar
curvature, while the quasiparabolic metric $g_{\M}^{qp}$
is complete and has bounded
geometry  in the usual sense --- the injectivity radius is bounded 
below and all derivatives of curvature are uniformly bounded
(Lemma \ref{quasip}).
\end{itemize}

The local control on geometry also leads to a compactness property for singular Ricci flows:
\begin{itemize}
\item (Compactness) 
If one has a sequence $\{(\M^j,\t_j,\D_{\t_j},g_j)\}_{j=1}^\infty$
of singular Ricci flows with a fixed 
choice of functions in Definition \ref{def_singular_ricci_flow}, 
and the initial metrics $\{(\M^j_0, g_j(0))\}_{j=1}^\infty$
form a precompact set in the smooth topology,
then a subsequence converges in the sense of
Theorem \ref{thm_main_convergence}
(Proposition \ref{rfcompactness}).
\end{itemize}

The proof of the compactness result is similar to the proof of 
Theorem \ref{thm_main_convergence}. 
We also have global results concerning
the scalar curvature and volume.

\begin{itemize}
\item (Scalar curvature and volume control)
For any $T<\infty$, the scalar curvature is integrable on $\M_{\leq T}$.
The volume function $\V(t)=\vol(\M_t)$ is 
absolutely continuous and has a locally
bounded upper right derivative.
The usual formula holds for volume evolution:
\begin{equation}
\label{eqn_volume_formula}
\V(t_1)-\V(t_0)=-\int_{\M_{[t_0,t_1]}}R\;\dvol_{g_{\M}}
\end{equation}
for all $0\leq t_0\leq t_1 < \infty$
(Corollary \ref{cor_volume_continuous}).
\end{itemize}

Elsewhere we will discuss the structure of the completion of the
spacetime, and will also show:
\begin{itemize}
\item (Refined scalar curvature and volume estimates)
For all $p \in (0,1)$ and all $t$,
the scalar curvature is $L^p$ on $\M_t$.
The volume $\V(t)$ is 
locally  $\alpha$-H\"older in $t$ for some exponent $\alpha \in (0,1)$.
\end{itemize}

To describe the next results, we introduce the following definitions.
\begin{definition}
\label{def_worldline}
A path $\ga:I\ra\M$ is {\em time-preserving} if $\t(\ga(t))=t$ for  all
$t\in I$. 
The {\em worldline} of a point
$m\in\M$ is the maximal time-preserving integral curve $\ga:I\ra \M$ of the 
time vector field $\D_{\t}$, which  passes through $m$.
\end{definition}

If $\ga:I\ra \M$ is a worldline then we may have $\sup I <\infty$.   In this case,
the scalar curvature blows up along $\ga(t)$ as $t\ra \sup I$, and the worldline 
encounters a singularity.  An example would be a shrinking round space form, or a 
neckpinch.  
A worldline may also encounter a singularity going backward
in time.

\begin{definition}
A worldline $\ga:I\ra\M$ is {\em bad} if $\inf I>0$, i.e.  if it is not
defined at $t=0$.
\end{definition}

Among our structural results, perhaps the most striking is the following:
\begin{theorem}
\label{thm_finiteness_of_bad_worldlines}
Suppose that $(\M,\t,\D_{\t},g)$ is a singular Ricci flow and $t \geq 0$.
If $C$ is a connected component of $\M_t$ 
then only finitely many points 
in $C$ have  bad worldlines.  Moreover if $\ga:I\ra\M$ is a bad worldline
then for $t\in I$ sufficiently close to $\inf I$, $\ga(t)$ lies in a 
cap region of $\M_t$.
\end{theorem}
As an illustration of the theorem, consider a
singular Ricci flow that undergoes a 
generic neck pinch at time $t_0$, so that 
the time slice $\M_{t_0}$ has two ends 
($\eps$-horns
in Perelman's language) which are instantly capped off when $t>t_0$.  
In this case the theorem asserts that only finitely many (in this case two)
 worldlines emerge from 
the singularity. See Figure \ref{fig-1}, where the bad wordlines
are indicated by dashed curves.
(The point in the figure where the two dashed curves meet is not
in the spacetime.)

\begin{figure}[h]
\begin{center}
\includegraphics[height=4in]{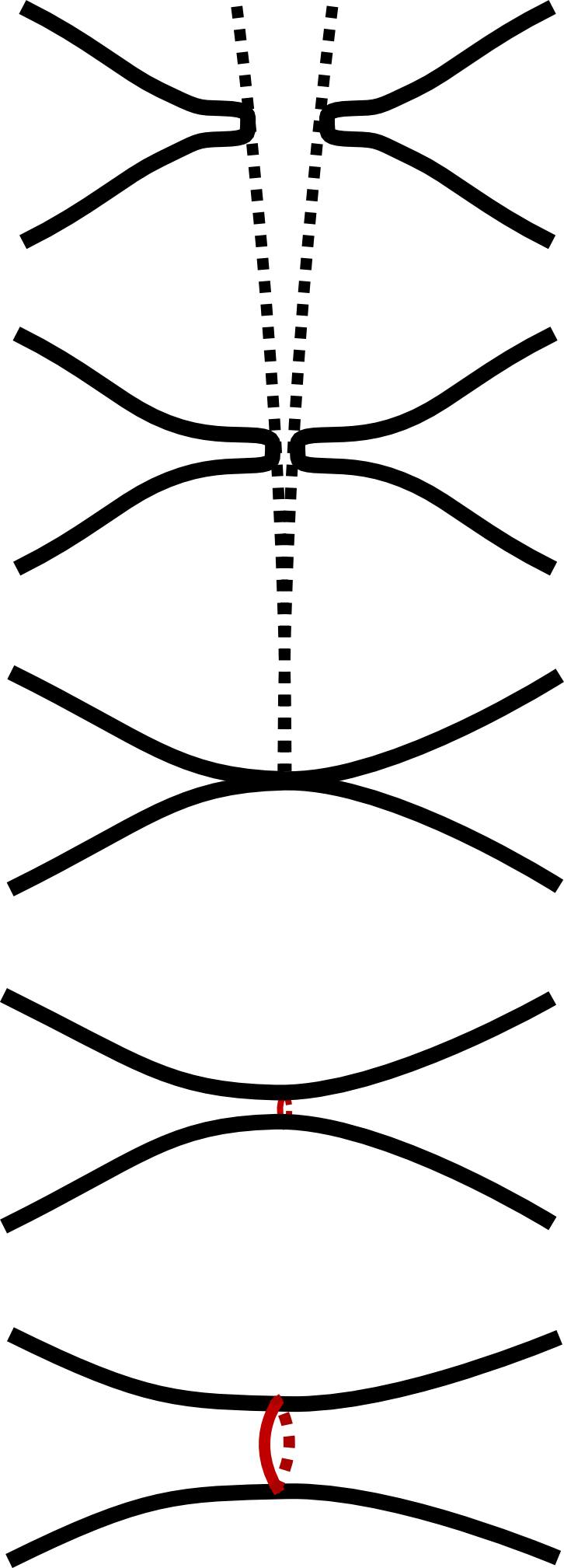}
\caption{\label{fig-1}}
\end{center}
\end{figure}

A key ingredient in the proof of Theorem \ref{thm_finiteness_of_bad_worldlines}
is a new stability property of neck regions in $\kappa$-solutions.
We recall that  $\kappa$-solutions are the class of ancient Ricci flows used
to model the high curvature part of Ricci flows with surgery. 
  We state the stability property
loosely as follows, and refer the reader to Section \ref{stab} 
for more details:
\begin{cap_stability}
Let $\M$ be a noncompact $\kappa$-solution other than the shrinking
round cylinder.
If  $\ga:I\ra \M$ is a worldline and $\M_{t_1}$
 is  sufficiently necklike at $\ga(t_1)$, 
then as
$t\ra-\infty$, the time slice $\M_t$ looks more and more necklike at $\ga(t)$.
Here the notion of necklike is scale invariant.
\end{cap_stability}
An easy but illustrative
case is the  Bryant soliton, in which worldlines other than the tip itself
move away from the tip 
(in the scale invariant sense) as
one goes backward in time.  

As mentioned above,
 Theorem \ref{thm_finiteness_of_bad_worldlines} is used in the proof of
(\ref{eqn_volume_formula}) and  the properties of volume.  

We mention some connectedness properties of singular Ricci flows.

\begin{itemize}
\item 
(Paths back to $\M_0$ avoiding high-curvature regions) Any point 
$m \in \M$ can be joined
to the initial time slice
$\M_0$ by a time-preserving curve $\ga:[0,\t(m)]\ra \M$, along
which 
$\max\{R(\ga(t))\: : \: t\in [0,\t(m)]\}$ is bounded in terms of
$R(m)$ and $\t(m)$. (Proposition \ref{curve2})
\item 
(Backward stability of components)  If $\ga_0,\ga_1:[t_0, t_1]\ra\M$
are time-preserving curves such that $\ga_0(t_1)$ and $\ga_1(t_1)$ lie in the 
same connected component of $\M_{t_1}$, 
then $\ga_0(t)$ and $\ga_1(t)$ lie in the 
same component of $\M_t$ for all $t\in [t_0,t_1]$. (Proposition 
\ref{sameconnected})
\end{itemize}

\subsection*{Related  work}
We now mention some other work that falls into the broad setting of
Ricci flow with singular structure.

A number of authors
have considered Ricci flow with low regularity initial conditions, studying
existence and/or uniqueness, and instantaneous improvement of regularity
\cite{Chen-Tian-Zhang_weak,Giesen-Topping_pseudolocality,Giesen-Topping_incomplete_surfaces,Giesen-Topping_unbounded_curvature,Koch-Lamm_rough_data,Schnuerer-Schulze-Simon_stability,Simon_deformation,Simon_almost_nonnegatively_curved,Simon_noncollapsed,Topping_pseudolocality,Topping_uniqueness_nonuniqueness}.
Ricci flow with persistent singularities has been considered in the case of
orbifold Ricci flow, and Ricci flow with conical singularities 
\cite{Chen-Tang-Zhu_positive_isotropic,Chen-Wang_bessel,Chen-Wang-long-time,Chow_entropy,Chow-Wu_negative,Hamilton_three_orbifolds,Kleiner-Lott_orbifollds,Liu-Zhang_conical,Lu_compactness,Mazzeo-Rubinstein-Sesum_conic,Phong-Song-Sturm-Wang_marked_points,Wang_smooth_approximation,Wu_two_orbifolds,Yin_conical,Yin_conical_2}

Passing to flows through singularities,
Feldman-Ilmanen-Knopf noted that in the noncompact K\"ahler setting, there
are some natural
examples of  flows through singularities which consist of a
shrinking gradient soliton
that has a conical limit at time zero, which  transmutes into an
expanding gradient soliton
\cite{Feldman-Ilmanen-Knopf_shrinking}.
Closer in spirit to this paper, Angenent-Caputo-Knopf
constructed a rotationally invariant Ricci 
flow through singularities starting with a metric on $S^{n+1}$
\cite{Angenent-Caputo-Knopf_minimally}.
They showed that the rotationally invariant neckpinches from 
\cite{Angenent-Knopf_precise},  
which have a singular limit as $t$ approaches zero from below, may be  continued
as a smooth Ricci flow on two copies of
$S^{n+1}$ for $t>0$.  The paper
\cite{Angenent-Caputo-Knopf_minimally}
also showed that the forward evolution has a unique asymptotic profile near
the singular point
in spacetime.

There has been much progress on flowing through singularities in the K\"ahler
setting.� For a flow on a
projective variety with log terminal singularities, Song-Tian 
\cite{Song-Tian_singularities} showed
that the flow can be
continued through the divisorial contractions and flips of the minimal model
program. We refer to \cite{Song-Tian_singularities} for the
precise statements.
The paper \cite{Eyssidieux-Guedj-Zeriahi_weak_solutions} has related results, but  uses a
viscosity solution approach instead of the regularization scheme in \cite{Song-Tian_singularities}.
The fact 
that the K\"ahler-Ricci flow --- like the K\"ahler-Einstein equation ---
 can be reduced to a scalar equation,
to which comparison principles may be applied, 
is an important  simplifying feature of K\"ahler-Ricci flow that is
not available in the non-K\"ahler case.  

Recently, Haslhofer-Naber
gave alternate characterizations of Ricci flow using stochastic analysis
\cite{haslhofer_naber};
they also announced an extension to
evolving metric measure spaces.

\subsection*{Concluding remarks}

The work in this paper is related to the question  of whether there is a good notion
of a generalized solution to the  Ricci flow
equation.  The results here show that singular Ricci flows give
an answer in the $3$-dimensional case, 
and in the 
$4$-dimensional case under the assumption of 
nonnegative isotropic curvature.  

By using 
different quantities to control the geometry, it may be possible
to work with  flows satisfying weaker curvature
conditions.  
The general higher dimensional case, however, 
is mysterious, and finding a good  notion 
of generalized solution is an intriguing question.  For mean curvature flows
with arbitrary smooth initial conditions, the situation  
is somewhat similar.

As mentioned before, some of the technical methods in this paper,
such as a compactness result for possibly incomplete Riemannian manifolds,
and a dynamical analysis of ancient solutions, may be useful for other
problems in geometric analysis.

\subsection*{Organization of the paper}

The remainder  of the paper is broken into two parts.  Part I, which is 
composed of Sections \ref{sec_compactness_locally_controlled}-\ref{sec_main_convergence},
is primarily concerned with the proof of the convergence result,
Theorem \ref{thm_main_convergence}.
Part II, which is composed of 
Sections \ref{sec_singular_ricci_flows_basic_properties}-\ref{sec_finiteness},
deals with results on singular Ricci flows. 
In addition, there are three appendices.

We now describe the contents section by section.

 Section \ref{sec_compactness_locally_controlled} gives a general pointed 
compactness result for Riemannian manifolds and spacetimes, whose geometry
are 
locally controlled as a function of some auxiliary function.  
Section \ref{surgerysec} develops some properties of Ricci flow with surgery; it is 
aimed at showing that the Ricci flow spacetime associated with a Ricci flow with surgery
has locally controlled geometry in the sense of Section
\ref{sec_compactness_locally_controlled}.   
Section \ref{sec_main_convergence} applies the two preceding sections to give the
proof of Theorem \ref{thm_main_convergence}.

Section \ref{sec_singular_ricci_flows_basic_properties} establishes some foundational
results about singular Ricci flows, concerning scalar
curvature and volume, as well as some results
involving the structure of the high curvature region.
Section \ref{stab} proves that neck regions in $\kappa$-solutions
have a stability property when going backward in time.
Section \ref{sec_finiteness} proves
Theorem \ref{thm_finiteness_of_bad_worldlines}, concerning
the finiteness of the number of bad worldlines, 
and gives several applications.

Appendix \ref{app_background_material} collects a variety of background material about Ricci flows and Ricci flows with surgery;
the reader may wish to quickly peruse this before proceeding to the body of the paper.  
Appendix \ref{technical} extends Proposition 
\ref{prop_cylinder_stability_1_noncompact}
to general $\kappa$-solutions.  Appendix \ref{pic}
extends the results of the paper 
to four-dimensional
Ricci flow with nonnegative isotropic curvature.

\subsection*{Notation and terminology}
We refer the reader to  Appendix \ref{sec_notation} for notation 
and terminology.
All manifolds 
that arise
will be taken to be orientable.

We thank the referee for a careful reading and helpful comments.

\section*{Part I}

\section{Compactness for spaces of locally controlled geometries}
\label{sec_compactness_locally_controlled}

In this section we prove a compactness result, Theorem \ref{theorem3.8first}, 
for sequences of controlled Riemannian manifolds.  In Subsection
\ref{spacetimecompactness} we extend the theorem to a 
compactness result for spacetimes,
meaning Riemannian manifolds equipped with time functions and
time vector fields.

\subsection{Compactness of the space of locally 
controlled Riemannian manifolds}

We will need a sequential compactness result for sequences of
Riemannian manifolds which may be incomplete, but 
whose local geometry (injectivity radius and
all derivatives of curvature) is bounded by a function of an auxiliary function
$\psi:M\ra [0,\infty)$.  A standard case of this is 
sequential compactness for sequences
$\{(M_j,g_j,\star_j)\}$ of Riemannian
$r$-balls, assuming that
the distance function $d_j(\star_j,\cdot):M_j\ra [0,r)$ is proper,
and that the geometry is bounded in terms of $d_j(\star_j,\cdot)$.  

Fix a smooth decreasing  function $\r:[0,\infty)\ra (0,1]$ and
smooth increasing functions $C_k:[0,\infty)\ra [1,\infty)$ for all $k\geq 0$.

\begin{definition} \label{definition3.1first}
Suppose that $(M, g)$ is a Riemannian manifold 
equipped with a
function $\psi:M\ra [0,\infty)$.  

Given 
$A\in [0,\infty]$,
a tensor field $\xi$ is 
{\em $(\psi,A)$-controlled} if for all $m \in \psi^{-1}([0, A))$
and $k \ge 0$, we have
\begin{equation}
\|\nabla^k\xi(m)\|\leq C_k(\psi(m)).
\end{equation}
If in addition $\psi$ is smooth then we say that
the tuple 
$(M,g,\psi)$ is {\em
$(\psi,A)$-controlled} if 
\begin{enumerate}
\item  The injectivity radius of $M$ at $m$ is at least
$\r(\psi(m))$ for all $m\in \psi^{-1}([0,A))$, and
\item The tensor field $\Rm$, and $\psi$ itself, are 
$(\psi,A)$-controlled.
\end{enumerate}
\end{definition}

Note that if $A = \infty$ then $\psi^{-1}([0, A))$ is all of $M$,
so there are quantitative bounds on the geometry at each point of $M$.
Note also that the value
of a control function may not reflect the actual bounds on the geometry, in the 
sense that the geometry may be more regular near $m$ than the value of $\psi(m)$
suggests.   This creates flexibility in choosing a control function,
which is useful in applications below.

\begin{example}
Suppose that $(M, g, \star)$ is a complete pointed Riemannian manifold. 
Put $\psi(m) = d(\star, m)$. Then $\Rm$ is $(\psi, A)$-controlled
if and only if for all $r \in (0, A)$ and $k \ge 0$, we have
$\| \nabla^k \Rm \| \le C_k(r)$ on $B(\star, r)$.
\end{example}

\begin{example}
Suppose that $\psi$ has constant value $c > 0$.  If $A \le c$ then
$(M, g, \psi)$ is vacuously $(\psi, A)$-controlled. If $A > c$ then
$(M, g, \psi)$ is $(\psi, A)$-controlled if and only if for all $m \in M$,
we have $\inj(m) \ge \r(c)$ and $\| \nabla^k \Rm(m) \| \le C_k(c)$.
\end{example}

There are compactness results in Riemannian geometry saying that
one can extract a subsequential limit from a sequence of complete pointed
Riemannian manifolds having uniform local geometry.
This last condition means that for each $r > 0$, 
one has quantitative uniform bounds on the geometry of
the $r$-ball around the basepoint; c.f.
\cite[Theorem 2.3]{Hamilton_compactness_property}. In such a case, one can
think of the distance from the basepoint as a control function.
We will give a compactness theorem for Riemannian manifolds
(possibly incomplete) equipped with more general control functions.

For notation, 
if $\Phi : U \rightarrow V$ is a diffeomorphism then we will write
$\Phi_*$ for both the pushforward 
action of $\Phi$ on contravariant tensor fields on $U$,
and the pullback action of $\Phi^{-1}$ on covariant tensor fields on $U$.

\begin{theorem}[Compactness for controlled manifolds] \label{theorem3.8first}
Let 
\begin{equation}
\{(M_j,g_j,\star_j,\psi_j)\}_{j=1}^\infty
\end{equation}
be a sequence of pointed tuples
which are $(\psi_j,A_j)$-controlled, where 
$\lim_{j\ra\infty} A_j = \infty$ 
and $\sup_j\psi_j(\star_j)
<\infty$.  Then after passing to a subsequence,
there is a pointed 
$(\psi_\infty,\infty)$-controlled
tuple
\begin{equation}
(M_\infty,g_\infty,\star_\infty,\psi_\infty)
\end{equation}
and a sequence
of diffeomorphisms 
$\{M_j\supset {U}_j \stackrel{\Phi^j}{\lra} {V}_j\subset 
M_\infty\}_{j=1}^\infty$
such that
\begin{enumerate}
\item Given
$A, r < \infty$, for all sufficiently large $j$
the open set $U_j$ contains the ball $B(\star_j,r)$ in the Riemannian manifold
$(\psi_j^{-1}([0,A)),g_j)$  and likewise
$V_j$ contains the ball $B(\star_\infty,r)$ in the Riemannian manifold
$(\psi_\infty^{-1}([0,A)),g_\infty)$.
\item 
Given $\eps >0$ and $k\geq 0$, for all sufficiently large $j$ we have 
\begin{equation}
\|\Phi^j_* g_j- g_\infty\|_{C^k(V_j)}<\eps
\end{equation}
and 
\begin{equation}
\|\Phi^j_* \psi_j-\psi_\infty\|_{C^k(V_j)}<\eps.
\end{equation}
\item
 $M_\infty$ is connected and, in particular,
every  $x\in M_\infty$ belongs to $V_j$ for 
$j$ large.
\end{enumerate}
\end{theorem}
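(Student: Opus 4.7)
This is a version of Hamilton's pointed compactness theorem in which the role of ``distance from a basepoint'' is replaced by bounds expressed through the auxiliary function $\psi$. The plan is to use the fact that $\psi$ itself is $(\psi,A)$-controlled to convert bounds on metric balls around $\star_j$ into bounds on sublevel sets of $\psi_j$, and then to invoke the standard Cheeger--Gromov--Hamilton machinery on those sublevel sets, followed by a diagonal extraction.

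\emph{Step 1 (Growth lemma for $\psi$).} Since $\|\nabla\psi_j\|\le C_1(\psi_j)$ wherever $\psi_j<A_j$, comparison with the scalar ODE $\dot y = C_1(y)$ produces a function $F:[0,\infty)^2\to[0,\infty)$, depending only on $C_1$, such that along any rectifiable path of length $\le r$ starting at a point where $\psi_j \le B$ and staying in $\psi_j^{-1}([0,A_j))$, one has $\psi_j \le F(B,r)$. Setting $B_0 := \sup_j \psi_j(\star_j)+1$, we conclude that once $A > F(B_0,r)$, the metric ball $B_{g_j}(\star_j,r)$ in the (generally incomplete) Riemannian manifold $(\psi_j^{-1}([0,A)),g_j)$ is contained in $\psi_j^{-1}([0,F(B_0,r)))$, a region in which the local geometry bounds depend only on $F(B_0,r)$.

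\emph{Step 2 (Uniform local geometry on sublevel sets).} On $\psi_j^{-1}([0,A'))$ the definition of $(\psi_j,A_j)$-controlled gives uniform bounds $\inj \ge \r(A')$ (since $\r$ is decreasing), $\|\nabla^k \Rm\| \le C_k(A')$, and $|\nabla^k \psi_j| \le C_k(A')$ for every $k\ge 0$ (since $C_k$ is increasing). Combined with Step 1, for $A' > F(B_0,r)$ the balls $B_{g_j}(\star_j,r) \subset (\psi_j^{-1}([0,A')),g_j)$ admit a uniformly finite cover by geodesic balls of radius $\r(A')/4$, on each of which the exponential map pulls back $g_j$ and $\psi_j$ to tensor fields with bounds of all orders on a fixed Euclidean ball. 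Arzel\`a--Ascoli yields smooth subsequential convergence of these local pullbacks, and the standard Cheeger--Gromov gluing procedure (uniqueness of limits on overlaps) assembles them into a pointed smooth Riemannian limit equipped with partial diffeomorphisms from large pieces of $B_{g_j}(\star_j,r)$.

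\emph{Step 3 (Diagonal extraction and the limit).} Fix sequences $r_k, A'_k \to \infty$ with $A'_k > F(B_0,r_k)$, and apply Step 2 for each $k$. A diagonal argument produces a single subsequence along which convergence holds at every level. Because smooth Riemannian isometries from connected open sets are rigid, the level-$(k{+}1)$ limit canonically contains the level-$k$ limit, so the nested sequence has a well defined ascending union $(M_\infty, g_\infty, \star_\infty, \psi_\infty)$; the diffeomorphisms $\Phi^j : U_j \to V_j$ are obtained by stitching the local approximate isometries together. The pointwise bounds on $\Rm$, $\inj$, and the derivatives of $\psi$ all pass to the limit, so the limit tuple is $(\psi_\infty,\infty)$-controlled. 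The ball inclusions in (1) are immediate from the construction, the smooth convergences in (2) come from Step 2, and connectedness in (3) holds because every level of the exhaustion is a finite union of balls containing~$\star_\infty$.

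\emph{Main obstacle.} The principal difficulty, relative to the classical Hamilton compactness theorem, is that the manifolds $M_j$ may be incomplete: one cannot exhaust by metric balls around $\star_j$ alone, because such balls need not be precompact and may approach a region where geometric control is lost. The growth lemma of Step 1 is precisely what reconciles ``metric ball of radius $r$'' with ``sublevel set $\{\psi < A'\}$'' in a quantitative way, ensuring that the balls of Step 2 lie in regions with uniform injectivity-radius and curvature-derivative bounds independent of $j$. Once this coupling between $r$ and $A'$ is in place, the remainder of the argument is a routine adaptation of the Cheeger--Gromov--Hamilton framework.
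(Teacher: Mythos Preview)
Your approach is the ``direct'' route that the paper explicitly acknowledges as feasible but ``somewhat involved'' before choosing a different method. The paper instead performs the conformal change $\widetilde g_j = \bigl(\tfrac{C_1}{\r}\circ\psi_j\bigr)^{2} g_j$, under which $|d\psi_j|_{\widetilde g_j}\le 1$; this converts your nonlinear growth estimate into a linear one, makes $\widetilde g_j$-balls precompact (and, in the limit $A_j=\infty$, the metric complete), and allows a direct appeal to Hamilton's standard compactness theorem for the $\widetilde g_j$. One then conformally changes back at the end and reads off conclusions (1)--(3).

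Your Step 1 has a real gap: the ODE $\dot y = C_1(y)$ need not have global-in-time solutions for a general smooth increasing $C_1:[0,\infty)\to[1,\infty)$ (take $C_1(y)=1+y^2$), so your function $F(B,r)$ may only be defined for $r$ below a finite blow-up time $r_*(B)$. Your exhaustion in Step~3 then cannot send $r_k\to\infty$ while maintaining $A'_k>F(B_0,r_k)$, and you fail to reach points of the limit at $g$-distance $\ge r_*(B_0)$ from the basepoint. The paper's conformal factor is chosen precisely to cure this: it makes $\psi$ $1$-Lipschitz in $\widetilde g$, so the growth is linear with no blow-up, and $\widetilde g$-balls of arbitrary radius exhaust everything. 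Your direct approach can be repaired --- exhaust by pairs $(A'_k,r_k)\to(\infty,\infty)$ with no coupling --- but then the precompactness of the $r_k$-ball in $\{\psi_j<A'_k\}$ that underlies your finite-cover claim in Step~2 is no longer supplied by Step~1 and must be argued separately (the analog of the paper's compact-closure lemma for $\widetilde g$-balls); this is exactly where the ``somewhat involved'' resides.
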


We will give the proof of Theorem \ref{theorem3.8first}
in Subsection \ref{pf3.8first}.
We first describe some general results about controlled Riemannian
manifolds.

\subsection{Some properties of controlled Riemannian manifolds}

One approach to proving Theorem \ref{theorem3.8first} would be to
imitate what one does when one has curvature and injectivity radius
bounds on 
$r$-balls, replacing the control function based on distance to the
basepoint by the control function $\psi$. While this could be done,
it would be somewhat involved.  Instead, we will perform a
conformal change on the Riemannian manifolds in order to put
ourselves in a situation where the geometry is indeed controlled by
the distance from the basepoint. We then take a subsequential limit 
of the conformally changed metrics, and at the end perform another
conformal change to get a subsequential limit of the original sequence.

Let $(M,g, \psi)$ be $(\psi, A)$-controlled.
Put 
\begin{equation} \label{3.5}
\widetilde g= \left( \frac{C_1}{\r} \circ\psi \right)^{2} g.
\end{equation}
We will only consider   $\widetilde g$  on the subset
 $\psi^{-1}([0,A))$, where it is smooth.
The next two lemmas are about $g$-balls
and $\widetilde{g}$-balls.

\begin{lemma} \label{lemma3.21}
For each finite $a \in (0, A]$,
each $\star \in \psi^{-1}([0, a))$
and each $r < \infty$, 
there is some $R = R(a,r) < \infty$ so that
the ball  
$B_g(\star,r)$ in the Riemannian manifold
$(\psi^{-1}([0,a)),g)$ 
is contained in the ball $B_{{\widetilde g}}(\star, R)$
in the Riemannian manifold $(\psi^{-1}([0,a)),{\widetilde g})$.
\end{lemma}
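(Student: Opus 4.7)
The plan is to observe that on the subset $\psi^{-1}([0,a))$ the conformal factor $(C_1/\r)\circ\psi$ relating $\widetilde g$ to $g$ is uniformly bounded above by a constant depending only on $a$. Once this is established, the $\widetilde g$-length of any admissible curve is at most that constant times its $g$-length, so the inclusion $B_g(\star,r)\subset B_{\widetilde g}(\star,R)$ follows with $R$ equal to this constant multiplied by $r$.

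Concretely, I would first note that since $C_1:[0,\infty)\to[1,\infty)$ is increasing and $\r:[0,\infty)\to(0,1]$ is decreasing, the quotient $C_1/\r:[0,\infty)\to[1,\infty)$ is continuous and non-decreasing. For any $m\in\psi^{-1}([0,a))$ we have $\psi(m)<a$, so by monotonicity
\[
\frac{C_1(\psi(m))}{\r(\psi(m))}\le \frac{C_1(a)}{\r(a)}=:K(a).
\]
Now given $m\in B_g(\star,r)$, by the definition of the intrinsic ball in $(\psi^{-1}([0,a)),g)$ there exists a smooth path $\gamma:[0,1]\to\psi^{-1}([0,a))$ from $\star$ to $m$ whose $g$-length is less than $r$. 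From the defining relation $\widetilde g = (C_1/\r)(\psi)^2\,g$, the pointwise comparison $|v|_{\widetilde g}=(C_1/\r)(\psi)\,|v|_g$ holds, and hence
\[
\length_{\widetilde g}(\gamma)=\int_0^1\frac{C_1(\psi(\gamma(s)))}{\r(\psi(\gamma(s)))}\,|\gamma'(s)|_g\,ds\;\le\;K(a)\cdot\length_g(\gamma)\;<\;K(a)\,r.
\]
Since $\gamma$ lies in $\psi^{-1}([0,a))$, this shows $d_{\widetilde g}(\star,m)<K(a)\,r$, so setting $R(a,r):=K(a)\,r$ completes the proof.

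There is essentially no obstacle here: the content of the lemma is just that restricting to $\psi^{-1}([0,a))$ with $a$ finite automatically forces $\psi$ to be bounded on the domain, which uniformly bounds the conformal factor. The analogous statement with $a=\infty$ would genuinely require more work, since then $\psi$ could grow unboundedly along $g$-geodesics of bounded length and the conformal factor would admit no global bound; this is presumably the reason the lemma is stated only for finite $a$, and is exactly the reverse inclusion that the conformal change $\widetilde g$ is designed to make tractable in the next step of the compactness argument.
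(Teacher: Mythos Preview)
Your proof is correct and follows exactly the same approach as the paper: bound the conformal factor on $\psi^{-1}([0,a))$ by $C_1(a)/\r(a)$ using the monotonicity of $C_1$ and $\r$, then conclude that any curve of $g$-length less than $r$ has $\widetilde g$-length less than $\frac{C_1(a)}{\r(a)}\,r$, giving $R = \frac{C_1(a)}{\r(a)}\,r$.
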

\begin{proof}
On $\psi^{-1}([0,a))$ we have 
$\widetilde g\leq \left( \frac{C_1(a)}{\r(a)} \right)^{2}g$.  
Therefore any path in $\psi^{-1}([0,a))$
with $g$-length at most $r$ has $\widetilde{g}$-length at most 
$\frac{C_1(a)}{\r(a)} r$,
so we may take
$R= \frac{C_1(a)}{\r(a)} r$.
\end{proof}

Let $\star$ be a basepoint in $\psi^{-1}([0, A))$.

\begin{lemma} \label{ballinside}
For all $R \in  (0, A - \psi(\star))$, the ball
$B_{\widetilde{g}}(\star, R)$ in the Riemannian manifold
$(\psi^{-1}([0, A)),\widetilde{g})$ 
is contained in $\psi^{-1}([0, \psi(\star) + R))$.
\end{lemma}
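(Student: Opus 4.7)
The plan is to show that $\psi$ is $1$-Lipschitz with respect to the rescaled metric $\widetilde{g}$, after which the statement follows immediately by integrating along a curve.

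First I would compute the gradient of $\psi$ in the new metric. Since $(M,g,\psi)$ is $(\psi,A)$-controlled, the definition forces $\psi$ itself to be $(\psi,A)$-controlled, and in particular $\|\nabla^g\psi\|_g \le C_1(\psi(m))$ at every point $m \in \psi^{-1}([0,A))$. Under the conformal change $\widetilde{g} = \left(\tfrac{C_1}{\r} \circ \psi\right)^{2} g$ from (\ref{3.5}), the dual metric rescales by the reciprocal square of the conformal factor, so
\begin{equation}
\|\nabla^{\widetilde{g}}\psi\|_{\widetilde{g}}
= \left( \frac{C_1(\psi)}{\r(\psi)} \right)^{-1} \|\nabla^g\psi\|_g
\le \frac{\r(\psi(m))\,C_1(\psi(m))}{C_1(\psi(m))}
= \r(\psi(m)) \le 1,
\end{equation}
since $\r$ takes values in $(0,1]$. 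Thus $\psi$ is $1$-Lipschitz on $(\psi^{-1}([0,A)),\widetilde{g})$.

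Next I would apply this Lipschitz bound along a curve. Given any $m \in B_{\widetilde{g}}(\star,R)$, by definition of the ball in the Riemannian manifold $(\psi^{-1}([0,A)),\widetilde{g})$ there is a piecewise smooth path $\ga:[0,1]\to \psi^{-1}([0,A))$ from $\star$ to $m$ whose $\widetilde{g}$-length is less than $R$. The $1$-Lipschitz property of $\psi$ along $\ga$ then gives
\begin{equation}
\psi(m) - \psi(\star) \le \int_0^1 \|\nabla^{\widetilde{g}}\psi\|_{\widetilde{g}}(\ga(s))\,\|\ga'(s)\|_{\widetilde{g}}\,ds \le \length_{\widetilde{g}}(\ga) < R,
\end{equation}
so $\psi(m) < \psi(\star) + R$. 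Since the hypothesis $R < A - \psi(\star)$ ensures $\psi(\star) + R < A$, the point $m$ lies in $\psi^{-1}([0,\psi(\star)+R))$, as claimed.

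There is no real obstacle here: the only subtlety is making sure the $1$-Lipschitz inequality is valid along the entire curve, which holds because the curve, by definition of the ball in $(\psi^{-1}([0,A)),\widetilde{g})$, is required to remain in $\psi^{-1}([0,A))$ where the control hypothesis applies. The choice of conformal factor $C_1/\r$ in (\ref{3.5}) is in fact tailored precisely to make $\psi$ into a $1$-Lipschitz function, which is what makes this lemma (and the distance comparison of Lemma \ref{lemma3.21}) work cleanly.
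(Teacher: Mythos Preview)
Your proof is correct and follows essentially the same approach as the paper: both arguments use the conformal rescaling together with the control bound $\|\nabla^g\psi\|_g \le C_1(\psi)$ to show that $|d\psi|_{\widetilde{g}} \le \r(\psi) \le 1$, and then integrate along a curve of $\widetilde g$-length less than $R$ to conclude $\psi(m) - \psi(\star) < R$. The only difference is organizational: you isolate the $1$-Lipschitz statement first, while the paper carries out the same computation inline along a unit-speed curve.
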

\begin{proof}
Given $m \in B_{\widetilde{g}}(\star, R)$, let 
$\gamma : [0,L] \ra \psi^{-1}([0, A))$ 
be a smooth
path from $\star$ to $m$ with unit $\widetilde{g}$-speed 
and $\widetilde{g}$-length $L \in (0, R)$. 
Then
\begin{align}
\psi(m) - \psi(\star) = & \int_0^L \frac{d}{dt} \psi(\gamma(t)) \: dt
\le \int_0^L \left| d \psi \right|_{\widetilde{g}}(\gamma(t)) 
\: dt \\
= & \int_0^L \frac{\r}{C_1}(\psi(\gamma(t))) \cdot
\left| d \psi \right|_{g} (\gamma(t))
 \: dt \notag \\
\le & \int_0^L \r(\psi(\gamma(t)))
 \: dt \le \int_0^L 1 \: dt 
= L. \notag 
\end{align}
The lemma follows.
\end{proof}

We now look at completeness properties of $\widetilde{g}$-balls.

\begin{lemma} \label{compactclosure}
For all $R \in (0, A - \psi(\star))$, the ball
$B_{\widetilde{g}}(\star, R)$ in the Riemannian manifold
$\left( \psi^{-1}([0,A)),
{\widetilde{g}} \right)$ has compact closure in
$\psi^{-1}([0,A))$.
\end{lemma}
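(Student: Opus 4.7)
The plan is to use the conformally rescaled metric $\widetilde g = (C_1/\r \circ \psi)^2 g$, which converts the $(\psi,A)$-control hypotheses into uniformly bounded geometry on $\psi$-sublevel sets, and then to deduce compactness via a local Hopf--Rinow / exponential map argument. Fix $a \in (\psi(\star)+R, A)$ and set $\Omega = \psi^{-1}([0,a))$. By Lemma \ref{ballinside}, $B_{\widetilde g}(\star, R) \subset \psi^{-1}([0,\psi(\star)+R)) \subset \Omega$. Since $\r$ is decreasing and each $C_k$ is increasing, the conformal factor $u = C_1/\r \circ \psi$ is uniformly bounded above and below on $\Omega$, so $\widetilde g$ is uniformly comparable to $g$ there. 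Combined with the $(\psi,A)$-control of $\Rm(g)$ and of $\psi$ and the usual conformal change formulas, this yields uniform bounds on all $\widetilde g$-covariant derivatives of the $\widetilde g$-curvature throughout $\Omega$, as well as a uniform positive lower bound $\iota_0>0$ on the $\widetilde g$-injectivity radius at each point of $\Omega$. The computation in the proof of Lemma \ref{ballinside} also yields $|d\psi|_{\widetilde g} \leq \r \circ \psi \leq 1$ on $\Omega$.

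Next, pick $R'\in(R, a-\psi(\star))$ and consider the $\widetilde g$-exponential map $\exp^{\widetilde g}_\star$. The first key claim is that $\exp^{\widetilde g}_\star$ is defined on the closed ball $\overline{B(0,R')} \subset T_\star M$, with image contained in $\Omega$. Given $v\in\overline{B(0,R')}$, set $\gamma_v(t) = \exp^{\widetilde g}_\star(tv)$. Wherever $\gamma_v$ is defined, the bound $|d\psi|_{\widetilde g}\leq 1$ yields
\begin{equation*}
\psi(\gamma_v(t)) \leq \psi(\star)+t|v|_{\widetilde g} \leq \psi(\star)+R' < a,
\end{equation*}
so $\gamma_v$ stays in $\Omega$. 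The injectivity radius lower bound $\iota_0$ on $\Omega$ means that at any $m\in\Omega$ and any tangent vector of $\widetilde g$-speed $\sigma$, the corresponding $\widetilde g$-geodesic extends for parameter time at least $\iota_0/\sigma$ past $m$. Applied iteratively to $\gamma_v$, whose $\widetilde g$-speed is $|v|_{\widetilde g} \leq R'$, finitely many extensions cover $[0,1]$, and the $\psi$-bound keeps all intermediate points in $\Omega$. Hence $\gamma_v$ is defined on $[0,1]$, and continuity of $\exp^{\widetilde g}_\star$ together with compactness of $\overline{B(0,R')}$ makes $\exp^{\widetilde g}_\star(\overline{B(0,R')})$ a compact subset of $\Omega$.

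Finally, a standard local Hopf--Rinow argument (using that $\exp^{\widetilde g}_\star$ is defined on a closed ball of radius $R' > R$) shows that every $m$ with $d_{\widetilde g}(\star,m) < R$ has the form $\exp^{\widetilde g}_\star(v)$ for some $v$ with $|v|_{\widetilde g} = d_{\widetilde g}(\star,m) < R$. Thus $B_{\widetilde g}(\star,R) \subset \exp^{\widetilde g}_\star(\overline{B(0,R')})$, and its closure in $\psi^{-1}([0,A))$ sits inside the same compact set, hence is compact. The main obstacle is the iterated extension step: everything hinges on the a priori confinement $\psi \circ \gamma_v \leq \psi(\star)+t|v|_{\widetilde g} < a$, which traps the geodesic in the good region $\Omega$ where the uniform injectivity radius lower bound powers the ODE continuation and prevents the geodesic from escaping at a finite parameter.
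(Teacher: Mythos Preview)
Your overall structure matches the paper's: both reduce to showing that $\widetilde g$-geodesics from $\star$ of $\widetilde g$-length $<R'$ extend to their closed parameter interval, and then invoke a Hopf--Rinow-type statement (the paper cites \cite[Chapter~1, Theorem~2.4]{Ballmann (1995)}; you use the exponential-map formulation). The gap lies in the sentence ``as well as a uniform positive lower bound $\iota_0>0$ on the $\widetilde g$-injectivity radius at each point of $\Omega$''. Curvature bounds and conformal change formulas do \emph{not} yield injectivity radius bounds on an incomplete manifold: the standard tool (Cheeger--Gromov--Taylor) requires a ball of definite radius around the point to have compact closure, which is exactly what the lemma is asserting. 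Indeed, in the paper the $\widetilde g$-injectivity radius bound is Lemma~\ref{morebounds}(b), proved \emph{after} Lemma~\ref{compactclosure} and using it. So this is not a minor omission; it is the substantive content of the lemma.

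What actually powers the geodesic extension is the $g$-injectivity radius bound coming from the $(\psi,A)$-control, and this is what the paper uses directly. Since $\gamma_v$ stays in $\psi^{-1}([0,\psi(\star)+R'))$, where $g$ and $\widetilde g$ are biLipschitz, the curve has finite $g$-arc length; if $\gamma_v$ failed to extend at some parameter time then the points along it would form a $g$-Cauchy sequence. The uniform lower bound $\inj_g\ge\r(\psi(\star)+R')$ at each such point traps the tail of this sequence in a single $g$-normal coordinate chart, forcing convergence to a limit $p\in M$ with $\psi(p)\le\psi(\star)+R'<A$, and the geodesic then extends past $p$. With this in hand your iterated-extension step becomes redundant, and the local Hopf--Rinow step finishes as you wrote.
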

\begin{proof} 
Choose $R^\prime \in (R, A - \psi(\star))$. 
From \cite[Chapter 1, Theorem 2.4]{Ballmann_lectures}, it suffices to
show that any $\widetilde{g}$-unit speed geodesic $\gamma : [0, L) \ra
\psi^{-1}([0, A))$ with $\gamma(0) = \star$, having 
$\widetilde{g}$-length $L \in (0,R^\prime)$, can be
extended to $[0, L]$. From Lemma \ref{ballinside},
$\gamma([0, L)) \subset \psi^{-1}([0, \psi(\star) + R^\prime))$.
Hence the $g$-injectivity radius along $\gamma([0, L))$ is bounded
below by $\r(\psi(\star) + R^\prime))$. 
For large $K$,
the points $\{\gamma(L - \frac{1}{k})\}_{k = K}^\infty$
form a Cauchy sequence in
  $(\psi^{-1}([0, A)), d_{\widetilde{g}})$.
As $d_{\widetilde{g}}$ and $d_g$ are biLipschitz on
$\psi^{-1}([0, \psi(\star) + R^\prime))$, the sequence is also Cauchy in
$\left( \psi^{-1}([0, A)), d_g \right)$.
From the uniform positive lower bound on the $g$-injectivity radius
at $\gamma(L - \frac{1}{k})$,
there is a limit in $\psi^{-1}([0, A))$. The lemma follows.
\end{proof}

\begin{corollary} \label{lemma3.4}
If $A = \infty$ then 
$(M,\widetilde g)$
is complete.
\end{corollary}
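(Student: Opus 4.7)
The plan is to obtain this immediately from Lemma \ref{compactclosure} together with the Hopf--Rinow theorem, so there is really no obstacle to overcome. First I would fix any basepoint $\star \in M$; since $A = \infty$, one has $A - \psi(\star) = \infty$, so the hypothesis $R \in (0, A - \psi(\star))$ in Lemma \ref{compactclosure} is satisfied for every $R > 0$. Applying that lemma gives that for every $R > 0$ the ball $B_{\widetilde g}(\star, R)$ has compact closure in $\psi^{-1}([0,\infty)) = M$.

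Next I would promote this to metric completeness of $(M, \widetilde g)$. One way is to note that every Cauchy sequence in $(M, d_{\widetilde g})$ is bounded, hence contained in some $B_{\widetilde g}(\star, R)$, whose closure is compact; extracting a convergent subsequence and using the Cauchy property then yields a limit in $M$. Equivalently, bounded closed sets are compact, which by Hopf--Rinow is equivalent to geodesic completeness of $(M, \widetilde g)$.

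The main step where the $(\psi, A)$-control hypothesis does the work is already encoded in Lemma \ref{compactclosure}; the conformal factor $(C_1/\r)(\psi)$ is large precisely where $\psi$ is large, so walking an arc of bounded $\widetilde g$-length out of a given sublevel set of $\psi$ forces $\psi$ to grow only by a controlled amount (this was the content of Lemma \ref{ballinside}), and the injectivity radius bound $\r(\psi)$ then prevents a geodesic from disappearing into infinity in finite $\widetilde g$-time. Once $A = \infty$, these local mechanisms cover the whole manifold, and completeness follows.
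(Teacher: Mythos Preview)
Your proof is correct and follows essentially the same route as the paper: the paper's proof is the single sentence ``This follows from Lemma \ref{compactclosure} and \cite[Chapter 1, Theorem 2.4]{Ballmann (1995)},'' where the cited result from Ballmann is precisely a Hopf--Rinow type criterion (compact closure of all balls around a point implies completeness). Your Cauchy-sequence argument spells out exactly this implication.
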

\begin{proof}
This follows from Lemma \ref{compactclosure} and
\cite[Chapter 1, Theorem 2.4]{Ballmann_lectures}.
\end{proof}

Finally, we give bounds on the geometry of $\widetilde{g}$-balls.

\begin{lemma} \label{morebounds}
Given $\r$, $\{C_k\}_{k=1}^\infty$ and $S < \infty$, there exist 
\begin{enumerate}
\item A smooth decreasing function $\widetilde{\r} : [0, \infty) 
\rightarrow (0,1]$, and
\item Smooth increasing functions $\widetilde{C}_k : 
[0, \infty) \ra [1, \infty)$,
$k \ge 0$,
\end{enumerate}
with the following properties.  Suppose that 
$(M, g, \psi)$ is $(\psi, A)$-controlled and
$\psi(\star) \le S$. Then
\begin{enumerate}
\renewcommand{\theenumi}{\alph{enumi}}
\item $\Rm_{\widetilde{g}}$ and $\psi$ are 
$\left( d_{\widetilde{g}}(\star, \cdot), A-S \right)$-controlled on
the Riemannian manifold $\left( \psi^{-1}([0, A)), \widetilde{g} \right)$
(in terms of the functions $\{\widetilde{C}_k\}_{k=1}^\infty$).
\item 
If $R < A - S - 1$ then
$
\inj_{\widetilde{g}} \ge \widetilde{\r}(R)
$
pointwise on $B_{\widetilde{g}}(\star, R)$.
\end{enumerate}
\end{lemma}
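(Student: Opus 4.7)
The plan is to convert $g$-data bounded in terms of $\psi$ into $\widetilde g$-data bounded in terms of $d_{\widetilde g}(\star,\cdot)$, and then deduce the injectivity radius bound from standard curvature-plus-volume arguments.

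For (a), the key observation is that Lemma \ref{ballinside} gives $\psi(m) < S + R$ for every $m \in B_{\widetilde g}(\star, R)$ whenever $R < A - S$. Hence on $B_{\widetilde g}(\star, R)$ the conformal factor $u := (C_1/\r)(\psi)$ and its reciprocal $u^{-1}$ lie in an interval $[c(R), C(R)] \subset (0, \infty)$, since $C_1/\r$ is a smooth positive function and $\psi \in [0, S+R]$. Applying the chain rule iteratively to $u = F(\psi)$ with $F := C_1/\r$, each $g$-covariant derivative $\nabla^k u$ is a universal polynomial in the $(\psi,A)$-controlled quantities $\nabla^j \psi$ ($j \le k$) and the derivatives $F^{(\ell)}(\psi)$ ($\ell \le k$); hence $|\nabla^k u|_g$ is bounded on $B_{\widetilde g}(\star, R)$ by a constant $D_k(R)$. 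I then invoke the standard conformal-change formulas for $\widetilde g = u^2 g$: the difference tensor $\widetilde\nabla - \nabla$ is a universal expression in $u^{-1}$ and $\nabla u$, and iterating gives $\widetilde\nabla^k \Rm_{\widetilde g}$ (resp.\ $\widetilde\nabla^k \psi$) as a universal polynomial in $u^{\pm 1}$, $\nabla^j u$ ($j \le k+2$), and $\nabla^j \Rm_g$ (resp.\ $\nabla^j \psi$) for $j \le k$; tensor norms in $g$ and in $\widetilde g$ differ only by fixed powers of $u$. Combining these facts with the $(\psi, A)$-control of $\Rm_g$ and $\psi$ yields $|\widetilde\nabla^k \Rm_{\widetilde g}|_{\widetilde g}, |\widetilde\nabla^k \psi|_{\widetilde g} \le \widetilde C_k(R)$ on $B_{\widetilde g}(\star, R)$, proving (a).

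For (b), fix $m \in B_{\widetilde g}(\star, R)$ with $R < A - S - 1$. The curvature bound from (a), applied on $B_{\widetilde g}(\star, R+1)$ (this is where the ``$-1$'' slack enters, so that $B_{\widetilde g}(m, 1) \subset B_{\widetilde g}(\star, R+1)$ sits in the controlled region), gives $|\Rm_{\widetilde g}| \le K(R)$ on $B_{\widetilde g}(m, 1)$. To invoke a standard injectivity radius estimate (e.g.\ Cheeger--Gromov--Taylor), it remains to produce a lower bound on $\vol_{\widetilde g}(B_{\widetilde g}(m, r_0))$ for some $r_0 = r_0(R) > 0$. Since $g$ and $\widetilde g$ are $C(R)$-biLipschitz on $B_{\widetilde g}(\star, R+1)$, the $\widetilde g$-ball $B_{\widetilde g}(m, r_0)$ contains a $g$-ball $B_g(m, r_0/C(R))$; the $g$-volume of the latter is bounded below, for $r_0$ small depending only on $R$, by the $g$-injectivity radius bound $\r(S+R+1)$ and the $g$-curvature bound $C_0(S+R+1)$ (via Bishop--Gromov comparison or a direct exponential-chart computation). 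Multiplying by $c(R)^n$ converts this into the desired $\widetilde g$-volume lower bound, and the injectivity radius lemma then delivers $\widetilde\r(R)$. The main technical obstacle is organizing the conformal identities cleanly and verifying that all constants can be chosen monotone in $R$; the injectivity radius step is then routine given (a).
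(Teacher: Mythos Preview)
Your proof is correct and follows essentially the same route as the paper's: part (a) via Lemma \ref{ballinside} plus the conformal-change formulas, and part (b) via a $g$-volume lower bound (from the $g$-injectivity radius and $g$-curvature bounds at $m$) transferred to a $\widetilde g$-volume lower bound through the biLipschitz comparison on $B_{\widetilde g}(\star,R+1)$, followed by Cheeger--Gromov--Taylor. Your write-up is somewhat more explicit about the conformal identities, but the argument is the same.
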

\begin{proof}
Conclusion (a) (along with the concomitant functions
$\{\widetilde{C}_k\}_{k=1}^\infty$)
follows from Lemma \ref{ballinside}, the assumption that
$\Rm_{g}$ and $\psi$ are 
$\left( \psi, A \right)$-controlled,
and the formula for the Riemannian curvature of
a conformally changed metric. 

To prove conclusion (b), suppose that
$R < A - S - 1$ and $m \in B_{\widetilde{g}}(\star, R)$.
Since $d_{\widetilde{g}}$ and $d_g$ are biLipschitz on the ball
$B_{\widetilde{g}}(\star, R+1)$ in the Riemannian manifold
$\left( \psi^{-1}([0, R+1+S)), \widetilde{g} \right)$, we can find
$\epsilon = \epsilon(R, S, \{C_k\}) > 0$ so that
$B_g(m, \epsilon) \subset B_{\widetilde{g}}(m, 1)
\subset B_{\widetilde{g}}(\star, R+1)$.
Since we have a $g$-curvature bound on $B_{\widetilde{g}}(\star, R+1)$,
and a lower $g$-injectivity radius bound at $m$, we obtain a
lower volume bound $\vol(B_g(m, \epsilon), g) \ge v_0 = 
v_0(R, S, \r, \{C_k\}) > 0$. Since $g$ and $\widetilde{g}$ are
relatively bounded on $B_g(m, \epsilon)$, this gives a lower volume bound
$\vol(B_{\widetilde{g}}(m, 1), \widetilde{g}) \ge v_1 = 
v_1(R, S, \r, \{C_k\}) > 0$. Using the curvature bound of part (a) and
\cite[Theorem 4.7]{Cheeger-Gromov-Taylor_finite},
we obtain a lower bound
$\inj_{\widetilde{g}}(m) \ge i_0 = i_0(R, S, \r, \{C_k\}) > 0$.
This proves the lemma.
\end{proof}

\subsection{Proof of Theorem \ref{theorem3.8first}} \label{pf3.8first}

Put $\widetilde g_j= \left( \frac{C_1}{\r} \circ\psi_j \right)^{2} g_j$.   
Consider the tuple
$(M_j,{\widetilde g}_j,\star_j,\psi_j)$. Recall that $\lim_{j \ra \infty} 
A_j = \infty$.

For the moment, we replace
the index $j$ by the index $l$.
Using Lemma \ref{compactclosure}, Lemma \ref{morebounds}
and a standard compactness theorem
\cite[Theorem 2.3]{Hamilton_compactness_property},
after passing to a subsequence we can find
a complete pointed Riemannian manifold
$(M_\infty, g_\infty, \star_\infty)$, domains $\widetilde{U}_l \subset M_l$ and
$\widetilde{V}_l \subset M_\infty$, and diffeomorphisms 
$\widetilde{\Phi}^l : \widetilde{U}_l \rightarrow \widetilde{V}_l$ so that 
\begin{enumerate}
\renewcommand{\theenumi}{\alph{enumi}}
\item $\star_l \in \widetilde{U}_l$.
\item $\star_\infty \in \widetilde{V}_l$.
\item ${\widetilde V}_l$ has compact closure.
\item For any compact set $K \subset M_\infty$, we have
$K \subset \widetilde{V}_l$ for all sufficiently large $l$.
\item Given $\eps >0$ and $k\geq 0$, we have 
\begin{equation} \label{3.20}
\|\widetilde{\Phi}^l_* \widetilde{g}_l- \widetilde{g}_\infty\|_{C^k({\widetilde V}_l, \widetilde{g}_\infty)}
<\eps
\end{equation}
for all sufficiently large $l$.
\end{enumerate}

Using Lemma \ref{morebounds} again, 
after passing to a further subsequence if necessary,
we can assume that there is a smooth
$\psi_\infty$ on $M_\infty$ so that
for all $\eps >0$ and $k\geq 0$, we have 
\begin{equation}
\|\widetilde{\Phi}^l_* \psi_l- 
\psi_\infty\|_{C^k({\widetilde V}_l,{\widetilde g}_\infty)}<\eps
\end{equation}
for all sufficiently large $l$.
Put $g_\infty = 
\left( \frac{C_1}{\r} \circ \psi_\infty \right)^{-2}
\widetilde{g}_\infty$.

We claim that
if the sequence $\{l_j\}_{j=1}^\infty$
increases rapidly enough then
the conclusions of the theorem can be made to hold with
$V_j = B_{\widetilde{g}_\infty}(\star_\infty, j) \subset M^\infty$, 
$U_j = \left( \widetilde{\Phi}^{l_j} \right)^{-1} (V_j) \subset M^{l_j}$
and $\Phi^j = \widetilde{\Phi}^{l_j} \Big|_{U_j}$.
To see this, we note that
\begin{itemize}
\item 
Given $A,r < \infty$, Lemma \ref{lemma3.21} implies that
the ball $B(\star_\infty, r)$ in the Riemannian manifold
$(\psi_\infty^{-1}([0, A)), g_\infty)$
will be contained in $V_j$ for all sufficiently large $j$.
\item 
If the sequence $\{l_j\}_{j=1}^\infty$
increases rapidly enough then for large $j$, the map 
$\Phi^j$ is arbitrarily close to an isometry
and $(\Phi^j)_* \psi_j $ is arbitrarily close to $\psi_\infty$ on
$V_j$. 
Hence given $A,r < \infty$, the ball
$B(\star_j, r)$ in the Riemannian manifold $(\psi_j^{-1}([0, A)), g_j)$
will be contained in $U_j$ for all sufficiently large $j$, so
conclusion (1) of the theorem holds.
\item The metrics $g_\infty$ and ${\widetilde g}_\infty$ are
biLipschitz on $V_j$.
Then if the sequence $\{l_j\}_{j=1}^\infty$
increases rapidly enough, conclusion (2) of the theorem can be made to hold.
\item 
Conclusion (3) of the theorem follows from the definition of $V_j$.
\end{itemize}

This proves Theorem \ref{theorem3.8first}. \qed

\subsection{Compactness of the space of locally controlled
spacetimes} \label{spacetimecompactness}

We now apply Theorem \ref{theorem3.8first} to prove a compactness result
for spacetimes.

\begin{definition} \label{definition3.1}
A {\em spacetime} is a Riemannian manifold 
$\left( \M, g_{\M} \right)$ equipped with a 
submersion
$\t: \M\ra \R$ and a smooth
vector field $\D_{\t}$ such that $d\t(\D_{\t})\equiv 1$.

Given 
$A\in [0,\infty]$ and $\psi \in C^\infty(\M)$, 
we say that the tuple 
$\left( \M,g_{\M},\t,\D_{\t},\psi \right)$ is {\em
$(\psi,A)$-controlled} if 
$\left( \M,g_{\M},\psi \right)$ is $(\psi, A)$-controlled in
the sense of Definition \ref{definition3.1first} and, in addition,
the tensor fields $\t$ and $\D_{\t}$ are
$(\psi,A)$-controlled.
\end{definition}

\begin{theorem}[Compactness for controlled spacetimes] \label{theorem3.8}
Let 
\begin{equation}
\left\{ \left( \M^j,g_{\M^j},\t_j,(\D_{\t})_j,\star_j,\psi_j \right)
\right\}_{j=1}^\infty
\end{equation}
be a sequence of pointed tuples
which are $(\psi_j,A_j)$-controlled, where 
$\lim_{j\ra\infty} A_j = \infty$ 
and $\sup_j\psi_j(\star_j)
<\infty$.  Then after passing to a subsequence,
there is a pointed 
$(\psi_\infty,\infty)$-controlled
tuple
\begin{equation}
\left( \M^\infty,g_{\M^\infty}, \t_\infty,(\D_{\t})_\infty,\star_\infty,
\psi_\infty \right)
\end{equation}
and a sequence
of diffeomorphisms
\begin{equation}
\{ \M^j\supset U_j \stackrel{\Phi^j}{\lra} V_j\subset \M^\infty\}_{j=1}^\infty
\end{equation}
of open sets
such that
\begin{enumerate}
\item Given $A < \infty$ and $r < \infty$, for all sufficiently large $j$,
the open set $U_j$ contains the ball $B(\star_j,r)$ in the Riemannian manifold
$\left( \psi_j^{-1}([0,A)),g_{\M^j} \right)$  and likewise
$V_j$ contains the ball $B(\star_\infty,r)\subset 
\left( \psi_\infty^{-1}([0,A)),g_{\M^\infty} \right)$.
\item $\Phi^j$ exactly preserves the time functions:
\begin{equation}
\t_\infty\circ\Phi^j=\t_j\;\text{for all}\;j\,.
\end{equation}
\item $\Phi^j$ asymptotically preserves the tensor fields 
$g_{\M^j}$, $(\D_{\t})_j$, and $\psi_j$:
if  $\xi_j\in \left\{ g_{\M^j},\left(\D_{\t} \right)_j,\psi_j \right\}$
and $\xi_\infty$ is the corresponding element of 
$\left\{ g_{\M^\infty},\left(\D_{\t}\right)_\infty,\psi_\infty \right\}$,
then for all $\eps >0$ and $k\geq 0$, we have 
\begin{equation}
\|\Phi^j_*\xi_j-\xi_\infty\|_{C^k(V_j)}<\eps
\end{equation}
for all sufficiently large $j$.
\item 
$\M^\infty$ is connected and, in particular, every $x\in \M^\infty$ belongs to 
$V_j$ for large $j$.
\end{enumerate}
\end{theorem}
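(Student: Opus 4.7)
The plan is to bootstrap from the Riemannian version, Theorem \ref{theorem3.8first}, and then use the flow of the time vector field to rigidify the time-preservation property.

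First I would apply Theorem \ref{theorem3.8first} to the sequence of pointed $(\psi_j,A_j)$-controlled Riemannian manifolds $\{(\M^j, g_{\M^j}, \star_j, \psi_j)\}$ to extract a subsequence, a limit $(\M^\infty, g_{\M^\infty}, \star_\infty, \psi_\infty)$, and diffeomorphisms $\widetilde{\Phi}^j : \widetilde{U}_j \to \widetilde{V}_j$ satisfying conclusions (1)--(3) of that theorem. Since $\t_j$ and $(\D_{\t})_j$ are themselves $(\psi_j, A_j)$-controlled tensor fields with uniform bounds on all covariant derivatives, the Arzel\`a--Ascoli argument at the heart of \cite[Theorem 2.3]{Hamilton (1995)} yields, after passing to a further subsequence, smooth tensor fields $\t_\infty$ and $(\D_{\t})_\infty$ on $\M^\infty$ such that $\widetilde{\Phi}^j_* \t_j \to \t_\infty$ and $\widetilde{\Phi}^j_*(\D_{\t})_j \to (\D_{\t})_\infty$ in $C^k_{\text{loc}}$ for every $k$. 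Passing to the $C^1$-limit, $d\t_\infty((\D_{\t})_\infty) \equiv 1$, so in particular $d\t_\infty$ is nowhere vanishing and $\t_\infty$ is a submersion, making $(\M^\infty, g_{\M^\infty}, \t_\infty, (\D_{\t})_\infty)$ a bona fide spacetime, $(\psi_\infty, \infty)$-controlled.

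The remaining issue is that $\widetilde{\Phi}^j$ only approximately intertwines the time functions, whereas conclusion (2) demands exact intertwining. I would correct this by flowing along $(\D_{\t})_\infty$. Define
\begin{equation}
s_j(m) = \t_j(m) - \t_\infty\bigl(\widetilde{\Phi}^j(m)\bigr),
\end{equation}
which tends to zero in $C^k_{\text{loc}}(\widetilde{U}_j)$ by the approximate-intertwining property. Let $\varphi^\infty_s$ denote the local flow of $(\D_{\t})_\infty$. On any precompact open subset $U_j \subset \widetilde{U}_j$ and for $j$ sufficiently large, $|s_j|$ is small enough that
\begin{equation}
\Phi^j(m) := \varphi^\infty_{s_j(m)}\bigl(\widetilde{\Phi}^j(m)\bigr)
\end{equation}
is defined. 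Because $(\D_{\t})_\infty \t_\infty \equiv 1$, we compute $\t_\infty(\Phi^j(m)) = \t_\infty(\widetilde{\Phi}^j(m)) + s_j(m) = \t_j(m)$, giving exact time preservation. Since $s_j \to 0$ in $C^k$ and $\varphi^\infty_0 = \id$, the map $\Phi^j$ is a $C^k$-small perturbation of $\widetilde{\Phi}^j$, hence a diffeomorphism onto its image $V_j := \Phi^j(U_j)$ for $j$ large. The tensor-field convergences $\Phi^j_* g_{\M^j} \to g_{\M^\infty}$, $\Phi^j_* (\D_{\t})_j \to (\D_{\t})_\infty$, and $\Phi^j_* \psi_j \to \psi_\infty$ in $C^k_{\text{loc}}$ follow from the corresponding statements for $\widetilde{\Phi}^j$ together with the smallness of the flow correction.

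Finally, for conclusion (1), I would exhaust $\M^\infty$ by the balls $B_{g_{\M^\infty}}(\star_\infty, r)$ inside $\psi_\infty^{-1}([0,A))$ and choose $U_j$ so that $V_j$ eventually contains each such ball; the corresponding statement on the $\M^j$ side follows because $\Phi^j$ is almost an isometry and $(\Phi^j)_* \psi_j \to \psi_\infty$, so preimages of metric balls are close to metric balls. Conclusion (4) is immediate from the construction. The main obstacle, modest in itself, is verifying that the flow correction $\varphi^\infty_{s_j(m)}$ stays inside the domain of definition of $(\D_{\t})_\infty$ uniformly on compact subsets; this is handled by the uniform $C^0$-bound $|s_j| \to 0$ together with the $(\psi_\infty,\infty)$-control of $(\D_{\t})_\infty$, which gives uniform bounds on the flow's existence time on precompact subsets of $\M^\infty$.
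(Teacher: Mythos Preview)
Your proposal is correct and follows essentially the same approach as the paper: apply the Riemannian compactness result (Theorem \ref{theorem3.8first}) to obtain preliminary comparison diffeomorphisms $\widetilde{\Phi}^j$, extract smooth limits $\t_\infty$ and $(\partial_{\t})_\infty$ via Arzel\`a--Ascoli using the control on these tensor fields, and then correct $\widetilde{\Phi}^j$ to be exactly time-preserving by postcomposing with the flow $\phi_{s_j(m)}$ of $(\partial_{\t})_\infty$, where $s_j(m) = \t_j(m) - \t_\infty(\widetilde{\Phi}^j(m))$. The paper carries out the same flow-correction formula verbatim; the only cosmetic difference is that it re-enters the proof of Theorem \ref{theorem3.8first} (working with the conformally rescaled metrics $\widetilde{g}_{\M^j}$) rather than invoking it as a black box, but the content is the same.
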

\begin{proof}
Put $\widetilde{g}_{\M^j} = \left( \frac{C_1}{\r} \circ \psi_j
\right)^{2} g_{\M^j}$.
Consider the pointed spacetime
$\left( \M^j,{\widetilde g}_{\M^j},\t_j,(\D_{\t})_j,\star_j,\psi_j \right)$.

\begin{lemma} \label{lemma3.18again}
For every $A<\infty$, the  tensor fields
 $\Rm_{\widetilde g_j}$, $\t_j$, $(\D_{t})_j$, and $\psi_j$  are all 
 $\left( d_{{\widetilde g}_{\M^j}}(\star_j,\cdot),A \right)$-controlled 
for all large $j$,
in the sense of Lemma \ref{morebounds}.
\end{lemma}
\begin{proof}
This follows as in the proof of Lemma \ref{morebounds}.
\end{proof}

We follow the proof of Theorem \ref{theorem3.8first} up to the
construction of $\widetilde{\Phi}^l : \widetilde{U}_l \rightarrow
\widetilde{V}_l$ and $\psi_\infty$. Using Lemma \ref{lemma3.18again},
after passing to a further subsequence if necessary,
we can assume that there are smooth $\t_\infty$ and 
$(\partial_{\t})_\infty$
on $\M^\infty$ so that if
$\xi_l \in \{\t_l, (\partial_{\t})_l, \psi_l\}$ then
for all $\eps >0$ and $k\geq 0$, we have 
\begin{equation}
\|\widetilde{\Phi}^l_* \xi_l- 
\xi_\infty\|_{C^k({\widetilde V}_l,{\widetilde g}_{\M^\infty})}<\eps
\end{equation}
for all sufficiently large $l$.
Again, $\t_\infty : \M^\infty \rightarrow \R$ is a submersion and
$(\partial_{\t})_\infty \t_\infty = 1$. 
As before, put
$g_{\M^\infty} =
\left( \frac{C_1}{\r} \circ \psi_\infty \right)^{-2}
    {\widetilde g}_{\M^\infty}$.

Let $\{\phi_s\}$ be the
flow generated by $(\partial_{\t})_\infty$; this exists for at
least a small time interval if the starting point is in a given
compact subset of $M_\infty$. 
Put $V^\prime_j = B_{\widetilde{g}_\infty}(\star_\infty, j)$.
Then there
is some $\Delta_j > 0$ so that 
$\{\phi_s\}$ exists on $V^\prime_j$
for $|s| < \Delta_j$. 
Given $l_j \gg 0$, put
$U_j = (\widetilde{\Phi}^{l_j})^{-1}(V^\prime_j)$. 
Assuming that $l_j$ is
large enough, we can define  
$\Phi^j : U_j \rightarrow \M^\infty$ by
\begin{equation}
\Phi^j(m) = \phi_{\t_j(m) - \t_\infty(\widetilde{\Phi}^{l_j}(m))} 
(\widetilde{\Phi}^{l_j}(m)). 
\end{equation}
By construction, $\t_\infty(\Phi^j(m)) = \t_j(m)$, and so
conclusion (2) of the theorem holds. 
If $l_j$ is large then $\Phi^j$ will be a diffeomorphism
to its image.
Putting
$V_j = \Phi^j(U_j)$, if $l_j$ is large enough then
$V_j$ can be made arbitrarily close to $V^\prime_j$.
It follows that conclusions (1), (3) and (4) of the theorem hold.
\end{proof}

\begin{remark}
In Hamilton's compactness theorem
\cite{Hamilton_compactness_property}, the comparison map $\Phi^j$ preserves both
the time function and the time vector field. In Theorem \ref{theorem3.8}
the comparison map $\Phi^j$ preserves the time function, but 
the time vector field is only preserved asymptotically. This is
good enough for our purposes.
\end{remark}

\section{Properties of Ricci flows with surgery} 
\label{surgerysec}

In this section we prove several estimates for Ricci flows with surgery.  These
will be used  in the proof of Theorem \ref{thm_main_convergence}, to show that 
the sequence of Ricci flow spacetimes has the local control required for
the application of the spacetime
compactness theorem, Theorem \ref{theorem3.8}.   

The arguments in this section require familiarity with some basic 
properties of Ricci flow with surgery. For the reader's convenience,
we have collected these
properties in Appendix A.8. 
The reader may wish to review this material before proceeding.

Let $\M$ be a Ricci flow with surgery in the sense of Perelman
\cite[Section 68]{Kleiner-Lott_perelman_notes}.
As recalled in Appendix \ref{RFsurgery},
there are parameters --- or more precisely 
positive decreasing
parameter functions --- 
associated with $\M$:
\begin{itemize}
\item The canonical neighborhood scale function $r(t)>0$.
We can assume that $r(0) < \frac{1}{10}$.
\item The noncollapsing function $\kappa(t)>0$.
\item The parameter $\de(t)>0$.
This has a dual role: it is the quality of the
surgery neck, and it enforces a scale buffer between the canonical neighborhood
scale $r$, the 
intermediate
scale $\rho$ and the surgery scale $h$.
\item The 
intermediate
scale $\rho(t)=\de(t)r(t)$,
which defines the threshold for discarding entire connected components at the
singular time.
\item The surgery scale $h(t) < \de^2(t)r(t)$.
\item 
The global parameter $\epsilon > 0$. This enters in the
definition of a canonical neighborhood. For the Ricci flow with surgery
to exist, a necessary condition is that $\epsilon$ be small enough.
\end{itemize}

In this section, canonical neighborhoods are those defined for
Ricci flows with surgery, as in
\cite[Definition 69.1]{Kleiner-Lott_perelman_notes}.
The next lemma gives a sufficient condition for parabolic
neighborhoods to be unscathed.

\begin{lemma} \label{lemma2.1}
Let $\M$ be a Ricci flow with surgery, with normalized initial
condition.
Given 
$T > \frac{1}{100}$,  there are numbers $\mu = \mu(T) \in (0,1)$, 
$\sigma = \sigma(T) \in (0,1)$, 
$i_0 = i_0(T) > 0$ and 
$A_k = A_k(T) < \infty$, $k \ge 0$, with the following
property.  If 
$t \in \left( \frac{1}{100}, T \right]$
and 
$|R(x,t)| < \mu \rho(0)^{-2} - r(T)^{-2}$,
put $Q = |R(x,t)| + r(t)^{-2}$. Then
\begin{enumerate}
\item The forward parabolic ball
$P_+(x,t, \sigma Q^{- \: \frac12})$ and the backward
parabolic ball $P_-(x,t, \sigma Q^{- \: \frac12})$
are unscathed.
\item 
$|\Rm| \le A_0 Q$,
$\inj \ge i_0 Q^{- \: \frac12}$ and
$|\nabla^k \Rm| \le A_k Q^{1 + \frac{k}{2}}$
on the union
$P_+(x,t, \sigma Q^{- \: \frac12}) \cup
P_-(x,t, \sigma Q^{- \: \frac12})$
of the forward and
backward parabolic balls.
\end{enumerate}
\end{lemma}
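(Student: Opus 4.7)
The plan is to combine the parameter-scale separations intrinsic to Perelman's surgery construction with the pointwise gradient estimates on scalar curvature that hold in canonical neighborhoods. The key observation is that the hypothesis $|R(x,t)| < \mu\rho(0)^{-2} - r(T)^{-2}$ forces $Q < \mu\rho(0)^{-2}$, while the surgery scale satisfies $h(t) < \de^2(t) r(t) \le \de^2(0) r(0) < \rho(0)$, so $h(t)^{-2} > \rho(0)^{-2}$ uniformly in $t$. Consequently, once $\mu = \mu(T)$ is chosen small, the scalar curvature threshold at which surgery occurs, which is of order $h(t)^{-2}$, exceeds $A_0 Q$ by an arbitrarily large factor, uniformly for $t \in (\tfrac{1}{100}, T]$.

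For conclusion (1) and the $|\Rm|$ bound in (2), I fix $A_0$ (depending only on $T$) and consider, for the forward parabolic ball, the set $\Sigma\subset[0,\sigma^2 Q^{-1}]$ of $s$ for which the slab $B_t(x,\sigma Q^{-1/2})\times[t,t+s]$ is unscathed and satisfies $R\le A_0 Q$ throughout. This set is open and contains $0$; I would prove closedness by excluding the only two possible failures at an endpoint $s_*$. A surgery cannot occur at $s_*$, since Perelman's surgery is performed only where $R$ is of order $h(\cdot)^{-2}$, which is $\gg A_0 Q$ by the previous paragraph. Saturation of the curvature bound is excluded by the canonical-neighborhood gradient estimates $|\nabla R|\le \eta R^{3/2}$ and $|\partial_t R|\le \eta R^2$, valid wherever $R\ge r(t)^{-2}$ (Appendix A.8); integrating these along spacetime curves of parabolic length $\sigma Q^{-1/2}$ shows that for $\sigma=\sigma(A_0,\eta)$ sufficiently small, $R$ cannot grow from its initial value $\le Q$ to $A_0 Q$ inside the slab. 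Where $R<r(t)^{-2}$ the bound $R\le Q$ is automatic. The backward ball is handled identically, and the canonical-neighborhood structure in the high-curvature region (together with a trivial bound elsewhere) upgrades $R\le A_0 Q$ to a two-sided curvature bound $|\Rm|\le A_0'Q$.

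With a uniform curvature bound on a slightly enlarged parabolic neighborhood in hand, Shi's local derivative estimates give $|\nabla^k \Rm|\le A_k Q^{1+k/2}$ on the original neighborhood after a mild shrinking of $\sigma$. For the injectivity radius, the condition $t>\tfrac{1}{100}$ (and the Hamilton--Ivey pinching that follows from normalized initial data) ensures the $\kappa$-noncollapsing assumption applies at scale $Q^{-1/2}$; combining this with the $|\Rm|$ bound via the standard Cheeger--Gromov--Taylor estimate yields $\inj\ge i_0 Q^{-1/2}$ at $(x,t)$ for some $i_0=i_0(T)>0$. The main obstacle is the closedness step in paragraph two: one must choose the constants in the order $A_0\to\sigma\to\mu$ so that on the one hand the integrated canonical-neighborhood gradient estimates keep $R$ strictly below $A_0 Q$ throughout a parabolic ball of radius $\sigma Q^{-1/2}$, and on the other hand $A_0 Q$ remains strictly below the surgery curvature threshold $\sim h(t)^{-2}$ uniformly for $t\in(\tfrac{1}{100},T]$.
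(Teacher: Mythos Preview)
Your outline is correct and matches the paper's: propagate the $R$-bound across a parabolic neighborhood via the canonical-neighborhood gradient estimates, verify unscathedness from the scale separation, then apply pinching, Shi's estimates, and $\kappa$-noncollapsing. The paper shortcuts your open--closed continuity argument by quoting \cite[Lemma~70.1]{Kleiner-Lott (2008)}, which already delivers $R\le 8Q$ on $P_\pm(x,t,\eta^{-1}Q^{-1/2})$; your argument is essentially a reproof of that lemma.

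Two small points to tighten. First, the threshold guaranteeing a worldline survives surgery is $\rho(t')^{-2}$, not $h(t')^{-2}$: at a singular time, entire components of the limit on which $R\ge\rho^{-2}$ everywhere are discarded, so a point with $\rho^{-2}\le R<h^{-2}$ can be removed. Your own chain of inequalities already yields $A_0 Q<A_0\mu\,\rho(0)^{-2}\le\rho(t')^{-2}$ once $\mu<A_0^{-1}$, so the conclusion stands; only the stated reason is off. Second, the upgrade from $R\le A_0 Q$ to $|\Rm|\le A_0'Q$ comes from Hamilton--Ivey pinching (valid globally under normalized initial conditions and preserved by surgery), not from canonical-neighborhood structure: in the region where $R<r(t)^{-2}$ there is no canonical neighborhood to invoke, and it is Hamilton--Ivey that controls the negative part of the curvature operator there.
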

\begin{proof}
By \cite[Lemma 70.1]{Kleiner-Lott_perelman_notes}, we have
$R(x^\prime, t^\prime) \le 8Q$ for all $(x^\prime,t^\prime) \in P_-(x, t, 
\eta^{-1} Q^{- \: \frac12})$.
where $\eta < \infty$ is a universal constant.
The same argument works for 
$P_+(x, t, \eta^{-1} Q^{- \: \frac12})$.
Since $R$ is proper on time slices (c.f. 
\cite[Lemma 67.9]{Kleiner-Lott_perelman_notes}), it follows that
$B(x,t, \eta^{-1} Q^{- \: \frac12})$ 
has compact closure in its time slice.

If $\mu \le \frac18$ then
\begin{align}
8Q & = 8 \left( |R(x,t)| + r(t)^{-2} \right) \le
8 \left( |R(x,t)| + r(T)^{-2} \right) \\
& \le
8 \mu \rho(0)^{-2} \le \rho(t^\prime)^{-2} \notag
\end{align}
for all
$t^\prime \in [t - \: \eta^{-2} Q^{-1}, t
+ \: \eta^{-2} Q^{-1}]$. 
Hence if $\sigma < \eta^{-1}$ then 
the forward and backward parabolic balls
$P_{\pm}(x,t, \sigma Q^{- \: \frac12})$ do not intersect the
regions that are affected by the surgery procedure;
see Remark \ref{surgeryremark}.

To show that the balls
$P_{\pm}(x,t, \sigma Q^{- \: \frac12})$ are unscathed, for an appropriate
value of $\sigma$, it remains to
show that $P_-(x,t, \sigma Q^{- \: \frac12})$ does not intersect the
time-zero slice. We have
$t - \sigma^2 Q^{-1} \ge t - \sigma^2 r(t)^2$.
Since $t > \frac{1}{100}$ and
$r(t) < r(0) < \frac{1}{10}$, if $\sigma < \frac12$ then
$t - \sigma^2 Q^{-1} \ge \frac{1}{200}$ and
the balls $P_{\pm}(x,t, \sigma Q^{- \: \frac12})$ are unscathed.

The Hamilton-Ivey estimate of (\ref{hi}) gives an explicit upper bound
$|\Rm| \le A_0 Q$ on
$P_\pm(x, t, \sigma Q^{- \: \frac12})$.
Using the distance distortion estimates for Ricci flow
\cite[Section 27]{Kleiner-Lott_perelman_notes}, there is a universal constant
$\alpha > 0$ so that whenever 
$(x^\prime, t^\prime)
\in P_{\pm}(x, t, \frac12 \sigma Q^{- \: \frac12})$, 
we have 
$P_-(x^\prime, t^\prime, 
\alpha Q^{- \: \frac12})
\subset
P_+(x, t, \sigma Q^{- \: \frac12})
\cup
P_-(x, t, \sigma Q^{- \: \frac12})$.
Then
Shi's local derivative estimates 
\cite[Appendix D]{Kleiner-Lott_perelman_notes} give estimates
$|\nabla^k \Rm| \le A_k Q^{1 + \frac{k}{2}}$ on 
$P_{\pm}(x, t, \frac12 \sigma Q^{- \: \frac12})$.

Since $t \le T$, we have
$\kappa(t) \ge \kappa(T)$. 
The $\kappa$-noncollapsing statement
gives an explicit lower bound 
$\inj \ge i_0 Q^{- \frac12}$ on a slightly
smaller parabolic ball, which after reducing $\sigma$,
we can take to be of the form
$P_{\pm}(x,t, \sigma Q^{- \: \frac12})$. 
\end{proof}

If $\M$ is a Ricci flow solution and
$\ga:[a, b] \ra \M$ is a time-preserving spacetime curve then
we define 
$\length_{g_{\M}}(\gamma)$
using the spacetime metric $g_{\M} = dt^2 + g(t)$.
The next lemma says that given a point $(x_0, t_0)$ in a $\kappa$-solution
(in the sense of Appendix \ref{appkappa}),
it has a large backward parabolic neighborhood so that any point
$(x_1, t_1)$ in the parabolic neighborhood can be connected to
$(x_0, t_0)$ by a time-preserving curve whose length is controlled
by $R(x_0,t_0)$, and along 
which the scalar curvature is controlled by $R(x_0,t_0)$.

\begin{lemma} \label{boxlemma}
Given $\kappa>0$, there exist $A = A(\kappa) < \infty$ and 
$C = C(\kappa) <\infty$
with the following property.
If $\M$ is a $\kappa$-solution and $(x_0,t_0) \in \M$ then
\begin{enumerate}
\item There is some
$(x_1,t_1)\in P_-(x_0,t_0, \frac12 AR(x_0, t_0)^{- \: \frac12})$
with $R(x_1,t_1)\leq \frac13
R(x_0, t_0)$.
\item 
The scalar curvature on
$P_-(x_0,t_0,2AR(x_0, t_0)^{- \: \frac12})$ 
is at most $\frac12 C R(x_0, t_0)$,
 and
\item 
Given 
$(x_1, t_1) \in P_-(x_0,t_0,\frac12 AR(x_0, t_0)^{- \: \frac12})$,
there is a time-preserving curve $\gamma : [t_1, t_0] \rightarrow 
P_- \left( x_0, t_0, \frac34 A R(x_0, t_0)^{- \frac12} \right)$
from $(x_1, t_1)$ to $(x_0, t_0)$
with 
\begin{equation}
\length_{g_{\M}}(\gamma) \le
\frac12 C \left(R(x_0, t_0)^{- \: \frac12} +  R(x_0, t_0)^{- \: 1} \right).
\end{equation}
\end{enumerate}
\end{lemma}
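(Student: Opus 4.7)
The class of $\kappa$-solutions is invariant under parabolic rescaling, and the radii as well as the two terms $R^{-1/2}$ and $R^{-1}$ in the length bound transform correctly under it, so I plan to reduce to the normalized case $R(x_0,t_0)=1$ and produce universal constants $A=A(\kappa)$ and $C=C(\kappa)$. The workhorse will be Perelman's smooth Cheeger--Gromov compactness theorem for pointed three-dimensional $\kappa$-solutions normalized by $R=1$ at the marked point.

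\textbf{Conclusions (1) and (2) via compactness.} For (1), I would argue by contradiction: if no $A$ works, take $A_j\to\infty$ and normalized pointed $\kappa$-solutions $(\M^j,(x_0^j,t_0^j))$ with $R\ge \tfrac13$ on $P_-(x_0^j,t_0^j,A_j/2)$. By compactness, extract a limit $\kappa$-solution $\M^\infty$; the lower bound $R\ge\tfrac13$ passes to every compact parabolic neighborhood of the basepoint, hence to all of $\M^\infty_{\le t_0^\infty}$. This contradicts the backward asymptotics of three-dimensional $\kappa$-solutions: along $x_0^\infty$'s worldline at a sequence $t_k\to-\infty$, Perelman's asymptotic-soliton rescaling by $|t_k|^{-1}$ would produce a gradient shrinking soliton limit whose scalar curvature at the basepoint at rescaled time $-1$ is bounded below by $|t_k|/3\to\infty$, while any shrinking soliton limit has finite scalar curvature away from its singular time. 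With $A=A(\kappa)$ so obtained, (2) follows from the same compactness theorem applied to the a priori bounded neighborhood $P_-(x_0,t_0,2A)$ of any normalized pointed $\kappa$-solution, yielding a uniform upper bound $C/2$ for $R$.

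\textbf{Conclusion (3) by explicit construction.} Given $(x_1,t_1)\in P_-(x_0,t_0,A/2)$, let $\phi^s$ denote the flow of $\D_\t$ and put $y_1=\phi^{t_0-t_1}(x_1)\in\M_{t_0}$, so $d_{g(t_0)}(y_1,x_0)\le A/2$. Let $\xi:[t_1,t_0]\to\M_{t_0}$ be an affinely reparametrized minimizing $g(t_0)$-geodesic from $y_1$ to $x_0$, and define the time-preserving curve $\gamma(t)=\phi^{t-t_0}(\xi(t))\in\M_t$. Since the projection of $\gamma$ to $\M_{t_0}$ via the worldline flow is precisely $\xi$, which stays in $B_{g(t_0)}(x_0,A/2)$, one has $\gamma\subset P_-(x_0,t_0,A/2)\subset P_-(x_0,t_0,\tfrac34 A)$ automatically. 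The spacetime velocity is $\D_\t+d\phi^{t-t_0}(\dot\xi(t))$, and the Ricci bound implied by (2), together with the standard distance-distortion estimate for Ricci flow, yields $|d\phi^{t-t_0}(\dot\xi(t))|_{g(t)}\le K\,|\dot\xi(t)|_{g(t_0)}$ for some $K=K(\kappa)$; integrating gives
\begin{equation}
\length_{g_\M}(\gamma)\le (t_0-t_1) + K\, \length_{g(t_0)}(\xi)\le \tfrac14 A^2 + \tfrac12 KA,
\end{equation}
which is $\le \tfrac12 C$ after enlarging $C$ if necessary.

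\textbf{Main obstacle.} The principal subtlety is the choice to pull back a minimizing $g(t_0)$-geodesic rather than a minimizing $g(t_1)$-geodesic: the latter would load the distance-distortion multiplicatively onto the spatial extent of $\gamma$ itself, so that containment in the buffer $\tfrac34 A$ would force $A$ to be small, conflicting with (1)'s requirement that $A$ be large. With the present construction, distortion affects only the length, which can be absorbed by enlarging $C$. Managing the consistent chain ``$A$ from (1) $\rightarrow$ $C$ from (2) depending on $A$ $\rightarrow$ $C$ further enlarged for the length bound in (3)'' is then routine and remains within $\kappa$-only dependence throughout.
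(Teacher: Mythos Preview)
Your argument is correct; the differences from the paper lie in parts (1) and (3), while (2) is identical.

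For (1), the paper extracts the same limit $\M^\infty$ with $R\ge\tfrac13$ everywhere but derives the contradiction more directly: the weak maximum principle applied to $\partial_t R\ge\Delta R+\tfrac23 R^2$ forces finite-time blowup from any time slice on which $R\ge\tfrac13$, which is impossible for an ancient solution. Your asymptotic-soliton route also works, but note a slip: Perelman's asymptotic soliton is centered not along the worldline of $x_0^\infty$ but at points $(y_k,t_k)$ where $l(y_k,t_k)\le\tfrac32$. The fix is immediate---those points still have $R(y_k,t_k)\ge\tfrac13$, so the rescaled curvature at the basepoint is $\ge|t_k|/3\to\infty$, contradicting convergence---but the ODE argument avoids invoking the asymptotic soliton altogether.

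For (3), the paper builds an L-shaped curve: the worldline from $(x_1,t_1)$ up to time $t_0$, followed by a minimal $g(t_0)$-geodesic to $(x_0,t_0)$, and then perturbs slightly to make the result time-preserving; the buffer between $\tfrac12 A$ and $\tfrac34 A$ absorbs that perturbation. Your diagonal construction $\gamma(t)=\phi^{t-t_0}(\xi(t))$ is time-preserving from the outset and lands automatically in $P_-(x_0,t_0,\tfrac12 A)$, which is cleaner, at the price of invoking the metric-distortion estimate explicitly to bound the spatial speed. Both approaches are equally valid; yours trades the perturbation step for the distortion bookkeeping.
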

\begin{proof}
To prove (1), suppose, by way of contradiction,
that for each $j \in \Z^+$ there is a $\kappa$-solution $\M^j$
and some $(x^j_0, t^j_0) \in \M^j$ so that
$R > \frac13 R(x^j_0, t^j_0)$ on $P_-(x^j_0,t^j_0, \frac12 j)$.
By compactness of the space of normalized pointed
$\kappa$-solutions (see Appendix \ref{appkappa}),
after normalizing so that $R(x^j_0, t^j_0) = 1$ and passing to a subsequence,
there is a limiting $\kappa$-solution 
$\M'$, defined for $t \le 0$, with  $R\geq \frac13$ everywhere. 
By the weak maximum principle for complete noncompact manifolds
 \cite[Theorem A.3]{Kleiner-Lott_perelman_notes} and the evolution equation
for scalar curvature,
there is a universal constant $\Delta > 0$ (whose exact value
isn't important) so that
if $R \ge \frac13$ on a time-$t$ slice then there is a singularity
by time $t + \Delta$. Applying this with $t = - 2 \Delta$
gives a contradiction.

Part (2) of the lemma,
for some value of $C$,
follows from the 
compactness of the space of normalized pointed $\kappa$-solutions.

To prove (3),
the curve which starts as a worldline  from
$(x_1, t_1)$ to $(x_1, t_0)$, and then moves as a minimal geodesic from
$(x_1, t_0)$ to $(x_0, t_0)$ in the time-$t_0$ slice, has $g_{\M}$-length
at most 
$\frac12 A R(x_0, t_0)^{- \: \frac12} +  \frac14 A^2 R(x_0, t_0)^{- \: 1}$.
By a slight perturbation to make it time-preserving,
we can construct $\gamma$ 
in $P_- \left( x_0, t_0, \frac34 A R(x_0, t_0)^{- \frac12} \right)$
with length at most
$\frac12 (A + 1) R(x_0, t_0)^{- \: \frac12} +  
\frac14 (A+1)^2 R(x_0, t_0)^{- \: 1}$. After redefining $C$,
this proves the lemma.
\end{proof}

The next proposition extends the preceding lemma from $\kappa$-solutions
to points in Ricci flows with surgery.
Recall that $\epsilon$ is the global parameter in the definition of
Ricci flow with surgery.

\begin{proposition} \label{curve}
There is an $\epsilon_0 > 0$ so that if $\epsilon < \epsilon_0$ then
the following holds.
Given $T < \infty$, suppose that $\rho(0) \le \frac{r(T)}{\sqrt{C}}$,
where $C$ is the constant from Lemma \ref{boxlemma}.
Then for any $R_0 < \frac{1}{C} \rho(0)^{-2}$,
there are $L = L(R_0, T) < \infty$ and
$R_1 = R_1(R_0, T) < \infty$ with the following 
property. 
Let $\M$ be a Ricci flow with surgery having normalized initial conditions.
Given $(x_0, t_0) \in \M$ with $t_0 \leq T$,
suppose that $R(x_0,t_0)\leq R_0$.
Then there is a time preserving curve $\ga:[0,t_0]\ra \M$ with
$\gamma(t_0) = (x_0, t_0)$ and $\length_{g_{\M}}(\ga)\leq L$ so that
$R(\ga(t))\leq R_1$ for all $t\in [0,t_0]$.
\end{proposition}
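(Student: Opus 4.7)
The plan is to construct the time-preserving curve $\ga$ in two phases: a \emph{high-curvature} phase in which we iteratively descend to points of lower scalar curvature using the canonical neighborhood structure and Lemma \ref{boxlemma}, and a \emph{low-curvature} phase in which Lemma \ref{lemma2.1} lets us flow backward along controlled, unscathed worldlines all the way to $t = 0$.

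For the high-curvature phase, fix a threshold $R_\ast = R_\ast(T)$ of the form $R_\ast = \mu \rho(0)^{-2} - r(T)^{-2}$ (with $\mu$ from Lemma \ref{lemma2.1}), chosen so that any point with $R \le R_\ast$ admits the controlled unscathed backward parabolic balls of that lemma. Starting at $(x_0, t_0)$, so long as $R(x_0, t_0) > R_\ast$, the canonical neighborhood assumption gives a neighborhood that, after parabolic rescaling by $R(x_0, t_0)$, is $\eps$-close to a basepoint in a $\kappa$-solution; the hypothesis $R_0 < \frac{1}{C}\rho(0)^{-2}$ puts us sufficiently below the surgery scale $h^{-2}$ that this approximating model is a genuine $\kappa$-solution rather than a standard (surgery-cap) solution. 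Applying Lemma \ref{boxlemma} in the model and transferring the conclusion back, we obtain $(x_1, t_1) \in P_-(x_0, t_0, A R(x_0, t_0)^{-1/2})$ with $R(x_1, t_1) \le \frac{1}{3} R(x_0, t_0)$ and a time-preserving curve from $(x_1, t_1)$ to $(x_0, t_0)$ of spacetime length $\le C\bigl(R(x_0, t_0)^{-1/2} + R(x_0, t_0)^{-1}\bigr)$ along which $R \le C\cdot R(x_0, t_0)$. Iterating yields $(x_k, t_k)$ with $R(x_k, t_k) \le 3^{-k} R_0$, stopping after $N \sim \log(R_0/R_\ast)$ steps when $R(x_N, t_N) \le R_\ast$. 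The lengths of the constituent arcs form a geometric sum that is dominated by its last term, giving a total length $\le C(R_\ast^{-1/2} + R_\ast^{-1})$, depending only on $T$; the scalar curvature along the concatenation stays bounded by $C R_0$.

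Once at $(x_N, t_N)$ with $R(x_N, t_N) \le R_\ast$ and $t_N \le T$, Lemma \ref{lemma2.1} provides an unscathed backward parabolic ball at $(x_N, t_N)$ of definite parabolic size $\sigma Q^{-1/2}$ with $Q = R_\ast + r(T)^{-2}$, on which $|\Rm| \le A_0 Q$. In particular the worldline of $x_N$ is unscathed for a backward time interval of definite length, along which $R$ is still controlled by $T$. Iterating Lemma \ref{lemma2.1} a bounded number of times (controlled by $T$ and $R_\ast$) extends the worldline back to $t = 0$; this contributes spacetime length at most $t_N \le T$. Concatenating with the high-curvature phase produces the desired $\ga : [0, t_0] \ra \M$, with $\length_{g_{\M}}(\ga) \le L(R_0, T)$ and $R \circ \ga \le R_1(R_0, T)$. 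The short initial subinterval $[0, \frac{1}{100}]$, on which the hypothesis of Lemma \ref{lemma2.1} fails, can be dealt with directly using the normalized initial condition and standard short-time Ricci flow curvature estimates.

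The main technical obstacle is justifying the iterated application of Lemma \ref{boxlemma} in the first phase: one must verify, at each iteration step, that the backward parabolic box used is genuinely unscathed and that the $\frac{1}{3}$ decrement in scalar curvature survives the $\eps$-approximation between the canonical neighborhood and its $\kappa$-solution model. Both points hinge on the hypothesis $R_0 < \frac{1}{C}\rho(0)^{-2}$: it forces the spatial scale $R^{-1/2}$ of every iteration step to dominate the surgery scale $h \le \de^2 r$ by a controlled factor, so that choosing $\eps_0$ sufficiently small (depending ultimately on $T$) renders the approximation error negligible and keeps the iteration safely away from all surgery times and caps.
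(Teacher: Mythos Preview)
Your two–phase decomposition hides an essential recurrence, and the low–curvature phase has a genuine gap.  Each invocation of Lemma~\ref{lemma2.1} gives an unscathed backward ball of temporal depth $\sigma^2 Q^{-1}$ on which $|\Rm|\le A_0 Q$, but this does \emph{not} keep $R\le R_\ast$ at the new, earlier point: the scalar curvature there is only controlled by a fixed multiple of $Q$, so after one step the new value of $Q$ may exceed the old one by a definite factor.  Iterating, $Q$ can grow geometrically; then the hypothesis $|R|<\mu\rho(0)^{-2}-r(T)^{-2}$ of Lemma~\ref{lemma2.1} fails after finitely many steps, and even before that the time increments $\sigma^2 Q^{-1}$ shrink geometrically so that their sum is bounded and need not reach $t=0$.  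Nothing in your argument prevents the scalar curvature from climbing along the backward worldline --- and in a Ricci flow with surgery it certainly can, since smoothing of a high-curvature region in forward time appears as curvature blow-up going backward.

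This is exactly why the paper's proof is organized as an \emph{alternation} rather than a two–phase split: one follows the worldline backward (Substep~B) only while $R(\gamma_x(s))\le r(s)^{-2}$, which automatically keeps the curve below the surgery scale since $r(s)^{-2}\le r(T)^{-2}\le C^{-1}\rho(0)^{-2}$; the moment $R$ rises back to the canonical-neighborhood threshold $r(t)^{-2}$ one returns to Substep~A and uses Lemma~\ref{boxlemma} to halve the curvature again.  A separate termination argument --- comparing the derivative bound $|\partial_t R|\le \eta R^2$ at scale $r(t_\infty)^{-2}$ against the factor-two drop produced by each Substep~A --- is then needed to show the sequence of switching times cannot accumulate at a positive $t_\infty$.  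Your proposal is missing both the alternation and this termination step; the threshold should be $r(t)^{-2}$ (where the canonical-neighborhood assumption kicks in), not the much larger $R_\ast$ tied to Lemma~\ref{lemma2.1}.
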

\begin{proof}
We begin by noting that
we can find $\epsilon_0 > 0$ so that if $\epsilon < \epsilon_0$
then for any $(x,t) \in \M$ with $t \le T$ which 
is in a canonical neighborhood,
Lemma \ref{boxlemma}(2) implies that
$R \le C R(x, t)$ on 
$P_-(x, t, AR(x, t)^{- \: \frac12})$.
If in addition $R(x, t) \le 
\frac{1}{C} \rho(0)^{-2}$ then for any 
$(x^\prime,t^\prime) \in P_-(x, t, AR(x, t)^{- \: \frac12})$,
we have 
\begin{equation}
R(x^\prime,t^\prime) \le  \rho(0)^{-2} \le
\rho(t^\prime)^{-2}.
\end{equation} 
Hence the parabolic neighborhood does not intersect the surgery region.

To prove the proposition, we start with $(x_0, t_0)$, and
inductively form a sequence of points 
$(x_i,t_i)$
and the curve $\gamma$ as follows, starting with $i = 1$.\\
{\bf Step 1 :}
If $R(x_{i-1},t_{i-1}) \ge r(t_{i-1})^{-2}$ then go to Substep A.
If $R(x_{i-1},t_{i-1}) < r(t_{i-1})^{-2}$ then go to Substep B. \\
{\bf Substep A :}
Since $R(x_{i-1},t_{i-1}) \ge r(t_{i-1})^{-2}$, the point
$(x_{i-1},t_{i-1})$ is in a canonical neighborhood.  
As will be explained, the backward parabolic ball 
$P_-(x_{i-1}, t_{i-1}, AR(x_{i-1}, t_{i-1})^{- \: \frac12})$
does not intersect the surgery region.
Applying Lemma \ref{boxlemma}
and taking $\epsilon_0$ small,
we can find
$(x_i, t_i) \in P_-(x_{i-1}, t_{i-1}, 
AR(x_{i-1}, t_{i-1})^{- \: \frac12})$
with
$R(x_i, t_i) \le \frac12 R(x_{i-1},t_{i-1})$, and a time-preserving
curve 
\begin{equation}
\gamma : [t_i, t_{i-1}] \rightarrow P_-(x_{i-1}, t_{i-1}, A
R(x_{i-1}, t_{i-1})^{- \: \frac12})
\end{equation}
from $(x_i, t_i)$ to $(x_{i-1}, t_{i-1})$
whose length is at most 
\begin{equation}
C \left( R(x_{i-1}, t_{i-1})^{- \: \frac12} +
R(x_{i-1}, t_{i-1})^{- \: 1} \right),
\end{equation}
along
which the scalar curvature is at most $C R(x_{i-1}, t_{i-1})$.
If $t_i > 0$ then go to Step 2.
If $t_i = 0$ then the process is terminated. 
\\
{\bf Substep B :} Since $R(x_{i-1},t_{i-1}) < r(t_{i-1})^{-2}$, put
$x_i = x_{i-1}$ and 
$t_i = \inf \{t : R(x_{i-1}, s) \le r(s)^{-2} \mbox{ for all } 
s \in [t, t_{i-1}]\}$.
Define $\gamma : [t_{i}, t_{i-1}] \rightarrow \M$ to be the worldline
$\gamma(s) = (x_{i-1}, s)$. 

If $t_i > 0$ then go to Step 2. (Note that 
$R(x_i, t_i) = r(t_i)^{-2}$.)
If $t_i =0$ then the process is terminated. \\
{\bf Step 2 :} Increase $i$ by one and go to 
Step 1. \\

To recapitulate the iterative process, if $R_0$ is large then
there may initially
be a sequence of Substep A's.  Since the curvature decreases by a
factor of at least two for each of these, the number of these
initial substeps is bounded
above by $\log_2 \left( \frac{R_0}{r(0)^{-2}} \right)$. Thereafter,
there is some $(x_{i-1}, t_{i-1})$ so that $R(x_{i-1}, t_{i-1}) <
r(t_{i-1})^{-2}$. We then go backward in time along a segment of
a   worldline
until we either hit a point $(x_i, t_i)$ with $R(x_i, t_i) = 
r(t_i)^{-2}$, or we hit time zero. If we hit $(x_i, t_i)$ then
we go back to Substep A, which produces a point $(x_{i+1}, t_{i+1})$
with at most half as much scalar curvature, etc.

We now check the claim in Substep A that the 
backward parabolic ball 
$P_-(x_{i-1}, t_{i-1}, 
AR(x_{i-1}, t_{i-1})^{- \: \frac12})$  
does not intersect the surgery region.
In the initial sequence of Substep A's, we 
always have
$R(x_{i-1}, t_{i-1}) \le R_0 < \frac{1}{C} \rho(0)^{-2}$.  
If we return to Substep A sometime after the initial sequence, we have
\begin{equation}
R(x_{i-1}, t_{i-1}) = r(t_{i-1})^{-2} \le 
r(T)^{-2} \le \frac{1}{C} \rho(0)^{-2}.
\end{equation}
Either way, from the first paragraph of the proof, we conclude that
$P_-(x_{i-1}, t_{i-1}, 
AR(x_{i-1}, t_{i-1})^{- \: \frac12})^{- \: 1})$
does not intersect the surgery region.

We claim that the iterative process terminates.  If not, the decreasing
sequence $(t_i)$ approaches some $t_\infty > 0$. For an infinite
number of $i$, we must have $R(x_i,t_i) = r(t_i)^{-2}$, which
converges to $r(t_\infty)^{-2}$. Consider a large $i$ with
$R(x_i,t_i) = r(t_i)^{-2}$. The result of Substep A is a point
$(x_{i+1}, t_{i+1})$ with $R(x_{i+1}, t_{i+1}) \le \frac12
R(x_i,t_i) = \frac12 r(t_i)^{-2} \sim \frac12 r(t_\infty)^{-2}$. 
If $i$ is large then this is
less than $r(t_{i+1})^{-2} \sim r(t_\infty)^{-2}$.
Hence one goes to Substep B to find $(x_{i+1}, t_{i+2})$ with
$R(x_{i+1}, t_{i+2}) = r(t_{i+2})^{-2}$.  However,
there is a double-sided
bound on $\frac{\partial R}{\partial t}(x_{i+1}, t)$
for $t \in [t_{i+2}, t_{i+1}]$, coming from
the curvature bound on a backward parabolic ball in
\cite[Lemma 70.1]{Kleiner-Lott_perelman_notes} and Shi's local estimates
\cite[Appendix D]{Kleiner-Lott_perelman_notes}.
This bound implies that the amount of backward time required to go from a 
point with scalar curvature
$R(x_{i+1}, t_{i+1}) \le \frac12 r(t_i)^{-2} \sim \frac12 r(t_\infty)^{-2}$
to a point with scalar curvature 
$R(x_{i+1}, t_{i+2}) = r(t_{i+2})^{-2} \sim r(t_\infty)^{-2}$
satisfies
\begin{equation}
t_{i+1} - t_{i+2} \ge \const r(t_\infty)^2.
\end{equation}
This contradicts the fact that $\lim_{i \ra \infty} t_i = t_\infty$.
 
We note that the preceding argument can be made effective.  This gives
a upper bound $N$ on the number of points $(x_i, t_i)$, of the form
$N = N(R_0, T)$. 
We now estimate the length of $\gamma$.  The contribution to the
length from segments arising from Substep B is at most $T$.
The contribution from segments arising from Substep A is bounded above by
$NC (r(0) + r(0)^2)$. 

It remains to estimate the scalar curvature along $\gamma$.
Along a portion of $\gamma$ arising from Substep A, the
scalar curvature is bounded above by $CR(x_{i-1}, t_{i-1}) \le
C \max(R_0, r(T)^{-2})$. Along a portion of $\gamma$ arising from
Substep B, the scalar curvature is bounded above by $r(T)^{-2}$.
Thus we can take $R_1 = (C+1)(R_0 + r(T)^{-2})$.
This proves the proposition.
\end{proof}

Finally, we give an estimate on the volume of the high-curvature
region in a Ricci flow with surgery.  This estimate will be used
to prove the volume convergence statement in Theorem 
\ref{thm_main_convergence}.

\begin{proposition} \label{volprop}
Given $T < \infty$,
there are functions $\epsilon^T_{1,2} : [0, \infty) \rightarrow
[0, \infty)$, with $\lim_{\overline{R} \ra \infty} 
\epsilon^T_1(\overline{R}) = 0$
and $\lim_{\delta \ra 0} \epsilon^T_2(\delta) = 0$, having the
following property. 
Let $\M$ be a Ricci flow with surgery, with normalized initial condition.
Let $\V(0)$ denote its initial volume.
Given $\overline{R} > r(T)^{-2}$, if
$t$ is not a surgery time then let $\V^{\ge \overline{R}}(t)$ be the
volume of the corresponding superlevel set of $R$ in $\M_t$. 
If $t$ is a surgery time, let $\V^{\ge \overline{R}}(t)$ be the
volume of the corresponding
superlevel set of $R$ in the postsurgery manifold $\M_t^+$.
Then if $t \in [0, T]$, we have
\begin{equation}
\V^{\ge \overline{R}}(t) \le \left( \epsilon^T_1(\overline{R}) + 
\epsilon^T_2(\delta(0)) \right) \V(0).
\end{equation}
\end{proposition}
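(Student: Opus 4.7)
The plan is to decompose the high-curvature set $\{R\ge\overline R\}$ at time $t$ into two portions: a \emph{persistent} portion whose backward worldlines reach the initial slice without meeting a surgery, and a \emph{surgery-descendant} portion whose backward worldlines terminate at a surgery cap. These two portions should contribute the bounds $\epsilon_1(\overline R)\V(0)$ and $\epsilon_2(\delta(0))\V(0)$, respectively. The common backbone is a set of \emph{a priori} global bounds obtained from the Hamilton--Ivey pinching (applied to normalized initial conditions and preserved by surgery, giving a lower bound $R\ge -C(T)$ on $[0,T]$) combined with the volume identity $\frac{d\V}{dt}=-\int_{\M_s}R\,dvol$ on smooth intervals and the non-increase of volume across surgery. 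These yield $\V(t)\le C(T)\V(0)$, $\int_0^T\int_{\M_s}R^+\,dvol\,ds\le C(T)\V(0)$, and a bound $\Sigma(T)\le C(T)\V(0)$ on the total volume lost to surgery up to time $T$.

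For the surgery-descendant contribution, each surgery inserts a cap of volume $\le C h(t)^3$ with $h(t)\le\delta(t)^2 r(t)\le\delta(0)^2 r(0)$, while the discarded tip of the ambient $\delta$-horn has volume dominating the cap volume by a factor tending to infinity as $\delta(0)\to 0$ (a $\delta$-horn is nearly cylindrical over a length that grows with $1/\delta$). Hence the aggregate cap volume inserted in $[0,T]$ can be charged against a vanishing fraction of $\Sigma(T)$, giving a bound of the form $\epsilon_2(\delta(0))\V(0)$ with $\epsilon_2\to 0$ as $\delta(0)\to 0$. Since Ricci flow distorts volumes by a bounded factor on $[0,T]$, the time-$t$ volume of points whose backward worldline terminates at such a cap is controlled by the same quantity, possibly after enlarging $\epsilon_2$.

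For the persistent contribution I would apply Lemma \ref{lemma2.1} at each $(x,t)$ with $R(x,t)\ge\overline R$: provided $\overline R<\mu\rho(0)^{-2}-r(T)^{-2}$, this provides an unscathed backward parabolic ball $P_-(x,t,\sigma Q^{-1/2})$ with $Q:=R(x,t)$ on which the geometry is controlled and $R$ remains comparable to $Q$, while $\kappa$-noncollapsing gives a definite volume for the ball. A Vitali-style covering then yields an inequality of the form
\[
\V^{\ge\overline R}_{\mathrm{persist}}(t)\;\lesssim\;\overline R\int_{t-c/\overline R}^{t}\V^{\ge\overline R/2}(s)\,ds.
\]
The main obstacle is to upgrade such an inequality to genuine decay in $\overline R$: the naive estimate $\overline R\cdot\V^{\ge\overline R}(t)\le\int_{\M_t}R^+\,dvol$ loses only a single power of $\overline R$ against a spacetime $\int R^+$ bound that does not itself shrink in $\overline R$, and iterating the displayed inequality across dyadic scales cancels this power. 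Overcoming this requires more refined use of the canonical neighborhood geometry --- one natural route is to establish an $L^p$-type bound on $R^+$ uniform in the surgery parameter for some $p\in(0,1)$ (analogous to Proposition \ref{prop_r_in_lp} for singular flows), from which $\V^{\ge\overline R}(t)\le\overline R^{-p}\int (R^+)^p\,dvol$ produces the required decay rate $\epsilon_1(\overline R)=C(T)\overline R^{-p}$.
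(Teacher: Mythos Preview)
Your decomposition into persistent and surgery-descendant portions is exactly the paper's strategy, and your treatment of the surgery-descendant part is essentially correct and matches the paper: cap volume is a vanishing fraction (as $\delta\to 0$) of removed volume, total removed volume is $\le C(T)\V(0)$, and forward volume distortion is bounded on $[0,T]$.

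The gap is in the persistent portion. Your Vitali-covering inequality does not yield decay in $\overline R$, as you yourself observe, and your proposed fix via an $L^p$ bound on $R$ is not available at this point in the development: Proposition~\ref{prop_r_in_lp} is stated for singular Ricci flows, is proved much later, and its proof uses the neck-stability machinery of Section~\ref{stab}. Invoking it here would be circular, or at best out of order. Moreover, the hypothesis $\overline R<\mu\rho(0)^{-2}-r(T)^{-2}$ of Lemma~\ref{lemma2.1} is not satisfied for all large $\overline R$, so that lemma is not the right tool.

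The missing idea is much simpler and more direct. If $m\in\M_t$ has $R(m)\ge\overline R>r(T)^{-2}$, then $m$ is in a canonical neighborhood, and the gradient estimate (\ref{gradest}) gives $\frac{d}{ds}R(\gamma_x(s))\ge -\eta R(\gamma_x(s))^2$ along the backward worldline, valid as long as $R\ge r(s)^{-2}$. Integrating this ODE inequality gives
\[
R(\gamma_x(s))\;\ge\;\frac{1}{R(m)^{-1}+\eta(t-s)}
\]
on that interval. Now integrate $R$ along the worldline: this produces a lower bound $\int_{t_1}^t R(\gamma_x(s))\,ds\ge \eta^{-1}\log\bigl(R(m)/r(T)^{-2}\bigr)$ (where $t_1$ is the first time the curvature drops to the canonical neighborhood scale), hence an upper bound on the Jacobian
\[
J_t(x)=e^{-\int_0^t R(\gamma_x(s))\,ds}\;\le\;(1+2T)^{3/2}\bigl(R(m)\,r(T)^2\bigr)^{-1/\eta}.
\]
Integrating over the preimage in $\M_0$ immediately gives $\V^{\ge\overline R}_{\mathrm{persist}}(t)\le (1+2T)^{3/2}(\overline R\,r(T)^2)^{-1/\eta}\V(0)$, so $\epsilon_1^T(\overline R)=(1+2T)^{3/2}(\overline R\,r(T)^2)^{-1/\eta}$. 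No covering argument or $L^p$ bound is needed.
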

\begin{proof}
Suppose first that $\M$ is a smooth Ricci flow.
Given $t \in [0, T]$,
let $i_t : \M_0 \rightarrow \M_t$ be the identity map.
For $x \in \M_0$,
put 
\begin{equation}
J_t(x) = \frac{i_t^* \dvol_{g(t)}}{\dvol_{g(0)}} (x).
\end{equation}
Let $\gamma_x : [0,t] \ra \M$ be the worldline of $x$.  
From the Ricci flow equation,
\begin{equation}
J_t(x) = e^{- \int_0^t R(\gamma_x(s)) \: ds}.
\end{equation}

Suppose that $m \in \M_t$ satisfies
$R(m) \ge \overline{R}$. 
Then $R(m) > r(t)^{-2}$.
Let $x \in \M_0$ be the point where the
worldline of $m$ hits $\M_0$.
From (\ref{gradest}) we have
\begin{equation} \label{etaeqn}
  \frac{dR(\gamma_x(s))}{ds}
\le \eta R(\gamma_x(s))^2
\end{equation}
as long as $R(\gamma_x(s)) \ge r(s)^{-2}$. Let $t_1$ be the smallest
number so that $R(\gamma_x(s)) \ge r(s)^{-2}$ for all $s \in [t_1, t]$.
Since $r(0) < \frac{1}{10}$, the normalized
initial conditions imply that 
$t_1 > 0$. From (\ref{etaeqn}), if $s \in [t_1, t]$ then
\begin{equation} \label{changeofR}
R(\gamma_x(s)) \ge \frac{1}{R(m)^{-1} + \eta (t-s)}.
\end{equation}
In particular,
\begin{equation}
r(t_1)^{-2} = R(t_1) \ge \frac{1}{R(m)^{-1} + \eta (t-t_1)}.
\end{equation}

From (\ref{changeofR}),
\begin{equation}
\int_{t_1}^t R(\gamma_x(s)) \: ds \ge - \frac{1}{\eta} \log
\frac{R(m)^{-1}}{R(m)^{-1} + \eta(t - t_1)}.
\end{equation}
Hence
\begin{align}
e^{- \int_{t_1}^t R(\gamma_x(s)) \: ds}
 \le &
\left( 
\frac{R(m)^{-1}}{R(m)^{-1} + \eta(t - t_1)}
\right)^{\frac{1}{\eta}} \\
\le & \left( R(m) r(t_1)^2 \right)^{- \: \frac{1}{\eta}} 
\le \left( R(m) r(T)^2 \right)^{- \: \frac{1}{\eta}}. \notag
\end{align}

On the other hand, for all $s \ge 0$, equation (\ref{scalarsurg})
gives
\begin{equation}
R(\gamma_x(s)) \: \ge \: - \: \frac{3}{1+2s},
\end{equation}
so
\begin{equation}
e^{- \int_0^{t_1} R(\gamma_x(s)) \: ds} \le (1+2t_1)^{\frac32}.
\end{equation}
Thus 
\begin{equation}
J_t(x) \le (1+2T)^{\frac32}
\left( R(m) r(T)^2 \right)^{- \: \frac{1}{\eta}}.
\end{equation}

Integrating over such $x \in \M_0$, 
arising from worldlines emanating from
$\{m \in \M_t \: : \:
R(m) \ge \overline{R}\}$,
we conclude that
\begin{equation}
\V^{\ge \overline{R}}(t) \le 
(1+2T)^{\frac32}
\left( \overline{R} \:  r(T)^2 \right)^{- \: \frac{1}{\eta}} \V(0),
\end{equation}
so the conclusion of the proposition holds in this case with
\begin{equation} \label{eps1}
\epsilon^T_1(\overline{R}) = 
(1+2T)^{\frac32}
\left( \overline{R} \: r(T)^2 \right)^{- \: \frac{1}{\eta}}.
\end{equation}

Now suppose that $\M$ has surgeries.
For simplicity of notation, we assume that $t$ is not a surgery time;
otherwise we replace $\M_t$ by $\M_t^+$.
We can first apply the preceding
argument to the subset of $\{m \in \M_t \: : \:
R(m) \ge \overline{R}\}$ consisting of points whose worldline
goes back to $\M_0$. The conclusion is that the volume of
this subset is bounded above by $\epsilon^T_1(\overline{R}) \V(0)$,
where $\epsilon^T_1(\overline{R})$ is the same as in
(\ref{eps1}). Now consider the subset of
$\{m \in \M_t \: : \:
R(m) \ge \overline{R}\}$ consisting of points whose worldline
does not go back to $\M_0$. We can cover such points by the
forward images of surgery caps (or rather the subsets thereof
which go forward to time $t$), for surgeries that occur at
times $t_\alpha \le t$. Let $\V_{t_\alpha}^{cap}$ be the total volume 
of the surgery caps for surgeries that occur at time $t_\alpha$.
Let $\V_{t_\alpha}^{remove}$ be the total volume that is removed
at time $t_\alpha$ by the surgery process.  From the nature of
the surgery process \cite[Section 72]{Kleiner-Lott_perelman_notes}, there
is an increasing
function $\delta^\prime : (0, \infty) \ra (0, \infty)$, with
$\lim_{\delta \ra 0} \delta^\prime(\delta) = 0$, so that
\begin{equation}
\frac{\V_{t_\alpha}^{cap}}{\V_{t_\alpha}^{remove}} 
\le \delta^\prime(\delta(0)).
\end{equation}
This is essentially because the surgery procedure removes a long capped tube,
whose length (relative to $h(t)$) is large if $\delta(t)$ is small,
and replaces it by a hemispherical cap. 

On the other hand, using (\ref{volumesurg}),
\begin{equation}
\sum_{t_\alpha \le t}
\V_{t_\alpha}^{remove} \le (1 + 2T)^{\frac32} \V(0),
\end{equation}
since surgeries up to time $t$ cannot remove more volume than
was initially present or generated by the Ricci flow.
The time-$t$ volume coming from forward worldlines of surgery caps
is at most $(1 + 2T)^{\frac32} \sum_{t_\alpha \le t}
\V_{t_\alpha}^{cap}$.
Hence the proposition is true if we take
\begin{equation}
\epsilon^T_2(\delta) = (1 + 2T)^3 \delta^\prime(\delta).
\end{equation}
\end{proof}

\section{The main convergence result}
\label{sec_main_convergence}

In this section we prove Theorem \ref{thm_convergence_flow_with_surgery}
except for the statement about the continuity of $\V_\infty$, which will
be proved in Corollary \ref{cor_volume_continuous}.

The convergence assertion in
Theorem \ref{thm_convergence_flow_with_surgery}
involves a sequence $\{\M^j\}$ of  Ricci flows with 
surgery, where the functions $r$ and 
$\kappa$ are fixed, but $\de_j\ra 0$; hence
$\rho_j$ and $h_j$ also go to zero. 
We will conflate these Ricci flows with surgery with  
their associated Ricci flow spacetimes; see Appendix \ref{RFsurgery}.

\begin{theorem} 
\label{thm_convergence_flow_with_surgery}
Let $\{\M^j\}_{j=1}^\infty$ be a sequence of Ricci flows with surgery
with normalized initial conditions
such that:
\begin{itemize}
\item The time-zero slices $\{\M^j_0\}$ are compact 
manifolds that
lie in a compact family
in the smooth topology. 
\item $\lim_{j \rightarrow \infty} \delta_j(0) = 0$.
\end{itemize}
Then after passing to a subsequence,
there is a 
singular Ricci flow  $(\M^\infty,\t_\infty,\D_{\t_\infty},g_\infty)$, 
and a sequence 
of diffeomorphisms 
\begin{equation}
\{\M^j\supset U_j \stackrel{\Phi^j}{\lra} V_j\subset\M^\infty\}\,,
\end{equation}
so that
\begin{enumerate}
\item $U_j\subset\M^j$ and $V_j\subset \M^\infty$ are open subsets. 
\item Given $\ol{t} > 0$ and $\ol{R} < \infty$, we have
\begin{equation} \label{eqn_u_j_contains}
U_j \supset\{m \in \M^j\mid \t_j(m)\leq \ol{t},\; 
R_j(m)\leq \ol{R}\}
\end{equation}
and
\begin{equation} \label{eqn_v_j_contains}
V_j \supset
\{m \in \M^\infty\mid \t_\infty(m )\leq \ol{t},\; 
R_\infty(m)\leq \ol{R}\}
\end{equation}
for all sufficiently large $j$.
\item $\Phi^j$ is time preserving, and the sequences
$\{\Phi^j_* \partial_{\t_j}\}_{j=1}^\infty$ and
$\{\Phi^j_*g_j\}_{j=1}^\infty$ 
converge smoothly on compact subsets of $\M^\infty$
to $\partial_{t_\infty}$ and $g_\infty$, respectively.  
(Note that by
(\ref{eqn_v_j_contains}) any compact set $K\subset \M^\infty$ will
lie in the interior of $V_j$, for all sufficiently large $j$.)
\item 
For every $t \ge 0$, we have
\begin{equation} \label{Rest}
\inf_{\M^\infty_t}R \: \geq \: - \: \frac{3}{1+2t}
\end{equation}
and
\begin{equation}
\max(\V_j(t),\V_\infty(t) )\le 
V_0 \left(1 + 2 t \right)^{\frac{3}{2}},
\end{equation}
where $\V_j(t)=\vol(\M^j_t)$, $\V_\infty(t)=\vol(\M^\infty_t)$,
and $V_0=\sup_j\V_j(0)<\infty$.
\item
$\Phi^j$ is asymptotically volume preserving : if 
$\V_j, \V_\infty : [0, \infty) \ra [0, \infty)$ denote the respective
volume functions $\V_j(t) = \vol(\M^j_t)$ and $\V_\infty(t) = 
\vol(\M^\infty_t)$, then $\lim_{j \ra \infty} \V_j = \V_\infty$
uniformly on compact subsets of $[0, \infty)$. 

\end{enumerate}
\end{theorem}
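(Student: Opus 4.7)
My strategy is to cast each $\M^j$ as a controlled spacetime in the sense of Definition \ref{definition3.1}, apply the spacetime compactness theorem (Theorem \ref{theorem3.8}), and then verify the additional properties. Fixing an arbitrary time horizon $T$ and diagonalizing in $T$ at the end, I would work on $\M^j_{\le T}$ with a control function roughly of the form
\[
\psi_j(m) = |R_j(m)| + r(T)^{-2},
\]
modified slightly on $\t_j \le \tfrac{1}{100}$ so that the normalized initial geometry is absorbed. Lemma \ref{lemma2.1}, together with the canonical neighborhood assumption at the high-curvature end, ensures that at every point with $\psi_j(m) \le \mu\rho_j(0)^{-2}$ there is an unscathed backward parabolic ball of radius $\sim \psi_j(m)^{-1/2}$ on which the injectivity radius and all derivatives of curvature are bounded by functions of $\psi_j(m)$. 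Since $\delta_j(0)\to 0$ forces $\rho_j(0)\to 0$, each $\M^j_{\le T}$ becomes a $(\psi_j,A_j)$-controlled spacetime with $A_j\to\infty$.

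To actually invoke Theorem \ref{theorem3.8} I must check the uniform exhaustion hypothesis: that each sublevel set $\psi_j^{-1}([0,A))$ is covered by $g_{\M^j}$-balls of a uniformly bounded radius around a basepoint $\star_j\in\M^j_0$. Compactness of the family of normalized initial metrics gives $\sup_j\psi_j(\star_j)<\infty$ and a uniform diameter bound on $\M^j_0$. The essential input is Proposition \ref{curve}: any $(x_0,t_0)\in\M^j$ with $t_0\le\bar t$ and $R_j(x_0,t_0)\le\bar R$ is joined to $\M^j_0$ by a time-preserving curve of spacetime length $\le L(\bar R,\bar t)$ along which $R_j\le R_1(\bar R,\bar t)$, both constants independent of $j$. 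Concatenating with a path in $\M^j_0$ gives a curve from $\star_j$ of bounded $g_{\M^j}$-length lying inside $\psi_j^{-1}([0,R_1+r(T)^{-2}))$. Theorem \ref{theorem3.8} then produces, after a subsequence, the limiting spacetime $(\M^\infty,\t_\infty,\D_{\t_\infty},g_\infty)$ with diffeomorphisms $\Phi^j$ satisfying conclusions (1) and (3) of the present theorem; the containments (\ref{eqn_u_j_contains})--(\ref{eqn_v_j_contains}) in (2) follow by the same length estimate applied on both sides via $\Phi^j$.

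Next I would verify that $\M^\infty$ is a singular Ricci flow. Properness below of $R_\infty$ on $\M^\infty_{\le T}$ follows from (\ref{eqn_v_j_contains}) together with smooth convergence, since any sublevel set of $R_\infty$ sits inside the $\Phi^j$-image of a controlled piece of $\M^j_{\le T}$. The Hamilton-Ivey inequality (\ref{hi}) is pointwise and preserved under smooth limits; $\kappa$-noncollapsing below scale $\epsilon$ and the $r$-canonical neighborhood assumption are preserved in the limit because $\kappa$ and $r$ are fixed along the sequence and both are stable under smooth convergence on $V_j$. The pointwise lower bound (\ref{Rest}) follows from the standard maximum-principle argument (\ref{scalarsurg}), which is preserved at surgeries and passes to the limit; integrating the identity $\tfrac{d}{dt}\dvol=-R\dvol$ and using that surgery only removes volume gives $\V_j(t)\le V_0(1+2t)^{3/2}$, whence the same bound for $\V_\infty$ by smooth convergence.

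The hardest part is the asymptotic volume preservation in (5). Given $\bar t\ge 0$ and $\eps>0$, the plan is to split $\M^j_t$ for $t\in[0,\bar t]$ into the low-curvature region $\{R_j\le\bar R\}$ and its complement. By Proposition \ref{volprop} the complement has volume at most $\bigl(\eps^T_1(\bar R)+\eps^T_2(\delta_j(0))\bigr)V_0$, which is made $<\eps$ by choosing $\bar R$ large and then $j$ large. On the low-curvature part, (\ref{eqn_u_j_contains}) and the smooth convergence $\Phi^j_*g_j\to g_\infty$ on $V_j$ yield uniform-in-$t$ convergence of $\vol(\{R_j\le\bar R\}\cap\M^j_t)$ to $\vol(\{R_\infty\le\bar R\}\cap\M^\infty_t)$, and the complementary volume on $\M^\infty_t$ is controlled by the same estimate in the limit $j\to\infty$. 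The central obstacle throughout the argument is preventing points of bounded curvature, or positive volume, from escaping to infinity in the limit; Proposition \ref{curve} is what rules this out geometrically, and Proposition \ref{volprop} is what rules it out measure-theoretically.
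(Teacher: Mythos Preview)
Your proposal is correct and follows essentially the same strategy as the paper: apply Theorem \ref{theorem3.8} with a curvature-based control function, using Lemma \ref{lemma2.1} for local control, Proposition \ref{curve} for the exhaustion by balls around a basepoint, and Proposition \ref{volprop} for the volume convergence. The paper's implementation differs only in technical details: it takes $\psi_j = R_j^2 + \t_j^2$ (smooth, with time built in, so no $T$-diagonalization is needed and no smoothness issue at $R_j=0$), it extends the flows smoothly to negative time to sidestep boundary issues at $\t = 0$, and it establishes properness of $R_\infty$ on $\M^\infty_{\le T}$ via a parabolic-ball packing argument using the volume bound of part (4) rather than by appealing to the structure of $V_j$.
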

\begin{proof}
Because of the normalized initial conditions, the Ricci flow
solution $g_j$ is smooth on the time interval $\left[0, \frac{1}{100}
\right]$.
As a technical device, 
we first extend $g_j$ backward in time to a 
family of metrics $g_j(t)$ which is smooth for
$t \in \left( -\infty, \frac{1}{100} \right]$.
To do this, 
as in \cite[Pf. of Theorem 1.2.6]{Hormander_linear_pde},
there is an explicit smooth extension $h_j$ of
$g_j(t) - g_j(0)$ 
to the time interval $t \in \left( - \infty, \frac{1}{100} \right]$, 
with values in smooth covariant $2$-tensor fields.
The extension on $(- \infty, 0]$ depends on the time-derivatives of
$g_j(t) - g_j(0)$ at $t = 0$ which, in turn, can be expressed in
terms of $g_j(0)$ by means of repeated differentiation of the Ricci flow
equation. 
Let $\phi : [0, \infty) \rightarrow [0,1]$ be a fixed nonincreasing
smooth function with $\phi \Big|_{[0,1]} = 1$ and 
 $\phi \Big|_{[2, \infty)} = 0$.
Given $\epsilon > 0$, 
for $t < 0$ put
\begin{equation}
g_j(t) = g_j(0) + \phi \left(- \: \frac{t}{\epsilon} \right)
h_j(t).
\end{equation}
Using the precompactness of the space of initial conditions,
we can choose $\epsilon$ small enough so that $g_j(t)$ is a
Riemannian metric for all $j$ and all $t \le 0$.
Then
\begin{enumerate}
\item $g_j(t)$ is smooth in $t \in \left( - \infty, \frac{1}{100}
\right]$, 
\item $g_j(t)$ is constant in $t$ for $t \le -2 \epsilon$, and
\item For $t \le 0$, $g_j(t)$  has
uniformly bounded curvature and curvature derivatives, independent of $j$.
\end{enumerate}

Let $g_{\M^j}$ be the spacetime Riemannian
metric  on $\M^j$ (see Definition \ref{spacetimemetric}). 
Choose a basepoint $\star_j \in \M^j_0$.

After passing to a subsequence, we can assume that for all $j$,
\begin{equation}
\rho_j(0) \le \sqrt{\frac{\mu}{j + r(j)^{-2}}},
\end{equation}
where 
$\mu = \mu(j)$
is the parameter of Lemma \ref{lemma2.1}. Put 
$\psi_j(x,t) = R_j(x,t)^2 + t^2$,
where $R_j$ is the  scalar
curvature function of the Ricci flow metric $g_j$.
Put $A_j = j^2$.
If $(x, t) \in \psi_j^{-1}([0, A_j))$ then
$|R_j(x, t)| \le j$ and $|t| \le j$.
In particular,
\begin{equation} \label{canapply}
|R_j(x, t)| \le j \le \mu \rho_j(0)^{-2} - r(j)^{-2} \le \rho_j(0)^{-2}
\le \rho_j(t)^{-2},
\end{equation}
so $(x,t)$ avoids the surgery region.

We claim that there are functions $\r$, $C_k$ so that Definition
\ref{definition3.1} holds for all $j$. Suppose that $\psi_j(x, t) \le A_j$.
When $t \le 0$
there is nothing to prove, so we assume that $t \ge 0$.
If $t \in \left[ 0, \frac{1}{100} \right]$ then the normalized 
initial conditions give uniform control on $g_j(t)$. If
$t \ge \frac{1}{100}$ then 
\begin{equation}
t\,r(t)^{-2} \ge \frac{1}{100}
r(0)^{-2} > 1 > \sigma^2,
\end{equation}
where 
$\sigma = \sigma(j)$
is the parameter from
Lemma \ref{lemma2.1} and we use the assumption that $r(0) < \frac{1}{10}$
from the beginning of Section \ref{surgerysec}. 
Then
\begin{equation}
t > \sigma^2 r(t)^2 \ge \sigma^2 \frac{1}{|R(x,t)| + r(t)^{-2}} =
\sigma^2 Q^{-1}, 
\end{equation}
so the
parabolic ball $P_-(x, t, \sigma Q^{- \: \frac12})$ of
Lemma \ref{lemma2.1} does
not intersect the initial time slice.
As $|R_j(x, t)| \le \mu \rho_j(0)^{-2} - r(j)^{-2}$ from
(\ref{canapply}),
we can apply
Lemma \ref{lemma2.1} to 
show that 
$\left( \M^j, g_{\M^j}, \psi_j \right)$
is $(\psi_j, A_j)$-controlled 
in the sense of Definition \ref{definition3.1first}.
Note that Lemma \ref{lemma2.1} gives bounds on the spatial and
time derivatives of the curvature tensor of $g_j(t)$, which
implies bounds on the derivatives of the curvature
tensor of the spacetime metric $g_{\M_j}$.

Finally, the function $\t_j$ and the vector field
$\partial_{\t_j}$, along with their covariant derivatives, are
trivially bounded in terms of $g_{\M^j}$.

After passing to a subsequence we may assume that the number $N$
of connected
components  of the initial time slice
$\M^j_0$ is independent of $j$.  Then the theorem follows
from the special  case when the
initial times slices are connected, since we may apply it to the
components separately.  Therefore we are reduced to proving the
theorem under the assumption that $\M^j_0$ is connected.

After passing to a subsequence,
Theorem \ref{theorem3.8} now gives a
pointed 
$(\psi_\infty,\infty)$-controlled
tuple
\begin{equation}
\left( \M^\infty,g_{\M^\infty}, \t_\infty,(\D_{\t})_\infty,\star_\infty,
\psi_\infty \right)
\end{equation}
satisfying (1)-(4) of Theorem \ref{theorem3.8}.  
We truncate $\M^\infty$ to the subset $\t_\infty^{-1}([0, \infty))$.
We now verify the claims of Theorem \ref{thm_convergence_flow_with_surgery}.

Part (1) of Theorem \ref{thm_convergence_flow_with_surgery} follows from
the statement of Theorem \ref{theorem3.8}.
Given $\overline{t} > 0 $ and $\overline{R} < \infty$,
Proposition \ref{curve} implies that there are
$r=r(\ol{R},\ol{t}) < \infty$ and $A=A(\ol{R},\ol{t}) < \infty$ so that 
the set 
$\{m \in \M^j_{\leq \ol{t}} \: : \: R(m)\leq \ol{R}\}$ is contained in
the metric ball $B(\star_j,r)$ in the Riemannian manifold
$(\psi_j^{-1}([0,A)),g_{\M^j})$.
Hence by the definition of $\psi_j$, and
part (1) of Theorem \ref{theorem3.8},
we get (\ref{eqn_u_j_contains}).

Pick 
$m_\infty \in \M^\infty$.
Put $t_\infty = \t(m_\infty)$.
By part (4) of Theorem \ref{theorem3.8}, we know that $m_\infty$ 
belongs to $V_j$ for 
large $j$.  Now part (3) of
Theorem \ref{theorem3.8} and the definition of $\psi_j$
imply that
\begin{equation}
\label{eqn_psi_infty}
\psi_\infty(m_\infty)=R(m_\infty)^2 + t_\infty^2\,.
\end{equation}
If $j$ is large then it makes sense to define
$(x_j,t_\infty)=(\Phi^j)^{-1}(m_\infty)\in \M^j$.
Then for $j$ large, $R(x_j,t_\infty)<R(m_\infty)+1$ and
so by Proposition \ref{curve}, there is a time-preserving curve
$\ga_j:[0,t_\infty]\ra \M^j$ such that
\begin{equation}
\max \left(
\length_{g_{\M^j}}(\ga_j),\max_{t \in [0, t_\infty]}
R(\ga_j(t)) \right) <C=C(R(m_\infty),t_\infty)\,.
\end{equation}
By part (1) of Theorem \ref{theorem3.8} 
we know that $\im(\ga_j)\subset U_j$ for 
large $j$.
By part (3) of Theorem \ref{theorem3.8}, for large $j$ the map $\Phi^j$ is an
almost-isometry.
Hence there are
$r=r(R(m_\infty),t_\infty) < \infty$ and 
$A=A(R(m_\infty),t_\infty) < \infty$ so that
$m_\infty$ is contained 
in the metric ball $B(\star_\infty,r)$ in the
Riemannian manifold
$((\psi_\infty)^{-1}([0,A)),g_{\M^\infty})$
Combining this with (\ref{eqn_psi_infty}) and
part (1) of Theorem \ref{theorem3.8} yields  (\ref{eqn_v_j_contains}).
This proves part (2) of Theorem \ref{thm_convergence_flow_with_surgery}.

Part (3) of 
Theorem \ref{thm_convergence_flow_with_surgery} now follows from 
part (2) of the theorem and  parts (2) and (3) of Theorem
\ref{theorem3.8}. 
 
Equation (\ref{Rest}) follows from (\ref{scalarsurg}) and the smooth
approximation in part (3) of Theorem \ref{thm_convergence_flow_with_surgery}.
Let $\V_j^{< \overline{R}}(t)$ be the volume of the
$\overline{R}$-sublevel set for the scalar curvature function
on $\M^j_t$. (If $t$ is a surgery time, we replace $\M_t$ by $\M_t^+$.) 
Let $\V_\infty^{< \overline{R}}(t)$ be the volume of the
$\overline{R}$-sublevel set for the scalar curvature function
on $\M^\infty_t$. Then
\begin{equation}
\V_\infty(t) = \lim_{\overline{R} \ra \infty}
\V_\infty^{< \overline{R}}(t) = 
\lim_{\overline{R} \ra \infty} \lim_{j \ra \infty}
\V_j^{< \overline{R}}(t) \le 
\limsup_{j \ra \infty} \V_j(t). 
\end{equation}
Part (4) of Theorem \ref{thm_convergence_flow_with_surgery}
now follows from combining this with (\ref{volumesurg}).

Next, we verify the volume convergence
assertion (5). Let $\| \cdot \|_T$ denote the sup norm on
$L^\infty([0, T])$.
By parts 
(2) and (3) of Theorem \ref{thm_convergence_flow_with_surgery}, there is some
$\epsilon_3^T(j, \overline{R}) > 0$,
with $\lim_{j \ra \infty} \epsilon_3^T(j, \overline{R}) = 0$,
so that
\begin{equation} \label{comb1}
\| \V_\infty^{< \overline{R}} - \V_j^{< \overline{R}} 
\|_T \le \epsilon_3^T(j, \overline{R}).
\end{equation}

Also,
if $\overline{S} > \overline{R}$ then
\begin{equation} \label{c1}
\| \V_\infty^{< \overline{S}} - \V_\infty^{< \overline{R}} \|_T  =
\lim_{j \ra \infty} 
\| \V_j^{< \overline{S}} - \V_j^{< \overline{R}} \|_T \le
\limsup_{j \ra \infty}
\| \V_j - \V_j^{< \overline{R}} \|_T.
\end{equation}
Proposition \ref{volprop} implies
\begin{equation} \label{comb2}
\| \V_j - \V_j^{< \overline{R}} \|_T \le 
\left( \epsilon_1^T(\overline{R}) + 
\epsilon_2^T(\delta_j(0)) \right) V_0.
\end{equation}
Combining (\ref{c1}) and (\ref{comb2}), and
taking $\overline{S} \ra \infty$, gives
\begin{equation} \label{comb3}
\| \V_\infty - \V_\infty^{< \overline{R}} \|_T  \le
\epsilon_1^T(\overline{R}) \: V_0.
\end{equation}
Combining (\ref{comb1}), (\ref{comb2}) and (\ref{comb3}) yields
\begin{equation}
\| \V_j - \V_\infty \|_T \le  
\left( 2 \epsilon_1^T(\overline{R}) + 
\epsilon_2^T(\delta_j(0)) \right) V_0 + \epsilon_3^T(j, \overline{R}).
\end{equation}
Given $\sigma > 0$, we can choose $\overline{R}< \infty$ so that
$2 \epsilon_1^T(\overline{R}) V_0 < \frac12 \sigma$. Given this
value of $\overline{R}$, we
can choose $J$ so that if $j \ge J$ then
\begin{equation}
\epsilon_2^T(\delta_j(0)) V_0 + \epsilon_3^T(j, \overline{R})
< \frac12 \sigma.
\end{equation}
Hence if $j \ge J$ then
$\| \V_j - \V_\infty \|_T < \sigma$. This shows that
$\lim_{j \ra \infty} \V_j = \V_\infty$, uniformly on compact
subsets of $[0, \infty)$, and proves part (5) of Theorem  
Theorem \ref{thm_convergence_flow_with_surgery}.

Finally, we check that $\M^\infty$ is a 
singular Ricci flow
in the sense of Definition \ref{def_singular_ricci_flow}.
Using part (3) of Theorem \ref{thm_convergence_flow_with_surgery}
one can pass 
the Hamilton-Ivey pinching condition, 
canonical neighborhoods, and the noncollapsing condition
from the $\M^j$'s to $\M^\infty$, and so parts (b) and (c) of  Definition 
\ref{def_singular_ricci_flow} hold.

We now verify part
(a) of Definition \ref{def_singular_ricci_flow}. We start with a statement
about parabolic neighborhoods in $\M^\infty$.
Given $T >0$ and $\overline{R} < \infty$,
suppose that $m_\infty\in\M^\infty$
has $\t(m_\infty)\leq T$ and $R(m_\infty)\leq \ol{R}$.
For large $j$, put 
$\widehat{m}_j =(\Phi^j)^{-1}(m_\infty)\in \M^j$.  
Lemma \ref{lemma2.1}
supplies parabolic regions centered at the 
$\widehat{m}_j$'s
which pass to $\M^\infty$. Hence
for some $r=r(T,\overline{R})>0$, the forward and backward parabolic regions
$P_+(m_\infty,r)$ and $P_-(m_\infty,r)$ are unscathed. 
There is some $K = K(T, \overline{R}) < \infty$ 
so that when
equipped with the spacetime metric $g_{\M^\infty}$, the union
$P_+(m_\infty,r)\cup P_-(m_\infty,r)$
is $K$-bilipschitz homeomorphic to a Euclidean parabolic region.

From (\ref{Rest}), we know that $R$ is bounded below on $\M^\infty_{[0, T]}$.
In order to show that $R$ is proper on $\M^\infty_{[0, T]}$, we
need to show that any sequence in a sublevel set of $R$ has a
convergent subsequence.
Suppose that $\{ m_k\}_{k=1}^\infty \subset \M^\infty$   is a sequence with
$\t(m_k)\leq T$ and $R(m_k)\leq \ol{R}$. 
After passing to a subsequence,
we may assume that $\t(m_k)\ra  t_\infty\in [0,\infty)$.  
Then the regions 
$P( m_k,\frac{r}{100}, r^2) \cup P( m_k,\frac{r}{100}, - r^2)$ will intersect
the time slice $\M_{t_\infty}$ in regions whose volume 
is bounded below by $\const r^3$.  By the volume bound in part (4) of
Theorem \ref{thm_convergence_flow_with_surgery}, only finitely many of these
can be disjoint.  Therefore,  after passing to a subsequence, 
$\{ m_k\}_{k=1}^\infty$ is contained in 
$P_+(m_\ell,\frac{r}{2})
\cup P_-(m_\ell,\frac{r}{2})$ for 
some $\ell$. As 
$P_+(m_\ell,\frac{r}{2})
\cup P_-(m_\ell,\frac{r}{2})$
has compact closure in $P_+(m_\ell,r) \cup P_-(m_\ell,r)$, a 
subsequence of $\{ m_k\}_{k=1}^\infty$ converges.  This verifies part
(a) of Definition \ref{def_singular_ricci_flow}.
\end{proof}

\section*{Part II}
\section{Basic properties of singular Ricci flows}
\label{sec_singular_ricci_flows_basic_properties}

In this section we prove some initial structural properties of
Ricci flow spacetimes and singular Ricci flows.
In Subsection \ref{subsec6.1} we justify the maximum principle on a Ricci flow
spacetime and apply it to a get a lower scalar curvature bound.
In Subsection \ref{subsec6.2} we prove some results about volume evolution
for Ricci flow spacetimes which satisfy certain assumptions, that
are satisfied in particular for singular Ricci flows. The main
result in Subsection \ref{subsec6.3} says that if $\M$ is a singular Ricci
flow and $\gamma_0, \gamma_1 : [t_0, t_1] \ra \M$ are two time-preserving
curves, such that $\gamma_0(t_1)$ and $\gamma_1(t_1)$ are in the
same connected component of $\M_{t_1}$, then
$\gamma_0(t)$ and $\gamma_1(t)$ are in the
same connected component of $\M_{t}$ for all $t \in [t_0, t_1]$.   

We recall the notion of a Ricci flow spacetime from 
Definition \ref{def_ricci_flow_spacetime}. In this section, 
we will consider it to only be defined for nonnegative time, i.e.
$\t$ takes value in $[0, \infty)$.
We also recall the metrics
$g_{\M}$ and $g_{\M}^{qp}$ from Definition \ref{spacetimemetric}. 
Let $n+1$ be the dimension of $\M$.
Our notation is
\begin{itemize}
\item $\M_t = \t^{-1}(t)$,
\item $\M_{[a,b]} = \t^{-1}([a,b])$ and
\item $\M_{\le T} = \t^{-1}([0, T])$.
\end{itemize}

\subsection{Maximum principle and scalar curvature} \label{subsec6.1}

In this subsection we prove a maximum principle on Ricci flow
spacetimes and apply it to get a lower bound on scalar curvature.

\begin{lemma} \label{maximumprin}
Let $\M$ be a Ricci flow spacetime.
Given $T \in (0, \infty)$,
let $X$ be a smooth vector field on $\M_{\le T}$ with $X \t = 0$.
Given a smooth function
$F : \R \times [0, T] \rightarrow \R$, 
suppose that $u \in C^\infty( \M_{\le T})$ is
a proper function, bounded 
above,
which satisfies
\begin{equation}
\partial_{\t} u \le \triangle_{g(\t)} u + X u + F(u,\t).
\end{equation}
Suppose further that $\phi : [0, T] \rightarrow \R$ satifies
\begin{equation}
\partial_{\t} \phi = F(\phi(\t), \t)
\end{equation}
with initial condition $\phi(0) = \alpha \in \R$.
If $u \le \alpha$ on $\M_0$ then $u \le \phi \circ \t$ on $\M_{\le T}$.
\end{lemma}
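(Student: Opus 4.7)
The plan is to argue by contradiction using the usual strict-inequality perturbation trick from parabolic maximum principles. The normal role of compactness of the time slices (which fails here, since $\M_t$ can be noncompact or incomplete) is taken over by the properness hypothesis on $u$. First I would fix small $\eps > 0$ and let $\phi_\eps : [0, T] \to \R$ solve the perturbed ODE $\partial_\t \phi_\eps = F(\phi_\eps, \t) + \eps$ with $\phi_\eps(0) = \alpha + \eps$. By continuous dependence of ODE solutions on initial data and parameters, for $\eps$ sufficiently small $\phi_\eps$ exists on $[0, T]$ and $\phi_\eps \to \phi$ uniformly as $\eps \to 0$. Setting $w_\eps := u - \phi_\eps \circ \t$, it suffices to prove $w_\eps < 0$ on $\M_{\le T}$, since letting $\eps \to 0$ then yields $u \le \phi \circ \t$.

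Suppose for contradiction that $S_\eps := w_\eps^{-1}([0, \infty))$ is nonempty. Since $u$ is bounded above by some $C < \infty$ and $|\phi_\eps| \le C_\eps$ on $[0,T]$, the set $S_\eps$ is contained in $u^{-1}([-C_\eps, C])$, which is compact by properness of $u$; as $S_\eps$ is closed in $\M_{\le T}$, it too is compact. On $\M_0$ one has $w_\eps \le -\eps < 0$, so $S_\eps$ is disjoint from $\M_0$ and $t^* := \min_{m \in S_\eps} \t(m) > 0$. Pick $m^* \in S_\eps \cap \M_{t^*}$. By minimality of $t^*$, we have $w_\eps < 0$ on $\M_{<t^*}$. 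If some $\tilde m \in \M_{t^*}$ had $w_\eps(\tilde m) > 0$, then flowing $\tilde m$ backward a short distance along $\partial_\t$ and using continuity would produce points of $S_\eps$ at times strictly less than $t^*$, a contradiction; hence $\sup_{\M_{t^*}} w_\eps \le 0$. Combined with $w_\eps(m^*) \ge 0$, this forces $w_\eps(m^*) = 0$, so $m^*$ is a spatial maximum of $w_\eps$ on $\M_{t^*}$ with value zero.

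Since $\phi_\eps \circ \t$ is spatially constant on $\M_{t^*}$, the standard first- and second-order conditions at the spatial maximum give $Xu(m^*) = 0$ and $\Delta_{g(\t)} u(m^*) \le 0$. Flowing backward from $m^*$ along $\partial_\t$ and using $w_\eps < 0$ at earlier times gives $\partial_\t w_\eps(m^*) \ge 0$. Combining with the differential inequality for $u$ and the identity $u(m^*) = \phi_\eps(t^*)$,
$$0 \le \partial_\t w_\eps(m^*) \le \Delta_{g(\t)} u(m^*) + Xu(m^*) + F(u(m^*),t^*) - F(\phi_\eps(t^*),t^*) - \eps \le -\eps,$$
a contradiction. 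Therefore $w_\eps < 0$ on $\M_{\le T}$, and letting $\eps \to 0$ completes the proof. The principal obstacle is precisely the possible noncompactness or incompleteness of the time slices, which rules out a direct appeal to the compact maximum principle; the properness hypothesis on $u$ is what supplies enough compactness of sublevel sets in $\M_{\le T}$ to locate a genuine first-contact point $m^*$ and to justify both the spatial maximum property and the one-sided time-derivative sign there.
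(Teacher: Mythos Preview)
Your proof is correct and follows essentially the same approach as the paper: introduce the perturbed ODE $\phi_\eps$, use properness of $u$ (in place of compactness of time slices) to locate a first-contact point $m^*$ where $u(m^*)=\phi_\eps(t^*)$, and derive a contradiction via the standard first- and second-order conditions at a spatial maximum together with the one-sided time derivative. The paper is terser and simply refers to Topping's book for the endgame at $m^*$, but you have correctly filled in exactly those details.
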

\begin{proof}
As in \cite[Pf. of Theorem 3.1.1]{Topping_lectures}, for $\epsilon > 0$, we
consider the ODE
\begin{equation}
\partial_t \phi_\epsilon = F(\phi_\epsilon(t), t) + \epsilon
\end{equation}
with initial condition $\phi_\epsilon(0) = \alpha + \epsilon$.
It suffices to show that for
all small $\epsilon$ we have $u < \phi_\epsilon$ on $\M_{\le T}$.

If not then we can find some $\epsilon > 0$ so that the property 
$u < \phi_\epsilon$ fails on $\M_{\le T}$.  As $u$ is proper and
bounded
above,
there is a first time $t_0$ so that the property
fails on $\M_{t_0}$, and an $m \in \M_{t_0}$ so that
$u(m) = \phi_\epsilon(t_0)$. The rest of the argument is the same
as in \cite[Pf. of Theorem 3.1.1]{Topping_lectures}.
\end{proof}

\begin{lemma} \label{scalarlem}
Let $\M$ be a Ricci flow spacetime.
Suppose that for each $T \ge 0$, the scalar curvature $R$
is proper and bounded below on $\M_{\le T}$.
Suppose that the initial scalar curvature bounded below by
$-C$, for some $C \ge 0$. Then
\begin{equation} \label{scalarcurv}
R(m) \: \ge \: - \: \frac{C}{1+\frac{2}{n} C \t(m)}.
\end{equation}
\end{lemma}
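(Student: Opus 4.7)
\medskip

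\noindent\textbf{Proof plan.}
The plan is to apply the maximum principle of Lemma~\ref{maximumprin} to $u = -R$ and use the standard reaction term from the scalar curvature evolution equation of Ricci flow.

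First I would recall that in any Ricci flow, the scalar curvature satisfies
\begin{equation}
\D_{\t} R \;=\; \triangle_{g(\t)} R + 2|\ric|^2 \;\ge\; \triangle_{g(\t)} R + \tfrac{2}{n} R^2,
\end{equation}
where the inequality $|\ric|^2 \ge \tfrac{1}{n} R^2$ comes from Cauchy--Schwarz on the eigenvalues of the Ricci tensor in dimension $n$. Since a Ricci flow spacetime reduces locally to an ordinary Ricci flow in any chart adapted to $\D_{\t}$, this pointwise differential inequality holds on all of $\M$. Setting $u := -R$, we obtain
\begin{equation}
\D_{\t} u \;\le\; \triangle_{g(\t)} u - \tfrac{2}{n} u^2,
\end{equation}
so $u$ satisfies the hypothesis of Lemma~\ref{maximumprin} with $X = 0$ and $F(u,\t) = -\tfrac{2}{n} u^2$.

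Next I would verify the regularity hypotheses. By assumption $R$ is proper and bounded below on $\M_{\le T}$ for each $T$; since $(-R)^{-1}([a,b]) = R^{-1}([-b,-a])$ is compact and $R \ge -M$ implies $-R \le M$, it follows that $u$ is proper and bounded above on $\M_{\le T}$. Thus Lemma~\ref{maximumprin} applies. Assuming first that $C > 0$, the comparison ODE $\phi' = -\tfrac{2}{n}\phi^2$ with $\phi(0) = C$ is solved by separation of variables:
\begin{equation}
\phi(t) \;=\; \frac{C}{1+\tfrac{2}{n}C\,t}.
\end{equation}
The initial hypothesis $R \ge -C$ on $\M_0$ translates to $u \le C$ on $\M_0$, so Lemma~\ref{maximumprin} yields $u \le \phi \circ \t$ on $\M_{\le T}$, i.e.\ the desired inequality~\eqref{scalarcurv}.

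Finally, the case $C = 0$ is handled by observing that the hypothesis $R \ge 0$ on $\M_0$ implies $R \ge -\eps$ on $\M_0$ for every $\eps > 0$; applying the case already proved with $C$ replaced by $\eps$ and letting $\eps \downarrow 0$ gives $R \ge 0$ everywhere on $\M_{\le T}$, which is exactly \eqref{scalarcurv} for $C = 0$. There is no serious obstacle here: the only mild subtlety is the properness bookkeeping needed to apply Lemma~\ref{maximumprin} to the non-obviously-proper quantity $-R$, which is handled cleanly by the standing hypothesis that $R$ itself is proper and bounded below on each $\M_{\le T}$.
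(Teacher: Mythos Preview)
Your proof is correct and takes essentially the same approach as the paper: apply Lemma~\ref{maximumprin} to $u=-R$, using the standard evolution inequality $\partial_{\t} R \ge \triangle R + \tfrac{2}{n}R^2$ and the comparison ODE $\phi' = -\tfrac{2}{n}\phi^2$. The paper's proof is a one-line reference to this standard argument; you have simply spelled out the details (the separate treatment of $C=0$ is unnecessary, since the ODE and formula work directly there as well).
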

\begin{proof}
Since $R$ is proper on $\M_{\le T}$, we can apply 
Lemma \ref{maximumprin} to the evolution equation for $- R$
and follow the standard proof to get (\ref{scalarcurv}).
\end{proof}

\subsection{Volume} \label{subsec6.2}

In this subsection we first justify a Fubini-type statement for
Ricci flow spacetimes.  Then we show that certain standard
volume estimates for smooth Ricci flows extend to the setting
of Ricci flow spacetimes under two assumptions : first that the
quasiparabolic metric is complete along worldlines that do
not terminate at the time-zero slice, and second that in any
time slice almost all points have worldlines that extend
backward to time zero.

The Fubini-type statement is the following.

\begin{lemma} \label{vollemma}
Let $\M$ be a Ricci flow spacetime.
Given $0 < t_1 < t_2 < \infty$, 
suppose that $F : \M_{[t_1, t_2]} \ra \R$ is measurable 
and bounded below.
Suppose that $\M_{[t_1,t_2]}$ has finite volume with respect to 
$g_\M$.
Then
\begin{equation}
\int_{\M_{[t_1, t_2]}} F \: \dvol_{g_\M} =
\int_{t_1}^{t_2} \int_{M_t} F \: \dvol_{g(t)} \: dt.
\end{equation} 
\end{lemma}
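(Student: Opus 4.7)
The plan is to recognize $\t : (\M_{[t_1,t_2]}, g_\M) \to ([t_1,t_2], dt^2)$ as a Riemannian submersion and apply the coarea formula. First I would compute $\nabla^{g_\M}\t$. Since $g_\M = \hat g + d\t^2$ with $\hat g(\D_{\t}, \cdot) = 0$, for any $X \in T\M$ one has
\begin{equation*}
g_\M(\D_{\t}, X) = \hat g(\D_{\t}, X) + d\t(\D_{\t})\, d\t(X) = d\t(X),
\end{equation*}
so $\nabla^{g_\M} \t = \D_{\t}$ and $|\nabla^{g_\M}\t|_{g_\M} \equiv 1$. Moreover, the submanifold metric that $g_\M$ induces on each time slice $\M_t = \t^{-1}(t)$ agrees with $g(t)$, since $g_\M|_{\ker(d\t)} = \hat g|_{\ker(d\t)} = g$.

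The smooth coarea formula for the Lipschitz function $\t$ then gives, for every nonnegative measurable $h$ on $\M_{[t_1,t_2]}$,
\begin{equation*}
\int_{\M_{[t_1,t_2]}} h \, \dvol_{g_\M} = \int_{\M_{[t_1,t_2]}} h \,|\nabla^{g_\M}\t|_{g_\M}\, \dvol_{g_\M} = \int_{t_1}^{t_2}\!\int_{\M_t} h \,\dvol_{g(t)}\, dt.
\end{equation*}
This proves the lemma when $F \geq 0$. For general $F$, choose a constant $C \in \R$ with $F + C \geq 0$. Applying the nonnegative identity to the constant function $1$, the finite-volume hypothesis $\vol_{g_\M}(\M_{[t_1,t_2]}) < \infty$ gives $\int_{t_1}^{t_2} \vol(\M_t)\,dt = \vol_{g_\M}(\M_{[t_1,t_2]}) < \infty$; hence both sides of the claimed identity for $F$ can be obtained from those for $F+C$ and for the constant $C$ by subtracting finite quantities.

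The only point that really requires attention is the coarea formula in this possibly incomplete spacetime setting. Rather than cite it in a general form, one can prove what is needed directly by a partition-of-unity argument. Around any $m \in \M$, the flow of $\D_{\t}$ carries a small transverse open set in $\M_{\t(m)}$ to a neighborhood $U$ of $m$ on which there are coordinates $(x,t)$ satisfying $\t = t$ and $\D_{\t} = \partial_t$; in these coordinates $g_\M = g(t) + dt^2$, hence $\dvol_{g_\M} = \dvol_{g(t)} \wedge dt$, and on $U$ the identity reduces to ordinary Fubini--Tonelli. Choosing a locally finite cover of $\M_{[t_1,t_2]}$ by such charts, together with a subordinate partition of unity, and invoking monotone convergence to pass to the global (nonnegative) statement, completes the argument. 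This patching step is the main technical obstacle, but it is essentially routine once the local product structure and the pointwise identity $|\nabla^{g_\M}\t|_{g_\M}=1$ have been established.
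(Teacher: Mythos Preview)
Your proposal is correct and follows essentially the same route as the paper: reduce to the nonnegative case, use local product charts produced by the flow of $\D_{\t}$ in which $\dvol_{g_\M}=\dvol_{g(t)}\wedge dt$, and globalize via a partition of unity and Tonelli/monotone convergence. The coarea framing is a pleasant conceptual gloss, but the substance of your argument in the final paragraph coincides with the paper's proof.
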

\begin{proof}
As $F$ is bounded below on $\M_{[t_1, t_2]}$, for the purposes
of the proof we can add a
constant to $F$ and assume that it is positive.
Given $m \in \M_{[t_1, t_2]}$, there is an time-preserving embedding
$e : (a,b) \times X \rightarrow \M$ with $e_* (\partial_s) = \partial_{\t}$
(where $s \in (a,b)$)
whose image is a neighborhood of $m$.
We can cover $\M_{[t_1, t_2]}$ by a countable collection
$\{P_i\}$ of such neighborhoods, with a subordinate partition
of unity $\{\phi_i\}$. Let $e_i : (a_i, b_i) \times X_i \rightarrow
P_i$ be the corresponding map. As
\begin{equation}
\int_{\M_{[t_1,t_2]}} \phi_i \: F \: \dvol_{g_\M} =
\int_{t_1}^{t_2}
 \int_{e_i(\{t\} \times X_i)}
\phi_i \: F \: \dvol_{g(t)} \: dt,
\end{equation}
we obtain
\begin{align}
\int_{\M_{[t_1,t_2]}}  F \: \dvol_{g_\M} & =
\sum_i \int_{\M_{[t_1,t_2]}} \phi_i \: F \: \dvol_{g_\M} \\
& = \sum_i \int_{t_1}^{t_2}
\int_{e_i(\{t\} \times X_i)} \phi_i \: F \: \dvol_{g(t)} \: dt \notag \\
& = \int_{t_1}^{t_2} \sum_i 
\int_{e_i(\{t\} \times X_i)} \phi_i \: F \: \dvol_{g(t)} \: dt \notag \\
& = \int_{t_1}^{t_2} 
\int_{\M_t} F \: \dvol_{g(t)} \: dt. \notag 
\end{align}

This proves the lemma.
\end{proof}

We now prove some results about the behavior of 
volume in Ricci flow spacetimes.

\begin{proposition} \label{vol1}
Let $\M$ be a Ricci flow spacetime.
Suppose that 
\begin{enumerate}
\renewcommand{\theenumi}{\alph{enumi}}
\item The quasiparabolic metric 
$g_{\M}^{qp}$ of Definition 
  \ref{spacetimemetric} is complete along worldlines.
\item If $B_t \subset \M_t$ is the set of points whose maximal worldline 
does not extend
backward to time zero, then $B_t$ has measure zero with 
respect to $\dvol_{g(t)}$, for each $t \ge 0$.
\item The initial time slice $\M_0$ has volume $\V(0) < \infty$.
\item 
The scalar curvature is proper and
bounded below on time slabs $\M_{\le T}$,
and the initial time slice has scalar curvature bounded below
by $-C$, with $C \ge 0$.
\end{enumerate} 

Let $\V(t)$ be the
volume of $\M_t$.
Then
\begin{enumerate}
\item 
$\V(t) \le \V(0) \left(1 + \frac{2}{n} C t \right)^{\frac{n}{2}}$.
\item $R$ is integrable on $\M_{[t_1, t_2]}$.
\item For all $t_1 < t_2$,
\begin{equation}
\V(t_2) - \V(t_1) = - \int_{\M_{[t_1, t_2]}} R \: \dvol_{g_{\M}}.
\end{equation}
\item The volume function $\V(t)$ is absolutely continuous.
\item
Given $0 \le t_1 \le t_2 < \infty$, we have
\begin{equation}
\V(t_2) - \V(t_1) \le 
\frac{C}{1 + \frac{2}{n} C t_1} 
\left( 1 +  \frac{2}{n} C t_2 \right)^{\frac{n}{2}}
\V(0) \cdot (t_2 - t_1). 
\end{equation}
\end{enumerate}
\end{proposition}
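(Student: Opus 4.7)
The plan is to combine a Jacobian/change-of-variables argument through the forward flow from $\M_0$ with the Fubini statement of Lemma \ref{vollemma}, reducing everything to computations on the ``good part'' of $\M$ where backward worldlines reach the time-zero slice.

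For (1), put $\M_0^t=\{x\in\M_0:\gamma_x(t)\text{ is defined}\}$, where $\gamma_x$ is the maximal forward integral curve of $\D_\t$ starting at $x$, and let $\Phi_t:\M_0^t\to\M_t$ be the time-$t$ flow. Then $\Phi_t$ is a smooth embedding onto $\M_t\setminus B_t$, with pointwise Jacobian $J_t(x):=\det D\Phi_t(x)$ satisfying $\D_t J_t=-R\,J_t$ along $\gamma_x$ and $J_0\equiv 1$. Lemma \ref{scalarlem} gives $R\ge-\tfrac{C}{1+\frac{2}{n}Cs}$, so exponentiating yields $J_t(x)\le(1+\tfrac{2}{n}Ct)^{n/2}$. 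By assumption (b), $\V(t)=\int_{\M_0^t}J_t\,\dvol_{g(0)}$, and integrating the pointwise bound against (c) proves (1).

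For (2), first establish $\vol_{g_{\M}}(\M_{[t_1,t_2]})<\infty$ so that Lemma \ref{vollemma} becomes applicable. The key sub-step is that the bad part $\M^{bad}_{[t_1,t_2]}=\bigcup_{t\in[t_1,t_2]}B_t$ is $g_{\M}$-null: cover $\M_{[t_1,t_2]}$ by countably many time-preserving product charts $e:(a,b)\times X\to\M$ with $e_*\D_s=\D_\t$; in each chart the bad set is sliced by $t$ as $\{t\}\times e_t^{-1}(B_t)$, each slice has measure zero by (b), and Euclidean Fubini on the chart together with countable subadditivity gives the claim. The complementary good part is the image of the embedding $\Phi:\{(x,t):x\in\M_0,\,0\le t<b_x\}\to\M$, where $b_x=\sup\{t:\gamma_x(t)\text{ defined}\}$; its $g_\M$-volume form is $J_t(x)\,dt\wedge \dvol_{g(0)}(x)$, so $\vol_{g_\M}(\M_{[t_1,t_2]})=\int_{t_1}^{t_2}\V(t)\,dt<\infty$ by (1). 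Then Lemma \ref{vollemma} and $R_-\le C$ give $R_-$ integrable; for $R_+$, use the pointwise bound $R_+J_s\le -\D_s J_s+CJ_s$ on each good worldline, integrate over $[t_1,\min(b_x,t_2)]$ using that $J$ is bounded, and then integrate over $\M_0$ to bound $\int R_+\,\dvol_{g_{\M}}$.

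For (3), the change of variables through $\Phi$ and Fubini give
\[\int_{\M_{[t_1,t_2]}}R\,\dvol_{g_{\M}}=\int_{\M_0^{t_1}}\int_{t_1}^{\min(b_x,t_2)}R(\gamma_x(s))J_s(x)\,ds\,\dvol_{g(0)}(x)=\int_{\M_0^{t_1}}\bigl[J_{t_1}(x)-J_{\min(b_x,t_2)}(x)\bigr]\dvol_{g(0)}.\]
For $x\in\M_0^{t_2}$ the upper term is $J_{t_2}(x)$. For $x\in\M_0^{t_1}\setminus\M_0^{t_2}$, i.e.\ $t_1<b_x<t_2$, assumption (a) --- read as completeness of $g^{qp}_\M$ at every endpoint of a worldline not lying on $\M_0$ --- forces $\int_0^{b_x}\sqrt{1+R^2}\,ds=\infty$, which combined with $R\ge-C$ upgrades to $\int_0^{b_x}R\,ds=+\infty$, hence $\lim_{s\to b_x^-}J_s(x)=0$. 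Reassembling, the integral equals $\V(t_1)-\V(t_2)$, proving (3). Conclusion (4) then follows because $(t_1,t_2)\mapsto\int_{\M_{[t_1,t_2]}}R\,\dvol_{g_{\M}}$ is absolutely continuous once $|R|$ is integrable. For (5), drop the nonnegative $R_+$ contribution and combine $R_-\le\frac{C}{1+\frac{2}{n}Ct_1}$ with $\vol_{g_{\M}}(\M_{[t_1,t_2]})=\int_{t_1}^{t_2}\V(t)\,dt\le\V(0)(1+\frac{2}{n}Ct_2)^{n/2}(t_2-t_1)$.

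The main obstacle is justifying $\lim_{s\to b_x^-}J_s(x)=0$ along good worldlines that terminate at finite forward time --- this is what kills the would-be boundary contribution in the change-of-variables and yields a clean volume formula. It hinges on interpreting (a) as controlling completeness at every non-time-zero endpoint of a worldline, and on upgrading finite quasiparabolic length divergence to divergence of $\int R\,ds$ using the lower scalar curvature bound. A secondary technical point is that the finiteness $\vol_{g_{\M}}(\M_{[t_1,t_2]})<\infty$ cannot be obtained from Lemma \ref{vollemma} directly (whose hypothesis already requires this); it must be extracted by the good/bad decomposition, with the local product-chart Fubini supplying the null set statement for the bad part from the slicewise hypothesis (b).
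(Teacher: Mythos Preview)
Your proof is correct and follows essentially the same strategy as the paper: parametrize by worldlines, use the Jacobian evolution $\partial_s J_s=-RJ_s$, and exploit that $J_s\to 0$ at any forward termination time via the $g^{qp}_\M$-completeness hypothesis to kill the boundary term. The only organizational differences are that the paper takes $\M_{t_1}$ rather than $\M_0$ as the base of the flow parametrization (decomposing $\M_{t_1}=X_1\cup X_2$ according to whether worldlines reach $\M_{t_2}$, and observing that the set $\M'''_{[t_1,t_2]}$ of points whose worldlines miss $\M_{t_1}$ is null by the same slicewise argument you give), and it deduces (2) as a corollary of (3) rather than proving integrability of $R_\pm$ first.
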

\begin{proof}
Given $0 \le t_1 < t_2 < \infty$,
there is a partition $\M_{[t_1, t_2]} =
\M_{[t_1, t_2]}^\prime \cup \M_{[t_1, t_2]}^{\prime \prime} \cup 
\M_{[t_1, t_2]}^{\prime \prime \prime}$, where
\begin{enumerate}
\item A point in $\M_{[t_1, t_2]}^\prime$ has a   worldline
that intersects both $M_{t_1}$ and $M_{t_2}$.
\item A point in $\M_{[t_1, t_2]}^{\prime \prime}$ has a   worldline
that intersects $M_{t_1}$ but not $M_{t_2}$.
\item A point in $\M_{[t_1, t_2]}^{\prime \prime \prime}$ 
has a   worldline
that does not intersect $M_{t_1}$.
\end{enumerate} 
By our assumptions, 
$\M_{[t_1, t_2]}^{\prime \prime \prime} \subset
\bigcup_{t \in [t_1, t_2]} B_t$ has measure zero
with respect to $\dvol_{g_{\M}}$; c.f. the proof of
Lemma \ref{vollemma}. Put
$X_1 = \M_{[t_1, t_2]}^\prime \cap M_{t_1}$ and 
$X_2 = \M_{[t_1, t_2]}^{\prime \prime} \cap M_{t_1}$.
For $s \in [t_1, t_2]$, 
there is a natural embedding $i_s : X_1 \rightarrow \M_s$
coming from flowing along worldlines. The complement
$M_{t_2} - i_{t_2}(X_1)$ has measure zero.
Thus
\begin{equation} \label{add1}
\V(t_2) = \int_{M_{t_2}} \dvol_{g(t_2)} = 
\int_{X_1} i_{t_2}^* \dvol_{g(t_2)}.
\end{equation}
and
\begin{equation} \label{add2}
\V(t_1) = \int_{X_1} \dvol_{g(t_1)} + \int_{X_2} \dvol_{g(t_1)}
\end{equation}

Given $x \in X_1$, let $\gamma_x : [t_1, t_2] \rightarrow \M_{[t_1,t_2]}$
be its worldline. Put
\begin{equation} \label{jacobian}
J_s(x) = \frac{i_s^* \dvol_{g(s)}}{\dvol_{g(t_1)}} (x).
\end{equation}
From the Ricci flow equation,
\begin{equation} \label{jacrf}
J_s(x) = e^{- \int_{t_1}^s R(\gamma_x(u)) \: du}.
\end{equation} 
Using Lemma \ref{scalarlem},
\begin{align}
\V(t_2) & = \int_{X_1} J_{t_2}(x) \: \dvol_{g(t_1)}(x) \le
\int_{X_1} e^{\int_{t_1}^{t_2} \frac{C}{1 + \frac{2C}{n} u} \: du}
\: \dvol_{g(t_1)} \\
& = \V(t_1) \left( 
\frac{1 + \frac{2C}{n} t_2}{1 + \frac{2C}{n} t_1} \right)^{\frac{n}{2}}.
\notag
\end{align}
When $t_1 = 0$, this proves part (1) of the proposition.

Next,
\begin{align} \label{add3}
\int_{X_1}(i_{t_2}^* \dvol_{g(t_2)}-\dvol_{g(t_1})
& =\int_{X_1} \left(
J_{t_2}(x)
  -1 \right) \dvol_{g(t_1)}(x) \\
& =\int_{X_1} \int_{t_1}^{t_2} \frac{dJ_{s}(x)}{ds}
 \: ds \: \dvol_{g(t_1)}(x)
\notag \\
& = - \int_{X_1} \int_{t_1}^{t_2} R(\gamma_x(s)) 
\: J_s(x)
 \: ds \: \dvol_{g(t_1)}(x)
\notag \\
& = - \int_{t_1}^{t_2} \int_{X_1} R \: i_s^* \dvol_{g(s)}  \:  ds
\notag \\
& = -\int_{\M_{[t_1,t_2]}^{\prime}} R \: \dvol_{g_{\M}}, \notag
\end{align}
where we applied Lemma \ref{vollemma} with
$F \: = \:  R \: 1_{\M_{[t_1,t_2]}^{\prime}}$ in the last step.

Given $x \in X_2$, let $e(x) \in (t_1, t_2)$ be the supremal
extension time of its worldline.
From the completeness of $g_{\M}^{qp}$ on worldlines that do not terminate
at the time-zero slice,
\begin{equation} \label{used1}
\int_{t_1}^{e(x)} R(\gamma_x(u)) \: du = \infty.
\end{equation}
Thus $\lim_{s \rightarrow e(x)} J_s(x) = 0$, so
\begin{align} \label{add4}
- \int_{X_2} \dvol_{g(t_1)}
& = \int_{X_2} \int_{t_1}^{e(x)} \frac{dJ_s(x)}{ds}
 \: ds \: \dvol_{g(t_1)}(x) \\
& = - \int_{X_2} \int_{t_1}^{e(x)} R(\gamma_x(s)) 
\: J_s(x)
 \: ds \: \dvol_{g(t_1)}(x)
\notag \\
& = - \int_{X_2} \int_{t_1}^{e(x)} R
 \: ds \: i_s^* \dvol_{g(s)}
\notag \\
& = -\int_{\M_{[t_1,t_2]}^{\prime \prime}} R \: \dvol_{g_{\M}}. \notag
\end{align}

Part (3) of the proposition follows from combining equations 
(\ref{add1}), (\ref{add2}), (\ref{add3}) and
(\ref{add4}). Part (2) of the proposition is now an immediate consequence.

By Lemma \ref{vollemma} and part (3) of the proposition,
the function $t\mapsto \int_{\M_t}R\dvol$ is locally-$L^1$ on
$[0,\infty)$ with respect to Lebesgue measure. 
This implies part (4) of the 
proposition.

To prove part (5) of the proposition,
using Lemma \ref{scalarlem} and parts (1) and (3)
of the proposition, we have
\begin{align}
\V(t_2) - \V(t_1) & = 
- \int_{t_1}^{t_2} \int_{\M_t} R \: \dvol_{g(t)} \: dt \\
& \le \int_{t_1}^{t_2} \frac{C}{1 + \frac{2}{n} C t} \V(t) \: dt \notag \\
& \le
\frac{C}{1+ \frac{2}{n} C t_1} 
\left( 1 + \frac{2}{n} Ct_2 \right)^{\frac{n}{2}}
\V(0) \cdot (t_2 - t_1). \notag
\end{align}
This proves the proposition.
\end{proof}

\subsection{Basic structural properties of singular Ricci flows}
\label{subsec6.3}

In this subsection we collect a number of properties of singular Ricci
flows, the latter being 
in the sense of Definition \ref{def_singular_ricci_flow}. 
We first show the completeness of the quasiparabolic metric.

\begin{lemma} \label{quasip}
If $\M$ is a singular Ricci flow  then the
quasiparabolic metric $g_{\M}^{qp}$ of Definition 
\ref{spacetimemetric} is complete away from the
time-zero slice.
\end{lemma}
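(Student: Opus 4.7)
The plan is to show that every Cauchy sequence $\{m_k\}$ in $(\M, g^{qp}_{\M})$ with $t_\ast := \inf_k \t(m_k) > 0$ converges in $\M$. Set $D := \sup_{j,k} d_{g^{qp}_{\M}}(m_j, m_k) < \infty$. The strategy is to show that the $m_k$ are trapped in a set on which the scalar curvature is bounded above and the time is bounded above, and then use condition (a) of Definition \ref{def_singular_ricci_flow} to conclude compactness.

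First I would extract a uniform upper bound on $\t(m_k)$. For any $g^{qp}_{\M}$-unit-speed curve $\gamma$, decomposing $\dot\gamma$ into its spatial and vertical parts and using $g^{qp}_{\M}=(1+R^2)^{1/2}\hat g+(1+R^2)d\t^2$ yields $|d\t(\dot\gamma)| \le (1+R^2)^{-1/2} \le 1$ and $\|\dot\gamma^\perp\|_g \le (1+R^2)^{-1/4}$. Joining $m_1$ to $m_k$ by a curve of $g^{qp}_{\M}$-length at most $D+1$ gives $\t(m_k) \le \t(m_1)+D+1 =: T$.

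The heart of the argument is a uniform upper bound on $R(m_k)$. Let $\gamma_k:[0,L_k]\to\M$ be a $g^{qp}_{\M}$-unit-speed curve from $m_1$ to $m_k$ with $L_k\le D+1$; by the previous step, $\t(\gamma_k(s))\in[t_\ast,T]$. Wherever $R(\gamma_k(s))\ge r(\t)^{-2}$, the $r$-canonical neighborhood assumption supplies the gradient estimates $|\nabla R|\le\eta R^{3/2}$ and $|\partial_\t R|\le \eta R^2$. Combining these with the speed bounds above gives
\begin{equation*}
\left|\frac{d R(\gamma_k(s))}{ds}\right| \le \eta R^2(1+R^2)^{-1/2} + \eta R^{3/2}(1+R^2)^{-1/4} \le 2\eta R,
\end{equation*}
where the last inequality uses $R\ge r(T)^{-2}>1$. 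Set $h:=r(T)^{-2}$ and let $f_k(s):=\max(R(\gamma_k(s)),h)$; then $f_k$ is Lipschitz, its derivative vanishes on the set where $R<h$, and satisfies $|f_k'| \le 2\eta f_k$ elsewhere. Integration yields $R(m_k)\le f_k(L_k) \le \max(R(m_1),h)\, e^{2\eta(D+1)}=: C_1$, uniformly in $k$. The quasiparabolic conformal factors $(1+R^2)^{1/2}$ on $\hat g$ and $(1+R^2)$ on $d\t^2$ are chosen precisely so that both the spatial and temporal contributions to $dR/ds$ are $O(R)$; this is the main technical observation and the part I expect to be the main obstacle, since one must track how $\gamma_k$ may cross in and out of the canonical-neighborhood regime (handled cleanly by the truncation $f_k=\max(R,h)$).

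Finally, $\{m_k\}$ lies in the set $K:=\{m\in\M_{\le T}: \t(m)\ge t_\ast,\ R(m)\le C_1\}$. By condition (a) of Definition \ref{def_singular_ricci_flow}, $R$ is proper and bounded below on $\M_{\le T}$, so $K$ is compact in $\M$. On $K$ the metrics $g_{\M}$ and $g^{qp}_{\M}$ are bi-Lipschitz (with constants depending on $C_1$), hence induce the same topology; consequently $\{m_k\}$ has a convergent subsequence in $\M$, and the Cauchy property forces the whole sequence to converge. This establishes completeness of $g^{qp}_{\M}$ on $\t^{-1}([t_\ast,\infty))$ for every $t_\ast>0$, which is the content of the lemma.
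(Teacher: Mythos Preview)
Your proof is correct and uses the same core computation as the paper: the canonical-neighborhood gradient estimates $|\nabla R|\le\eta R^{3/2}$, $|\partial_\t R|\le\eta R^2$ combined with the conformal weights of $g^{qp}_{\M}$ give $|d(\log R)/ds|\le C$ along unit-speed $g^{qp}_{\M}$-curves in the high-curvature region; the paper packages this as showing that any curve going to infinity (with $\t$ bounded away from $0$) has infinite $g^{qp}_{\M}$-length, while you package it as trapping Cauchy sequences in a set with bounded $R$ and $\t$, hence compact by properness of $R$. One small remark: your claim $r(T)^{-2}>1$ is not guaranteed by Definition~\ref{def_singular_ricci_flow}, but your displayed inequality in fact holds for all $R>0$ since $R^2(1+R^2)^{-1/2}\le R$ and $R^{3/2}(1+R^2)^{-1/4}\le R$, so this is harmless.
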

\begin{proof}
Suppose that $\gamma : [0, \infty) \rightarrow \M$ is a curve
that goes to infinity in $\M$, with
$\t \circ \gamma$ bounded away from zero. We want to show that
its quasiparabolic length is infinite. 
If $\t \circ \gamma$ is not bounded then
the quasiparabolic length of $\gamma$ is infinite
from the definition of $g_{\M}^{qp}$, so we can assume
that $\t \circ \gamma$ takes value in some interval $[0, T]$.
Since $R$ is proper and bounded below on $\M_{\le T}$, we have
$\lim_{s \rightarrow \infty} R(\gamma(s)) = \infty$.
After truncating the initial part of $\gamma$, we can assume that
$R(\gamma(s)) \ge r(T)^{-2}$ for all $s$. In particular, 
each point $\gamma(s)$ is in a canonical neighborhood.
Now
\begin{equation}
\frac{dR(\gamma(s))}{ds} = \frac{\partial R}{\partial t} 
\frac{dt}{ds} + 
\langle \nabla R, \gamma^\prime \rangle_g,
\end{equation}
so
\begin{equation}
\left| \frac{dR(\gamma(s))}{ds} \right|^2 \le
2 \left(
\left| \frac{\partial R}{\partial t} \right|^2
\left| \frac{dt}{ds} \right|^2 +  
\left| \nabla R \right|_g^2 \left| \gamma^\prime \right|_g^2
\right).
\end{equation}

The gradient estimates in (\ref{gradest}), of
the form
\begin{equation}
|\nabla R| < \const R^{\frac32}, \: \: \: \: \: \:
|\partial_t R| < \const R^2,
\end{equation}
are valid for points in a canonical neighborhood of a singular
Ricci flow solutions.
Then
\begin{equation}
\left| \frac{dR(\gamma(s))}{ds} \right|^2 \le C
R^2 
\left| \frac{d\gamma}{ds} \right|_{g^{qp}_{\M}}^2  
\end{equation}
for some universal $C < \infty$.
We deduce that
\begin{equation}
\int_0^\infty \left| R^{-1} \frac{dR}{ds} \right| \: ds
\: \le \: C^{\frac12} \int_0^\infty \left| \frac{d\gamma}{ds} 
\right|_{g^{qp}_{\M}}
\: ds.
\end{equation}
Since the left-hand side is infinite, the quasiparabolic length of
$\gamma$ must be infinite.  This proves the lemma.
\end{proof}

The next lemma gives the existence of unscathed forward and
backward parabolic neighborhoods of
a certain size around a point, along with geometric bounds on those
neighborhoods.

\begin{lemma} \label{singforback}
Let $\M$ be a singular Ricci flow.
Given $T < \infty$, there are numbers 
$\sigma = \sigma(T) > 0$, $i_0 = i_0(T) > 0$ and 
$A_k = A_k(T) < \infty$, $k \ge 0$, with the following
property.  If $m \in \M$ and $\t(m) \le T$, put 
$Q = |R(m)| + r(\t(m))^{-2}$. Then
\begin{enumerate}
\item The forward parabolic ball
$P_+(m, \sigma Q^{- \: \frac12})$
and the backward parabolic ball 
$P_-(m, \sigma Q^{- \: \frac12})$
are unscathed.
\item 
$|\Rm| \le A_0 Q$,
$\inj \ge i_0 Q^{- \: \frac12}$ and
$|\nabla^k \Rm| \le A_k Q^{1 + \frac{k}{2}}$
on the union
$P_+(m, \sigma Q^{- \: \frac12}) \cup
P_-(m, \sigma Q^{- \: \frac12})$
of the forward and backward parabolic balls.
\end{enumerate}
\end{lemma}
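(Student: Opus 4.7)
The plan is to follow the approach of Lemma \ref{lemma2.1} closely, the essential difference being that in the singular Ricci flow setting there is no surgery scale to contend with; the genuinely new issue is verifying that the parabolic neighborhoods are unscathed, i.e.\ compactly contained in $\M$ with smooth Ricci flow throughout. I will first establish a scalar curvature bound $R \le 8Q$ on $P_\pm(m,\eta^{-1}Q^{-1/2})$ for some universal $\eta$, then upgrade this to a full curvature bound via Hamilton-Ivey pinching, then deduce higher derivative bounds via Shi, and finally obtain the injectivity radius bound from $\kappa$-noncollapsing, all on a slightly shrunken neighborhood $P_\pm(m,\sigma Q^{-1/2})$.

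For the scalar curvature bound, the $r$-canonical neighborhood assumption of Definition \ref{def_singular_ricci_flow}(c), together with the gradient estimates (\ref{gradest}) it entails, will permit a standard Perelman-style point-picking argument analogous to the proof of \cite[Lemma 70.1]{Kleiner-Lott (2008)}: were the bound to fail, rescaling and passing to a subsequential blow-up limit would produce an ancient $\kappa$-solution-type limit on which $R$ is large everywhere for a definite past time interval, contradicting the known structure of $\kappa$-solutions (c.f. Lemma \ref{boxlemma}(1)). The low-curvature regime $R(m) < r(\t(m))^{-2}$, in which $Q \le 2r(T)^{-2}$, is handled by the same type of argument with $r(T)^{-2}$ playing the role of the ambient curvature scale, since any excursion either stays at bounded curvature or enters the canonical neighborhood regime where the gradient estimate may be applied.

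The main obstacle will be establishing unscathedness, which has no analog in the surgery setting. The plan is to exploit the fact that, by Definition \ref{def_singular_ricci_flow}(a), $R$ is proper and bounded below on $\M_{\le T+1}$, hence $\{R \le 9Q\} \cap \M_{\le T+1}$ is compactly contained in each time slice. Combined with the scalar curvature bound just established, this will force any worldline through a point of $P_\pm(m,\eta^{-1}Q^{-1/2})$ to extend throughout the parabolic time interval (or back to $\t = 0$), since premature termination of a worldline would require $R \to \infty$ along it, contradicting $R \le 8Q$. With unscathedness secured, the remaining bounds in conclusion (2) follow as in Lemma \ref{lemma2.1}: Hamilton-Ivey pinching (Definition \ref{def_singular_ricci_flow}(b)) promotes the scalar curvature bound to $|\Rm| \le A_0 Q$, distance distortion combined with Shi's local derivative estimates yields $|\nabla^k\Rm| \le A_k Q^{1+\frac{k}{2}}$ on the slightly shrunken neighborhood, and the $\kappa$-noncollapsing in Definition \ref{def_singular_ricci_flow}(c) yields the injectivity radius bound $\inj \ge i_0 Q^{-1/2}$.
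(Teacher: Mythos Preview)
Your proposal is correct and follows the same approach as the paper, whose proof consists of the single sentence ``The proof is the same as that of Lemma \ref{lemma2.1}.'' Your explicit treatment of unscathedness --- using properness of $R$ together with the fact that $R$ blows up along terminating worldlines (equivalently, completeness of $g^{qp}_{\M}$ from Lemma \ref{quasip}) --- is precisely the point where the singular setting differs from the surgery setting of Lemma \ref{lemma2.1}, and the paper leaves this adaptation implicit.
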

\begin{proof}
The proof is the same as that of Lemma \ref{lemma2.1}.
\end{proof}

The next two propositions characterize the high-scalar-curvature
part of a time slice.

\begin{proposition} \label{goodeps}
Let $\M$ be a singular Ricci flow.
For all $\eps_1>0$, there is a scale function $r_1:[0,\infty)\ra (0,\infty)$
with $r_1(t)\leq r(t)$, such that for every point 
$m \in \M$
with $R(m)>r_1(\t(m))^{-2}$
the
$\eps_1$-canonical neighborhood assumption holds, and moreover
$(\M,m)$ is $\eps_1$-modelled on a $\kappa$-solution.
(Recall that here $\kappa=\kappa(t)$, i.e. we are 
suppressing the time dependence in our notation.)
\end{proposition}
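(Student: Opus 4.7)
The plan is to argue by contradiction and extract an ancient $\kappa$-solution as a pointed rescaled limit. Fix $T<\infty$ and suppose that for some $\eps_1>0$ no admissible scale function $r_1$ works on $[0,T]$. Taking $r_1(t)=r(t)/j$, one obtains a sequence $m_j\in\M_{\leq T}$ with
\begin{equation}
R(m_j)\cdot r(\t(m_j))^2 \ge j^2
\end{equation}
such that $(\M,m_j)$ is not $\eps_1$-modelled on any $\kappa$-solution. After passing to a subsequence, $\t(m_j)\ra t_\infty\in[0,T]$ and, in particular, $Q_j:=R(m_j)\ra\infty$.

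I would then parabolically rescale the spacetime by $Q_j$ and consider the pointed sequence $(\M,Q_jg,m_j)$, normalized so that $R(m_j)=1$. Since $R(m_j)>r(\t(m_j))^{-2}$, Definition \ref{def_singular_ricci_flow}(c) puts $m_j$ in an $\eps$-canonical neighborhood, and Lemma \ref{singforback} provides unscathed forward and backward parabolic balls of a definite rescaled radius around $m_j$, with uniform bounds on $|\Rm|$, on all covariant derivatives of curvature, and on the injectivity radius. A standard application of Hamilton's pointed compactness theorem extracts a smooth subsequential limit on this parabolic neighborhood.

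The crux of the proof is to extend this partial limit to an ancient Ricci flow on $(-\infty,0]$. I would do this by iteration: at any point of the existing (rescaled) limit where the scalar curvature is of order one, there is an approximating sequence of points in $\M$ whose unrescaled scalar curvature is comparable to $Q_j$; since backward time intervals of order one in the rescaled flow correspond to backward time intervals of order $Q_j^{-1}$ in $\M$, the relevant times are all within an infinitesimal neighborhood of $t_\infty$, so $r(\t)$ is essentially equal to $r(t_\infty)>0$ and the canonical neighborhood threshold $R>r(\t)^{-2}$ is satisfied. Thus Lemma \ref{singforback} again provides unscathed parabolic balls of definite size with uniform geometric bounds, and a diagonal argument produces a pointed ancient Ricci flow. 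The $\kappa$-noncollapsing condition of Definition \ref{def_singular_ricci_flow}(c) passes to the limit, and the Hamilton-Ivey pinching condition of Definition \ref{def_singular_ricci_flow}(b), being scale non-invariant with the unpinched part suppressed by $Q_j^{-1}$, forces the limit to have nonnegative curvature operator. The limit is therefore a $\kappa$-solution, and by the smooth convergence $(\M,m_j)$ is $\eps_1$-modelled on it for $j$ large, contradicting the construction of the sequence.

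The main obstacle is precisely the backward extension to a genuinely ancient solution: one must ensure at each extension step that the new points encountered still satisfy the canonical neighborhood threshold in the unrescaled flow, so that Lemma \ref{singforback} and the canonical neighborhood bounds can be reapplied uniformly in $j$. Once this is carried out for each slab $\M_{\leq T}$, producing a scale function $r_1^T$, a standard patching procedure (e.g.\ setting $r_1(t)=\inf_{t'\leq t}r_1^{\lceil t'\rceil+1}(t')$ and smoothing) produces the desired globally defined decreasing $r_1:[0,\infty)\ra(0,\infty)$.
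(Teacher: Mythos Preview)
Your proposal is correct and follows essentially the same approach as the paper: the paper's own proof is a sketch referring to \cite[Theorem 52.7]{Kleiner-Lott (2008)}, noting that the pre-existing $r$-canonical neighborhood assumption in Definition~\ref{def_singular_ricci_flow}(c) obviates the point-selection step (Step~1 there) and simplifies the bounded-curvature-at-bounded-distance argument (Step~2), after which Steps~3 and~4 go through unchanged. The one detail the paper flags explicitly that you leave implicit is that time slices of a singular Ricci flow need not be complete, so Hamilton's compactness theorem does not apply directly; Lemma~\ref{singforback} --- which you already invoke --- is precisely what substitutes for completeness.
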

\begin{proof}
The proof is similar to the proof of  \cite[Theorem 52.7]{Kleiner-Lott_perelman_notes}.
Suppose that the proposition is not true.  Then there is a sequence of
singular Ricci flows $\{\M_j\}_{j=1}^\infty$, along with points
$m_j \in \M_j$, that together provide a counterexample. 
After passing to a subsequence, we extract a limiting flow,
as in Step 2 of the proof of \cite[Theorem 52.7]{Kleiner-Lott_perelman_notes}.
In that proof,
the existence of a limiting flow used a point selection argument from
Step 1 of the proof.
In the present case, because of the canonical neighborhood assumption
in the definition of singular Ricci flow, 
we do not have to perform point selection in order to extract a
limiting flow. 

Similarly, the use of the existing canonical
neighborhood assumption simplifies Step 2 of 
the proof of \cite[Theorem 52.7]{Kleiner-Lott_perelman_notes}.
The rest of the proof of the proposition 
is the same as in Steps 3 and 4 of the proof of
\cite[Theorem 52.7]{Kleiner-Lott_perelman_notes}.

As a further point,
time slices are not assumed to be compact
as in \cite[Theorem 52.7]{Kleiner-Lott_perelman_notes}, and
are therefore not necessarily complete. To deal with this, 
there are places in the
proof where one applies Lemma \ref{singforback} to
compensate for any incompleteness.
\end{proof}

\begin{proposition}
\label{prop_large_r_structure}
Let $\M$ be a singular Ricci flow.
For any $T<\infty$ and $\hat\eps>0$, 
there exist  $C_1=C_1(\hat\eps,T)<\infty$ and
$\ol{R}= \ol{R}(\hat\eps,T) < \infty$
such that for every 
$t\leq T$,  each  connected component of the time slice $\M_t$
has finitely many ends, each of which is an $\hat\eps$-horn.
Morever  for every $\ol{R}'\geq \ol{R}$, 
the superlevel set
$\M_t^{> \ol{R}'} = \{ m \in \M_t \: : \: R(m) > \ol{R}' \}$
is contained in a finite disjoint union
of properly embedded  three-dimensional submanifolds-with-boundary 
$\{N_i\}_{i=1}^k$ 
such that 
\begin{enumerate}
\item Each $N_i$ is contained in the superlevel set 
$\M_t^{> C_1^{-1}\ol{R}'}$.
\item The boundary $\D N_i$ has
scalar curvature  in the interval 
$(C_1^{-1}\ol{R}',C_1\ol{R}')$.
\item
For each $i$ one of the following holds:
\begin{enumerate}
\item $N_i$ is diffeomorphic to $S^1 \times S^2$ or
$I \times S^2$ and consists of $\hat\eps$-neck
points.  Note that here the interval $I$ can be open (a double horn), 
closed (a tube) or half-open (a horn).
\item $N_i$ is diffeomorphic to $D^3=\ol{B^3}$ or $\R P^3 - B^3$
and its boundary $\D N_i\simeq S^2$ consists of $\hat\eps$-neck points.
\item $N_i$ is diffeomorphic to $S^3$, $\R P^3$, or   $\R P^3 \# \R P^3$.
\item $N_i$ is diffeomorphic to
a spherical space form other than $S^3$ or $\R P^3$, and $(\M,x)$ is $\hat\eps$-modelled on a shrinking round spherical space form, for all $x\in N_i$.
\end{enumerate}
\item In cases (b) and (c) of (3), 
if $S_i\subset N_i$ is a subset consisting of
non-$\hat\eps$-neck points, such that for any two distinct elements
$s_1,s_2\in S_i$ we have $d_{\M_t}(s_1,s_2)> C_1R^{-\frac12}(s_1)$, 
then  the cardinality  $|S_i|$ is at most $1$ in case (b) and 
at most $2$ in case (c).   \item Each $N_i$ with nonempty boundary has volume at least 
$C_1^{-1}\left( \ol{R}' \right)^{-\frac32}$.
\end{enumerate}
\end{proposition}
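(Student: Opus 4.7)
The plan is to follow Perelman's standard analysis of the high-curvature regions in Ricci flow with surgery, now carried out inside a singular Ricci flow, and feed it into the canonical neighborhood structure supplied by Proposition \ref{goodeps}. First, choose $\eps_1 = \eps_1(\hat\eps) \ll \hat\eps$ small enough that every $\kappa$-solution model agrees sufficiently closely with one of the four strong canonical neighborhood types (strong $\eps$-neck, $\eps$-cap, closed component covered by necks, or closed spherical space form) in the $\hat\eps$-sense.  Apply Proposition \ref{goodeps} with this $\eps_1$ to obtain $r_1(t)$ and set $\ol{R} = \ol{R}(\hat\eps,T)$ to be any number with $\ol{R} \geq r_1(T)^{-2}$. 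Then every $m \in \M_{\leq T}$ with $R(m) > \ol{R}$ has an $\hat\eps$-canonical neighborhood modeled on a $\kappa$-solution, and Lemma \ref{singforback} gives unscathed parabolic neighborhoods around $m$ at scale $R(m)^{-1/2}$.

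Next, for $\ol{R}' \geq \ol{R}$, construct the submanifolds $N_i$ as follows.  Start from a connected component $U$ of the open super-level set $\M_t^{> \ol{R}'}$.  Extend $U$ ``outward'' by adjoining adjacent $\hat\eps$-neck points of lower curvature, following the central spheres of necks, and stop the extension when either the scalar curvature has dropped to a value in a controlled interval around $C_1^{-1}\ol{R}'$, or one hits an $\hat\eps$-cap point, or one closes up into a compact manifold.  A standard Perelman-style continuity argument, using that the neck scale varies only by a factor close to $1$ under $\hat\eps$-closeness and that canonical neighborhoods at nearby points overlap controllably, shows that this extension process terminates and produces a smooth compact boundary $\D N_i$ contained in necks of scalar curvature in $(C_1^{-1}\ol{R}', C_1 \ol{R}')$ for a constant $C_1 = C_1(\hat\eps,T)$ coming from the canonical neighborhood geometry.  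This gives (1) and (2).

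The topological classification (3) and the cardinality bound (4) then follow from the global structure of noncompact $\kappa$-solutions (plus Appendix \ref{technical}): each $N_i$ is built from $\hat\eps$-neck pieces together with at most a few $\hat\eps$-cap regions, and a connected $3$-manifold built by gluing neck tubes and caps falls into the listed diffeomorphism types depending on the number of caps (zero, one, or two) or is a closed component with positive curvature giving a spherical space form.  For (4), two non-neck points that are far apart (in the scale-invariant sense $d > C_1 R^{-1/2}$) must live in distinct cap regions of a $\kappa$-solution model, and the classification shows that noncompact $\kappa$-solutions have at most one such tip, while the $\R P^3 \# \R P^3$ model has two.  The $\kappa$-noncollapsing below scale $\epsilon$ gives a lower volume bound at scale $R^{-1/2}$ on each $\hat\eps$-neck piece, and combined with the upper bound on $\vol(\M_t)$ from Proposition \ref{vol1} this yields (5) by bounding the cross-sectional neck volume from below by $C_1^{-1}(\ol{R}')^{-3/2}$.

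The finiteness-of-ends statement and their identification with $\hat\eps$-horns reduces to the same local picture: properness of $R$ on $\M_{\leq T}$ implies that, for each sufficiently large value $\ol{R}'$, the level set $\{R = \ol{R}'\} \cap \M_t$ is compact, hence has finitely many components; each unbounded component of $\{R > \ol{R}'\}$ inside $\M_t$ is then covered by a chain of $\hat\eps$-necks along which the scalar curvature tends to infinity, which is exactly an $\hat\eps$-horn.  Finiteness of the ends per connected component follows from the volume lower bound per horn combined with the global volume bound of Proposition \ref{vol1}.  The main technical obstacle is the geometric bookkeeping in the ``extension along necks'' step: one must verify that the curvature only varies by a bounded multiplicative factor over the extension and that the process cannot extend indefinitely without hitting either a cap or a boundary in the prescribed curvature window; this is handled by iterating the canonical neighborhood approximation using overlapping $\kappa$-solution models and the universal constants coming from the compactness of the space of pointed $\kappa$-solutions.
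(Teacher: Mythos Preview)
Your outline matches the paper's approach: the paper's own proof is a one-line citation to \cite[Section 67]{Kleiner-Lott (2008)}, and what you have written is a faithful expansion of that argument, with properness of $R$ on $\M_{\le T}$ standing in for the compactness of time slices available in the Ricci-flow-with-surgery setting.

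There is, however, a circularity in your two appeals to Proposition \ref{vol1} (once for (5) and once for finiteness of ends). That proposition has as hypothesis (b) that the set of points with bad worldlines has measure zero in each time slice; in the paper this is only established via Theorem \ref{thm_finiteness_of_bad_worldlines} (cf.\ Corollary \ref{cor_volume_continuous}), whose proof in turn uses the present proposition in Step~4. So you may not invoke Proposition \ref{vol1} here.

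Fortunately it is unnecessary. Assertion (5) is a \emph{lower} bound on $\vol(N_i)$, and it follows purely locally from the neck structure near $\partial N_i$: a definite portion of an $\hat\eps$-neck at scale $\sim(\ol{R}')^{-1/2}$ sits inside $N_i$ and contributes volume $\gtrsim(\ol{R}')^{-3/2}$; no global upper bound on $\vol(\M_t)$ enters. For finiteness of the ends and of the $N_i$, your first argument via properness of $R$ is already the correct one and suffices on its own: the sublevel set $\{R\le\ol{R}'\}\cap\M_t$ is compact, so each connected component of $\M_t$ has finitely many ends, and (for generic $\ol{R}'$) the level set $\{R=\ol{R}'\}\cap\M_t$ has finitely many components, to which the $N_i$ with nonempty boundary are attached. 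You should simply delete the references to Proposition \ref{vol1}.
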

\begin{proof}
The proof is the same as in \cite[Section 67]{Kleiner-Lott_perelman_notes}.
\end{proof}

We now prove a statement about preservation of connected
components when going backwards in time.

\begin{proposition} \label{sameconnected}
Let $\M$ be a singular Ricci flow.
If $\ga_0,\ga_1:[t_0,t_1]\ra \M$ are time-preserving curves, and 
$\ga_0(t_1)$, $\ga_1(t_1)$ 
lie in the same connected component of $\M_{t_1}$,
then $\ga_0(t)$, $\ga_1(t)$ lie in the same connected component
of $\M_{t}$ for every $t\in [t_0,t_1]$. 
\end{proposition}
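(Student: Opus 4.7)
The plan is to define
\[
S \;=\; \{\, s \in [t_0,t_1] \;:\; \gamma_0(s)\text{ and }\gamma_1(s)\text{ lie in the same component of }\M_s \,\},
\]
and prove $S$ is both open and closed in $[t_0,t_1]$. Since $t_1 \in S$ by hypothesis and $[t_0,t_1]$ is connected, this forces $S = [t_0,t_1]$.

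For openness, fix $s \in S$ and let $C$ be the common component of $\gamma_0(s)$ and $\gamma_1(s)$ in $\M_s$. Since $C$ is a connected open $3$-manifold, choose a continuous path $\alpha:[0,1]\to C$ joining them, with compact image $K$. Applying Lemma \ref{singforback} at each point of $K$ and extracting a uniform size by compactness, one obtains $r>0$ such that $P_+(m,r)\cup P_-(m,r)$ is unscathed for every $m\in K$. The union $U = \bigcup_{m\in K}\bigl(P_+(m,r)\cup P_-(m,r)\bigr)$ is then an open neighborhood of $K$ whose time slices $U\cap \M_{s'}$ are connected for every $s' \in (s-r^2,s+r^2)\cap[t_0,t_1]$, since each slice is the unscathed Ricci-flow evolution of the connected $r$-thickening of $\alpha$ in $\M_s$. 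By continuity of $\gamma_0,\gamma_1$ at $s$, both $\gamma_0(s')$ and $\gamma_1(s')$ lie in $U\cap\M_{s'}$ for $s'$ sufficiently close to $s$, so $s' \in S$.

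For closedness, let $s_n\to s$ with $s_n\in S$. By continuity $\gamma_i(s_n)\to \gamma_i(s)$ for $i=0,1$. Applying Lemma \ref{singforback} to obtain unscathed parabolic neighborhoods of $\gamma_i(s)$, in the one appropriate to the sign of $s_n - s$ the time-$s_n$ slice is a connected ball containing both $\gamma_i(s_n)$ and the flow image $\phi_{s_n-s}(\gamma_i(s))$. Consequently $\phi_{s_n-s}(\gamma_0(s))$ and $\phi_{s_n-s}(\gamma_1(s))$ lie in the same component of $\M_{s_n}$ and can be joined there by a continuous path $\beta_n$. I then aim to pull $\beta_n$ back to $\M_s$ via the inverse flow $\phi_{s-s_n}$, which is well defined on the open subset of $\M_{s_n}$ whose worldlines extend all the way between $s$ and $s_n$; the pulled-back path would connect $\gamma_0(s)$ and $\gamma_1(s)$ in $\M_s$, giving $s\in S$. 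The main obstacle is handling those points of $\beta_n$ outside this domain, namely points whose worldlines terminate at a singularity in the intervening time interval. By the canonical neighborhood assumption and Proposition \ref{prop_large_r_structure}, such points lie in a disjoint union of cap regions, each diffeomorphic to $D^3$ or $\R P^3 \setminus B^3$ and bounded by an $\hat\eps$-neck $2$-sphere on which the flow between $s$ and $s_n$ is defined. Since removing the open cap interiors from a connected $3$-manifold preserves connectedness, the path $\beta_n$ may be deformed within its component of $\M_{s_n}$ to avoid these bad regions while keeping its endpoints fixed, and the deformed path pulls back via the inverse flow to the required connecting path in $\M_s$.
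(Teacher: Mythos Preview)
Your openness argument is fine; the gap is in the closedness step. In the relevant case $s_n\searrow s$ (which suffices, since $(\hat t,t_1]\subset S$ for $\hat t=\inf\{t:[t,t_1]\subset S\}$), you want to deform the connecting path $\beta_n\subset\M_{s_n}$ off the bad set $B_n$ of points whose worldlines fail to reach $\M_s$. You assert that $B_n$ lies in a union of cap regions (type~(b) of Proposition~\ref{prop_large_r_structure}), but that proposition only places the high-curvature superlevel set inside a union of pieces that includes neck tubes and horns (type~(a)) as well as caps; it says nothing about where $B_n$ sits within that union. If $B_n$ contained a cross-sectional $2$-sphere of some neck tube, excising it would separate the two boundary spheres of the tube and your deformation would fail. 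To exclude type~(a) points from $B_n$ you would need to show that the worldline through an $\hat\eps$-neck point $m$ extends backward for time $s_n-s$ independently of how large $R(m)$ is; the neck condition at $m$ controls only backward time of order $R(m)^{-1}$, possibly much smaller than $s_n-s$, and iterating requires the worldline to land at a neck again after each step. That is backward neck stability, the content of Section~\ref{stab}, which is proved \emph{after} this proposition.

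The paper sidesteps this with a min--max device. For $t>\hat t$ let $R_{crit}(t)$ be the infimum of $\bar R$ such that $\gamma_0(t),\gamma_1(t)$ lie in the same component of the sublevel set $\{R\le\bar R\}\subset\M_t$; one checks $R_{crit}$ is locally Lipschitz and must blow up as $t\to\hat t^+$. When $R_{crit}(t)$ is large, one excises the type~(b)--(d) pieces $N_i$ from $\M_t$; since caps are dead ends this keeps $\gamma_0(t),\gamma_1(t)$ in the same component of the remaining set $X_t$ (your own topological observation). On $X_t$ the maximum of $R$ is attained only at type~(a) neck points, where $R$ is strictly increasing in forward time by direct comparison with the shrinking cylinder. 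Flowing $X_t$ backward for an \emph{arbitrarily short} time thus strictly lowers $\max_{X_t}R$, forcing $R_{crit}(t')<R_{crit}(t)$ for $t'$ just below $t$, contradicting the blow-up. Only the instantaneous sign of the time derivative of $R$ at neck points is used, never a finite backward-time extension.
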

\begin{proof}
The idea of the proof is to consider the values of $t$ for
which $\gamma_0(t)$ can be joined to $\gamma_1(t)$ in $\M_t$, and the
possible curves $c_t$ in $\M_t$ that join them.  
Among all such curves $c_t$, we look at one which minimizes the maximum
value of scalar curvature along the curve. Call this threshold
value of scalar curvature $R_{crit}(t)$. We will argue that $c_t$
can only intersect the high-scalar-curvature part of $\M_t$ in
its neck-like regions.  But the scalar curvature in a neck-like
region is strictly decreasing when one goes backward in time;
this will imply that $R_{crit}(t)$, when large, is 
decreasing when one goes backward in time,
from which the lemma will follow.

To begin the formal proof, suppose that the lemma is false.   Let
 $S\subset [t_0,t_1]$ be the set of times $t\in [t_0,t_1]$ such that 
$\ga_0(t)$ and $\ga_1(t)$ lie in the same connected component of $\M_t$;
note that $S$ is open.  
Put $\hat t=\inf\{t\mid [t,t_1]\subset S\}$. Then
$\hat t>\frac{1}{100}$ since $\M_{[0, \frac{1}{100}]}$ is a product.
Also, for $i\in \{0,1\}$ and $t \in [\hat t, t_1]$ close to $\hat t$,
if $\hat\ga_i(t)\in \M_t$ is the worldline of $\ga_i(\hat t)$ at time $t$
then $\hat\ga_i(t)$ lies in the same connected component of $\M_t$
as $\ga_i(t)$.  Therefore, after 
reducing $t_1$ if necessary,
we may assume without loss of generality that $\ga_i$ 
is a worldline.

For $t \in [t_0, t_1]$ and $\ol{R} < \infty$, 
put $\M^{\leq \ol{R}}_t=\{m\in\M_t\mid R(m)\leq\ol{R}\}$.
For $t\in (\hat t,t_1]$, let $\calr_t$ be the set of $\ol{R}\in \R$
such that $\ga_0(t)$ and $\ga_1(t)$ lie in the same connected component
of $\M^{\leq \ol{R}}_t$.  Put $R_{crit}(t)=\inf\calr_t$.
Since $R:\M_t\ra \R$ is proper, the sets 
$\{\M^{\leq\ol{R}}_t\}_{\ol{R}>R_{crit}(t)}$ 
 are compact
and nested, which implies that $\ga_0(t)$ and $\ga_1(t)$ lie in the same
component of
 $\M^{\leq R_{crit}(t)}_t=
\bigcap_{\ol{R}>R_{crit}(t)}\M^{\leq \ol{R}}_t$,  i.e.
$R_{crit}(t)\in \calr_t$.

By Lemma \ref{singforback}, 
there is a $C<\infty$
such that for any $t \in (\frac{1}{100}, t_1]$ and any $m\in \M_t$
 there is a 
$\tau=\tau(t_1,R(m))>0$, where $\tau$
is a continuous function that is
 nonincreasing in $R(m)$, such that the worldline $\ga_m$ of $m$ is defined
 and  satisfies 
\begin{equation}
\label{eqn_r_change_small}
\left|\frac{dR(\gamma_m(t))}{dt} \right| <C \cdot R(m)^2 
\end{equation}
in the time interval $(t-\tau,t+\tau)$.
Now for $t \in (\hat t, t_1]$
and $t^\prime$ satisfying $|t'-t|<\tau(t_1,R_{crit}(t))$,
let $Z_{t,t'}\subset \M_{t'}$ denote the result of flowing 
$\M^{\leq R_{crit}(t)}_t$ 
under $\D_{\t}$ for an elapsed time $t'-t$.
Then $Z_{t,t'}$ is well-defined
and contains $\ga_0(t')$ and $\ga_1(t')$
in the same component, so
\begin{equation}
R_{crit}(t')\leq \max_{Z_{t,t'}}R\leq R_{crit}(t)+C \cdot 
R_{crit}(t)^2 \cdot |t - t^\prime|.
\end{equation}
This implies that $R_{crit}:(\hat t,t_1]\ra \R$ is locally Lipschitz
(in particular continuous)
and that
\begin{equation}
\label{eqn_r_crit_blows_up}
R_{crit}(t)\ra \infty
\end{equation}
as $t\ra \hat t$ from the right; 
otherwise there would be a sequence $t_i \rightarrow \hat t$ along which
$R_{crit}$ is uniformly bounded above by some $\hat R < \infty$, which
would allow us to construct $Z_{t_i, \hat t}$ whenever
$|\hat t - t_i| < \tau(t_1, \hat R)$, contradicting the
definition of $\hat t$.

We now concentrate on $t$ close to $\hat t$.
Suppose that for some
$t\in (\hat t,t_1]$ we have
\begin{equation} \label{bigg}
R_{crit}(t)\gg\max(r^{-2}(t_1), \max_{[t_0,t_1]}R\circ \ga_0,
\max_{[t_0,t_1]}R\circ \ga_1)\,.
\end{equation}
Then by 
Proposition \ref{prop_large_r_structure}
the superlevel set 
$\M^{> (R_{crit}(t) - 1)}_t$ is contained in a
finite union of components 
$\{N_{i,t}\}_{i=1}^{k_t}$, 
each diffeomorphic to one
of the possibilities 
(a)-(d)
in the statement of the proposition.
Let $X_t$ be the result of removing from 
$\M_t$
the interior of each $N_{i,t}$ that is not of type 
(a).
Since $\ga_0(t)$ and
$\ga_1(t)$ lie outside $\bigcup_{i=1}^{k_t} N_{i,t}$, 
and each $N_i$ of type 
(b)-(d)
has at most one boundary component, it follows that $\ga_0(t)$ and $\ga_1(t)$
lie in the same connected component of $X_t$.

For $t^\prime < t$ 
close to $t$, let $X_{t'}\subset\M_{t'}$ be the result of flowing
$X_t$ under $\D_{\t}$.   Now $Y_t=X_t\cap \M_t^{> (R_{crit}(t)-1)}$ 
consists of
$\eps$-neck points, and at such a point the scalar curvature is 
strictly increasing
as a function of time. Hence there is a $\tau_1>0$ such that the worldline 
$\ga_m:[t-\tau_1,t]\ra\M$ of any $m\in Y_t$ satisfies
\begin{equation}
R(\ga_m(t'))<R(m)\leq R_{crit}(t)
\end{equation}
for $t'\in [t-\tau_1,t)$.
This implies that $R_{crit}(t')<R_{crit}(t)$ when $t'<t$ is close to $t$,
again under the assumption (\ref{bigg}),
which contradicts (\ref{eqn_r_crit_blows_up}).
\end{proof}

We now state a result about connecting a point in a singular Ricci flow
$\M$ to the time-zero slice by a curve whose length with respect to the spacetime metric $g_{\M}$ is quantitatively 
bounded, and along which the scalar curvature is quantitatively bounded.  This is similar to Proposition \ref{curve}.

\begin{proposition} \label{curve2}
Let $\M$ be a singular Ricci flow.
Given $T, R_0 < \infty$,
there are $L = L(R_0, T) < \infty$ and
$R_1 = R_1(R_0, T) < \infty$ with the following 
property. Suppose that $R(m_0)\leq R_0$, with $t_0 = \t(m_0) \leq T$.
Then
there is a time preserving curve $\ga:[0,t_0]\ra \M$ with
$\gamma(t_0) = m_0$ and $\length(\ga)\leq L$ so that
$R(\ga(t))\leq R_1$ for all $t\in [0,t_0]$.
\end{proposition}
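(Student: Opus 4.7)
The plan is to parallel the proof of Proposition \ref{curve}, replacing the surgery-specific structure with the singular Ricci flow analogues established earlier in this section. First, apply Proposition \ref{goodeps} with a sufficiently small $\eps_1$ (depending on the constants in Lemma \ref{boxlemma}) to obtain a scale function $r_1 : [0,\infty) \to (0, \infty)$ with $r_1 \leq r$, such that any point $m$ with $R(m) > r_1(\t(m))^{-2}$ is $\eps_1$-modeled on a $\kappa$-solution. Lemma \ref{singforback} provides unscathed forward and backward parabolic balls around every point in $\M_{\leq T}$, with size governed by the scalar curvature, and supplies the geometric bounds needed to transfer conclusions from the $\kappa$-solution model to the spacetime.

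Construct $\gamma$ iteratively starting from $(x_0, t_0) = m_0$, repeating the following until reaching time $0$. In the \emph{high-curvature step}, when $R(x_{i-1}, t_{i-1}) \geq r_1(t_{i-1})^{-2}$, the point $(x_{i-1}, t_{i-1})$ is $\eps_1$-modeled on a $\kappa$-solution; Lemma \ref{boxlemma} applied to the model, combined with Lemma \ref{singforback} to ensure that the relevant backward parabolic neighborhood is actually unscathed in $\M$, produces a point $(x_i, t_i)$ with $R(x_i, t_i) \leq \tfrac12 R(x_{i-1}, t_{i-1})$ and a time-preserving connecting curve of length at most $C(R(x_{i-1},t_{i-1})^{-1/2} + R(x_{i-1},t_{i-1})^{-1})$ along which $R \leq C R(x_{i-1}, t_{i-1})$. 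In the \emph{low-curvature step}, when $R(x_{i-1}, t_{i-1}) < r_1(t_{i-1})^{-2}$, follow the worldline of $x_{i-1}$ backward in time until either reaching time $0$ or hitting a time $t_i$ with $R(x_{i-1}, t_i) = r_1(t_i)^{-2}$; backward extendability of the worldline as long as the curvature stays bounded is furnished by Lemma \ref{singforback}.

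Finite termination follows as in the proof of Proposition \ref{curve}: the high-curvature steps halve the scalar curvature, so occur at most $O(\log(R_0 r_1(T)^2))$ times initially, while the derivative estimate $|\partial_t R| \leq \const \cdot R^2$ implicit in Lemma \ref{singforback} forces a positive lower bound on the backward time increment of each low-curvature step, in terms of $r_1(T)$. The desired length bound $L(R_0, T)$ is then obtained by summing the low-curvature contributions (total at most $T$) and the high-curvature contributions (bounded via the curvature-halving geometric series), while $R_1 := (C+1)(R_0 + r_1(T)^{-2})$ bounds $R$ along $\gamma$.

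The main obstacle, which is the principal departure from the proof of Proposition \ref{curve}, is ensuring that the iteration does not encounter an ``edge'' of the spacetime in the backward direction. In the surgery setting this was handled by an explicit comparison between $R$, $r$ and $\rho$ that ruled out intersection with surgery caps; in the singular setting, Lemma \ref{singforback} together with the properness of $R$ on $\M_{\leq T}$ plays the corresponding role, guaranteeing that any worldline along which the scalar curvature remains bounded extends backward until it either crosses the threshold $R = r_1^{-2}$ or reaches $\M_0$.
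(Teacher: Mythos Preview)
Your proposal is correct and follows essentially the same approach as the paper, whose proof reads in full: ``The proof is the same as that of Proposition \ref{curve}.'' You have spelled out precisely the adaptations needed in the singular setting --- using Proposition \ref{goodeps} to obtain $\eps_1$-modeling on $\kappa$-solutions (so that Lemma \ref{boxlemma} transfers), and using Lemma \ref{singforback} together with properness of $R$ in place of the surgery-avoidance argument to guarantee that the backward parabolic neighborhoods and worldlines are unscathed.
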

\begin{proof}
The proof is the same as that of Proposition \ref{curve}.
\end{proof}

Finally, we give a compactness result for the space of
singular Ricci flows.

\begin{proposition} \label{rfcompactness}
Let $\{\M^i\}_{i=1}^\infty$ be a sequence of singular Ricci flows
with a fixed choice of parameters $\epsilon$, $r$ and
$\kappa$ whose initial conditions $\{\M^i_0\}_{i=1}^\infty$ lie in
a compact family in the smooth topology. Then a subsequence
of $\{\M^i\}_{i=1}^\infty$ converges, in the sense of Theorem 
\ref{thm_convergence_flow_with_surgery}, to a singular Ricci flow
 $\M^\infty$.
\end{proposition}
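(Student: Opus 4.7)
The plan is to mimic the proof of Theorem \ref{thm_convergence_flow_with_surgery}, with the simplifications afforded by the fact that the parameters $\epsilon$, $r$, $\kappa$ are fixed (no surgery scales tending to zero) and that the canonical-neighborhood and noncollapsing assumptions hold globally rather than only in the high-curvature region. First I would extend each $g_i$ to slightly negative times using the precompactness of $\{\M^i_0\}$, exactly as in the opening paragraphs of the proof of Theorem \ref{thm_convergence_flow_with_surgery}. Then, choosing basepoints $\star_i \in \M^i_0$, I would set
\begin{equation}
\psi_i(x,t) = R_i(x,t)^2 + t^2, \qquad A_i = i^2.
\end{equation}
Whenever $\psi_i(m) \le A_i$ we have $|R_i(m)|,|t| \le i$, so Lemma \ref{singforback} (which replaces Lemma \ref{lemma2.1} in this setting) supplies unscathed forward and backward parabolic balls of radius comparable to $Q^{-1/2}$, where $Q = |R_i(m)| + r(\t_i(m))^{-2}$, together with uniform bounds on $|\nabla^k\Rm|$ and $\inj$. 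These bounds, transferred to the spacetime metric $g_{\M^i}$, yield functions $\r$ and $\{C_k\}$ for which each tuple $(\M^i, g_{\M^i}, \t_i, \partial_{\t_i}, \star_i, \psi_i)$ is $(\psi_i, A_i)$-controlled in the sense of Definition \ref{definition3.1}. Theorem \ref{theorem3.8} then produces, along a subsequence, a pointed $(\psi_\infty,\infty)$-controlled limit $(\M^\infty, g_{\M^\infty}, \t_\infty, (\partial_\t)_\infty, \star_\infty, \psi_\infty)$ together with the required comparison maps $\Phi^j : U_j \to V_j$.

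Verification of the five conclusions of Theorem \ref{thm_convergence_flow_with_surgery} then proceeds essentially as in the surgery case. Conclusions (1) and (3) are immediate from Theorem \ref{theorem3.8}. Conclusion (2) is obtained by invoking Proposition \ref{curve2} in place of Proposition \ref{curve}: any point with $\t \le \ol{t}$ and $R \le \ol{R}$ can be joined to the basepoint by a spacetime curve of length bounded by $L(\ol{R},\ol{t})$ along which $\psi$ is bounded, which puts the sublevel set $\{R \le \ol{R}, \t \le \ol{t}\}$ into a $g_{\M^i}$-ball of uniformly bounded radius inside $\psi_i^{-1}([0,A))$ for suitable $A$, hence into $U_j$ (and $V_j$) for all large $j$. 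Conclusion (4) follows by applying Lemma \ref{scalarlem} and Proposition \ref{vol1}(1) on each $\M^i$ and passing to the limit by sublevel-set approximation. Finally, the Hamilton--Ivey pinching, $\kappa$-noncollapsing, and $r$-canonical-neighborhood conditions pass to $\M^\infty$ under smooth convergence, while properness and lower boundedness of $R_\infty$ on $\M^\infty_{\le T}$ follow by the parabolic-neighborhood packing argument at the end of the proof of Theorem \ref{thm_convergence_flow_with_surgery}, with Lemma \ref{singforback} replacing Lemma \ref{lemma2.1}; hence $\M^\infty$ satisfies Definition \ref{def_singular_ricci_flow}.

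The main obstacle is the volume-convergence statement (5). In the surgery setting it was proved via Proposition \ref{volprop}, which combined a Jacobian estimate along backward worldlines with a control on the volume lost by surgery caps. Here there are no surgeries, so the surgery-cap term drops out and one only needs the Jacobian estimate
\begin{equation}
\V_i^{\ge \ol{R}}(t) \;\le\; (1+2T)^{3/2}\bigl(\ol{R}\,r(T)^2\bigr)^{-1/\eta}\,\V_i(0)
\end{equation}
for $t\le T$. The derivation is identical to the smooth part of the proof of Proposition \ref{volprop}: along the backward worldline of any $m \in \{R \ge \ol{R}\}\cap \M^i_t$, the gradient estimate (\ref{gradest}), which holds in canonical neighborhoods of singular Ricci flows, gives $R(\gamma_x(s)) \ge (R(m)^{-1}+\eta(t-s))^{-1}$ as long as $R(\gamma_x(s)) \ge r(s)^{-2}$, and hence an upper bound on the Jacobian $J_t(x) = e^{-\int_0^t R(\gamma_x)ds}$ of the desired form. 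The only delicate point is that this argument presumes the worldline extends back to $\M^i_0$, i.e.\ the hypothesis (b) of Proposition \ref{vol1}. Granting that almost every point in $\M^i_t$ has such a backward worldline, the uniform bound above together with the diagonal argument at the end of the proof of Theorem \ref{thm_convergence_flow_with_surgery} (using $\epsilon_2^T \equiv 0$ in the absence of surgery) yields $\lim_{j\to\infty}\V_j=\V_\infty$ uniformly on compact subsets of $[0,\infty)$, completing the proof.
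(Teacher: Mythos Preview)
Your proposal is correct and follows exactly the approach the paper takes: its proof is the single sentence ``Using Proposition \ref{curve2}, the proof is the same as that of [Theorem] \ref{thm_convergence_flow_with_surgery},'' and you have fleshed this out faithfully, substituting Lemma \ref{singforback} for Lemma \ref{lemma2.1} and Proposition \ref{curve2} for Proposition \ref{curve}. The one point you flag as delicate---that the Jacobian/volume estimate presumes almost every worldline extends back to $\M^i_0$---is indeed needed and is supplied by Theorem \ref{finitenessthm} (finiteness of bad worldlines), which appears later in the paper but is logically independent of Proposition \ref{rfcompactness}; once that is granted, your $\eps_2^T\equiv 0$ simplification of the Proposition \ref{volprop} argument is exactly right.
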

\begin{proof}
Using
Proposition \ref{curve2}, the proof is the same as that of
Proposition \ref{thm_convergence_flow_with_surgery}.
\end{proof}

\begin{remark}
In the setting of Proposition \ref{rfcompactness},
if we instead assume that the (normalized) initial conditions have a uniform
upper volume bound then we can again take a convergent subsequence
to get a limit  
Ricci flow spacetime $\M^\infty$.     In this case the
time-zero slice $\M^\infty_0$ will generally only be $C^{1,\alpha}$-regular,
but $\M^\infty$ will be smooth on $\t^{-1}(0, \infty)$. 
\end{remark}

\section{Stability of necks} \label{stab}

In this section we fix $\kappa>0$; the dependence
of various constants on this choice of $\kappa$ is implicit.  
We recall the notion of a $\kappa$-solution from Appendix \ref{appkappa}.
In this section we establish a new dynamical stability property
of caps and necks in $\kappa$-solutions, which we will use in 
Section \ref{sec_finiteness}  to show that
a bad worldline $\ga:I\ra\M$ is confined to a cap region as
$t$ approaches the blow-up time $\inf I$.

Conceptually speaking, the stability assertion is that among 
pointed $\kappa$-solutions, the round cylinder is an attractor under backward flow;
similarly, under forward flow, non-neck points form an attractor.  The rough idea
of the argument is as follows.  Suppose that $\de\ll 1$ and $(x_0,0)$ is a 
$\de$-neck in a $\kappa$-solution $\M$.  Then $(x_0,t)$ will also 
be  neck-like as long as $t < 0$ is 
not too negative.
One also knows that
 $\M$ has an asymptotic  soliton as $t\ra-\infty$, 
which is a shrinking round cylinder.   Thus $\M$ tends toward
neck-like geometry
as $t$ approaches $-\infty$, which is the desired stability property.  
However, there is a  catch here:
the asymptotic soliton is a pointed
limit of a sequence where the basepoints are not fixed, and so a priori it
says nothing about the asymptotic geometry near $(x_0,t)$ as $t\ra-\infty$. 
To address this we exploit the behavior of the $l$-function.

\subsection{The main stability asssertion}

We recall the notation ${\hat \M}(t)$  from
Section \ref{sec_notation} for the parabolic rescaling
of a Ricci flow spacetime $\M$. We recall the
notation $\cyl$ and $\sphere$  from
Section \ref{sec_notation} for the standard Ricci 
flow solutions. We also recall the notion of one Ricci
flow spacetime being
$\epsilon$-modelled on another one, from Appendix
\ref{appcloseness}. 

\begin{theorem}
\label{thm_stability_of_necks}
There is a 
$\de_{neck}=\de_{neck}(\kappa)>0$,
 and  
for all $\de_0,\de_1\leq\de_{neck}$
there is a 
$T=T(\de_0,\de_1,\kappa)\in(-\infty,0)$
with the following property.
Suppose  that
\begin{enumerate}
\renewcommand{\theenumi}{\alph{enumi}}
\item $\M$ is a  $\kappa$-solution with noncompact time slices,
\item $(x_0,0)\in\M$, 
\item $R(x_0,0)=1$, and 
\item $(\M, (x_0,0))$ is a $\de_0$-neck.
\end{enumerate}
Then either 
 $\M$ is  isometric
to the $\Z_2$-quotient of a shrinking round cylinder, or 
for all $t \in (- \infty, T]$,
\begin{enumerate}
\item 
$(\M, (x_0,t))$ is a $\de_1$-neck and
\item 
$\left( {\hat \M}(-t), (x_0, -1) \right)$
is $\delta_1$-close to $(\cyl, (y_0, -1))$, where $y_0 \in 
S^2 \times \R$.
\end{enumerate}
\end{theorem}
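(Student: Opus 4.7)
The plan is to argue by contradiction via compactness of $\kappa$-solutions, invoke Perelman's asymptotic soliton construction for the limiting solution, and then establish a uniform bound on the reduced distance along the worldline of $x_0$. Fix $\delta_0,\delta_1\leq\delta_{neck}$ and suppose the theorem fails: one produces a sequence $\{\M^j\}$ of noncompact $\kappa$-solutions, none isometric to the $\Z_2$-quotient of a shrinking round cylinder, together with points $(x_0^j,0)\in\M^j$ such that $R(x_0^j,0)=1$ and $(\M^j,(x_0^j,0))$ is a $\delta_0$-neck, and a sequence $t_j\to-\infty$ at which $(\hat\M^j(-t_j),(x_0^j,-1))$ fails to be $\delta_1$-close to $(\cyl,(y_0,-1))$. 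Hamilton's compactness theorem for $3$-dimensional $\kappa$-solutions (Appendix \ref{appkappa}) produces a pointed limit $(\M^\infty,(x_0^\infty,0))$ which is itself a noncompact $\kappa$-solution with a $\delta_0$-neck at $(x_0^\infty,0)$; we may assume $\M^\infty$ is not the $\Z_2$-quotient, since by rigidity a limit of that quotient would force the $\M^j$'s themselves to be quotients.

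The next step is to apply Perelman's asymptotic soliton construction to $\M^\infty$. Fix any point $p\in\M^\infty_0$ and let $l_p$ denote the reduced distance based at $(p,0)$. Perelman's analysis supplies, for every sequence $\tau_k\to\infty$, points $q_k\in\M^\infty_{-\tau_k}$ with $l_p(q_k,\tau_k)\leq\tfrac{3}{2}$ along which the parabolic rescalings $(\hat\M^\infty(\tau_k),(q_k,-1))$ subconverge smoothly to the round shrinking cylinder $(\cyl,(y_0,-1))$; the exclusion of the $\Z_2$-quotient is precisely what forces the asymptotic soliton to be the unquotiented cylinder. The obstacle is that the $q_k$ are not a priori on the worldline of $x_0^\infty$, so this convergence is insensitive to our fixed choice of basepoint.

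The heart of the argument is to show that the worldline of $x_0^\infty$ can itself serve as a selection sequence in Perelman's construction; equivalently, that $l_{x_0^\infty}(x_0^\infty(-\tau),\tau)\leq C$ for a universal $C$ and all large $\tau$. The motivation comes from $\cyl$ itself: normalizing so that $R(y_0,0)=1$ gives $R(y_0,-s)=(1+s)^{-1}$, so the $\L$-length of the constant worldline $s\mapsto y_0$ on $[0,\tau]$ is $\int_0^\tau\sqrt{s}(1+s)^{-1}\,ds\sim 2\sqrt{\tau}$, whence $l_{y_0}(y_0,\tau)\lesssim 1$. The plan is to transfer this bound to $\M^\infty$ by a bootstrap: if the rescaled geometry around $(x_0^\infty,-\tau)$ is cylinder-like to within some tolerance $\delta$, then the $\L$-length contribution from the next segment of the worldline remains of cylinder magnitude, and applying the asymptotic soliton convergence with $q_k=x_0^\infty(-\tau_k)$ propagates cylinder-like geometry further back in time. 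The bootstrap is seeded at time $0$ by the $\delta_0$-neck hypothesis, provided $\delta_{neck}$ is chosen sufficiently small.

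These ingredients yield smooth convergence $(\hat\M^\infty(\tau),(x_0^\infty(-\tau),-1))\to(\cyl,(y_0,-1))$ as $\tau\to\infty$, contradicting the failure of $\delta_1$-closeness for the $\M^j$'s; conclusion (1) then follows from (2) by unwinding the definition of a $\delta_1$-neck. The principal obstacle is the bootstrap itself: one must carefully track how the closeness tolerance propagates across each step so that the cumulative error remains below $\delta_1$, and this tradeoff is what pins down $\delta_{neck}$ as a function of $\delta_1$. A secondary subtlety is handling the $\Z_2$-quotient exception at the start, which must be excluded through a rigidity argument since it is the only other possible asymptotic soliton for a noncompact $3$-dimensional $\kappa$-solution.
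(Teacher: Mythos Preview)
Your proposal contains the right ingredients --- controlling the reduced distance $l$ along the worldline of $x_0$, then invoking the asymptotic soliton --- but the argument as written has a genuine gap in the contradiction step, and the bootstrap is not closed.

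The contradiction does not follow from the stated ingredients. You pass to a pointed limit $(\M^\infty,(x_0^\infty,0))$ of the counterexample sequence $\{(\M^j,(x_0^j,0))\}$, and then prove that $(\hat\M^\infty(\tau),(x_0^\infty,-1))\to\cyl$ as $\tau\to\infty$. But pointed convergence of $\kappa$-solutions is smooth convergence on \emph{compact} subsets of spacetime; it gives you nothing at the escaping times $t_j\to-\infty$ where the $\M^j$'s fail. Knowing that $\M^\infty$ is cylinder-like for $t\le T^\infty$ does not force any $\M^j$ to be cylinder-like at $t_j\ll T^\infty$. The paper avoids this by proving statements that are \emph{uniform} over all $\kappa$-solutions: Proposition~\ref{prop_cylinder_stability_1_noncompact} gives a $T=T(\hat\eps,C,\kappa)$ such that for \emph{any} noncompact $\kappa$-solution and any point with $l\le C$ at time $t<T$, the rescaled triple is $\hat\eps$-close to $(\cyl,\cdot,l_\infty)$. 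This uniform statement applies directly to each $\M^j$, so no limit-to-sequence transfer is needed.

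Your bootstrap also has a circularity. You write that ``applying the asymptotic soliton convergence with $q_k=x_0^\infty(-\tau_k)$ propagates cylinder-like geometry further back in time,'' but the asymptotic soliton is a statement about limits as $\tau\to\infty$, not about any fixed $\tau$; invoking it already presupposes the $l$-bound you are trying to establish. The paper breaks this circularity in two steps. First, Proposition~\ref{prop_cylinder_stability_1_noncompact} is proved independently (via reduced-volume monotonicity and the classification of shrinkers): it says closeness to $\cyl$ follows from $l\le C$ alone, at any fixed time past a threshold. Second, Lemma~\ref{lem_cyl_l_function_stability} isolates the precise property of the cylinder's $l$-function that makes the induction go: on $\cyl$, the function $l_\infty=1+\tfrac{z^2}{-4t}$ is strictly decreasing along backward worldlines away from its minimum, so if $l\le 1+\hat\eps$ on $[-1,-A]$ and the triple is $\bar\mu$-close to the cylinder triple, then $l<1+\hat\eps$ persists on $[-A^{-1},-1]$. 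Iterating this multiplicative step (Lemma~\ref{lem_basepoint_stability}) gives $l(x_0,t)<1+\hat\eps$ for all $t<T$, and then Proposition~\ref{prop_cylinder_stability_1_noncompact} with $C=2$ finishes. Your outline gestures at this mechanism but does not identify the stability property of $l_\infty$ on $\cyl$ that makes the bootstrap close without error accumulation.
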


We recall the notion of a generalized neck from Appendix \ref{appcloseness}.

\begin{corollary}
\label{cor_de_de_over_4}
If $\de_{neck}=\de_{neck}(\kappa)$ as in the previous theorem, then
there is a $T=T(\kappa)<(-\infty,0)$ such that if $\M$ is a $\kappa$-solution
with noncompact time slices, $(x_0,0)\in\M$, $R(x_0,0)=1$, 
and $(x_0,0)$ is a generalized $\de_{neck}$-neck, then
$(x_0,T)$ is a generalized $\frac{\de_{neck}}{4}$-neck, and
$R(x_0,T)<\frac14 $. 
\end{corollary}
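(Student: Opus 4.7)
The plan is to derive the corollary from Theorem \ref{thm_stability_of_necks} by splitting into the two cases that correspond to the dichotomy in that theorem. Fix $\delta_{neck} = \delta_{neck}(\kappa)$ as produced by the theorem, set $\delta_0 = \delta_{neck}$ and $\delta_1 = \delta_{neck}/4$, and let $T_1 = T(\delta_0, \delta_1, \kappa) < 0$ be the corresponding threshold time.

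Suppose first that $\M$ is \emph{not} the $\Z_2$-quotient of a shrinking round cylinder. If $(x_0, 0)$ is a genuine $\delta_{neck}$-neck, conclusion (1) of the theorem immediately gives that $(x_0, t)$ is a $\delta_{neck}/4$-neck, hence a generalized $\delta_{neck}/4$-neck, for every $t \le T_1$. For the curvature bound I would invoke conclusion (2): the rescaled spacetime $\hat\M(-t)$ at basepoint $(x_0, -1)$ is $\delta_1$-close to $(\cyl, (y_0, -1))$, whose scalar curvature is a definite constant. Unraveling the parabolic rescaling $\hat\M(-t)$, this gives $R(x_0, t) = (1+o(1))/|t|$ as $t \to -\infty$, so taking $T_2$ sufficiently negative yields $R(x_0, T_2) < 1/4$.

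In the exceptional case, $\M$ is the $\Z_2$-quotient of a shrinking round cylinder and the flow is completely explicit. Normalizing so that $R(x_0, 0) = 1$, the scalar curvature along any worldline satisfies $R(x_0, t) = 1/(1-t)$, so any $T < -3$ forces $R(x_0, T) < 1/4$. Because parabolic rescalings of this model are again such quotients, $(x_0, t)$ is a generalized $\delta'$-neck for every $\delta' > 0$ and every $t \le 0$, so the generalized-neck conclusion is automatic. Finally, if $(x_0, 0)$ is a generalized $\delta_{neck}$-neck that is not a genuine neck, I would argue that for $\delta_{neck}$ small the local geometry is forced to look like a quotient piece of a cylinder, and by the rigidity of noncompact three-dimensional $\kappa$-solutions this forces $\M$ itself to be the $\Z_2$-quotient, reducing again to the explicit case.

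Setting $T = \min(T_1, T_2)$, pushed further below $-3$ if necessary, yields the required threshold. The main subtlety I anticipate is the reduction in the last step: one must verify that a sufficiently thin generalized neck which is not a genuine neck forces $\M$ to be globally the $\Z_2$-quotient of a cylinder, rather than only locally modeled on one. This should follow from the definition of generalized neck recalled from Appendix \ref{appcloseness} together with Perelman's classification of three-dimensional $\kappa$-solutions, but requires some care about how the quotient interacts with the ancient-flow structure.
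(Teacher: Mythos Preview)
Your proposal is correct and is the natural derivation; the paper leaves this corollary without proof, treating it as an immediate consequence of Theorem \ref{thm_stability_of_necks} together with the explicit $\Z_2$-quotient case, which is exactly your decomposition.

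The subtlety you flag resolves precisely as you anticipate, and in fact the paper itself uses the same mechanism in the proof of Proposition \ref{prop_cylinder_stability_1_noncompact}. If $(x_0,0)$ is a generalized $\de_{neck}$-neck modelled on the $\Z_2$-quotient and the comparison region $U_2$ contains the central one-sided $\R P^2$, then the comparison diffeomorphism $\Phi$ transplants a one-sided $\R P^2$ into $\M_0$; by the classification of noncompact three-dimensional $\kappa$-solutions this forces $\M$ to be the $\Z_2$-quotient globally. If instead $U_2$ misses the $\R P^2$, then $U_2$ lifts isometrically to a region in the round cylinder via the double cover, and composing $\Phi$ with this lift exhibits $(x_0,0)$ as a genuine $\de_{neck}$-neck, so the theorem applies directly. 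No additional smallness of $\de_{neck}$ is needed here, since this dichotomy is purely topological.
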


\subsection{Convergence of $l$-functions, asymptotic $l$-functions and asymptotic
solitons}
\label{subsec_convergence_l_functions}

In this subsection we prove some preparatory results about the convergence
of $l$-functions with regard to a convergent sequence of 
three-dimensional
$\kappa$-solutions.

Suppose that we are given that $R(x_1,t_1) \le C$ at some point $(x_1, t_1)$
in a $\kappa$-solution.
By compactness of the space of normalized pointed $\kappa$-solutions
(see Appendix \ref{appkappa}),
we obtain $R(x,t_1) \leq F(C, d_{t_1}(x,x_1))$ for some universal function $F$.
Since $R$ is pointwise nondecreasing in time 
in a $\kappa$-solution, 
we also have $R(x,t) \leq F(C, d_{t_1}(x,x_1)$
whenever $t \le t_1$.

\begin{lemma}[Curvature bound at basepoint] \label{curvbasepoint}
Let $\{(\M^j,(x_j,0))\}$ be a sequence of pointed $\kappa$-solutions, with 
$\sup_j R(x_j,0)<\infty$, and let $l_j:\M^j_{<0}\ra (0,\infty)$ be the reduced 
distance function with spacetime basepoint at $(x_j,0)$.  Then after passing to
a subsequence, we have convergence of
tuples 
\begin{equation}
(\M^j,(x_j,0),l_j)\ra (\M^\infty,(x_\infty,0),l_\infty)\,.
\end{equation}
Here the Ricci flow convergence is the usual smooth convergence on parabolic balls,
and $l_j$ converges to 
a locally Lipschitz function $l_\infty$
uniformly on compact subsets of 
$\M^\infty_{<0}$, after composition  with the comparison map.
\end{lemma}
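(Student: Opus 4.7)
The plan is to first extract a smooth pointed Ricci flow limit $(\M^\infty, (x_\infty, 0))$ by appealing to the compactness of $\kappa$-solutions, and then to apply Arzel\`a-Ascoli to the $l_j$'s using standard estimates for Perelman's reduced distance, giving a locally Lipschitz limit $l_\infty$ on compact subsets of $\M^\infty_{<0}$.

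For the first step, I would use the compactness of the space of pointed normalized $\kappa$-solutions. Because $R(x_j,0)$ is uniformly bounded, the structural properties of $\kappa$-solutions (compactness of normalized $\kappa$-solutions together with the fact that $R$ is pointwise nondecreasing in time) propagate scalar curvature bounds, and hence $C^k$ bounds on the Riemann tensor, from the basepoint to any parabolic ball around $(x_j,0)$, uniformly in $j$. Combined with $\kappa$-noncollapsing, Hamilton's compactness theorem supplies comparison diffeomorphisms $\Phi^j$ along which $(\M^j, (x_j,0))$ smoothly subconverges to a limiting $\kappa$-solution $(\M^\infty, (x_\infty, 0))$.

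For the convergence of the reduced distances, I would first pin down $l_j$ along the worldline of the basepoint. Taking the constant path $\gamma(s) = (x_j, -s)$ as a competitor and using that $R$ is nondecreasing in time in a $\kappa$-solution gives
\begin{equation}
0 \leq l_j(x_j, -\tau) \leq \frac{1}{2\sqrt{\tau}} \int_0^\tau \sqrt{s}\, R(x_j, -s)\, ds \leq \tfrac{1}{3} R(x_j, 0)\, \tau,
\end{equation}
where the lower bound uses $R \geq 0$. Next, Perelman's pointwise inequality $|\nabla l_j|^2 + R \leq \frac{3 l_j}{\tau}$ together with the analogous barrier-sense bound on $\partial_\tau l_j$, combined with the uniform bounds on $R$ on parabolic balls from the first step, give uniform spatial and temporal Lipschitz bounds for $l_j$ on each compact subset $K \subset \M^\infty_{<0}$ (pulled back via $\Phi^j$), with constants depending only on $\kappa$, the uniform bound for $R(x_j,0)$, and positive upper and lower bounds for $\tau=-t$ on $K$. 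Integrating these Lipschitz bounds along curves propagates the upper bound from the worldline of $(x_j,0)$ to a uniform upper bound on $l_j|_K$.

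With the sequence $\{l_j \circ (\Phi^j)^{-1}\}$ uniformly bounded and uniformly Lipschitz on each compact subset of $\M^\infty_{<0}$, Arzel\`a-Ascoli produces a further subsequence converging uniformly on compact subsets to a locally Lipschitz function $l_\infty$. The main technical subtlety is that Perelman's estimates degenerate as $\tau \to 0$, which is exactly why the convergence is asserted on $\M^\infty_{<0}$ and not up to the time-zero slice (where $l_j$ is ill-defined anyway); working within $\M^\infty_{<0}$ ensures $\tau$ stays bounded away from $0$ on each compact set, so the quantitative estimates remain uniform in $j$.
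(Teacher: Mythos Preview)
Your proposal is correct and follows essentially the same approach as the paper: first extract a smoothly convergent subsequence via the compactness of pointed normalized $\kappa$-solutions, then use the worldline of the basepoint (where $R \le R(x_j,0)$) to bound $l_j$ along $\{x_j\}\times[a,b]$, propagate this to parabolic balls using the standard $l$-function estimates $|\nabla l|^2,\ |l_\tau| \le Cl/\tau$, and finish with Arzel\`a--Ascoli. The paper cites these estimates as (\ref{ye2.54}), (\ref{ye2.55}), (\ref{ye2.57}), but the logical structure is identical to yours.
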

\begin{proof}
Since $R(x_j,0)$ is uniformly bounded above, 
the compactness of the space of normalized pointed
$\kappa$-solutions implies that a subsequence,
which we relabel as $\{(\M^j,(x_j,0))\}$,
converges in the pointed smooth topology.   
Along the curve $\{x_j\} \times (-\infty, 0) \subset \M^j$, the scalar
curvature is bounded above by $R(x_j, 0)$. 
For any $a < b < 0$, this gives a uniform
upper bound on $l_j$ on $\{x_j\} \times [a,b] \subset \M^j$.
From (\ref{ye2.55}), we get that
$l$ is uniformly bounded on 
balls of the form $B(x_j, b, r) \subset \M^j_b$.
Then from (\ref{ye2.57}), we get that $l$ is uniformly bounded on
parabolic balls of the form
$P(x_j, b, r, a-b) \subset \M^j$. 
Using (\ref{ye2.54}) and passing to a
subsequence, we get convergence of $\{l_j\}$ to some $l_\infty$,
uniformly on compact subsets of $\M^\infty_{<0}$.
\end{proof}

\begin{proposition}[No curvature bound at basepoint]
\label{prop_convergence_no_curvature_bound}
Let $\{(\M^j,(x_j,0))\}$ be a sequence of pointed $\kappa$-solutions, and let 
$l_j:\M^j_{<0}\ra (0,\infty)$ be the reduced 
distance function with spacetime basepoint at $(x_j,0)$.  
Suppose that  $\{(y_j,-1)\in \M^j_{-1}\}$ is a sequence satisfying
$\sup_j l_j(y_j,-1)<\infty$.  Then after passing to a subsequence, the tuples
$(\M^j,(y_j,-1),l_j)$ converge to a  tuple $(\M^\infty,(y_\infty,-1),l_\infty)$
where:
\begin{enumerate}
\item 
The convergence $(\M^j,(y_j,-1))\ra (\M^\infty,(y_\infty,-1))$ is smooth
on compact subsets of $\M^\infty_{<0}$, and $\M^\infty$ is either a $\kappa$-solution
defined on $(-\infty,0)$ or the static solution on $\R^3$.
\item $\{l_j\}$ converges to 
a locally Lipschitz function
$l_\infty$
uniformly on compact subsets of $\M^\infty_{<0}$, 
after composition
with the comparison diffeomorphisms. 
\item The reduced volume functions $\tilde V_j:(-\infty,0)\ra (0,\infty)$
converge uniformly on compact sets to the function 
$\tilde V_\infty:(-\infty,0)\ra (0,\infty)$, where
\begin{equation} \label{redvol}
\tilde V_\infty(t)= (-t)^{-\frac{n}{2}} \int_{\M^\infty_t} e^{-l_\infty}\,
\dvol_{g(t)}.
\end{equation}
\item The function $l_\infty$ satisfies the differential inequalities
(24.6) and (24.8) of 
\cite{Kleiner-Lott_perelman_notes}.
If $\tilde V_\infty$ is constant on some time interval 
$[t_0,t_1]\subset (-\infty,0)$ then $\M^\infty$ is a gradient shrinking
soliton with potential $l_\infty$.
\end{enumerate}
\end{proposition}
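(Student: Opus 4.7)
The strategy is to first leverage the bound $l_j(y_j,-1)\le L$ to obtain enough local geometric control at $(y_j,-1)$ to extract a pointed subsequential limit of the underlying Ricci flow spacetimes, and then to use Perelman's uniform a priori estimates for the reduced distance on a $\kappa$-solution to pass $l_j$ to a limit with all the claimed properties.

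For (1), I would first argue that $R(y_j,-1)$ is uniformly bounded above along a subsequence. The key inputs are Perelman's differential inequalities for $l$ on a smooth Ricci flow, which on a $\kappa$-solution give pointwise control of $R$ by $l/|t|$ up to terms that can be absorbed using $\kappa$-noncollapsing and $R\ge 0$.  Passing to a subsequence so that $R(y_j,-1)$ converges in $[0,\infty]$, the case $R(y_j,-1)\to+\infty$ is incompatible with a positive lower injectivity radius bound at unit scale (which follows from $\kappa$-noncollapsing), and hence cannot arise under pointed convergence at scale one. If the limit of $R(y_j,-1)$ is positive, the compactness theorem for $\kappa$-solutions from Appendix \ref{appkappa} yields smooth pointed convergence to a $\kappa$-solution defined on $(-\infty,0)$. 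If instead $R(y_j,-1)\to 0$, then $\kappa$-noncollapsing and the curvature estimates on parabolic neighborhoods of $(y_j,-1)$ force the geometry at scale one to flatten out, giving the flat static $\R^3$ as the pointed limit.

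For (2) and (3), Perelman's gradient and time-derivative estimates for $l$ on a $\kappa$-solution provide uniform $C^{0,1}_{\mathrm{loc}}$ bounds on $l_j$ on parabolic neighborhoods of $(y_j,-1)$, anchored by $l_j(y_j,-1)\le L$. After composing with the comparison diffeomorphisms from (1), Arzel\`a--Ascoli produces, along a further subsequence, a locally Lipschitz limit $l_\infty$ with uniform local convergence on compact subsets of $\M^\infty_{<0}$. The convergence $\tilde V_j\to\tilde V_\infty$ then follows by splitting the integral defining $\tilde V_j(t)$ into a compact piece, handled by the smooth convergence of the metrics and the local uniform convergence of $l_j$, and a tail piece, handled by the quadratic lower bound $l(y,t)\ge c\,d_{t}^{2}(y,x_0)/|t| - C$ valid on any nonnegatively curved Ricci flow. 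The latter gives uniform exponential decay of $e^{-l_j}$ in $d_t(y,x_0)$ on compact time intervals, yielding uniform integrability and hence uniform convergence of $\tilde V_j$ on compact subsets of $(-\infty,0)$.

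For (4), the inequalities (24.6) and (24.8) of \cite{Kleiner-Lott (2008)} hold for each $l_j$ in the smooth sense on the open dense subset where $l_j$ is smooth, and in the barrier/distributional sense everywhere; both formulations pass to the limit under $C^\infty_{\mathrm{loc}}$ convergence of the metrics together with locally uniform convergence of $l_j$. The rigidity statement then follows from the equality case in Perelman's monotonicity formula for reduced volume on $\M^\infty$: constancy of $\tilde V_\infty$ on $[t_0,t_1]$ forces equality in the monotonicity inequality, which in turn forces equality in (24.6) and (24.8) in the limit, and this is precisely the gradient shrinking soliton equation with potential $l_\infty$. The main obstacle is the dichotomy in (1): bootstrapping the single pointwise bound $l_j(y_j,-1)\le L$ into genuine curvature control at $(y_j,-1)$ requires the delicate combination of Perelman's estimates on $l$, $\kappa$-noncollapsing, and the nonnegativity of curvature on a $\kappa$-solution, and it is this step which splits the analysis into the $\kappa$-solution case and the flat $\R^3$ case.
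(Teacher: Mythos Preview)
Your approach matches the paper's, and your treatment of (2)--(4) is correct and in fact more detailed than the paper's proof, which simply defers those parts to \cite{Ye (2008),Ye2 (2008)}. But your argument for (1) contains a genuine error mixed in with the correct idea.

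You correctly identify the key estimate $R\le Cl/\tau$ (this is (\ref{ye2.53}) in the paper), which immediately gives $R(y_j,-1)\le CL$ from $l_j(y_j,-1)\le L$. Having this, the case $R(y_j,-1)\to+\infty$ simply cannot occur, and there is no need to rule it out separately. More importantly, the argument you offer to exclude it is wrong: $\kappa$-noncollapsing does \emph{not} give a lower injectivity radius bound at unit scale when curvature is large---it gives such a bound only at the curvature scale $R^{-1/2}$, which shrinks to zero as $R\to\infty$. So if the estimate $R\le Cl/\tau$ were unavailable, your backup argument would fail; the curvature bound genuinely comes from (\ref{ye2.53}) and nothing else.

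The paper's route for (1) is then: propagate the $l$-bound in time along the worldline of $y_j$ via (\ref{ye2.57}) to get $l_j$ bounded on $\{y_j\}\times[a,b]$ for any $[a,b]\subset(-\infty,0)$; convert this to an $R$-bound there via (\ref{ye2.53}); extend the $R$-bound spatially using the bounded-curvature-at-bounded-distance property of $\kappa$-solutions (opening paragraph of Subsection~\ref{subsec_convergence_l_functions}); and run a diagonal argument over $[a,b]\nearrow(-\infty,0)$. The $\kappa$-solution versus flat $\R^3$ dichotomy then arises exactly as you say, from whether the limiting scalar curvature is positive or identically zero.
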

\begin{proof}
From (\ref{ye2.57}), given $a < b < 0$, 
there is a uniform bound for $l_j$ on 
$\{y_j\} \times [a,b]$. From (\ref{ye2.53}),
there are bounds for $R_j$ on $\{y_j\} \times [a,b]$.
Then we get a uniform scalar curvature bound on the balls
$B(y_j, b, r) \subset \M^j_b$, and hence on the parabolic neighborhoods
$P(y_j, b, r, - \Delta t) \subset \M^j$.
Taking $a \rightarrow - \infty$ and $b \rightarrow 0$, and
applying a diagonal argument, we can assume that 
$\{(\M^j, (y_j, -1))\}$ converges
to a $\kappa$-solution $(\M^\infty, (y_\infty, -1))$. 
As in the proof of Lemma
\ref{curvbasepoint}, after passing to a subsequence we get
$l_j \ra l_\infty$ for some $l_\infty$ defined
on $\M^\infty_{<0}$. The rest is as in \cite{Ye_reduced,Ye_reduced_2}.
\end{proof}

\begin{lemma}
\label{lem_cyl_l_function}
Let $(\M,(x,0),l)$ be a shrinking round cylinder with $R\equiv 1$ at $t=0$,
where the $l$-function has spacetime basepoint $(x,0)$.   
Then:
\begin{enumerate}
\item For every $t\in (-\infty,0)$ the $l$-function $l:\M_t\ra \R$
attains its minimum uniquely
at $(x,t)$.
\item $\lim_{t \rightarrow - \infty} (\hat\M(-t),(x,-1),l) = 
(\cyl,(x_\infty,-1),l_\infty)$, where the coordinate $z$ for the
$\R$-factor in $\cyl$ satisfies
$z(x_\infty)=0$, and 
\begin{equation}
\label{eqn_l_infinity_cyl}
l_\infty=1+\frac{z^2}{-4t}\,.
\end{equation} 
\item $\lim_{t \ra - \infty} l(x,t) = 1$.
\end{enumerate} 
\end{lemma}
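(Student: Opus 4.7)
The normalization $R\equiv 1$ at $t=0$ pins down the model explicitly: $\M = S^2\times\R$ with $g(t) = 2(1-t)g_{S^2} + dz^2$ for $t<1$, where $g_{S^2}$ is the unit round metric. The isometry group $SO(3)\times\R$ preserves the worldline of $x$, so without loss of generality $x=(x_{S^2},0)$, and $l(y,t)$ depends only on $z(y)$ and on the $g_{S^2}$-distance $\ell$ from the $S^2$-projection of $y$ to $x_{S^2}$. The plan is to compute $l(y,-\tau_0)$ in closed form; parts (1)-(3) then follow by inspection.

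By the product structure, any smooth curve $\gamma=(\gamma_{S^2},\gamma_\R):[0,\tau_0]\to\M$ with $\gamma(0)=x$ satisfies
\begin{equation*}
L(\gamma) = \int_0^{\tau_0}\!\sqrt\tau\,\Bigl[\tfrac{1}{1+\tau} + 2(1+\tau)|\gamma_{S^2}'|^2_{g_{S^2}} + |\gamma_\R'|^2\Bigr]\,d\tau,
\end{equation*}
so the $S^2$- and $\R$-parts of the variational problem decouple into two one-dimensional problems with explicit Euler-Lagrange solutions. The substitution $u=\sqrt\tau$ reduces all remaining integrals to ones involving $du/(1+u^2)$, yielding
\begin{equation*}
l(y,-\tau_0) \;=\; 1 - \frac{\arctan\sqrt{\tau_0}}{\sqrt{\tau_0}} + \frac{z(y)^2}{4\tau_0} + \frac{\ell^2}{2\sqrt{\tau_0}\,\arctan\sqrt{\tau_0}}.
\end{equation*}

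Parts (1) and (3) then follow by inspection. The last two summands are nonnegative and vanish simultaneously iff $z(y)=\ell=0$, i.e.\ exactly when $y$ lies on the worldline of $x$, proving (1). Specializing to $y$ on that worldline gives $l(x,-\tau_0) = 1-\arctan\sqrt{\tau_0}/\sqrt{\tau_0}\to 1$ as $\tau_0\to\infty$, proving (3).

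For (2), I use invariance of the reduced length under parabolic rescaling, so $l_{\hat\M(-t)}(y,\hat t) = l_\M(y,(-t)\hat t)$ and the rescaled $\R$-coordinate is $\hat z = z/\sqrt{-t}$. Smooth metric convergence $\hat\M(-t)\to\cyl$ with basepoint $(x_\infty,-1)$, where $z(x_\infty)=0$, follows at once from the product form $\hat g(\hat t) = \bigl(2/(-t)-2\hat t\bigr)g_{S^2} + d\hat z^2 \to -2\hat t\,g_{S^2} + d\hat z^2$. Substituting $\tau_0 = (-t)|\hat t|$ and $z(y)=\sqrt{-t}\,\hat z$ into the closed-form expression for $l$, the first term tends to $1$, the $z$-term to $\hat z^2/(-4\hat t)$, and the $\ell$-term is $O(1/\sqrt{-t})\to 0$ (since $\ell\leq\pi$ while the denominator grows), with convergence locally uniform. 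This yields $l_\infty(\hat t,\hat z) = 1 + \hat z^2/(-4\hat t)$, proving (2). The computation is essentially mechanical; the only subtlety is correctly tracking the rescaling of the $\R$-coordinate.
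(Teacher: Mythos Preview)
Your proof is correct and takes a genuinely different route from the paper's. You compute $l$ in closed form by solving the decoupled Euler--Lagrange equations on the $\R$- and $S^2$-factors (the latter reducing to a one-dimensional problem along the minimizing geodesic), and then read off (1)--(3) from the explicit formula
\[
l(y,-\tau_0)=1-\frac{\arctan\sqrt{\tau_0}}{\sqrt{\tau_0}}+\frac{z(y)^2}{4\tau_0}+\frac{\ell^2}{2\sqrt{\tau_0}\,\arctan\sqrt{\tau_0}}.
\]
The paper instead argues (1) in one line from the observation that $R$ depends only on $\tau$, so $\mathcal L(\gamma)\ge\int\sqrt\tau\,R\,d\tau$ with equality iff $\gamma$ is the static worldline; and it obtains (2) by invoking the classification of gradient shrinking $\kappa$-solutions together with their asymptotic $l$-functions (Lemma~\ref{lem_classification_gradient_soliton_kappa_solutions}), which in turn imports results of Enders and Naber. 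Your approach is more self-contained and yields the exact formula for $l$ at finite times, which is extra information; the paper's argument for (1) is slicker, and its treatment of (2) is shorter once that black box is available. Both are valid.
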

\begin{proof}
Part (1)  follows from the formula for $\L$-length:
\begin{equation}
\L(\ga)=\int_{\bar t}^0\sqrt{-t}(R+|\ga'(t)|^2)dt
\geq\int_{\bar t}^0\sqrt{-t}\,R\,dt
\end{equation}
with equality if and only if $\ga(t)=(x,t)$ for all $t\in [\bar t,0]$.
Part (2) follows from applying
Lemma \ref{lem_classification_gradient_soliton_kappa_solutions}
in Section \ref{subsec_gradient_shrinking_solitons}
to parabolic rescalings of $\M$. Part (3) is now immediate.
\end{proof}

From (\ref{eqn_l_infinity_cyl}), we get that $l_\infty$ is strictly decreasing
along backward worldlines,  except at its minimum value  $1$. In particular, if
$l_\infty(x,t_0)\leq 1+\hat{\eps}$ then $l_\infty(x,t_1)<1+\hat{\eps}$ for all
$t_1<t_0$.  By compactness, this stability property is inherited by any tuple
which approximates 
the shrinking round cylinder.

\begin{lemma}
\label{lem_cyl_l_function_stability}
For all $\hat\eps>0$ and $A\in (0,1)$, there is a $\bar\mu>0$ with the following
property.   Suppose that
$(\bar\M,(x,-1),l)$ is a $\kappa$-solution, with  $l$-function
based at  $(x,0)$, so that
\begin{enumerate}
\item $(\bar\M,(x,-1),l)$  is 
$\bar\mu$-close to $(\cyl,(y,-1),l_\infty)$ on the time
interval $[-A^{-1},-A]$, for some $(y,-1)\in\cyl$, and
\item
$l(x,t)\leq 1+\hat\eps$ for all $t\in [-1,-A]$.
\end{enumerate}
(Here $l_\infty(y, -1)$ need not be one.)
Then $l(x,t)< 1+\hat\eps$ for all $t\in [-A^{-1},-1]$.
\end{lemma}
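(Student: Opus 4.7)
The plan is to argue by contradiction, combining the compactness for reduced distance functions in Proposition~\ref{prop_convergence_no_curvature_bound} with the explicit formula $l_\infty=1+\frac{z^2}{-4t}$ on the cylinder from Lemma~\ref{lem_cyl_l_function}(2). Suppose no such $\bar\mu$ exists. Then for a sequence $\bar\mu_j\to 0$ there are $\kappa$-solutions $(\bar\M^j,(x_j,-1),l_j)$ satisfying hypotheses (1) and (2) with $\bar\mu=\bar\mu_j$, and times $s_j\in[-A^{-1},-1]$ with $l_j(x_j,s_j)\ge 1+\hat\eps$. The bound $l_j(x_j,-1)\le 1+\hat\eps$ from hypothesis~(2) lets us apply Proposition~\ref{prop_convergence_no_curvature_bound}: after passing to a subsequence, the pointed tuples converge to a limit $(\bar\M^\infty,(x_\infty,-1),\tilde l)$, with smooth Ricci flow convergence on compact subsets of $\bar\M^\infty_{<0}$, uniform convergence of the $l$-functions on compact subsets, and $s_j\to s_\infty\in[-A^{-1},-1]$.

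Since $\bar\mu_j\to 0$, the closeness hypothesis~(1)---which involves both the spacetime and the $l$-function---passes to the limit: on the time slab $[-A^{-1},-A]$ the Ricci flow spacetime $\bar\M^\infty$ is isometric to $\cyl$, and after composition with the comparison diffeomorphisms $\tilde l$ agrees there with the cylinder $l$-function $l_\infty$ from Lemma~\ref{lem_cyl_l_function}(2). Writing $z$ for the $\R$-coordinate on $\cyl$ normalized so that $l_\infty$ attains its minimum value $1$ along $\{z=0\}$, we therefore have
\begin{equation*}
\tilde l(x_\infty,t) \;=\; 1+\frac{z(x_\infty)^2}{-4t} \qquad \text{for } t\in[-A^{-1},-A].
\end{equation*}
As observed in the remark following Lemma~\ref{lem_cyl_l_function}, this is a strictly decreasing function of $-t$ along the worldline of $x_\infty$ whenever $z(x_\infty)\ne 0$.

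Taking $t=-A\in[-1,-A]$ in the limit of hypothesis~(2) gives $\tilde l(x_\infty,-A)\le 1+\hat\eps$, equivalently $z(x_\infty)^2\le 4A\hat\eps$. For any $t\in[-A^{-1},-1]$ we have $-t\ge 1>A$, whence
\begin{equation*}
\tilde l(x_\infty,t) \;=\; 1+\frac{z(x_\infty)^2}{-4t} \;\le\; 1+\frac{A\hat\eps}{-t} \;<\; 1+\hat\eps.
\end{equation*}
Specializing to $t=s_\infty$, the uniform convergence of the $l_j$ to $\tilde l$ gives $l_j(x_j,s_j)\to \tilde l(x_\infty,s_\infty)<1+\hat\eps$, contradicting $l_j(x_j,s_j)\ge 1+\hat\eps$. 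The main obstacle is verifying that as $\bar\mu_j\to 0$ the joint closeness of the Ricci flows and the $l$-functions on the slab $[-A^{-1},-A]$---which is all hypothesis~(1) directly provides---is strong enough to produce a cylindrical limit with the explicit formula for $\tilde l$ on that slab, so that the strict monotonicity of Lemma~\ref{lem_cyl_l_function} can be leveraged at $t=s_\infty$, a point that a priori could lie anywhere in $[-A^{-1},-1]$.
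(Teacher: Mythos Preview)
Your proof is correct and follows essentially the same route as the paper's: argue by contradiction, pass to a limit, and use the explicit formula $l_\infty = 1 + \frac{z^2}{-4t}$ on the cylinder to derive a contradiction. One minor difference is that you take a detour through Proposition~\ref{prop_convergence_no_curvature_bound} to manufacture an abstract limit $(\bar\M^\infty,(x_\infty,-1),\tilde l)$ before identifying the time slab $[-A^{-1},-A]$ with the cylinder; the paper instead works directly on the cylinder side via the comparison diffeomorphisms supplied by hypothesis~(1). Since $[-A^{-1},-1]\subset [-A^{-1},-A]$, everything you need already lives on that slab, so hypothesis~(1) alone (with $\bar\mu_j\to 0$) lets you transport the bounds $l_j(x_j,-A)\le 1+\hat\eps$ and $l_j(x_j,s_j)\ge 1+\hat\eps$ to the corresponding cylinder points $(y_j,-A)$ and $(y_j,s_j)$, bound $z(y_j)^2$, pass to a subsequential limit $y_j\to y_\infty$, $s_j\to s_\infty$, and get the same contradiction. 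Your invocation of Proposition~\ref{prop_convergence_no_curvature_bound} is harmless, and your closing ``main obstacle'' sentence correctly identifies the only point requiring care, but you could tighten the write-up by dropping that detour.
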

\begin{proof}
If the lemma fails then for some $\hat\eps>0$ and $A\in (0,1)$, there is a sequence
$\{(\bar\M^j,(x_j,-1),l_j)\}$ so that for all $j$:
\begin{itemize}
\item $(\bar\M^j,(x_j,-1),l_j)$ is $j^{-1}$-close to $(\cyl,(y_j,-1),l_\infty)$
on the time interval $[-A^{-1},-A]$, 
for some 
$(y_j,-1)\in \cyl_{-1}$.
\item $l_j(x_j,t)\leq 1+\hat\eps$ for all $t\in [-1,-A]$.
\item 
$l_j(x_j,t_j)\geq 1+\hat\eps$ for some $t_j\in [-A^{-1},-1]$.
\end{itemize}
Using (\ref{eqn_l_infinity_cyl}) and
passing to a limit, 
we obtain $(y_\infty,-1)\in \cyl$ with the property
that $l_\infty(y_\infty,-A)\leq 1+\hat\eps$ and $l_\infty(y_\infty,t)\geq 1+\hat\eps$ for some
$t\in [-A^{-1},-1]$.  But 
this contradicts the formula $l_\infty(x,t)=1+\frac{z^2}{(-4t)}$.  
\end{proof}

\subsection{Stability of cylinders with moving basepoint}

In this subsection we prove a backward stability result for cylindrical
regions, initially without fixing the basepoint.  We then prove
a version fixing the basepoint, which will imply
Theorem \ref{thm_stability_of_necks}.

We start with a preliminary lemma about reduced volume.

\begin{lemma} \label{redvol2}
There are $\mu > 0$ and $\tau  >  0$ so that if $\M$ is a $\kappa$-solution defined
on $(- \infty, 0]$
with $R(x,0) = 1$ then $\tilde{V}(\tau) \le (4 \pi )^{\frac32} - \mu$.
\end{lemma}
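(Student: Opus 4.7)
The plan is to argue by contradiction, combining the compactness of normalized pointed $\kappa$-solutions with the rigidity case in Perelman's reduced volume monotonicity. Fix $\tau = -1$ once and for all (any fixed negative number would do). Suppose the conclusion fails for this $\tau$. Then for each $j$ there is a $\kappa$-solution $\M^j$ on $(-\infty,0]$ and a point $(x_j,0)\in\M^j$ with $R(x_j,0)=1$ and $\tilde V_j(-1) > (4\pi)^{\frac32} - \frac{1}{j}$, where $\tilde V_j$ denotes the reduced volume based at $(x_j,0)$.

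First I would extract a limit. Because $R(x_j,0)=1$, the compactness of normalized pointed $\kappa$-solutions (Appendix \ref{appkappa}) together with Lemma \ref{curvbasepoint} gives, after passing to a subsequence, a limit of tuples $(\M^j,(x_j,0),l_j) \to (\M^\infty,(x_\infty,0),l_\infty)$, where $\M^\infty$ is a $\kappa$-solution on $(-\infty,0]$ with $R_\infty(x_\infty,0)=1$ and $l_\infty$ is the reduced distance on $\M^\infty$ based at $(x_\infty,0)$. To pass the reduced volume itself to the limit, I would apply Proposition \ref{prop_convergence_no_curvature_bound} with the choice $(y_j,-1) = (x_j,-1)$; since $R(x_j,0)=1$ and scalar curvature on a $\kappa$-solution is pointwise nondecreasing in time along worldlines, we have $R\leq 1$ along $s\mapsto (x_j,s)$ for $s\in [-1,0]$, which bounds $l_j$ along this worldline and hence gives $\sup_j l_j(y_j,-1)<\infty$. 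Part (3) of that proposition then yields $\tilde V_\infty(-1) = \lim_{j\to\infty}\tilde V_j(-1) = (4\pi)^{\frac32}$.

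Finally I would invoke rigidity. By Perelman's monotonicity, $\tilde V_\infty$ is nondecreasing on $(-\infty,0)$ with $\lim_{t\to 0^-}\tilde V_\infty(t) = (4\pi)^{\frac32}$, so the equality $\tilde V_\infty(-1) = (4\pi)^{\frac32}$ forces $\tilde V_\infty \equiv (4\pi)^{\frac32}$ on $[-1,0)$. The equality case, Proposition \ref{prop_convergence_no_curvature_bound}(4), then implies that $\M^\infty$ restricted to $[-1,0)$ is a gradient shrinking Ricci soliton with potential $l_\infty$. However, the associated flow of any non-Gaussian three-dimensional $\kappa$-noncollapsed gradient shrinking soliton (the round sphere $S^3$, the round cylinder $S^2\times\R$, or their $\Z_2$-quotients) becomes singular at $t=0$, whereas $\M^\infty$ extends smoothly to the time slice $\M^\infty_0$ with $R_\infty(x_\infty,0)=1$. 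Hence $\M^\infty$ must be the Gaussian soliton on flat $\R^3$, which has $R\equiv 0$, contradicting $R_\infty(x_\infty,0)=1$.

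The main obstacle is passing the reduced volume to the limit, since $\M^\infty_{-1}$ is noncompact and $\tilde V$ is an integral over the whole time slice. This is handled cleanly by Proposition \ref{prop_convergence_no_curvature_bound}(3), whose proof already encodes the required uniform integrable dominant --- coming from the quadratic lower bound on $l_j$ via (\ref{ye2.55}) and the polynomial volume growth on $\kappa$-noncollapsed manifolds of nonnegative curvature --- once one produces basepoints $y_j$ with a uniform bound on $l_j(y_j,-1)$.
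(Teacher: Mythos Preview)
Your proof is correct and follows essentially the same contradiction argument as the paper: extract a limiting $\kappa$-solution from a violating sequence, pass reduced volume to the limit, and use rigidity to force flat $\R^3$, contradicting $R(x_\infty,0)=1$. The only cosmetic differences are that the paper lets $\tau_j\to-\infty$ (via monotonicity) rather than fixing $\tau=-1$, and it cites \cite[Pf.\ of Proposition 39.1]{Kleiner-Lott (2008)} directly for the rigidity rather than routing through the classification of shrinkers; your explicit invocation of Proposition~\ref{prop_convergence_no_curvature_bound}(3) to justify convergence of the reduced volumes is a point the paper leaves implicit.
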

\begin{proof}
Using the monotonicity of the reduced volume,
if the lemma fails then there is a sequence $\{\M^j\}_{j=1}^\infty$ of $\kappa$-solutions
and points $(x_j, 0) \in \M^j_0$ so that $R(x_j,0) = 1$ and
$ \tilde{V}_{\M^j}(j) \ge (4 \pi)^{\frac32} - \frac{1}{j}$.
After passing to a subsequence, there is pointed convergence to a pointed $\kappa$-solution
$\left( \M^\infty, (x_\infty, 0) \right)$. Then $R(x_\infty, 0) = 1$ and 
$\tilde{V}_\infty  (\tau) \ge (4 \pi)^{\frac32}$ for
all $ \tau > 0$. This is a contradiction \cite[Pf. of Proposition 39.1]{Kleiner-Lott_perelman_notes}.
\end{proof}

\begin{proposition}
\label{prop_cylinder_stability_1_noncompact}
For all $\hat\eps>0$ and $C<\infty$, 
there is a $T=T(\hat\eps,C)\in (-\infty,0)$ with the
following property.  Suppose that
 $\M$ is a noncompact $\kappa$-solution defined on 
$(-\infty,0]$  and:
\begin{itemize}
\item $(x,0)\in \M_0$ is a point with  $R(x,0)=1$.
\item $l:\M_{< 0}\ra \R$
is the $l$-function with spacetime basepoint $(x,0)$.
\item $t<T$, and $(y,t)\in \M_{t}$
is a point where the reduced distance
satisfies  $l(y,t)\leq C$.
\end{itemize}
Then one of the following holds:
\begin{enumerate}
\item  The tuple
$(\hat \M( -t),(y,-1),l)$ is $\hat\eps$-close to a triple
$(\cyl,(y_\infty,-1),l_\infty)$, where 
$y_\infty \in S^2 \times \R$
and $l_\infty$ is the asymptotic  $l$-function of
(\ref{eqn_l_infinity_cyl}).   Note that $y_\infty$
need not be at the minimum of $l_\infty$ in $\cyl_{-1}$.
\item $\M$ is isometric to a  $\Z_2$-quotient of a shrinking round cylinder.
\end{enumerate}
\end{proposition}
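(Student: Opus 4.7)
The plan is a contradiction argument via the classical asymptotic-soliton extraction, carried out for a sequence of ambient $\kappa$-solutions. Suppose the proposition fails. Then there exist $\hat\eps > 0$ and $C < \infty$ so that for each $T_j \to -\infty$ we can find a noncompact $\kappa$-solution $\M^j$ on $(-\infty,0]$, a point $(x_j,0)$ with $R(x_j,0)=1$, a time $t_j < T_j$, and a point $(y_j,t_j)$ satisfying $l_j(y_j,t_j) \le C$, such that $\M^j$ is not isometric to a $\Z_2$-quotient of a round shrinking cylinder and no tuple $(\hat\M^j(-t_j),(y_j,-1),l_j)$ is $\hat\eps$-close to any $(\cyl,(y_\infty,-1),l_\infty)$ with $l_\infty$ given by (\ref{eqn_l_infinity_cyl}).

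Next I would pass to the parabolic rescalings $\tilde\M^j := \hat\M^j(-t_j)$, in which the marked point $(y_j,t_j)$ sits at time $-1$. Since both reduced distance and reduced volume are invariant under parabolic rescaling, $l_j(y_j,-1) \le C$ persists on $\tilde\M^j$. Applying Proposition \ref{prop_convergence_no_curvature_bound}, after passing to a subsequence we obtain pointed convergence
\begin{equation}
(\tilde\M^j,(y_j,-1),l_j)\longrightarrow(\M^\infty,(y_\infty,-1),l_\infty),
\end{equation}
with $\tilde V_j\to \tilde V_\infty$ uniformly on compact subsets of $(-\infty,0)$; the limit $\M^\infty$ is either a $\kappa$-solution on $(-\infty,0)$ or static flat $\R^3$.

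The crux of the argument is to show that $\tilde V_\infty$ is constant on $(-\infty,0)$. For any fixed $s<0$, scale invariance gives $\tilde V_j(s) = \tilde V^{\M^j}(s(-t_j))$, and $s(-t_j) \to -\infty$ as $j\to\infty$. Since $R(x_j,0)=1$, compactness of the space of normalized pointed $\kappa$-solutions supplies a uniform rate of convergence of $\tilde V^{\M^j}(\tau)$ to its asymptotic limit $V^j_{\mathrm{asy}} := \lim_{\tau\to -\infty}\tilde V^{\M^j}(\tau)$, and combined with Perelman's monotonicity this forces $\tilde V_j(s_2) - \tilde V_j(s_1) \to 0$ for all $s_1 < s_2 < 0$. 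Thus $\tilde V_\infty$ is constant. Lemma \ref{redvol2} provides $\tilde V_\infty \le (4\pi)^{3/2}-\mu$, which rules out the static flat $\R^3$ case; by part (4) of Proposition \ref{prop_convergence_no_curvature_bound}, $\M^\infty$ is then a nontrivial three-dimensional gradient shrinking $\kappa$-soliton with potential $l_\infty$.

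Finally I would classify $\M^\infty$ using the classification of three-dimensional gradient shrinking $\kappa$-solitons (Lemma \ref{lem_classification_gradient_soliton_kappa_solutions}): it is either a quotient of the shrinking round sphere, the round shrinking cylinder $\cyl$, or a $\Z_2$-quotient of $\cyl$. The compact cases are excluded because each $\M^j$ is noncompact and connected, so a smooth pointed Cheeger--Gromov limit along a subsequence cannot be compact without forcing the $\M^j$ themselves to be compact for large $j$. If $\M^\infty=\cyl$, Lemma \ref{lem_cyl_l_function} identifies $l_\infty$ with the cylindrical asymptotic $l$-function (\ref{eqn_l_infinity_cyl}) centered at the appropriate $y_\infty$, yielding case (1) for large $j$ and contradicting the choice of sequence. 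The main obstacle is the remaining case, where $\M^\infty$ is a $\Z_2$-quotient of $\cyl$: one must upgrade the local soliton convergence to the global conclusion that $\M^j$ itself is isometric to a $\Z_2$-quotient of a round shrinking cylinder for large $j$, yielding case (2) and the final contradiction. This step will rely on the rigidity of the $\Z_2$-quotient among three-dimensional noncompact $\kappa$-solutions (distinguished by an $\R P^2$ core in every time slice), together with propagation of this topological feature forward and backward in time by Ricci flow.
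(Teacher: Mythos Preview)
Your overall strategy—contradiction, rescaling, extracting a shrinking soliton via Proposition~\ref{prop_convergence_no_curvature_bound}, and classifying it—matches the paper's. The genuine gap is the sentence ``compactness of the space of normalized pointed $\kappa$-solutions supplies a uniform rate of convergence of $\tilde V^{\M^j}(\tau)$ to its asymptotic limit $V^j_{\mathrm{asy}}$.'' Compactness plus pointwise convergence does not by itself yield a uniform rate; one needs to know that $V^j_{\mathrm{asy}}$ is the \emph{same} value $V_{\cyl}$ for every $j$ and for every subsequential limit of the $(\M^j,(x_j,0))$. That in turn requires precisely the $\R P^2$-rigidity argument you defer to the last paragraph: since $\M^j$ is noncompact and not the $\Z_2$-quotient, its asymptotic soliton cannot be $\cyl/\Z_2$ (else $\M^j$ would contain a one-sided $\R P^2$ and hence be $\cyl/\Z_2$), so $V^j_{\mathrm{asy}}=V_{\cyl}$; and any pointed limit of the $(\M^j,(x_j,0))$ is likewise noncompact and not $\cyl/\Z_2$, so it too has asymptotic reduced volume $V_{\cyl}$. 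Only then does monotonicity plus convergence of reduced volumes on compact time sets force the uniform rate. As written, your argument is circular: constancy of $\tilde V_\infty$ is asserted before the $\R P^2$ step that it actually depends on.

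The paper avoids this by a two-stage device. It first uses a pigeonhole on the monotone, bounded function $\tilde V^{\M^j}$ to locate an \emph{intermediate} scale $t_j\in(T_j,0)$ with $T_j/t_j\to\infty$ on which $\tilde V^{\M^j}$ is nearly constant over an interval of growing logarithmic length. A rescaling at $t_j$ (with basepoint $z_j$ where $l\le\tfrac32$) then converges to a shrinker, which is classified as $\cyl$ after the $\R P^2$ exclusion. This simultaneously pins down $\tilde V^{\M^j}(t_j)\to V_{\cyl}$ and $V^j_{\mathrm{asy}}=V_{\cyl}$, forcing $\tilde V^{\M^j}$ nearly constant on all of $(-\infty,t_j]$; only then is the rescaling at $T_j$ taken. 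Note also that once $\tilde V_\infty\equiv V_{\cyl}$ is properly established, your ``remaining case'' $\M^\infty=\cyl/\Z_2$ cannot occur, since its reduced volume is $V_{\cyl}/2$.
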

\begin{proof}
If the proposition were false then for some $\hat\eps>0$ there would be a sequence
$\{(\M^j,(x_j,0))\}$ of pointed $\kappa$-solutions, and a sequence
$T_j\ra -\infty$, such that
\begin{itemize}
\item $R(x_j,0)=1$.
\item There is a point $(y_j,T_j)\in \M^j_{T_j}$ with
$l(y_j,T_j)\leq C$, such that (1) and (2) fail
for $t = T_j$.
\end{itemize}

From Lemma \ref{redvol2}, there are constants $\mu > 0$ and $\tau  > 0$
so that for each $j$, we have
${\tilde V}_{\M^j}(\tau) \le (4 \pi)^{\frac32} - \mu$.
Using the monotonicity of the reduced volume and the existence of
the asymptotic soliton, there is a uniform positive lower bound
on ${\tilde V}_{\M^j}( - T_j)$.
After passing to a subsequence,
we can find $t_j  \in (T_j, 0)$
so that $\frac{T_j}{t_j}\ra \infty$, and the reduced volume
is  constant to within a factor $(1+j^{-1})$ on a time interval
$[A_jt_j,A^{-1}_jt_j]$ where $A_j\ra\infty$.

By Appendix \ref{appkappa}, for each $j$ there is a point
$(z_j, t_j) \in \M^j_{t_j}$ with $l(z_j, t_j) \le \frac32$. 
After passing to a subsequence,  by Proposition
\ref{prop_convergence_no_curvature_bound}, the sequence of
parabolically rescaled flows 
$\{(\hat \M^j(- t_j),(z_j,-1))\}$ pointed-converges 
to a gradient shrinking soliton $(\hat\S^\infty,(y_\infty,-1))$
that is either a $\kappa$-solution or is flat $\R^3$. Since
the reduced volume of $(\hat\S^\infty,(y_\infty,-1))$ is also
bounded above by $(4 \pi)^{\frac32} - \mu$, it cannot be flat $\R^3$.
In addition, $\hat\S^\infty$ cannot be a round spherical space form,
as each $\M^j_0$ is noncompact. Also,
$\hat\S^\infty$ cannot be a $\Z_2$-quotient of a shrinking 
round cylinder, because
then $\M^j_t$ would contain a one-sided $\R P^2$;
by the classification of noncompact $\kappa$-solutions
this would imply that $\M^j_t$ is isometric to the $\Z_2$-quotient of a 
round cylinder for all
$t$, contradicting the failure of (2).  Therefore $\hat\S^\infty$ must be a 
shrinking round cylinder.  

On the other hand, for each $j$, there is an asymptotic soliton of $\M^j$.
  By the previous argument involving spherical space forms and one-sided $\R P^2$'s,
  the asymptotic soliton must be a shrinking round cylinder.
It follows that for large $j$,
the reduced volume
${\tilde V}_{\M^j}(-t)$
is nearly constant in the interval
$(-\infty,t_j]$.
Proposition \ref{prop_convergence_no_curvature_bound} now implies that after passing to a subsequence,
$\{(\hat\M^j(- T_j),(y_j,-1),l_j)\}$ converges to a gradient shrinking soliton
$(\hat\M^{\infty},(y_\infty,-1),l_\infty)$,
which is a $\kappa$-solution and
whose asymptotic reduced volume is that of the 
shrinking round cylinder.  From Lemma \ref{lem_classification_gradient_soliton_kappa_solutions}, 
this implies
that $\hat\M^{\infty}$ is a shrinking round cylinder, contradicting the
failure of (1).
\end{proof}

We now use the stability result of Proposition
\ref{prop_cylinder_stability_1_noncompact}, together with Lemma
\ref{lem_cyl_l_function_stability}, to show that worldlines that
start close to necks have a nearly minimal value of $l$, provided one goes
at least a certain controlled amount backward in time.  

\begin{lemma}[Basepoint stability]
\label{lem_basepoint_stability}
For all $\hat\eps>0$, there exist $\de=\de(\hat\eps)>0$ and $T=T(\hat\eps)\in (-\infty,0)$
such that if $(\M,(x,0))$ is a pointed $\kappa$-solution with $R(x,0)=1$, 
and $(x,0)$ is a $\de$-neck, then $l(x,t)<1+\hat\eps$
for all $t<T$.  
\end{lemma}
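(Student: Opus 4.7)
I will establish the bound $l(x,t)\leq 1+\hat\eps/2$ first on a bounded initial time window $[-\tau_0,-A\tau_0]$ via a contradiction-and-compactness argument, and then extend it all the way to $t=-\infty$ by iterating Lemma \ref{lem_cyl_l_function_stability}. The closeness-to-cylinder hypothesis needed at each iterative step will be supplied by Proposition \ref{prop_cylinder_stability_1_noncompact}, applied with the worldline point $(x,-\tau)$ as the new basepoint.

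Fix $A\in(0,1)$ and let $\bar\mu=\bar\mu(\hat\eps/2,A)$ be given by Lemma \ref{lem_cyl_l_function_stability}. Pick $\hat\eps'>0$ small enough that $\hat\eps'$-closeness in Proposition \ref{prop_cylinder_stability_1_noncompact} forces $\bar\mu$-closeness on the time interval $[-A^{-1},-A]$, set $T^*=T(\hat\eps',1+\hat\eps/2)<0$, and use Lemma \ref{lem_cyl_l_function}(3) to pick $\tau_0>|T^*|/A$ large enough that the basepoint $l$-function of the shrinking round cylinder is $\leq 1+\hat\eps/4$ on $[-\tau_0,-A\tau_0]$. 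For the base case, if the bound $l(x,t)\leq 1+\hat\eps/2$ on $[-\tau_0,-A\tau_0]$ failed for every $\delta>0$, one could extract sequences $\delta_j\to 0$, $\kappa$-solutions $(\M^j,(x_j,0))$ with $R(x_j,0)=1$, and times $t_j\in[-\tau_0,-A\tau_0]$ with $l_j(x_j,t_j)>1+\hat\eps/2$. Lemma \ref{curvbasepoint} yields a subsequential limit $(\M^\infty,(x_\infty,0),l_\infty)$ with uniform convergence of $l_j\to l_\infty$ on compacta. The hypothesis $\delta_j\to 0$ makes a neighborhood of $(x_\infty,0)$ isometric to a round-cylinder slice, and the classification of three-dimensional $\kappa$-solutions (which excludes compact $S^3/\Gamma$'s and the Bryant soliton as hosts of genuine $0$-necks, the latter since its asymptotic geometry is paraboloid-like, not cylindrical) forces $\M^\infty$ to be either the shrinking round cylinder or its $\Z_2$-quotient. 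In both cases the choice of $\tau_0$, together with the comparison $l_{\Z_2\text{-quot}}\leq l_{\text{cyl}}$ coming from lifts to the universal cover, gives $l_\infty(x_\infty,t)\leq 1+\hat\eps/4$ on $[-\tau_0,-A\tau_0]$, and passing to a further subsequence $t_j\to t_\infty\in[-\tau_0,-A\tau_0]$ yields $l_j(x_j,t_j)\to l_\infty(x_\infty,t_\infty)\leq 1+\hat\eps/4$, a contradiction.

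The iteration assumes $l(x,t)\leq 1+\hat\eps/2$ on $[-A^{-(k-1)}\tau_0,-A\tau_0]$ and extends this control to $[-A^{-k}\tau_0,-A\tau_0]$. Setting $\tau=A^{-(k-1)}\tau_0$ and parabolically rescaling $\M$ by $\tau$, the inductive hypothesis becomes $l\leq 1+\hat\eps/2$ on rescaled $[-1,-A]$. Since $\tau\geq\tau_0>|T^*|$, Proposition \ref{prop_cylinder_stability_1_noncompact} applies at $(x,-\tau)$ with $C=1+\hat\eps/2$ and produces either the required $\bar\mu$-closeness of the rescaled flow to a shrinking cylinder on $[-A^{-1},-A]$, or else the global conclusion that $\M$ is a $\Z_2$-quotient cylinder; the latter is handled directly, as its worldline $l$-function also tends to $1$. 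In the former case Lemma \ref{lem_cyl_l_function_stability} (applied with parameter $\hat\eps/2$) produces $l<1+\hat\eps/2$ on rescaled $[-A^{-1},-1]$, equivalently on $[-A^{-k}\tau_0,-A^{-(k-1)}\tau_0]$ in original time. Iterating yields $l(x,t)<1+\hat\eps/2<1+\hat\eps$ for all $t\leq -A\tau_0$, so the lemma holds with $T=-A\tau_0$.

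The main obstacle is ensuring that the iteration closes: the inductive hypothesis must be strictly stronger than the conclusion of Lemma \ref{lem_cyl_l_function_stability}, which is why the tighter bound $1+\hat\eps/2$ is propagated instead of $1+\hat\eps$. A subsidiary issue is handling the $\Z_2$-quotient case consistently, both in the limit extraction of the base case and in alternative (2) of Proposition \ref{prop_cylinder_stability_1_noncompact}, but the universal-cover comparison of $l$-functions makes this routine.
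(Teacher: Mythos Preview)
Your proof is correct and uses the same two key ingredients as the paper (Proposition \ref{prop_cylinder_stability_1_noncompact} and Lemma \ref{lem_cyl_l_function_stability}), but the organization is genuinely different. The paper argues by contradiction around a single ``critical time'': it takes a sequence with $\frac{1}{j}$-necks for which the conclusion fails at some $\bar t_j\leq -j$, uses pointed convergence to the cylinder together with Lemma \ref{lem_cyl_l_function}(3) to get $l_j(x_j,T_2)<1+\hat\eps/4$ at one fixed time $T_2$, defines $t_j$ as the \emph{latest} time in $(-\infty,T_2]$ where $l_j\geq 1+\hat\eps$, bounds the ratio $T_2/t_j$ away from $1$ via the time-derivative estimate (\ref{ye2.54}), and then applies Lemma \ref{lem_cyl_l_function_stability} a single time at the scale $t_j$ to reach a contradiction. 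You instead establish the bound on a seed interval $[-\tau_0,-A\tau_0]$ and propagate it to $-\infty$ by explicit iteration. The paper's maximal-time device is more economical and avoids the inductive bookkeeping; your approach is more constructive and yields an explicit $T=-A\tau_0$.

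One comment on your base case: once you pass to a subsequential limit of pointed $\kappa$-solutions with $\delta_j$-necks at the basepoint and $\delta_j\to 0$, the pointed limit \emph{is} the shrinking round cylinder, immediately from the definition of neck closeness and pointed smooth convergence. No classification argument is needed, and the side discussion about the Bryant soliton and the $\Z_2$-quotient is superfluous (the Bryant remark is also misleadingly phrased: its ends are $\epsilon$-necks for every $\epsilon>0$, and what rules out an exact cylindrical region is analyticity, not ``paraboloid-like'' asymptotics). The paper handles this step in one line: ``Since $(\M^j,(x_j,0))$ converges to the pointed round cylinder by assumption, Lemma \ref{lem_cyl_l_function}(3) implies \ldots''.
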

\begin{proof}
Suppose the lemma were false.  Then for some $\hat\eps>0$,
there would be a sequence $\{( \M^j,(x_j,0))\}$
of pointed $\kappa$-solutions so that
\begin{enumerate}
\item $R(x_j,0)=1$ and 
\item $(x_j,0)$ is  a $\frac{1}{j}$-neck, but 
\item $l_j(x_j,\bar t_j)\geq 1+\hat\eps$ for some $\bar t_j\leq -j$.
\end{enumerate}
We can assume that $\hat\eps < 1$.
Let $\mu_1>0$ 
be a parameter to be determined later.

By Proposition \ref{prop_cylinder_stability_1_noncompact},
there is a $T_1\in (-\infty,0)$ such that for large $j$ and every
$(y,t)\in \M^j_{\leq T_1}$ with $l_j(y,t)<2$, 
 the tuple
$(\hat\M^j(-t),(y,-1),l_j)$ is $\mu_1$-close to $(\cyl,(y',-1),l_\infty)$, for
some $(y',-1)\in \cyl_{-1}$.

Since $(\M^j,(x_j,0))$ converges to the pointed round cylinder by assumption,
Lemma \ref{lem_cyl_l_function}(3) implies
there is a $T_2\in (-\infty,T_1]$ such that for large $j$, we
have $l_j(x_j,T_2)<1+\frac{\hat\eps}{4}$.

Put $t_j=\max\{t\in (-\infty,T_2] \: : \: l_j(x_j,t)\geq 1+\hat\eps\}$,
so $t_j \neq T_2$.
Note that there is an 
$A \in \left( 0, 1 \right)$ independent of $\mu_1$  
such that 
$\limsup_{j\ra\infty} \frac{T_2}{t_j} < A$
since
\begin{equation}
l_j(x_j,T_2)<1+\frac{\hat\eps}{4}
<1+\hat\eps=l_j(x_j,t_j)
\end{equation}
in view of the time-derivative bound on $l$
of (\ref{ye2.54}).

For large $j$, in 
$\hat\M^j(-t_j)$
we have 
$l_j(x_j,t)\leq 1+\hat\eps$ for $t\in [-1,-A]$, 
but $l_j(x_j,-1)=1+\hat\eps$.
Since $1+\hat\eps < 2$, we know 
from Proposition \ref{prop_cylinder_stability_1_noncompact} that
$(\hat\M^j(-t_j), (x_j, -1), l_j)$ is $\mu_1$-close to $(\cyl, (y^\prime, -1),
l_\infty)$ for some $(y^\prime, -1) \in \cyl_{-1}$,
again for large $j$.
If $\bar\mu$ is the constant from Lemma \ref{lem_cyl_l_function_stability},
and $\mu_1<\bar\mu$, then that lemma gives a contradiction.
\end{proof}

\begin{proof}[Proof of Theorem \ref{thm_stability_of_necks}]
This follows by combining 
Proposition \ref{prop_cylinder_stability_1_noncompact} 
with Lemma \ref{lem_basepoint_stability}.

\end{proof}

\section{Finiteness of points with bad worldlines}
\label{sec_finiteness}

In this section we study 
bad worldlines; recall from Definition 
\ref{def_worldline}  that a worldline $\ga:I\ra\M$ in a singular
Ricci flow $\M$  is bad if $\inf I>0$. 
In Theorem \ref{finitenessthm} we prove that only finitely many bad worldlines
intersect a given connected component in a given time slice.
We then give some applications.

The main result of this section is the following theorem.

\begin{theorem} \label{finitenessthm}
Let $\M$ be a singular Ricci flow.
For $T<\infty$,  every connected component $C_T$ of $\M_T$
intersects at most $N$ bad worldlines, where $N=N(T,\vol(\M_0))$.
In particular, the set of bad worldlines in $\M$ is at most countable.
Moreover if $\ga:I\ra\M$ is a bad worldline,
then for $t\in I$ sufficiently close to $\inf I$, $\ga(t)$ lies in a 
cap region of $\M_t$.
\end{theorem}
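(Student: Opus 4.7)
My plan is to associate to each bad worldline through $C_T$ a \emph{cap witness} $(m_\ga,t_\ga)\in\M$ at a large fixed scale $\overline R$, classify the witness via Proposition \ref{prop_large_r_structure}, and bound the number of witnesses by a spacetime volume argument.

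\emph{Step 1 (witness via neck stability).} Fix $\hat\eps\ll\de_{neck}$ from Theorem \ref{thm_stability_of_necks} and a scale $\overline R\gg r(T)^{-2}$. For a bad worldline $\ga:(t_\ga^-,T]\ra\M$ meeting $C_T$, $R(\ga(t))\ra\infty$ as $t\ra t_\ga^-$, so I claim there is a time $t_\ga\in(t_\ga^-,T]$ with $R(\ga(t_\ga))\ge\overline R$ at which $\ga(t_\ga)$ is \emph{not} a generalized $\hat\eps$-neck. If no such $t_\ga$ existed, every point of $\ga$ above the canonical threshold would be a generalized $\hat\eps$-neck; under the canonical neighborhood assumption, Corollary \ref{cor_de_de_over_4} (transferred to $\M$ via $\eps$-closeness of the canonical neighborhood to its $\kappa$-solution model) would extend $\ga$ backward by time $\sim R(\ga(t))^{-1}$ to a generalized $\tfrac{\hat\eps}{4}$-neck of quartered scalar curvature.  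Iterating yields a divergent geometric series of backward time-steps that drives $\ga$ below the canonical threshold $r(\cdot)^{-2}$; Lemma \ref{singforback} then supplies further backward parabolic extensions in the bounded-curvature regime, ultimately reaching $t=0$ and contradicting $t_\ga^->0$.

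\emph{Step 2 (classification).} At $m_\ga:=\ga(t_\ga)$, Proposition \ref{prop_large_r_structure} (applied at time $t_\ga$ with threshold $\overline R$) places $m_\ga$ in a component $N_\ga\subset\M_{t_\ga}^{>c\overline R}$ of type (b), (c), or (d) and identifies $m_\ga$ as one of at most two non-$\hat\eps$-neck points of $N_\ga$; property (5), together with compactness of $\kappa$-solutions in the closed type (d) case, yields $\vol_{g(t_\ga)}(N_\ga)\ge C_1^{-1}\overline R^{-3/2}$.

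\emph{Step 3 (counting).} Choose $t_\ga$ canonically so that $\ga\mapsto(m_\ga,t_\ga)$ is injective, and set $\tau:=\sigma^2\overline R^{-1}$ with $\sigma$ from Lemma \ref{singforback}. For each bad $\ga$ through $C_T$, let $W_\ga\subset\M$ be the forward propagation of $N_\ga$ along worldlines on $[t_\ga,\min(t_\ga+\tau,T)]$; Lemma \ref{singforback} ensures this propagation is unscathed with bounded metric distortion, giving
\[
\vol_{g_\M}(W_\ga)\ge c_0\,\tau\,\vol_{g(t_\ga)}(N_\ga)\ge c_0\,\sigma^2 C_1^{-1}\overline R^{-5/2}.
\]
Worldlines are disjoint integral curves of $\D_\t$, so the $W_\ga$'s are pairwise disjoint subsets of $\M$: for witnesses at the same $t_\ga$ this follows from the disjointness of the components $N_\ga$ combined with Proposition \ref{sameconnected} (which places all bad worldlines through $C_T$ in a single backward-connected ambient component), and for witnesses at distinct $t_\ga$ the canonical choice (e.g. $t_\ga=\sup\{t\in(t_\ga^-,T]:R(\ga(t))\ge\overline R\}$) forces $\ga(t)$ to lie outside the scale-$\overline R$ high-curvature region for $t>t_\ga$, hence the forward image of $N_\ga$ avoids any later cap component $N_{\ga'}$. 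Applying Lemma \ref{vollemma} and Proposition \ref{vol1}(1) bounds the total spacetime volume of $\M_{\le T}$ by $T\,\vol(\M_0)(1+\tfrac{2}{3}CT)^{3/2}$, so the number of bad worldlines through $C_T$ is at most $N=N(T,\vol(\M_0))$. Countability of bad worldlines in $\M$ then follows by exhausting $\M$ over the countably many components of $\M_T$ for $T\in\N$ and using second-countability.

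\textbf{Main obstacle.} The most delicate point is the cross-time disjointness of the $W_\ga$'s in Step 3: witnesses at different times could a priori yield overlapping forward propagations. The canonical choice of $t_\ga$, combined with Proposition \ref{sameconnected} and the neck-stability dynamics of Section \ref{stab}, is what reduces the counting to a clean spacetime-volume inequality.
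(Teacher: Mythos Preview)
Your Step 1 (using neck stability to show a bad worldline must hit a non-neck point at high curvature) captures the key dynamical input and is essentially the content of Step 2 in the paper's proof. However, there are two genuine gaps in your counting argument.

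\emph{First, circularity with volume.} You invoke Proposition \ref{vol1}(1) to bound the spacetime volume of $\M_{\le T}$, but that proposition has as a standing hypothesis that the set $B_t$ of points with bad worldlines has measure zero in each $\M_t$. This is precisely (a consequence of) what you are trying to prove; cf.\ Corollary \ref{cor_volume_continuous}, which is derived \emph{from} Theorem \ref{finitenessthm}. The paper avoids this by a two-stage argument: it first proves finiteness of the bad worldlines born in a short interval $[t_-,t_+)$ using only the combinatorics of Proposition \ref{prop_large_r_structure} (no volume needed), deduces countability, and only then invokes Proposition \ref{vol1} to upgrade to an explicit bound $N(T,\vol(\M_0))$.

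\emph{Second, the disjointness of the $W_\ga$ fails.} Your argument requires the map $\ga\mapsto N_\ga$ (or $\ga\mapsto W_\ga$) to be injective, but nothing you have said rules out two distinct bad worldlines $\ga,\ga'$ having witnesses in the \emph{same} cap component $N_i$ at the same time. Proposition \ref{prop_large_r_structure}(4) bounds the number of \emph{well-separated} non-neck points in $N_i$, not the number of non-neck points outright; two bad worldlines could sit within $C_1R^{-1/2}$ of each other inside one cap. Your ``canonical choice'' $t_\ga=\sup\{t:R(\ga(t))\ge\overline R\}$ controls only where $\ga$ itself exits the high-curvature region, not where the rest of $N_\ga$ goes under forward flow, so the cross-time disjointness claim is also unjustified. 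The paper handles this by restricting to worldlines born in a short window $[t_-,t_+)$, observing that at the common time $t_+$ any two of them that lie in the same cap must, by positive Ricci curvature on a minimizing geodesic between them and the fact that both blow up in $[t_-,t_+)$, actually coincide (Step 3 of the paper's proof). This ``two close bad worldlines are equal'' lemma is the missing ingredient; without it one can only bound the number of cap components, not the number of bad worldlines.
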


We begin with a lemma.

\begin{lemma}
\label{lem_controlled_diameter}
For all $D<\infty$ there exist $\hat \eps=\hat\eps(\kappa,D)>0$ and 
$A=A(\kappa,D)<\infty$ such that if $m\in \M_t$ and $(\M,m)$ is
 $\hat\eps$-modelled 
(see Appendix \ref{appcloseness}) 
on  a $\kappa$-solution
of diameter $\leq D$, then:
\begin{enumerate}
\item The connected component $N_t$ of $\M_t$ containing $m$ is compact and has
$\Rm>0$.
\item Let $g'(\cdot)$ be the Ricci flow on the 
(smooth manifold underlying)
$N_t$ defined on the time interval $[t,T')$, with initial condition at time
$t$ given by 
$g'(t)=g\restr_{N_t}$, and   blow-up time $T'$.  Let $\n$ be the corresponding
Ricci flow spacetime.  Then  
$\n_{\leq t'}$ is compact for all $t'<T'$, and 
if $\tilde\M$ is the connected component of $\M_{\geq t}$ containing
$N_t$, then $\tilde\M$ is isomorphic to $\n$.
\item $R\geq AR(m)$ on $\tilde\M$.
\end{enumerate}
\end{lemma}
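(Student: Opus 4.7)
My plan is to verify the three conclusions in order, with (2) being the main work.

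First, I would establish (1) using compactness of the moduli of pointed $\kappa$-solutions (Appendix A). Among compact $3$-dimensional $\kappa$-solutions $(\K,\K_*)$ normalized with $R(\K_*)=1$ and $\diam(\K_0)\leq D$, the curvature operator satisfies $\Rm>0$ uniformly: one has $\Rm\geq 0$ on any $\kappa$-solution, and vanishing of $\Rm$ in some direction would force an isometric splitting off a line, contradicting compactness. Taking $\hat\eps=\hat\eps(\kappa,D)$ small enough that the parabolic closeness window around $m$ in $\M$ contains a ball (in the rescaled metric) of radius $2D$, the model time slice $\K_0$ embeds almost-isometrically into $\M_t$ as an open subset (image of an open embedding) that is also closed (continuous image of a compact Hausdorff space). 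This image is therefore a connected component of $\M_t$; it contains $m$, so it equals $N_t$, which is compact with $\Rm>0$.

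The heart of the proof is (2). Let $\n$ be the classical Ricci flow on $N_t$ with maximal time interval $[t,T')$; since $N_t$ is compact with $\Rm>0$, one has $T'<\infty$ by Hamilton's theorem. Define $\phi:\n\ra\M$ by starting from $\n_t=N_t\hookrightarrow\M_t$ and integrating the time vector field $\D_{\t}$. For any $s_0\in[t,T')$, the spacetime $\n_{[t,s_0]}$ is compact with uniformly bounded scalar curvature, so its $\phi$-image in $\M$ lies in a sublevel set $\{R\leq R_{\max}(s_0)\}\cap\M_{\leq s_0}$, which is compact by condition (a) of Definition \ref{def_singular_ricci_flow}. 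Hence $\phi$ extends through $s_0$ and is globally defined on $\n$. Both sides solve the Ricci flow equation with the same initial data on $N_t$, so $\phi$ is an isometric immersion preserving $\t$ and $\D_{\t}$; injectivity is automatic, since $\phi|_{\n_t}$ is injective and distinct worldlines in $\n$ cannot collide. Thus $\phi$ is an open embedding into $\tilde\M$. To show $\phi(\n)=\tilde\M$, it suffices to check that $\phi(\n)$ is closed in $\tilde\M$: given $x=\lim\phi(y_i)\in\tilde\M$ with $y_i=(z_i,s_i)\in\n$, the relation $R(x)<\infty$ together with $R_\n(y_i)=R(\phi(y_i))$ forces $s_i\to s^*<T'$ along a subsequence, so $x$ lies in the compact set $\phi(\n_{[t,s']})$ for any $s'\in(s^*,T')$, giving $x\in\phi(\n)$. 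Since $\tilde\M$ is connected and $\phi(\n)$ is open, closed, and nonempty, $\phi(\n)=\tilde\M$; preservation of $g$, $\t$, and $\D_{\t}$ upgrades $\phi$ to an isomorphism of Ricci flow spacetimes.

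Part (3) follows from the same compactness argument: there exists $c=c(\kappa,D)>0$ such that $R\geq c$ on the time-$0$ slice of any normalized compact pointed $\kappa$-solution of diameter $\leq D$. After parabolic rescaling, $\hat\eps$-closeness gives $R\geq\frac{c}{2}R(m)$ on $N_t$. Since $\n$ is a smooth Ricci flow on compact $3$-manifolds with $\Rm>0$ preserved, the evolution $\D_{\t}R=\Delta R+2|\ric|^2$ and the standard minimum principle imply that $\min_{\n_s}R$ is nondecreasing in $s\in[t,T')$. Hence $R\geq\frac{c}{2}R(m)$ on all of $\n$, and through the isomorphism $\phi$ of (2), on $\tilde\M$ as well; setting $A=c/2$ completes the proof. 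The main obstacle is the closedness argument in (2), which must use the finiteness of the classical blow-up time $T'$ together with the properness of $R$ on $\M_{\leq s_0}$ to rule out any extra part of $\tilde\M$ not captured by $\phi(\n)$.
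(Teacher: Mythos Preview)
Your proposal is correct and follows essentially the same approach as the paper: compactness of normalized pointed $\kappa$-solutions for (1) and (3), and for (2) an isometric embedding $\n\hookrightarrow\M$ extended using properness of $R$ on $\M_{\le T'}$, with closedness coming from Hamilton's blowup $\min_{\n_s}R\to\infty$ as $s\to T'$. One small point of presentation: in your extension step you invoke the bound $R\le R_{\max}(s_0)$ on the $\phi$-image before establishing that $\phi$ is isometric on its current domain; since flowing along $\partial_{\t}$ in $\M$ already gives a Ricci flow, $\phi$ is automatically isometric wherever defined, and stating this first (as the paper's maximal-interval argument implicitly does) removes the apparent circularity.
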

\begin{proof}
(1).  Let $\kapsold$ be the collection of pointed $\kappa$-solutions
$(\M',(x,0))$ such that $R(x,0)=1$, and $\diam(\M'_0)\leq D$.
Then   every $(\M',(x,0))\in \kapsold$ has
$\Rm>0$, and since  $\kapsold$ is compact, there is a $\la>0$
such that $\Rm\geq \la$ 
in $\M'_0$ for all $(\M',(x,0))\in \kapsold$.
Part (1) now follows.

(2). Let $\n$ be as above.  Then $\n_{[t,t']}$ is compact for all $t'\in [t,T')$,
 and
by Hamilton's theorem  for manifolds with $\ric>0$, we know that
\begin{equation}
\label{eqn_r_blows_up}
\min\{R(m') \: : \: m'\in \n_{t'}\}\ra \infty\quad\text{as}\quad t'\ra T'\,.
\end{equation}

Consider an isometric embedding of Ricci flow spacetimes
$\n_{J}\hookrightarrow \M_{J}$
 that extends the isometric embedding
$\n_t\ra \M_t$, and which is defined on a maximal time interval
$J$ starting at time $t$.   Then $J$ cannot be a closed interval
$[t,\hat t]$, since then the embedding could extended to a larger time
interval using uniqueness for Ricci flows.
If $J=[t,\hat t)$ with $\hat t<T'$, then since $R$ is bounded on 
$\n_{[t,\hat t]}$, by the properness of $R$ on $\M_{\leq T'}$ we may 
extend the embedding to an isometric
embedding $\n_{[t,\hat t]}\ra \M_{[t,\hat t]}$, contradicting the maximality
of $J$.  Therefore there exists an isometric embedding
$\n_{[t,T')}\ra \M_{[t,T')}$ of Ricci flow spacetimes, as asserted.
The image is clearly an open subset of $\M_{\geq t}$; 
it is closed by (\ref{eqn_r_blows_up}).  This proves (2).  

(3).  From the proof of (1) above, 
for every $(\M',(x,0))\in \kapsold$, we have $R\geq 6\la$ on $\M'_0$.  Taking
$A=3\la$, and $\hat\eps$ sufficiently small, we get that $R\geq AR(m)$
in $N_t$, and therefore in $\n_t$ as well.  
By the maximum principle applied to the scalar curvature evolution equation,
we have $R\geq AR(m)$ on $\n$, and hence on $\tilde\M$ as well.
\end{proof}

\begin{proof}[Proof of Theorem \ref{finitenessthm}]
  In the proof below, we take $\kappa=\kappa(T)$.
  Recall that $\epsilon$ is the global parameter
    used in the definition of a Ricci flow spacetime
    from Definition \ref{def_singular_ricci_flow}.

Let $\eps_1, \De>0$ be constants, to be determined later. 
During the course of the argument below, we will state a  number of 
inequalities involving  $\eps_1$ and $\De$; these will be treated as
a cumulative set of constraints imposed on $\eps_1$ and $\De$, i.e.
we will be assuming that each inequality is satisfied. 

We recall that by Proposition 
\ref{sameconnected}, $C_T$ determines a connected component
$C_t$ of $\M_t$ for all $t\leq T$.  Let $\bad$ be the collection of bad 
worldlines intersecting $C_T$.

Choose $0<t_-<t_+\leq T$ such that $t_+-t_-<\De$, and let
$\bad_{[t_-,t_+)}$   be the set of
worldlines
$\ga:I\ra \M$ which belong to $\bad$, where $\inf I\in [t_-,t_+)$. 
 We will show that
if $t_+-t_-<\De=\De(\kappa,T,\eps)$, then $|\bad_{[t_-,t_+)}|$ is bounded by 
a function of $\kappa$, $T$ 
and $\vol(\M_0)$; the theorem then follows immediately.

\bigskip
\noindent
{\em Step 1. If $\De<\bar\De(\eps_1,\kappa,\eps,T)$, then
there exists $\hat T\in [0,T]$ such that: 
\begin{enumerate}
\renewcommand{\theenumi}{\alph{enumi}}
\item 
For every 
worldline
$\ga:I\ra\M$ in $\bad$ we have $\inf I< \hat T$.
\item 
For every 
worldline
$\ga:I\ra\M$ in $\bad_{[t_-,t_+)}$,
and every  
$t\in (\inf I,t_+]$
with $t\leq \hat T$, the pair  
$(\M,\ga(t))$
is $\eps_1$-modelled on   a noncompact $\kappa$-solution.
\end{enumerate} 
}

By the compactness of the space of 
normalized pointed $\kappa$-solutions (see
Appendix \ref{appkappa})
there exist $\eps_2=\eps_2(\eps_1) > 0$ and $D=D(\eps_1) < \infty$ 
such that if 
$(\M,\ga(t))$ is $\eps_2$-modelled on 
a pointed $\kappa$-solution
with diameter greater than $D$, then it
is $\frac12 \eps_1$-modelled on  a noncompact $\kappa$-solution.  Put 
$\eps_3= \frac12  \min(\hat \eps(\kappa,D),\eps_2)$, where $\hat\eps(\kappa,D)$
is as in Lemma \ref{lem_controlled_diameter}.

Let $\W$
be the set of $m\in \bigcup_{t\leq T}\,C_t$
such that 
the pair
$(\M,m)$ is $\eps_3$-modelled on 
a pointed $\kappa$-solution with diameter at most $D$.
Suppose first that $\W$ is nonempty.
Lemma \ref{lem_controlled_diameter} implies that $R(m)\leq A^{-1}
\inf_{C_T}R$, for all $m\in \W$; therefore
by the properness of $R:\M_{\leq T}\ra \R$,
the time function $\t$ attains a minimum value $\hat T$ on $\W$.  
Pick $m\in \W\cap \M_{\hat T}$.
Then by Lemma \ref{lem_controlled_diameter},
the connected component of $\M_{[\hat T,T]}$ containing $m$
is isomorphic to the $[\hat T,T]$-time slab
of the spacetime $\n$ of a Ricci flow on a compact manifold
with positive Ricci curvature.  In particular, it also
coincides with $\bigcup_{t\in [\hat T,T]}C_t$, and therefore
the curvature is bounded on the latter.  Hence
 for every
 $\ga:I\ra \M$
in  $\bad$, we have $\inf I<\hat T$.
If $\W$ is empty then we put $\hat T = T$; then
the conclusion of part (a) of Step 1 still holds, so we continue.

By Proposition \ref{goodeps},
 there exists $\hat R=\hat R(\eps_3,\kappa,r(T))$, such that 
for all $m\in \M_{\leq T}$ with $R(m)\geq \hat R$, the pair
$(\M,m)$ is $\eps_3$-modelled on a pointed
$\kappa$-solution.  By Lemma \ref{singforback}, 
if
$\De<\bar\De(\hat R,\kappa)$, 
 $\ga:I\ra\M$
belongs to  $\bad_{[t_-,t_+)}$ and 
$t\in [t_-,t_+]\cap I\cap [0,\hat T)$, then
we have $R(\ga(t))>\hat R$.    Hence 
either $(M,\ga(t))$
is (A) $\frac12 \eps_1$-modelled on  a noncompact $\kappa$-solution, or (B)
$\eps_3$-modelled on 
 a $\kappa$-solution of diameter at most $D$; but in case (B) we would
have $\ga(t)\in \W$, which is impossible 
because $t<\hat T$.  This proves that part (b) of Step 1 holds when
$t < \hat T$,
with $\epsilon_1$ replaced by $\frac12 \epsilon_1$ in the statement.
The borderline case $t=\hat T$ now follows by 
applying the previous arguments to times $t$ slightly less than $\hat T$
and taking the limit as $t\nearrow \hat T$.  This completes Step 1.

\bigskip

Hereafter we assume that $\De<\bar\De(\eps_1,\kappa,\eps,T)$.
By part (a) of Step 1, the set $\bad$ is the same as the set of bad worldlines
intersecting $C_{\hat T}$.  Hence we may replace $T$ by $\hat T$;
then by part (b) of Step 1, 
for every $\ga:I\ra\M$ in $\bad_{[t_-,t_+)}$,
and every  $t\in [t_-,t_+]$, the pair  
$(\M,\ga(t))$
is $\eps_1$-modelled on a noncompact $\kappa$-solution.

\bigskip
\noindent
{\em Step 2. Provided that 
$\eps_1 < \eps_1(\kappa)$,
for all $\ga:I\ra \M$
belonging to $\bad_{[t_-,t_+)}$ and every $t\in I\cap [t_-,t_+)$, 
the pair $(\M,\ga(t))$ is not  a generalized
$\frac{\de_{neck}}{2}$-neck.
(Here $\de_{neck}$ is the parameter from Corollary \ref{cor_de_de_over_4}, 
and a generalized neck is in the sense of
Appendix \ref{appcloseness}.)}

Suppose that $\ga:I\ra \M$ belongs to $\bad_{[t_-,t_+)}$, and 
$(\M,\ga(\hat t_0))$ is a generalized $\frac{\de_{neck}}{2}$-neck
for some 
$\hat t_0\in I\cap[t_-,t_+)$.
By Step 1 we know that $(\M,\ga(\hat t_0))$ is $\eps_1$-modelled
on a noncompact pointed $\kappa$-solution $(\M^1,(x_0,0))$.
If $\eps_1<\bar\eps_1(\frac{\de_{neck}}{2})$, 
then $(\M^1,(x_0,0))$ will be a 
generalized
$\de_{neck}$-neck.
Let $T_1=T(\de_{neck},\frac{\de_{neck}}{4})\in (-\infty,0)$ be as in 
Corollary \ref{cor_de_de_over_4}.   Then $(\M^1,(x_0,T_1))$
is a generalized $\frac{\de_{neck}}{4}$-neck.  If  
$\eps_1<\bar\eps_1(T_1,\frac{\de_{neck}}{2})$, then we get that:
\begin{itemize}
\item $\ga$ is 
defined at $\hat t_1=\hat t_0+R^{-1}(\ga(\hat t_0))T_1$.
\item $(\M,\ga(\hat t_1))$ is a generalized $\frac{\de_{neck}}{2}$-neck.
\item  $R(\ga(\hat t_1))<\frac12 R(\ga(\hat t_0))$.  
\end{itemize}
Thus we may iterate
this to produce a sequence $\{\hat t_0,\hat t_1,\ldots\}\subset I$ 
such that 
$\hat t_i \le \hat t_{i-1}+R^{-1}(\ga(\hat t_0))T_1$
for all $i$.  This contradicts the fact that $\inf I\in [t_-,t_+)$,
and completes Step 2.

\bigskip

Hereafter we assume that
$\eps_1 < \eps_1(\kappa)$.
Let $D_0<\infty$ be such that if $\M'$ is a noncompact 
$\kappa$-solution, $m_1,m_2\in \M'_t$, and neither $m_1$ nor $m_2$
is a $\frac{\de_{neck}}{4}$-neck, then 
\begin{equation}
\label{eqn_non_necks_close}
d_t(m_1,m_2)<D_0
R(m_1)^{-\frac12}\,.
\end{equation}

Let $D_1\in (2D_0,\infty)$ be a constant, to be determined 
in Step 4.

\bigskip
\noindent
{\em Step 3. Provided that 
$\eps_1<\bar\eps_1^\prime(\kappa,D_1)$,
if $\ga_1,\ga_2\in \bad_{[t_-,t_+)}$, and 
\begin{equation}
\label{eqn_d_1}
d_{t_+}(\ga_1(t_+),\ga_2(t_+))<D_1
R(\ga_1(t_+)))^{-\frac12}\,,
\end{equation}
then $\ga_1=\ga_2$.}

Let $I_i$ be the domain of $\gamma_i$, for $i \in \{1,2\}$.
Suppose that $t\in I_1\cap I_2\cap [t_-,t_+]$.
By Steps 1 and 2,
$(\M,\ga_1(t))$ is $\eps_1$-modelled on a
noncompact $\kappa$-solution and neither
$\ga_1(t)$ nor $\ga_2(t)$ is a  $\frac{\de_{neck}}{2}$-neck.
If $\eps_1<\bar\eps_1^\prime(D_1,\de_{neck})$ then using the
$\eps_1$-closeness to a noncompact $\kappa$-solution and  
(\ref{eqn_non_necks_close}), we can say that 
\begin{equation}
\label{eqn_3_d0}
d_{t}(\ga_1(t),\ga_2(t))<D_1
R(\ga_1(t)))^{-\frac12}
\,,
\end{equation}
implies
\begin{equation}
d_{t}(\ga_1(t),\ga_2(t))<2D_0
R(\ga_1(t)))^{-\frac12}
\end{equation}

Since we are assuming (\ref{eqn_d_1}), a continuity argument shows
that (\ref{eqn_3_d0}) holds for
all $t\in I_1\cap I_2\cap [t_+,t_-]$. 
 If $\inf I_1\geq \inf I_2$, then
 $\lim_{t\ra \inf I_1}R(\ga_2(t))=\infty$, so $\inf I_1=\inf I_2$;
similar reasoning holds if $\inf I_2\leq \inf I_1$.
Thus $\inf I_1=\inf I_2$.
Moreover, if 
$\eps_1<\bar \eps_1^{\prime \prime}(\kappa)$
then any geodesic from 
$\ga_1(t)$ to $\ga_2(t)$ in $C_t$ will lie in the set with $\ric > 0$, 
so $d_t(\ga_1(t),\ga_2(t))$ is a decreasing function of $t$.  Since
(\ref{eqn_3_d0}) implies that
$d_t(\ga_1(t),\ga_2(t))\ra 0$ as $t\ra \inf I_1$, it follows that 
$\ga_1=\ga_2$.  This completes Step 3.

\bigskip

Hereafter we assume that $\eps_1<\bar\eps_1^\prime(\kappa,D_1)$.

\bigskip
\noindent
{\em Step 4.  Provided that $\De<\bar\De(\kappa,T)$, 
the cardinality of $\bad_{[t_-,t_+)}$ is at most
$N=N(\kappa,T,\vol(\M_0))$. }

Take $\hat\eps=\frac{\de_{neck}}{2}$, and let 
$C_1=C_1(\hat\eps,T)$, $\ol{R}=\ol{R}(\hat\eps,T)$, and
$N_1,\ldots,N_k\subset \M_{t_+}$ be as in 
Proposition \ref{prop_large_r_structure}.
With reference to Step 3, 
take $D_1=C_1$. Let $\De<\ol{\De}(\ol{R},\kappa,r(T))$ be such that
if $\ga:I\ra \M$ belongs to $\bad_{[t_-,t_+)}$, and 
$t\in [t_-,t_+]\cap I$, then $R(\ga(t))\geq \ol{R}$;
c.f. the proof of Step 1.

Then the set 
\begin{equation}
S=\{\ga(t_+) \: : \: \ga\in \bad_{[t_-,t_+)}\}
\end{equation}
is contained in $\{m \in \M_{t_+} : R(m) \geq \ol{R} \}
\subset \bigcup_i N_i$.
By Step 3, for any two distinct elements $\ga_1,\ga_2\in \bad_{[t_-,t_+)}$
we have 
\begin{equation}
d_t(\ga_1(t_+),\ga_2(t_+))\geq D_1R^{-\frac12}(\ga_1(t_+))
=C_1R^{-\frac12}(\ga_1(t_+))\,.
\end{equation}
By Proposition \ref{prop_large_r_structure}, for all $i\in \{1,\ldots,k\}$
we have $|S\cap N_i|\leq 2$.   Therefore $|S|\leq 2k$.  This proves
that $\bad_{[t_-,t_+)}$ is finite,  and hence a weaker version of 
the theorem, namely that the set of all
bad worldlines is countable.    

Since the set of bad worldlines is countable, their union has measure zero
in spacetime.  Therefore we may apply Proposition \ref{vol1},
to conclude that 
\begin{equation}
\vol(C_{t+})\leq \vol(\M_{t+})
\leq\V(0) \left(1 + 2 t \right)^{\frac{3}{2}}.
\end{equation}
If $k\geq 2$, then each $N_i$ has nonempty boundary.
Hence part (5) of Proposition \ref{prop_large_r_structure} gives a bound
$k<k(\V(0),T)$.

This proves Theorem \ref{finitenessthm}; the assertion 
that bad worldlines
are confined to cap regions was established in the proof above.
\end{proof}

\begin{corollary}
\label{cor_volume_continuous}
If $\M$ is a singular Ricci flow then the 
conclusions of Proposition \ref{vol1} hold.
\end{corollary}
\begin{proof}
By Theorem \ref{finitenessthm} the set of bad worldlines is countable, and hence
has measure zero.  Combining this with  Lemma \ref{quasip} shows that the
hypotheses of 
Proposition \ref{vol1} are satisfied.
\end{proof}

\begin{remark}
For mean convex mean curvature flow, the continuity of the volume follows from the
weak continuity of the mass measure, as established in 
\cite{metzger_schulze}.
\end{remark}

Theorem \ref{finitenessthm} also has the following topological implications.
\begin{corollary}
Let $\M$ be a singular Ricci flow.
\begin{enumerate}
\item If $T\geq 0$ and $W\subset\M_T$ is an open subset that does not contain
any compact connected components of $\M_T$, then there is a smooth 
time-preserving map $\Ga:W\times [0,T]\ra\M$ that is a ``weak isotopy'', in 
the sense
that it maps $W\times \{t\}$ diffeomorphically onto an open subset of 
$\M_{t}$, for all $t\in[0,T]$.
\item For all $T\geq 0$, the pair $(\M,\M_{\leq T})$ is $k$-connected
for $k\leq 2$.
\end{enumerate}
\end{corollary}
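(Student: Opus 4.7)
\smallskip

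For part (1), the plan is to realize the weak isotopy $\Ga$ as the backward flow of a time-preserving vector field $V=\D_{\t}+X$ defined on a neighborhood of $W$ in $\M_{\leq T}$. Away from bad worldlines the backward flow of $\D_{\t}$ already reaches time $0$, so I set $X\equiv 0$ there. By Theorem \ref{finitenessthm}, each connected component $C_T$ of $\M_T$ meeting $W$ contains only finitely many bad worldline endpoints $p_1,\ldots,p_N$, with blow-up times $t_1^*,\ldots,t_N^*\in(0,T)$. The hypothesis that $W$ contains no compact component of $\M_T$, combined with the density of good-worldline endpoints in $C_T$ (their complement in $C_T$ being the finite set $\{p_1,\ldots,p_N\}$), produces for each $p_i\in W$ a good-worldline endpoint $q_i$ arbitrarily close to $p_i$. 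I would then take $X_i$ compactly supported in a small spacetime neighborhood of $p_i$, vanishing at $t=T$, so that the integral curves of $\D_{\t}+X_i$ starting at points near $p_i$ at time $T$ are transported diffeomorphically onto a family of worldlines near that of $q_i$ over a short time interval slightly above $t_i^*$; below that interval the flow follows these good worldlines down to $t=0$. Choosing the $X_i$ with disjoint supports and small enough to preserve injectivity of $\Ga(\cdot,t)$ for every $t\in[0,T]$, the backward flow of $V$ from time $T$ defines $\Ga$.

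\smallskip

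For part (2), the case $k=0$ is immediate from Proposition \ref{curve2}, which joins any point of $\M$ to $\M_0\subset\M_{\leq T}$ by a time-preserving curve. For $k\in\{1,2\}$, suppose $f\colon(D^k,\D D^k)\ra(\M,\M_{\leq T})$ is smooth. By Theorem \ref{finitenessthm}, the union $\B\subset\M$ of all bad worldlines is a countable family of smooth $1$-dimensional curves in the $4$-dimensional spacetime $\M$; since $k+1\leq 3<4$, standard transversality together with a Baire-category argument lets me perturb $f$ slightly, supported in the interior of $D^k$, so that $f(D^k)\cap\B=\emptyset$. Then every $f(x)$ has a maximal worldline that extends back to $t=0$. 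Let $\Phi_s$ denote backward flow by time $s$ along $\D_{\t}$, smooth on the open subset of $\M$ consisting of points whose worldline extends at least $s$ units into the past. Fixing a smooth cutoff $\tau\colon D^k\ra[0,\infty)$ with $\tau\equiv 0$ on $\D D^k$, $\tau(x)\leq \t(f(x))$ everywhere, and $\tau(x)\geq \t(f(x))-T$ wherever $\t(f(x))>T$, the map
\begin{equation}
H\colon D^k\times[0,1]\ra\M,\qquad H(x,s):=\Phi_{s\tau(x)}(f(x)),
\end{equation}
is a smooth homotopy rel $\D D^k$ from $f$ to a map into $\M_{\leq T}$, since $\t(H(x,1))=\t(f(x))-\tau(x)\leq T$ for every $x$.

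\smallskip

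The main technical obstacle will be in part (1): producing the modified vector field $V$ so that for each $t\in[0,T]$ the time-$t$ flow map $\Ga(\cdot,t)\colon W\ra\M_t$ is not merely an immersion but a \emph{diffeomorphism} onto an open subset of $\M_t$. The condition that $W$ contain no compact connected component is exactly what ensures that near each bad endpoint $p_i\in W$ there is spatial room to divert nearby trajectories onto good worldlines without forcing distinct trajectories to collide; if $W$ were to contain a compact component entirely, a bad endpoint in it would have no ``free direction'' available along which to perturb without overlap. Once part (1) is set up, part (2) goes through cleanly, with the restriction $k\leq 2$ appearing precisely as the dimension count $k+1<\dim\M=4$ needed to perturb $f(D^k)$ off the $1$-dimensional set of bad worldlines.
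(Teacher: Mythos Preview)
Your argument is essentially correct and follows the same strategy as the paper: exploit the finiteness of bad worldlines (Theorem~\ref{finitenessthm}) to perturb away from them, then use the backward flow of $\partial_{\t}$.

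For part~(1), the paper's implementation is somewhat cleaner than yours. Rather than modifying the time vector field in spacetime, the paper observes that since each component $C$ of $\M_T$ has only finitely many bad worldline endpoints, and each bad worldline still extends backward some positive amount, there is a single $t_C<T$ such that \emph{every} worldline from $C$ (bad or not) is defined on $[t_C,T]$. This gives a product structure $F_C:C\times[t_C,T]\to\M$. The finitely many bad endpoints in $C$ lie in a $3$-disk $D^3\subset C$, and since $W\cap C\subsetneq C$ (using the hypothesis), one can isotope $W\cap C$ off $D^3$ \emph{entirely within the time-$T$ slice}, then compose with $F_C$ and flow back from $t_C$ to $0$ along unmodified worldlines. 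This finesses the issue you flag as the ``main technical obstacle'': the injectivity of $\Ga(\cdot,t)$ is automatic because an isotopy is a diffeomorphism at each stage and distinct worldlines never meet. Incidentally, in your vector-field approach injectivity is also automatic for the same reason --- since $(\partial_{\t}+X)\t\equiv 1$, the backward flow from $\M_T$ to $\M_t$ is a diffeomorphism onto its image wherever defined --- so the genuine issue is not injectivity but ensuring the modified flow reaches $t=0$, which is exactly what your $X_i$ are designed to do.

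For part~(2), your argument matches the paper's almost verbatim: perturb $f$ off the bad worldlines using the dimension count $k+1\le 3<4$, then homotope via backward flow. Your explicit cutoff $\tau$ is a fine way to make the paper's one-line ``use the backward flow'' precise.
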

\begin{proof}
(1).  Let $\C$ be the collection of connected components of $\M_T$.  
Pick $C\in\C$.
Let $B$ be the set of bad worldlines intersecting $C$.   
 By Theorem \ref{finitenessthm} the set $B$
 is  finite, so its intersection with $C$
 is contained in a $3$-disk $D^3$.
There  is a $t_C<T$ such that the worldline
of every $m\in C$ is defined in the interval $[t_C,T]$. Hence we get
a time-preserving map $F_C:C\times [t_C,T]\ra \M$ that is a diffeomorphism onto
its image. 

By assumption, either $C$ is noncompact, or $C$ is compact and 
$W\not\supset C$.   Therefore there is a
smooth homotopy $\{H_t:W\cap C\ra C\}_{t\in [t_C,T]}$ 
(purely in the time-$T$ slice)
such that $H_T:W\cap C\ra C$
is the inclusion map, 
$H_{t}:W\cap C\ra C$ is a diffeomorphism onto its image for all 
$t\in[t_C,T]$, and $H_{t_C}(W\cap C)\cap D^3=\emptyset$.
 We define $\Ga$ on $(W\cap C)\times [t_C,T]$ by 
$\Ga(m,t)=F_C(H_t(m),t)$, and extend this to 
$(W\cap C)\times [0,t_C]$ by following
worldlines.  Note that if $C_1,C_2\in\C$ are distinct components of $\M_T$
then $F_{C_1}(C_1\cap W)$ is disjoint from $F_{C_2}(C_2\cap W)$,
so the resulting map $\Ga$ has the property that $\Ga(\cdot,t):W\ra \M_t$
is an injective local diffeomorphism for every $t\in [0,T]$.

(2). Suppose that $0\leq k\leq 2$, and $f:(D^k,\D D^k)\ra (\M,\M_{\leq T})$ 
is a map 
of pairs, where $\D D^k=S^{k-1}$ if $k\geq 1$ and $\D D^0=\emptyset$.  
By Theorem \ref{finitenessthm}, 
the Hausdorff dimension of the bad worldlines is at most one.
Then after making a small homotopy we may assume that $f$ is smooth,
and that its image is disjoint from the bad worldlines.  We can now find
a homotopy through maps of pairs by using the backward flow of the time
vector field, which is well-defined on $f(D^k)$.
\end{proof}

\appendix
\section{Background material}
\label{app_background_material}

In this appendix we collect some needed facts about Ricci flows
and Ricci flows with surgery.  More information can be found in
\cite{Kleiner-Lott_perelman_notes}.

\subsection{Notation and terminology}
\label{sec_notation}

Let $(\M, \t, \partial_{\t}, g)$ be a Ricci flow spacetime 
(Definition \ref{def_ricci_flow_spacetime}).
For brevity, we will often write $\M$ for the quadruple.
In a Ricci flow with surgery, we will sometimes loosely write a point
$m \in \M_t$ as a pair $(x,t)$.

Given $s > 0$, the rescaled
Ricci flow spacetime is
$\hat \M(s) = (\M, \frac{1}{s} \t, s \partial_{\t}, \frac{1}{s} g)$.

Given $m \in M_t$, we write $B(m, r)$ for the open metric
ball of radius $r$ in $\M_t$. We write $P(m,r,\Delta t)$ for the
parabolic neighborhood, i.e. the set of points
$m^\prime$ in $\M_{[t, t + \Delta t]}$ if $\Delta t > 0$
(or $\M_{[t + \Delta t, t]}$ if $\Delta t < 0$) that lie on the
worldline of some point in $B(m,r)$.
We say that $P(m,r, \Delta t)$ is {\em unscathed} if 
$B(m,r)$ has compact closure in $\M_t$ and
for every
$m^\prime \in P(m,r, \Delta t)$, the maximal worldline $\gamma$ through
$m^\prime$ is defined on a time interval containing $[t, t+\Delta t]$
(or $[t+\Delta t, t]$). 
We write $P_+(m,r)$ for the
forward parabolic ball $P(m,r,r^2)$ and 
$P_-(m,r)$ for the
backward parabolic ball $P(m,r,-r^2)$. 

We write $\cyl$ for the standard Ricci flow on $S^2 \times \R$ that
terminates at time zero, with $g(t) = (-2t) g_{S^2} + dz^2$.
We write $\sphere$ for the standard round shrinking $3$-sphere
that terminates at time zero.

\subsection{Closeness of Ricci flow spacetimes} \label{appcloseness}

Let $\M^1$ and $\M^2$ be two Ricci flow spacetimes in the sense of
Definition \ref{def_ricci_flow_spacetime}.
Consider a time interval 
$[a,b]$. Suppose that $m_1 \in \M^1$ and
$m_2 \in \M^2$ have $\t_1(m_1) = \t_2(m_2) = b$. We say that
$(\M^2, m_2)$ is {\em $\epsilon$-close} to $(\M^1, m_1)$
on the
time interval $[a,b]$ if 
there are open subsets $U_i \subset \M^i$ with
$P(m_i, \epsilon^{-1}, a-b) \subset U_i$, $i \in \{1,2\}$, and
there is a pointed diffeomorphism 
$\Phi : (U_1, m_1) \ra (U_2, m_2)$ so that
\begin{itemize}
\item $B(m_i, \epsilon^{-1})$ has compact closure in $\M^i_b$,
\item $P(m_i, \epsilon^{-1}, a-b)$ is unscathed,
\item $\Phi$ is time-preserving, i.e. $\t_2 \circ \Phi = \t_1$,
\item $\Phi_* \partial_{{\t}_1} = \partial_{{\t}_2}$ 
and
\item  
$\Phi^* g_2 - g_1$ has norm less than $\epsilon$ in the
$C^{[1/\epsilon]+1}$-topology (as defined using $g_1$) on $U_1$.
\end{itemize}

\begin{remark}
  The notion of $\epsilon$-closeness is not symmetric with respect to
  $(\M^1, m_1)$ and $(\M^2, m_2)$, but this will not be an issue
  since we only use the associated topology.
  \end{remark}
  
Now consider an open time interval 
$(- \infty,  b)$.
Suppose that $\t_1(m_1) = \t_2(m_2) = c \in (- \infty, b)$.
After time shift and  parabolic rescaling, we can assume that
$c = -1$ and $b = 0$.
In this case, we say that 
$(\M^2, m_2)$ is $\epsilon$-close to $(\M^1, m_1)$
on the
time interval $(- \infty, 0)$ if 
there are open sets
$U_i \subset \M^i$ with
$(P(m_i, \epsilon^{-1}, 1-\epsilon) \cup P(m_i, \epsilon^{-1},
-\epsilon^{-2})) \subset U_i$,
$i \in \{1,2\}$, and there is a pointed diffeomorphism
$\Phi : (U_1, m_1) \rightarrow (U_2, m_2)$ so that
\begin{enumerate}
\item $B(m_i, \epsilon^{-1})$ has compact closure in
$\M^i_{-1}$,
\item $P(m_i, \epsilon^{-1}, 1-\epsilon)$ and $P(m_i, \epsilon^{-1},
-\epsilon^{-2})$ are unscathed,
\item $\Phi$ is time-preserving, i.e. $\t_2 \circ \Phi = \t_1$,
\item $\Phi_* \partial_{{\t}_1} = \partial_{{\t}_2}$ and
\item 
$\Phi^* g_2 - g_1$ has norm less than $\epsilon$ in the
$C^{[1/\epsilon]+1}$-topology on $U_1$.
\end{enumerate}

Suppose $(\M_1,m_1)$, $(\M_2,m_2)$ are as above, and 
$u_i:\M_i\ra \R$ is a continuous
function for $i\in \{1,2\}$.
Then we say that 
$(\M_2,m_2,u_2)$ is $\eps$-close to $(\M_1,m_1,u_1)$
if 
(1)-(5) above hold, and in addition 
$$
\sup\{u_2\circ \Phi(m)-u_1(m)\mid
m\in (P(m_1, \epsilon^{-1}, 1-\epsilon) \cup P(m_1, \epsilon^{-1},
  -\epsilon^{-2}))
\}<\eps\,.
$$
We will apply this notion when the $u_i$'s are reduced distance functions.

We define $\epsilon$-closeness similarly on other time intervals, whether
open or half-open.

If $(\M_1, m_1)$ and $(\M_2, m_2)$ are Ricci flow spacetimes then
we say that
$(\M_2,m_2)$ is {\em $\eps$-modelled} on $(\M_1,m_1)$
if after shifts in the time parameters so that $\t_1(m_1) = \t_2(m_2) = 0$,
and parabolic rescaling by $R(m_1)$ and $R(m_2)$ respectively,
the resulting Ricci flow spacetimes are $\eps$-close
on the time interval $[- \epsilon^{-1}, 0]$.
(It is implicit in the definition that $R(m_1) > 0$ and $R(m_2) > 0$;
this will be the case for us since we are interested in modelling
regions of high scalar curvature.)
A point $m$ in a Ricci flow spacetime $\M$ is a
{\em generalized $\eps$-neck} if $(\M,m)$ is $\eps$-modelled on 
$(\M',m')$, where $\M'$ is either a shrinking round cylinder or the
$\Z_2$-quotient of a shrinking round cylinder.

\subsection{Necks, horns and caps}

We say that $(\M, m)$ is a (strong) {\em $\delta$-neck}
if after time shifting and parabolic rescaling, it is $\delta$-close
on the time interval $[-1,0]$ 
to the product Ricci flow which,
at its final time, is isometric to the product of $\R$ with 
a round $2$-sphere of scalar curvature one. The basepoint is taken at
time $0$. 
In this case, we also say that $m$ is the 
{\em center of a $\delta$-neck}.

If $I$ is an open interval then a metric on an embedded copy of 
$S^2 \times I$ in $\M_t$,
such that each
point is contained in an $\delta$-neck, is called a {\em $\delta$-tube},
or a {\em $\delta$-horn}, or a 
{\em double $\delta$-horn}, if the scalar
curvature stays bounded on both ends, stays bounded on one end and
tends to infinity on the other, or tends to infinity on both ends,
respectively.  (Our definition differs slightly from that in
\cite[Definition 58.2]{Kleiner-Lott_perelman_notes}, where the definition
is in terms of the ``$\delta$-necks'' of that paper, as opposed
to the ``strong $\delta$-necks'' that we are using now.)

A metric on $B^3$ or $B^3 - \R P^3$, such that each point outside some
compact set is contained in a $\delta$-neck, is called a
{\em $\delta$-cap} or a {\em capped $\delta$-horn}, if the scalar
curvature stays bounded or tends to infinity on the end, respectively.

\subsection{$\kappa$-noncollapsing} \label{kappa}

Let $\M$ be an $(n + 1)$-dimensional Ricci flow spacetime.
Let $\kappa : [0,\infty) \ra (0, \infty)$ be a
decreasing function.
We say that $\M$ is {\em $\kappa$-noncollapsed at scales below
$\epsilon$} if for each $\rho < \epsilon$ and all $m \in \M$ with
$\t(m) \ge \rho^2$, whenever $P(m, \rho, - \rho^2)$ is unscathed
and $|\Rm| \le \rho^{-2}$ on $P(m, \rho, - \rho^2)$, then we also
have $\vol(B(m, \rho)) \ge \kappa(\t(m)) \rho^n$.
In the application to Ricci flow with surgery, $\epsilon$ will be
taken to be the global parameter.

We refer to \cite[Section 15]{Kleiner-Lott_perelman_notes} for the definitions of the
$l$-function $l(m)$ and the reduced volume $\tilde V(\tau)$.
For notation, we recall that the $l$-function is defined in terms of
${\mathcal L}$-geodesics going backward in time from a
basepoint $m^\prime \in \M$. The parameter $\tau$ is backward time
from $m^\prime$; e.g. $\tau(m) = \t(m^\prime) - \t(m)$. 

\subsection{$\kappa$-solutions} \label{appkappa}

Given $\kappa \in \R^+$,
a {\em $\kappa$-solution} $\M$ is a smooth 
Ricci flow solution defined on a time
interval of the form $(- \infty, C)$ (or $(- \infty, C]$) such that
\begin{itemize}
\item The curvature is uniformly bounded on each compact time interval, and
each time slice is complete.
\item The curvature operator is nonnegative and the scalar curvature is
everywhere positive.
\item The Ricci flow is $\kappa$-noncollapsed at all scales.
\end{itemize}

We will sometimes talk about $\kappa$-solutions without specifying $\kappa$.
Unless other specified, it is understood that $C = 0$.
If $(\M, m)$ is a pointed $\kappa$-solution then we will sometimes understand
it to be defined on the interval $(- \infty, \t(m)]$.

Examples of $\kappa$-solutions are $\cyl$ and $\sphere$.

Any pointed $\kappa$-solution $(\M, m)$ has an {\em asymptotic soliton}.
It is obtained by constructing the $l$-function using 
${\mathcal L}$-geodesics emanating backward from $m$. For any 
$t < \t(m)$, there is some point $m^\prime_t \in \M_t$ where
$l(m) \le \frac{n}{2}$. Put $\tau = \t(m) - t$.
Then the parabolic rescaling 
$\left( \hat \M(\tau), m^\prime_t \right)$
subconverges as $\tau \rightarrow \infty$ to a nonflat gradient
shrinking soliton called the asymptotic soliton
\cite[Proposition 39.1]{Kleiner-Lott_perelman_notes}.
(In the cited reference, the convergence is shown on the 
(rescaled) time interval
$\left[ -1, - \frac12 \right]$, but using the estimates of
Subsection \ref{lestimates} one easily gets pointed convergence on
the time interval $(- \infty, 0)$.)

Hereafter, we suppose that the spacetime $\M$ of the
$\kappa$-solution is four-dimensional.
A basic fact is that the space of pointed
$\kappa$-solutions $(\M, m)$, with $R(m) = 1$, is compact
\cite[Theorem 46.1]{Kleiner-Lott_perelman_notes}.

Given $\delta > 0$, let $\M_\delta$ denote the points in $\M$ that are
not centers of $\delta$-necks.
We call these {\em cap points}. Put $\M_{t, \delta} = \M_t \cap \M_\delta$. 
From \cite[Corollary 47.2]{Kleiner-Lott_perelman_notes}, if $\delta$ is small enough
then  
there is a $C = C(\delta, \kappa) > 0$ such that
if 
$\M_t$
is noncompact then
\begin{itemize} 
\item $\M_{t,\delta}$ is compact with 
$\diam(\M_{t,\delta}) \le C Q^{-\frac12}$ and
\item $C^{-1} Q \le R(m) \le C Q$ whenever $m \in M_{t,\delta}$,
\end{itemize} 
where $Q = R(m^\prime)$ for some $m^\prime \in \partial M_{t, \delta}$.

If $\M$ is noncompact, and not a round shrinking cylinder, then 
$\M_{t, \delta} \neq \emptyset$. A version of the preceding paragraph
that also holds for compact $\kappa$-solutions can be found in
\cite[Corollary 48.1]{Kleiner-Lott_perelman_notes}.

A compact $\kappa$-solution is either a quotient of the round shrinking
sphere, or is diffeomorphic to $S^3$ or $\R P^3$
\cite[Lemma 59.3]{Kleiner-Lott_perelman_notes}.

There is some $\kappa_0 > 0$ so that any $\kappa$-solution
is a $\kappa_0$-solution or a quotient of the round shrinking $S^3$
\cite[Proposition 50.1]{Kleiner-Lott_perelman_notes}

\subsection{Gradient shrinking solitons}
\label{subsec_gradient_shrinking_solitons}
\begin{lemma}
\label{lem_classification_gradient_soliton_kappa_solutions}
Let $\M$ be a three-dimensional gradient shrinking
soliton that is a $\kappa$-solution and  blows up as $t\ra 0$.
For $t < 0$ and a point $(y,t) \in \M$, 
let $l_{y,t} \in C^\infty(\M_{< t})$ be the 
$l$-function on $\M$
constructed using ${\mathcal L}$-geodesics going backward in time from 
$(y,t)$.
Then there is a function $l_\infty \in C^\infty(\M)$ so that
the limit $\lim_{t \rightarrow 0^-} l_{y,t} = l_\infty$ exists,
independent of $y$, with continuous convergence on compact subsets on $\M$.
Define
$\tilde V_\infty:(-\infty,0)\ra (0,\infty)$ as in
(\ref{redvol}).
Then one of the following holds:
\begin{enumerate}
\item $\M$ is the shrinking round cylinder solution $\cyl$ on 
$S^2 \times \R$, with $R(x,t)=(-t)^{-1}$, 
$l_\infty((x,z),t)=1+\frac{z^2}{(-4t)}$ and
$\tilde V_\infty(t) = \frac{16 \pi^{\frac32}}{e}$.
\item $\M$ is the $\Z_2$-quotient of the 
cylinder in (2), with $\tilde V_\infty(t) = \frac{8 \pi^{\frac32}}{e}$.
The pullback of $l_\infty$ to the cylinder is 
$1+\frac{z^2}{(-4t)}$.
\item $\M$ is a shrinking round spherical space form $\sphere/\Ga$, where
$\Ga\subset \SO(4)$, $R(x,t)=\frac{3}{2}(-t)^{-1}$,  $l_\infty(x,t)= \frac32$
and $\tilde V_\infty(t) = \frac{16 \pi^2 e^{- \: \frac32}}{|\Gamma|}$.
\end{enumerate}
\end{lemma}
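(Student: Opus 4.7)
The plan is to combine a classification of three-dimensional gradient shrinking solitons which are $\kappa$-solutions with direct case-by-case verification of the stated formulas. First I would invoke Perelman's classification: such a soliton is either $\cyl$, its $\Z_2$-quotient, or a round shrinking spherical space form $\sphere/\Ga$ with $\Ga\subset\SO(4)$. In the compact case this follows from Hamilton's theorem that a compact three-manifold with positive Ricci curvature satisfying the shrinking soliton equation has constant positive sectional curvature; in the noncompact case, strict positivity of sectional curvature combined with $\kappa$-noncollapsing at infinity rules out an $\R^3$-type shrinker, and the resulting splitting of an $\R$-factor in the universal cover, together with the two-dimensional classification of shrinking solitons, produces the cylinder and its quotient.

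Second, I would read off $R(x,t)$ by solving the ODE for the radius of a round factor: for $S^3$, $\partial_t a^2=-4$ together with the blow-up condition $a^2(0)=0$ gives $a^2(t)=-4t$ and hence $R=6a^{-2}=\tfrac{3}{2}(-t)^{-1}$; for the $S^2$ factor of $\cyl$, $\partial_t a^2=-2$ gives $a^2(t)=-2t$ and $R=2a^{-2}=(-t)^{-1}$, unchanged under the $\Z_2$ quotient.

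Third and most centrally, I would identify $l_\infty$ with the normalized soliton potential. On a gradient shrinking Ricci soliton with potential $f$ normalized by
\begin{equation*}
\ric(g(t))+\hess f=\tfrac{1}{2(-t)}g(t),\qquad (-t)\bigl(2\Delta f-|\nabla f|^2+R\bigr)+f-n=0,
\end{equation*}
the reduced length $l_{y,t}$ based at $(y,t)$ should converge, as $t\to 0^-$, to $f$, with a limit independent of $y$. Solving the soliton equation explicitly on $\sphere/\Ga$ forces $f$ to be the constant $\tfrac{n}{2}=\tfrac{3}{2}$; on $\cyl$ it forces $f=1+\tfrac{z^2}{-4t}$; the $\Z_2$-quotient inherits this formula via the covering. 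The values of $\tilde V_\infty$ are then direct Gaussian integrations. For $\sphere/\Ga$ one uses $\vol=\tfrac{2\pi^2}{|\Ga|}(-4t)^{3/2}$ to obtain $\tilde V_\infty=16\pi^2 e^{-3/2}/|\Ga|$; for $\cyl$ the $S^2$-fiber has area $8\pi(-t)$ and $\int_\R e^{-z^2/(-4t)}\,dz=2\sqrt{\pi(-t)}$, giving $\tilde V_\infty=16\pi^{3/2}/e$, with the $\Z_2$-quotient halving this.

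The main obstacle is justifying the convergence $l_{y,t}\to f$ as $t\to 0^-$ with basepoint-independent limit. This requires controlling $\mathcal L$-geodesics on a shrinker as the target time approaches the blow-up time, exploiting the self-similar diffeomorphism flow generated by $\nabla f$ to eliminate the basepoint dependence, and then identifying the limit with $f$ via Perelman's differential inequalities for $l$, which become equalities on a gradient shrinker. The classification and the remaining computations are essentially routine once this step is in place.
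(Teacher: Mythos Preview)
Your approach is essentially the same as the paper's: classify the solitons, identify $l_\infty$ with the soliton potential $f$ (up to an additive constant), and compute the explicit formulas case by case. The paper does not give a self-contained argument for the step you flag as the main obstacle either; it disposes of the convergence $\lim_{t\to 0^-} l_{y,t} = f + C$ by citing Enders (2007, Theorem 3.7 and Example 3.3), Naber (2010, Proposition 2.5), and Chow et al.\ (2007, Chapter 7.7.3), so your identification of this as the crux is exactly right.
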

\begin{proof}
The classification of the solitons follows from 
\cite[Corollary 51.22]{Kleiner-Lott_perelman_notes}.
Let $f$ be a potential for the soliton, i.e.
\begin{equation}
\ric + \hess(f) = - \frac{1}{2t} g
\end{equation}
and 
\begin{equation}
\frac{\partial f}{\partial t} = |\nabla f|^2.
\end{equation}
There is a constant $C$ so that $R + |\nabla f|^2 + \frac{1}{t} f = 
- \frac{C}{t}$.

From \cite[Theorem 3.7]{Enders_reduced_distance} and 
\cite[Proposition  3.8]{Naber_noncompact_shrinking},
for any sequence $t_i \rightarrow 0^-$, after passing to a subsequence
there is a limit $\lim_{i \rightarrow \infty} l_{y,t_i}$
with continuous convergence on compact subsets of $\M$.
In the case of a gradient shrinking soliton,
\cite[Chapter 7.7.3]{Chow_techniques_applications} implies that 
$\lim_{i \rightarrow \infty} l_{y,t_i} = f + C$; c.f.
\cite[Example 3.3]{Enders_reduced_distance}. Thus the limit
$\lim_{t \rightarrow 0^-} l_{y,t}$ exists and equals $f + C$, 
independent of $y$.
In our case, the formulas for $l_\infty$ and $\tilde V_\infty$ now follow from
straightforward calculation.
\end{proof}

\subsection{Estimates on $l$-functions} \label{lestimates}

We recall some estimates on the $l$-function that hold for
$\kappa$-solutions, taken from \cite{Ye_reduced}

The letter $C$ will denote a generic universal constant. 
From \cite[(2.53)]{Ye_reduced},
\begin{equation} \label{ye2.53}
R\leq \frac{Cl}{\tau}.
\end{equation}
From \cite[(2.54),(2.56)]{Ye_reduced},
\begin{equation} \label{ye2.54}
\max(|\nabla l|^2,|l_\tau|) \leq \frac{Cl}{\tau}.
\end{equation}
From \cite[(2.55)]{Ye_reduced},
\begin{equation} \label{ye2.55}
|\sqrt{l}(q_1,\tau)-\sqrt{l}(q_2,\tau)|\leq
\sqrt{\frac{C}{4\tau}}\;d(q_1,q_2,\tau).
\end{equation}
From \cite[(2.57)]{Ye_reduced},
\begin{equation} \label{ye2.57}
  (\frac{\tau_1}{\tau_2})^C\leq
\frac{l(q,\tau_2)}{l(q,\tau_1)}
\leq (\frac{\tau_2}{\tau_1})^C.
\end{equation}
From \cite[(3.7)]{Ye_reduced},
\begin{equation} \label{ye3.7}
-l(q_1,\tau)-1+C_1\frac{d^2(q_1,q_2,\tau)}{\tau}
\leq l(q_2,\tau)
\leq 2l(q_1,\tau)+C_2\frac{d^2(q_1,q_2,\tau)}{\tau}\,.
\end{equation}

\subsection{Canonical neighborhoods} \label{canon}

In this subsection we recall the notion of a canonical neighborhood for a 
Ricci flow with surgery, and define the notion of a canonical neighborhood
in a singular Ricci flow.  We mention that this rather complicated looking 
definition is motivated by the structure of   $\kappa$-solutions and the
standard (postsurgery) solution.  

Let $r : [0, \infty) \rightarrow (0, \infty)$ be a
decreasing
function.  Let $\epsilon > 0$ be small enough so that
the bulletpoints at the end of Subsection \ref{appkappa} hold
(with $\delta = \epsilon$).
Let $C_1 = C_1(\epsilon)$ and $C_2 = C_2(\epsilon)$ be the
constants in \cite[Definition 69.1]{Kleiner-Lott_perelman_notes}.

As in \cite[Definition 69.1]{Kleiner-Lott_perelman_notes},
a Ricci flow with surgery $\M$ defined on the time interval $[a,b]$ satisfies
the {\em $r$-canonical neighborhood assumption} if every
$(x,t)\in \M_t^\pm$ with scalar curvature $R(x,t)\geq r(t)^{-2}$
has a canonical neighborhood in the corresponding forward/backward 
time slice, in the following sense.   There is
an  $\hat r\in (R(x,t)^{-\frac12},C_1R(x,t)^{-\frac12})$ 
and an open 
set $U\subset \M_t^\pm$ with 
$\ol{B^\pm(x,t,\hat r)}\subset U\subset B^\pm(x,t,2\hat r)$ that
falls into one of the
following categories : 

(a) $U\times [t-\De t,t]\subset\M$  is a strong $\eps$-neck
for some $\De t>0$.  (Note that after parabolic rescaling the scalar
curvature at $(x,t)$ becomes $1$, so the scale factor must be
$\approx R(x,t)$, which implies that $\De t\approx R(x,t)^{-1}$.)

(b) $U$ is an $\eps$-cap which, after rescaling, is $\epsilon$-close to the
corresponding piece of a $\kappa_0$-solution or a time slice of a standard
solution.

(c) $U$ is a closed manifold diffeomorphic to $S^3$ or $\R P^3$.

(d) $U$ is  $\epsilon$-close to 
a closed manifold of constant positive sectional 
curvature.

\noindent
Moreover, the scalar curvature in $U$ lies between $C_2^{-1}R(x,t)$
and $C_2R(x,t)$.
In cases (a), (b), and  (c), the volume of $U$ is greater than 
$C_2^{-1}R(x,t)^{-\frac32}$. In case (c), the infimal
sectional curvature of $U$ is greater than $C_2^{-1}R(x,t)$.

Finally, we require that
\begin{equation}
\label{gradest}
|\nabla R(x, t)|<\eta R(x, t)^{\frac32}, \: \: \:  \: \: \: \: 
\left|\frac{\D R}{\D t} (x, t)\right|<\eta R(x, t)^2,
\end{equation}
where $\eta$ is the constant from 
\cite[(59.5)]{Kleiner-Lott_perelman_notes}.  Here the time 
dervative $\frac{\D R}{\D t} (x, t)$ should be interpreted
as a one-sided derivative when the point $(x,t)$ is added or removed
during surgery at time $t$.

We use a slightly simpler definition of canonical neighborhood in the 
case of singular Ricci flows, for Definition \ref{def_singular_ricci_flow}.
We do not need to consider forward/backward time slices, and in 
case (b), we do not need to consider the case that $U$ is close to 
a time slice of a standard solution. 

\begin{remark}
Alternatively, 
for a singular Ricci flow,
one could replace the above definition of canonical neighborhood with the 
requirement that every point with $R\geq r(t)^{-2}$ is
$\eps$-modelled on a
$\kappa(t)$-solution.  This is quantitatively equivalent to the 
definition above, as
follows from Proposition \ref{goodeps} and
\cite[Lemma 59.7]{Kleiner-Lott_perelman_notes}.
\end{remark}

\subsection{Ricci flow with surgery} \label{RFsurgery}

We recall that there are certain parameters 
in the definition of Ricci flow with surgery, namely a number
$\epsilon > 0$ and 
positive nonincreasing
functions $r, \kappa, \delta : [0, \infty) \ra
(0, \infty)$. The function $r$ is the canonical neighborhood scale;
c.f. Appendix \ref{canon}. The function $\kappa$ is the noncollapsing
parameter; c.f. Appendix \ref{kappa}.
The parameter $\epsilon > 0$ is a global parameter in the definition of
a Ricci 
flow with surgery \cite[Remark 58.5]{Kleiner-Lott_perelman_notes}.

The function
$\delta : [0, \infty) \rightarrow (0, \infty)$ is a surgery parameter.
There is a further parameter
$h(t) < \delta^2(t) r(t)$ so that if a point
$(x,t)$ lies in an $\epsilon$-horn and has $R(x,t) \ge h(t)^{-2}$, then
$(x,t)$ 
is the center of a $\delta(t)$-neck
\cite[Lemma 71.1]{Kleiner-Lott_perelman_notes}.
One can then perform surgery
on such cross-sectional $2$-spheres
\cite[Sections 72 and 73]{Kleiner-Lott_perelman_notes}.
Perelman showed that there are positive nonincreasing step functions
$r_P$, $\kappa_P$ and $\overline{\delta}_P$
so that if the (positive nonincreasing) function $\delta$
satisfies $\delta(t) < \overline{\delta}_P(t)$ then
there is a well-defined Ricci flow with surgery, with a discrete set
of surgery times
\cite[Sections 77-80]{Kleiner-Lott_perelman_notes}.

In particular, we can assume that $\delta$ is 
strictly decreasing.
If $r \le r_P$ and $\kappa \le \kappa_P$
are positive functions then the $r_P$-canonical neighborhood 
assumption implies
the $r$-canonical neighborhood assumption, 
and $\kappa_P$-noncollapsing implies
$\kappa$-noncollapsing.  Hence Ricci flow with surgery also exists in
terms of the parameters $(r, \kappa, \delta)$. Consequently, we
can assume that $r$, $\kappa$ and $\delta$ are
strictly decreasing.

  \begin{remark} \label{surgeryremark}
    We remark that for the purposes of this paper, it is necessary to impose slightly stricter conditions on the surgery process than those that are needed for the proof of the geometrization conjecture.  Specifically, in the definition of Ricci flow with $(r,\delta)$-cutoff \cite[Definition 73.1]{Kleiner-Lott_perelman_notes}, the surgery procedure involves choosing $\delta$-necks inside $\eps$-horns, cutting along cross-sectional $2$-spheres of the $\delta$-necks, and discarding the tips of the $\eps$-horns; see steps B-D of \cite[Definition 73.1]{Kleiner-Lott_perelman_notes}.  Here we require that the $\delta$-necks are chosen in such a way that the discarded tips have scalar curvature at least $\frac{1}{100}h^{-2}$.
  \end{remark}

As in \cite[Section 68]{Kleiner-Lott_perelman_notes},
the formal structure of a {\em Ricci flow with surgery} is given by 
\begin{itemize}
\item   A collection of Ricci flows
$\{(M_k\times[t_k^-,t_k^+),g_k(\cdot))\}_{1\leq k \leq N}$,
where $N\leq \infty$, $M_k$ is a compact (possibly empty) manifold,
$t_k^+=t_{k+1}^-$ for  all
$1\leq k<N$, and the flow $g_k$ 
goes singular at $t_k^+$ 
for each $k < N$. We allow $t_N^+$ to be $\infty$.
\item A collection of limits
$\{(\Om_k,\bar g_k)\}_{1\leq k\leq N}$, in the sense of 
\cite[Section 67]{Kleiner-Lott_perelman_notes},
at the respective final times $t_k^+$ that are
singular if $k < N$. (Here
$\Omega_k$ is an open subset of
$M_k$.)
\item A collection of isometric embeddings 
$\{\psi_k:X_k^+\ra X_{k+1}^-\}_{1\leq k<N}$
where $X_k^+\subset \Om_k$ and $X_{k+1}^-\subset M_{k+1}$,
$1 \le k< N$, are compact $3$-dimensional submanifolds with 
boundary.   The $X_k^\pm$'s are the subsets which survive the 
transition from one flow to the next, and the $\psi_k$'s give
the identifications between them.   
\end{itemize}

We will say that $t$ is a {\em singular time} if $t=t_k^+=t_{k+1}^-$
for some $1\leq k<N$, or $t=t_N^+$ and the metric goes singular
at time $t_N^+$.

A Ricci flow with surgery does not necessarily have to have any
real surgeries, i.e. it could be a smooth nonsingular flow. 

We now describe the Ricci flow spacetime associated with a Ricci flow with surgery.  We mention that in \cite{Kleiner-Lott_perelman_notes}, a spacetime object was associated with a Ricci flow with surgery; that construction was slightly different from the one used here, and did not produce a Ricci flow spacetime in the sense of Definition~\ref{def_ricci_flow_spacetime}. 
We begin with the time slab
$M_k\times [t_k^-,t_k^+]$ for $1\leq k\leq N$, which has a time function 
$\t:M_k\times [t_k^-,t_k^+]\ra [t_k^-,t_k^+]$
given by  projection onto the second factor, and a time
vector field $\D_{\t}$ inherited from the coordinate vector field
on the factor $[t_k^-,t_k^+]$.  
 
For every $1<k\leq N$, put
$W_k^-=(M_k\setminus \Int(X_k^-))\times\{t_k^-\}$
and for $1\leq k<N$, let $W_k^+=(M_k\setminus \Int(X_k^+))\times \{t_k^+\}$.
Since $W_k^\pm$ is a closed subset of the $4$-manifold with boundary
$M_k\times [t_k^-,t_k^+]$, the complement 
$Z_k=(M_k\times [t_k^-,t_k^+])\setminus
(W_k^-\cup W_k^+)$ is a $4$-manifold with boundary,  where
$\D Z_k=(M_k\times\{t_k^-,t_k^+\})\setminus (W_k^-\cup W_k^+)$.
Note that the Ricci flow $g_k(\cdot)$ with singular limit $\bar g_k$ 
defines a smooth metric $\hat g_k$ on the subbundle $\ker d\t\subset TZ_k$ that
satisfies $\L_{\D_{\t}}\hat g_k=-2\ric(\hat g_k)$.

For every $1\leq k<N$, we glue $Z_k$ to $Z_{k+1}$ using the identification
$\Int(X_k^+)
\stackrel{\psi_k}{\ra} \Int(X_{k+1}^-)$,
to obtain a smooth $4$-manifold
with boundary $\M$, where $\D \M$ is the image of $W_1^-\cup W_N^+$ under
the quotient map 
$\bigsqcup_k Z_k\ra \M$.  The time functions, time vector fields,
and metrics descend to $\M$, yielding a tuple 
$(\M,\t,\D_{\t},g)$ which is a Ricci flow spacetime in the sense of Definition
\ref{def_ricci_flow_spacetime}.

Recall the notion of a normalized Riemannian manifold from the
introduction. 
Our convention is that the trace of the curvature operator
is the scalar curvature.
From \cite[Appendix B]{Kleiner-Lott_perelman_notes},
if a smooth three-dimensional Ricci flow
$\M$ has normalized initial condition then the scalar curvature
satisfies 
\begin{equation} \label{scalarsurg}
R(x,t) \: \ge \:  - \: \frac{3}{1+2t}.
\end{equation}
If follows that the volume satisfies 
\begin{equation} \label{volumesurg}
\V(t) \le (1 + 2t)^{\frac32} \V(0).
\end{equation}
These estimates also hold for a Ricci flow with surgery.

Let $A$ be a symmetric $3 \times 3$ real matrix.  
Let $\lambda_1$ denote its smallest eigenvalue.
For $t \ge 0$, put
\begin{align} \label{hicone}
K(t) =  \{ A \: : \: & \tr(A) \ge - \frac{3}{1+t}, \mbox{ and if }
\lambda_1 \le - \frac{1}{1+t} \mbox{ then } \\ 
& \tr(A) \ge - \lambda_1 \left( \log(- \lambda_1) + \log(1+t) - 3 \right)\}.
\notag
\end{align}
Then $\{K(t\}_{t \ge 0}$ is a family of $O(3)$-invariant convex sets which
is preserved by the ODE on the space of curvature operators
\cite[Pf. of Theorem 6.44]{Chow-Lu-Ni}. 
If a smooth three-dimensional Ricci flow
has normalized initial conditions then the time-zero curvature operators
lie in $K(0)$. Using (\ref{scalarsurg}), 
we obtain the Hamilton-Ivey estimate that
whenever the lowest eigenvalue $\lambda_1(x,t)$ of the curvature operator
satisfies $\lambda_1 \le - \frac{1}{1+t}$, we have
\begin{equation} \label{hi}
R \ge - \lambda_1 \left( \log(- \lambda_1) + \log(1+t) - 3 \right).
\end{equation}
The surgery procedure is designed to ensure that (\ref{hi}) also holds
for Ricci flows with surgery.

(Perelman's definition of a normalized Riemannian manifold is
slightly different; he requires that the sectional curvatures be
bounded by one in absolute value \cite[Section 5.1]{Perelman_surgery}.
With his convention, $R(x,t) \: \ge \: - \: \frac{6}{1+4t}$ and
$\V(t) \le (1 + 4t)^{\frac32} \V(0)$.)

\newpage

\section{Extension of
Proposition 
\ref{prop_cylinder_stability_1_noncompact}}
\label{technical}

The main result in this appendix is
Proposition \ref{prop_critical_transitional},
which is  an extension of Proposition
\ref{prop_cylinder_stability_1_noncompact} to 
general $\kappa$-solutions.   It uses 
similar ideas as Proposition \ref{prop_cylinder_stability_1_noncompact} 
but is a bit more complicated
to state.  We include this  for the sake of 
completeness, as it gives a general quantitative 
picture of the behavior of $\kappa$-solitons
over time.  It is not needed in the body of the paper.

Suppose that $\M$ is a $\kappa$-solution defined on $(-\infty,0]$, and
$l:\M_{<0}\ra \R$
is the $l$-function with spacetime basepoint $(x,0)\in \M_0$.  
Roughly speaking,
Proposition \ref{prop_critical_transitional}
says that the reduced volume for a $\kappa$-solution, as a function of time,
has two types of behavior: it can remain close to the reduced volume
of a gradient shrinking soliton over a 
long time interval, or it
can transition relatively
quickly between two such values. During long time intervals
of the former kind, the $\kappa$-solution and $l$-function
are close, modulo parabolic scaling, to a
gradient shrinking soliton and its $l$-function.
We now give the details. 

Fix $\kappa > 0$.
By Lemma \ref{lem_classification_gradient_soliton_kappa_solutions}
there are only finitely many
 $3$-dimensional gradient shrinking solitons that are 
$\kappa$-solutions.
Let $\V_{sol}=\{0<\tilde V_1<\ldots <\tilde V_K\}$ be the set of their
reduced
volumes, 
in the sense of 
Lemma \ref{lem_classification_gradient_soliton_kappa_solutions}.

\begin{proposition}
\label{prop_critical_transitional}
For all $\hat\eps>0$ and $C<\infty$ there is a 
$\rho = \rho(\hat\eps, C) <\infty$ with the following
property. 
Suppose that
\begin{itemize}
\item $\M$ is  a $\kappa$-solution defined on $(-\infty,0]$,
\item $(x,0)\in \M_0$ and
\item $l:\M_{<0}\ra \R$ is the 
$l$-function on $\M$ with spacetime basepoint $(x,0)$.
\end{itemize}
Then there is a partition 
$$
\P=\{-\infty<T_0<\ldots<T_i=-1\}\,,
$$ 
of the interval $(-\infty,-1]$
(where $i\leq 2|\V_{sol}|$) such that if $I_j=[T_{j-1},T_j]$ for $j>0$,
and $I_0 = (-\infty,T_0]$, then the following holds :
\begin{enumerate}
\item If $j$ is odd then $\frac{T_{j-1}}{T_j}<\rho$.
\item If $j$ is even, 
then
	\begin{enumerate}
	\item  For every  $(y,t)\in \M_{I_j}$ with 
	$l(y,t)<C$, the rescaled triple 
	$(\hat\M(-t),(y,t),l)$ is $\hat\eps$-close to a triple
	$(\M^\infty,(y_\infty,-1),l_\infty)$ where $\M^\infty$ is a 
	gradient shrinking soliton with potential function $l_\infty$,
	and $\tilde V(t)$ is $\hat\eps$-close to the reduced volume of $\M^\infty$.
	(See Appendix \ref{appcloseness} 
	for the definition of $\hat\eps$-closeness of triples).
	\item  $|\tilde V(I_j)\cap \V_{sol}|=1$ if $j>0$ and  
	$|\tilde V(I_j)\cap \V_{sol}|=0$ if $j=0$.
	\item $\tilde V(t)$ tends to 
	an element of $\V_{sol}$ as $t\ra-\infty$.
	\end{enumerate}
\end{enumerate}
\end{proposition}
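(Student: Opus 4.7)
The plan is to exploit the monotonicity of the reduced volume together with the rigidity statement in part (4) of Proposition \ref{prop_convergence_no_curvature_bound}, which says that if $\tilde V$ is locally constant in a limit, then the limit is a gradient shrinking soliton. The strategy is to subdivide $(-\infty,-1]$ by alternating ``plateau'' intervals, on which $\tilde V$ stays close to an element of $\V_{sol}$, with ``transition'' intervals, on which $\tilde V$ drops by a definite amount on a controlled logarithmic time scale; finiteness of $\V_{sol}$ will bound the number of such intervals.

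First, I will establish the quantitative rigidity statement: for every $\hat\eps, C$ and every $\mu>0$, there exists $\rho_0=\rho_0(\hat\eps,C,\mu)$ such that if $\M$ is a $\kappa$-solution and $[a,b]\subset (-\infty,0)$ with $a/b\geq \rho_0$, and if $\tilde V(a)-\tilde V(b)<\mu$, then for every $(y,t)\in\M_{[a,b]}$ with $l(y,t)<C$ the rescaled triple $(\hat\M(-t),(y,t),l)$ is $\hat\eps$-close to a pointed gradient shrinking soliton (a $\kappa$-solution) equipped with its potential. The argument is by contradiction: if this failed, one would obtain sequences $\{(\M^j,(y_j,t_j))\}$ with $l_j(y_j,t_j)<C$, with reduced volume near-constant on a long rescaled time interval around $t_j$, yet failing the closeness. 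By Proposition \ref{prop_convergence_no_curvature_bound}, a subsequence of $(\hat\M^j(-t_j),(y_j,-1),l_j)$ would subconverge to a limit in which the limiting reduced volume is constant on arbitrarily long time intervals, and hence a gradient shrinking soliton with potential $l_\infty$, contradicting the closeness failure.

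Next, I will construct the partition. Set $\tilde V_0=0$, and choose a small $\mu>0$ smaller than half the minimal gap between consecutive elements of $\V_{sol}\cup\{0,(4\pi)^{3/2}\}$. Using monotonicity and boundedness of $\tilde V$, define the ``plateau level'' $L(t)$ to be the unique element of $\V_{sol}\cup\{0,(4\pi)^{3/2}\}$ (if it exists) within $\mu$ of $\tilde V(t)$. Call a time $t$ \emph{transitional} if no such $L(t)$ exists. Starting from $t=-1$ and moving backward, let $T_i=-1$ and inductively define $T_{j-1}$ as the supremum (or infimum, as appropriate to the odd/even case) of times for which the current ``plateau'' or ``transition'' regime persists. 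Because $\tilde V$ is monotone and crosses each gap in $\V_{sol}$ at most once, there are at most $|\V_{sol}|$ transition intervals (producing the odd $j$'s, where by construction $\rho_0$-stability must fail, hence $T_{j-1}/T_j<\rho_0$) and at most $|\V_{sol}|+1$ plateau intervals, including the terminal one $I_0=(-\infty,T_0]$; this gives the bound $i\leq 2|\V_{sol}|$.

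On each even-$j$ interval, $\tilde V$ stays within $\mu$ of some $\tilde V_k\in\V_{sol}$ (or of $0$ for $I_0$), so the quantitative rigidity statement applies, giving (a). Condition (b) follows from the construction and the choice of $\mu$, which precludes two distinct elements of $\V_{sol}$ from appearing on a single plateau, and (for $j=0$) from the fact that $I_0$ cannot contain any $\tilde V_k$ without pushing $T_0$ further back. Finally, (c) follows because $\tilde V$ is monotone and bounded, so $\tilde V(t)$ converges as $t\to-\infty$; applying the asymptotic soliton construction (the basepoint being moved to a point with $l\leq n/2$ at each time) together with the rigidity statement above forces the limit value to lie in $\V_{sol}$. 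The main obstacle will be making the plateau-vs-transition dichotomy genuinely quantitative in terms of $\rho$: one must choose $\rho$ large enough relative to $\rho_0(\hat\eps,C,\mu)$ and the gaps in $\V_{sol}$, and check that the inductive construction terminates after at most $2|\V_{sol}|$ steps uniformly in $\M$.
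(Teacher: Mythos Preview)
Your approach is essentially the paper's: your rigidity statement is exactly the paper's preparatory lemma (proved by the same compactness/contradiction argument via Proposition \ref{prop_convergence_no_curvature_bound}), and the plateau/transition decomposition based on proximity of $\tilde V(t)$ to $\V_{sol}$ is the same construction.

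Two details are stated too loosely. First, on an odd interval you write ``$\rho_0$-stability must fail, hence $T_{j-1}/T_j<\rho_0$,'' but this is not what the contrapositive of your rigidity lemma gives: the reduced volume can drop by much more than $\mu$ across a full transition interval (the gaps in $\V_{sol}$ can be large), so the failing hypothesis could be the $\tilde V$-drop bound rather than the ratio bound. The paper handles this by applying the rigidity lemma a second time with $C=\tfrac32$ (using the universal point with $l\le\tfrac32$ in each time slice) to get constants $(\rho_0',\mu')$, then subdividing each transition interval into at most $N$ pieces on which $\tilde V$ drops by at most $\mu'$; the ratio on each piece is then bounded by $\rho_0'$, giving $T_{j-1}/T_j<(\rho_0')^N$ with $N$ controlled by the maximal gap in $\V_{sol}$. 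Your final sentence gestures at this, but the earlier claim needs the iteration spelled out. Second, applying the rigidity lemma at a point $t$ inside a plateau requires a backward sub-interval of logarithmic length at least $\rho_0$ on which $\tilde V$ still varies by less than $\mu$; near the left (more negative) endpoint of a plateau no such sub-interval exists inside the plateau. The paper resolves this by enlarging each transition interval $J_j$ to $[t_{j-1},t_j/\rho_0]$, absorbing the problematic strip of the adjacent plateau into the odd interval.
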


We begin with a lemma.
\begin{lemma}
\label{lem_nearly_constant_implies_soliton}
For all $\eps_1>0$ and $C<\infty$ there exist  $\mu=\mu(\eps_1,C)<\infty$ and 
$\th(\eps_1,C)>0$ with the following
property.

Let
 $\M$ be  a $\kappa$-solution defined on $(-\infty,0]$, let
 $(x,0)\in \M_0$ be a basepoint and let 
 $l:\M_{<0}\ra \R$ be the 
corresponding $l$-function.
Suppose that
\begin{itemize}
\item  $T'<T<0$,
\item  $\frac{T'}{T}\geq\mu$,
\item  $\tilde V(T)\leq\tilde V(T')+\th$ and
\item  $(y,T)\in \M_T$ is a point where $l(y,T)<C$.
\end{itemize}
Then the rescaled
triple $(\hat\M(-T),(y,-1),l)$ is 
$\eps_1$-close to a triple
	$(\M^\infty,(y_\infty,-1),l_\infty)$ where $\M^\infty$ is a 
	gradient shrinking soliton with potential function $l_\infty$,
	and $\tilde V(T)$ is $\eps_1$-close to the reduced volume of $\M^\infty$.
\end{lemma}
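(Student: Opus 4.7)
The plan is to argue by contradiction and compactness. Suppose the lemma fails for some fixed $\eps_1 > 0$ and $C < \infty$. Then there exist sequences $\mu_j \to \infty$, $\theta_j \to 0^+$, pointed $\kappa$-solutions $(\M^j, (x_j, 0))$ with associated $l$-functions $l_j$, times $T'_j < T_j < 0$ with $T'_j/T_j \geq \mu_j$, and points $(y_j, T_j) \in \M^j_{T_j}$ with $l_j(y_j, T_j) < C$, such that $\tilde V_j(T_j) \leq \tilde V_j(T'_j) + \theta_j$ but the rescaled triple $(\hat\M^j(-T_j), (y_j,-1), l_j)$ is not $\eps_1$-close to any gradient shrinker triple $(\M^\infty, (y_\infty,-1), l_\infty)$ with matching reduced volume.

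First I would parabolically rescale by $-T_j$, so that $T_j$ becomes $-1$ and $T'_j$ becomes $-\mu_j$. Since the basepoint satisfies $l_j(y_j,-1) < C$, Proposition \ref{prop_convergence_no_curvature_bound} applies: after passing to a subsequence, the rescaled triples converge to a limiting triple $(\M^\infty, (y_\infty,-1), l_\infty)$, where $\M^\infty$ is either a $\kappa$-solution on $(-\infty, 0)$ or flat $\R^3$, and $l_\infty$ satisfies the differential inequalities of \cite{Kleiner-Lott (2008)}. The reduced volume functions $\tilde V_j$ converge uniformly on compact subsets of $(-\infty, 0)$ to $\tilde V_\infty$.

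The next step is to extract constancy of $\tilde V_\infty$. By the monotonicity of the reduced volume and the nearly-constant hypothesis, for any $-\infty < a < b \leq -1$, once $\mu_j \geq |a|$ we have $\tilde V_j(a) \geq \tilde V_j(b) \geq \tilde V_j(-\mu_j) \geq \tilde V_j(-1) - \theta_j$. Passing to the limit yields $\tilde V_\infty(a) = \tilde V_\infty(b)$ on the whole interval $(-\infty, -1]$. By part (4) of Proposition \ref{prop_convergence_no_curvature_bound}, this forces $\M^\infty$ to be a gradient shrinking soliton with potential $l_\infty$, provided it is not the static flat $\R^3$. To exclude flat $\R^3$, I would invoke Lemma \ref{redvol2}: each $\kappa$-solution satisfies $\tilde V_j(\tau_0) \leq (4\pi)^{3/2} - \mu$ for a universal $\mu > 0$ and some $\tau_0 < 0$, and by monotonicity this descends to $\tilde V_\infty(-1) \leq (4\pi)^{3/2} - \mu < (4\pi)^{3/2}$, which is the reduced volume of flat $\R^3$; hence $\M^\infty$ is a nonflat $\kappa$-solution, in particular a bona fide gradient shrinker.

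Finally, the convergence $\tilde V_j(T_j) \to \tilde V_\infty(-1)$ shows that $\tilde V_j(T_j)$ is eventually $\eps_1$-close to the reduced volume of $\M^\infty$. Combined with the $C^k$-smooth convergence of the triples, this contradicts the assumption for large $j$, completing the proof. The main technical point is the passage from the discrete nearly-constant bound $\tilde V_j(T_j) \leq \tilde V_j(T'_j) + \theta_j$ on the stretching interval $[T'_j, T_j]$ to pointwise constancy of $\tilde V_\infty$ on all of $(-\infty, -1]$; this is where the quantitative interplay between $\mu$ and $\theta$ is essential, and it is ultimately a standard diagonal argument using reduced-volume monotonicity.
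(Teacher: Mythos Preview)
Your proof is correct and follows essentially the same route as the paper: contradiction, rescale by $-T_j$, apply Proposition~\ref{prop_convergence_no_curvature_bound} to extract a limit triple, use monotonicity plus the nearly-constant hypothesis to force $\tilde V_\infty$ constant on $(-\infty,-1]$, and invoke part~(4) of that proposition to conclude the limit is a gradient shrinker, contradicting the failure of the conclusion. Your write-up is in fact more detailed than the paper's on the constancy step.

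One small remark: your exclusion of flat $\R^3$ via Lemma~\ref{redvol2} is not quite clean as stated, since that lemma carries the normalization $R(x,0)=1$, which you do not have here; the resulting $\tau_0$ would then depend on $j$ and need not lie above $T_j$. However, this step is unnecessary for the lemma as stated: flat $\R^3$ with the Gaussian potential is itself a gradient shrinking soliton, so even if the limit were flat the conclusion would hold and you still reach a contradiction. The paper simply asserts the limit is a $\kappa$-solution without comment.
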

\begin{proof}
Suppose the lemma were false.  Then for some $\eps_1>0$ and $C<\infty$, 
there would
be sequences $\{\M^j\}_{j=1}^\infty$, $\{T_j'\}_{j=1}^\infty$,
$\{T_j\}_{j=1}^\infty$ and $\{y_j\}_{j=1}^\infty$ with
$T_j'<T_j<0$,
$(y_j,T_j)\in \M^j$,
$\lim_{j \rightarrow \infty} \frac{T_j'}{T_j} = \infty$ and
$\lim_{j \rightarrow \infty}
|\tilde V(T_j) - \tilde V(T_j')| = 0$,
but for each $j$,
the rescaled triple $(\hat M^j(-T_j),(y_j,-1),l_j)$ does not satisfy
the conclusion of the lemma.  
Proposition \ref{prop_convergence_no_curvature_bound} implies that
after passing to a subsequence, we have convergence of triples to
a $\kappa$-solution $\left( \M^\infty, (y_\infty, -1), l_\infty \right)$.
From the assumed properties of $\{\M^j\}_{j=1}^\infty$, the reduced volume
$\tilde V_\infty(t)$ of $\M^\infty$ will be constant for $t \le -1$.
Hence $\M^\infty$ is a gradient shrinking soliton, which gives a
contradiction.
\end{proof}

\bigskip

\begin{proof}[Proof of Proposition \ref{prop_critical_transitional}]
From the monotonicity of the reduced volume $\tilde V(t)$, the existence of the
asymptotic soliton and the fact that $\lim_{t \rightarrow 0}
\tilde V(t) = \tilde V_{\R^3}$, the
reduced volume  of flat $\R^3$ (as a gradient shrinking soliton),
it follows that for all $t\in (-\infty,0)$, we have
$\tilde V(t)\in [\tilde V_1,\tilde V_{\R^3}]$. 
 
Let $\mu_1=\mu(\hat\eps,C)$ and $\th_1=\th(\hat\eps,C)$ be as in Lemma \ref{lem_nearly_constant_implies_soliton}.
Choose $\th_1'<\min(\frac{\th_1}{2},\tilde V_1)$ small enough so that the
intervals $\{[\tilde V_j-\th_1',\tilde V_j+\th_1']\}_{1\leq j\leq K}$
are disjoint. Put
 $\tilde J_j=[\tilde V_j+\th_1',\tilde V_{j+1}-\th_1']$
for  $1\leq j<K$, and  $\tilde J_K=[\tilde V_K+\th_1',\tilde V_{\R^3}]$.

Putting $J_j=\tilde V^{-1}(\tilde J_j)\cap (-\infty,-1]$ for $1\leq j\leq K$, 
we obtain a 
collection $\{J_j\}_{1\leq j\leq K}$ of at most $K$ intervals in 
$(-\infty,-1]$.  

We know that for each $t < 0$, there is a point
$y_t \in \M_t$ with $l(y_t, t) \le \frac32$.
Let $\mu_2=\mu(\th_1', 2)$ and $\th_2=\th(\th_1', 2)$ be as in 
Lemma \ref{lem_nearly_constant_implies_soliton}.
Note that for every $1\leq j\leq K$, since 
$\dist(\tilde V(t),\V_{sol})\geq \th_1'$ for all $t\in J_j$, 
 the contrapositive of Lemma 
\ref{lem_nearly_constant_implies_soliton} implies that for any
$T',T\in J_j$ with $\tilde V(T)\leq \tilde V(T')+\th_2$,
we have $\frac{T'}{T}<\mu_2$.  Iterating this  $1+[\th_2^{-1}\length(J_j)]$
times gives a bound
$$
\frac{\inf J_j}{\sup J_j}<\mu_3=\mu_3(\hat\eps,C)\,.
$$ 

The complement of $\bigcup_j J_j$ in $(-\infty,-1]$ is a union of open intervals.
Let $I=(t_-,t_+)$ be one such interval.  Then $\tilde V\restr_I$ takes values
in an interval of length at most $2 \th_1^\prime \leq \th_1$.  Therefore  by Lemma 
\ref{lem_nearly_constant_implies_soliton}, if $t\in I$ and $t\geq \frac{t_-}{\mu_1}$,
then for all $y\in \M_t$ with $l(y,t)<C$, conclusion (2a) of Proposition 
\ref{prop_critical_transitional} holds.

Thus we may obtain the desired partition of $(-\infty,-1]$ by enlarging
each nonempty interval $J_j=[t_{j-1},t_j]$ to $J_j'=[t_{j-1},\frac{t_j}{\mu_1}]$,
forming  the union $\bigcup_j J'_j$, and taking the associated partition of 
$(-\infty,-1]$. Conclusion (2b) of the proposition follows from the
construction.  Conclusion (2c) of the proposition
follows from the finiteness of ${\mathcal V}_{sol}$, the monotonicity of
the reduced volume and the
existence of the asymptotic soliton.
\end{proof}

\begin{corollary} \label{gradcor}
With the notation of Proposition \ref{prop_critical_transitional}, 
depending  on the topology
of $\M_0$, the sequence 
$\{\M^{2j}\}_{0\leq 2j\leq i}$ of gradient shrinking solitons 
must be one of the following:
\begin{enumerate}
\item $\M_0$ is diffeomorphic to a spherical space form other 
than $S^3$ or $S^3/\Z_2$:
  $\{\sphere/\Ga\}$.
\item $\M_0$ is diffeomorphic to the cylinder: $\{Cyl\}$.
\item $\M_0$ is diffeomorphic to the $\Z_2$-quotient of the cylinder:
$\{Cyl/\Z_2,Cyl\}$, $\{Cyl/\Z_2\}$, $\{Cyl\}$.
\item $\M_0$ is diffeomorphic to $S^3$: $\{Cyl,\sphere\}$, $\{\sphere\}$, 
$\{Cyl\}$.
\item $\M_0$ is diffeomorphic to $S^3/\Z_2$: 
 $\{Cyl/\Z_2,\sphere/\Z_2\}$,
$\{\sphere/\Z_2\}$, $\{Cyl/\Z_2\}$, $\{Cyl/\Z_2,Cyl\}$, $\{Cyl\}$.
\end{enumerate}
\end{corollary}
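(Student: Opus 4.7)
The plan is to combine three tools: the classification of three-dimensional $\kappa$-solutions, monotonicity of the reduced volume $\tilde V(t)$, and topological obstructions on the solitons that can serve as local models. The common underlying fact is that $\M$ is smooth and ancient with positive curvature operator, so the diffeomorphism type of $\M_t$ is independent of $t$, and every topological feature of a local model $\M^{2j}$ must embed in $\M_0$.

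First I would dispose of the rigid cases. For $\M_0\cong S^3/\Ga$ with $|\Ga|\geq 3$, the classification of compact $\kappa$-solutions (Lemma 59.3 of \cite{Kleiner-Lott (2008)}) forces $\M$ to be the round shrinking $\sphere/\Ga$, giving the sequence $\{\sphere/\Ga\}$; this is case (1). For $\M_0\cong S^2\times\R$, the classification of noncompact $\kappa$-solutions with cylinder topology forces $\M$ to be the round shrinking cylinder, giving $\{Cyl\}$; this is case (2). More generally, whenever the asymptotic soliton $\M^0$ is a compact round space form, a rigidity argument --- that a $\kappa$-solution whose backward blowdown is a compact round space form is itself that round shrinker --- forces the entire sequence to be the singleton $\{\sphere/\Ga\}$.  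This is what excludes sequences such as $\{\sphere/\Z_2,Cyl\}$ in case (5).

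The remaining constraints come from monotonicity and topology.  The explicit values in Lemma \ref{lem_classification_gradient_soliton_kappa_solutions} yield the ordering $\tilde V(\sphere/\Ga)<\tilde V(Cyl/\Z_2)<\tilde V(\sphere/\Z_2)<\tilde V(Cyl)<\tilde V(\sphere)$ for $|\Ga|\geq 3$. Since $\tilde V(t)$ is nondecreasing in $t$, and successive even intervals have disjoint target values in $\V_{sol}$, the sequence $\tilde V(\M^0)<\tilde V(\M^2)<\ldots$ is strictly increasing, immediately ruling out sequences such as $\{Cyl,Cyl/\Z_2\}$ or $\{Cyl,\sphere/\Z_2\}$ by monotonicity alone. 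For topology, $Cyl/\Z_2$ contains an embedded $\R P^2$ (the image of $S^2\times\{0\}$ under the quotient), so its appearance as a local model forces $\M_0$ to contain an embedded $\R P^2$; since $\R P^2$ embeds in neither $S^3$ nor $S^2\times\R$, $Cyl/\Z_2$ is excluded from cases (2) and (4). Similarly, a compact soliton $\sphere/\Ga$ has diameter bounded in terms of its scalar curvature, so it can appear as a local model only if $\M_t$ is globally $\hat\eps$-close to a rescaled $\sphere/\Ga$, forcing $\M_t\cong S^3/\Ga$ diffeomorphically; this matches the subgroup $\Ga$ with the topology of $\M_0$.

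Applying all three tools case by case yields exactly the listed sequences: in each case the listed options are precisely those nondecreasing sequences of topologically admissible solitons whose first element is compatible with the asymptotic rigidity.  The main obstacle will be making the topological obstruction arguments rigorous from the approximate $\hat\eps$-closeness of triples: in particular, extracting a genuine embedded $\R P^2$ in $\M_t$ from an almost-$\Z_2$-symmetric cross-section in the approximation, and, more seriously, establishing the rigidity that a compact round asymptotic soliton forces $\M$ to be globally round --- this last statement goes slightly beyond Perelman's classification and requires an ancient-stability argument for round shrinking Ricci flow among $\kappa$-solutions.
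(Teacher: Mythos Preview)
Your approach is essentially the paper's: strict monotonicity of $\tilde V$, topological restrictions, and a stability argument for round space forms. However, there is a gap in how you deploy the stability. You only invoke rigidity when the \emph{asymptotic} soliton $\M^0$ is a compact round quotient, and you correctly use this to exclude $\{\sphere/\Z_2,\cyl\}$ in case~(5). But this does not exclude $\{\cyl/\Z_2,\sphere/\Z_2,\cyl\}$, which is monotone increasing in reduced volume ($16.39<17.62<32.78$) and topologically compatible with $\R P^3$, yet is absent from the list. Here the compact soliton sits in the \emph{middle} of the sequence, so your $\M^0$-based rigidity says nothing.

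The paper sidesteps this by using \emph{forward} local stability rather than backward rigidity: if for some $j$ the flow on $I_{2j}$ is $\hat\eps$-close to a constant-curvature quotient, then Hamilton's convergence for positive Ricci curvature keeps all later time slices close to round, so every subsequent $\M^{2j'}$ must also be a round quotient of the same $\Gamma$ --- hence with the same reduced volume, contradicting strict monotonicity unless the sequence terminates there. In other words, a round quotient can only occur as the \emph{last} entry. This is both the correct generality and strictly easier than the ancient rigidity you worry about at the end: you do not need to prove that a compact asymptotic soliton forces $\M$ to be globally round, only that round is forward-absorbing. Your concern about extracting an embedded $\R P^2$ from $\hat\eps$-closeness is not a real obstacle either: for small $\hat\eps$ the comparison region in $\cyl/\Z_2$ contains the core $\R P^2$, and the diffeomorphism transports it.
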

\begin{proof}
The relevant gradient shrinking solitons are $\cyl/\Z_2$, $Sphere/\Z_2$,
$\cyl$ and $\sphere$.  From Lemma
\ref{lem_classification_gradient_soliton_kappa_solutions},
their reduced volumes are
$16.39$, $17.62$, $32.78$ and $35.24$, respectively.
The corollary follows from
the fact that the reduced volumes of the $\M^{2j}$'s 
form a strictly increasing sequence, the local stability of quotients
of $\sphere$, and topological restrictions.
\end{proof}

\begin{remark}
From explicit calculations, one can rule out the $\{\cyl\}$ possibility
in case (3) of Corollary \ref{gradcor}.  We expect that one can also
rule out the $\{\cyl\}$ possibility in case (5).
\end{remark}

\section{Nonnegative isotropic curvature in four dimensions} \label{pic} 

In this appendix we extend the results of the paper 
to four-dimensional
Ricci flow with nonnegative isotropic curvature, under an assumption of
no incompressible embedded spherical space forms. The basic results
about such flows are due to Hamilton \cite{Hamilton_four_manifolds}.

Let $(M,g)$ be a closed oriented $4$-manifold with nonnegative isotropic 
curvature. Suppose that under Ricci flow, the isotropic curvature does
not immediately become positive. From 
\cite[Theorem 4.10]{Micallef-Wang_nonnegative_isotropic}, 
one of the following happens:
\begin{enumerate}
\item $(M,g)$ is flat.
\item The universal cover $(\widetilde{M}, \widetilde{g})$
is an isometric product of metrics on 
$S^2$ and a surface $\Sigma$.
\item $M$ is biholomorphic to $\mathbb{C} P^2$ and $g$ is a K\"ahler metric
with positive first Chern class.
\end{enumerate}
In these three cases, the Ricci flow is well understood.  In case
(1), it is just a static flow.  In cases (2) and (3), the ensuing
Ricci flow will be smooth on a time interval $[0, T)$ and
nonexistent thereafter. Hence we
assume that $(M,g)$ has positive isotropic curvature.  This condition
is preserved under the Ricci flow. 

There is a version of the Hamilton-Ivey inequality for Ricci flow
on a $4$-manifold with positive isotropic curvature
\cite[Section B]{Hamilton_four_manifolds}. It has the implication that any
blowup limit is a $\kappa$-solution with positive isotropic curvature, and
restricted isotropic curvature
in the sense of \cite[(2.4)]{Chen-Zhu_positive_isotropic}. Any such compact solution
is diffeomorphic to $S^4$.  Any such noncompact solution is
diffeomorphic to $\R^4$, or is an isometric product $\R \times Z$ 
where $Z$ is diffeomorphic to a spherical space form.

Using \cite{Naber_noncompact_shrinking}, we have the following extension of Lemma 
\ref{lem_classification_gradient_soliton_kappa_solutions}.

\begin{lemma}
\label{4Dlemma}
Let $\M$ be a four-dimensional gradient shrinking
soliton that is a $\kappa$-solution, has positive isotropic curvature and
restricted isotropic curvature in the terminology of
\cite[(2.4)]{Chen-Zhu_positive_isotropic}, and  blows up as $t\ra 0$.
For $t < 0$ and a point $(y,t) \in \M$, 
let $l_{y,t} \in C^\infty(\M_{< t})$ be the 
$l$-function on $\M$
constructed using ${\mathcal L}$-geodesics going backward in time from 
$(y,t)$.
Then there is a function $l_\infty \in C^\infty(\M)$ so that
the limit $\lim_{t \rightarrow 0^-} l_{y,t} = l_\infty$ exists,
independent of $y$, with continuous convergence on compact subsets on $\M$.
Define
$\tilde V_\infty:(-\infty,0)\ra (0,\infty)$ as in
(\ref{redvol}).
Then one of the following holds:
\begin{enumerate}
\item $\M$ is isometric to the shrinking round cylinder solution on
$(S^3/\Gamma) \times \R$, where $\Gamma \subset \SO(4)$,
with $R(x,t)= \frac32 (-t)^{-1}$, 
$l_\infty((x,z),t)=\frac32 +\frac{z^2}{-4t}$ and
$\tilde V_\infty(t) = \frac{32 \pi^{\frac52} e^{- \: \frac32}}{|\Gamma|}$.
\item $\M$ is a shrinking round $S^4$, with
$R(x,t)= 2 (-t)^{-1}$,  $l_\infty(x,t)= 2$
and $\tilde V_\infty(t) = \frac{96 \pi^2}{e^2}$.
\end{enumerate}
\end{lemma}

Let $(M, g)$ be a compact $4$-manifold with positive isotropic curvature
and no embedded $\pi_1$-injective spherical space forms.  The
construction of Ricci flow with surgery, starting from $(M, g)$ was
initiated by Hamilton \cite{Hamilton_four_manifolds}. There was a problem with
\cite{Hamilton_four_manifolds} because no canonical neighborhood result was
available at the time.  The construction was revisited by Chen-Zhu
\cite{Chen-Zhu_positive_isotropic}, based on Perelman's work in the three-dimensional
case \cite{Perelman_entropy,Perelman_surgery} and a preprint version of
\cite{Kleiner-Lott_perelman_notes}.
The result of Lemma \ref{4Dlemma}, which follows from
\cite{Naber_noncompact_shrinking}, was not available when
\cite{Chen-Zhu_positive_isotropic} was written. With the incorporation 
of Lemma \ref{4Dlemma},
the construction of the Ricci flow with surgery
is strictly analogous to Perelman's work.

The results of the present paper now extend to the
setting of four-dimensional singular Ricci flows with positive
isotropic curvature, again under the assumption that the initial
time slice does not have any embedded $\pi_1$-injective spherical
space forms.  

Returning to Ricci flow with surgery, suppose that there 
are such embedded spherical space forms in the initial time slice.
There is still a well-defined
Ricci flow with surgery, but the time slices may be orbifolds with 
isolated singularities  \cite{Chen-Tang-Zhu_positive_isotropic}. It should be
possible to extend the results of the present paper to the orbifold
setting, using \cite{Kleiner-Lott_orbifollds}, but we do not address 
the subject here.

\bibliography{ricci_limits}
\bibliographystyle{alpha}

\end{document}